\documentclass[microtype,reqno]{gtpart}     
\usepackage{graphicx}
\usepackage{grffile}
\usepackage[pointlessenum]{paralist}%
\usepackage{mdwlist}
\usepackage[mathscr]{euscript}%

\newcommand\figwidth{0.95\textwidth}

\title[Some $3$--dimensional transverse $\C$--links]
{Some $\mathbf{3}$--dimensional transverse $\C$--links\\
(Constructions of higher-dimensional $\C$--links, I)}%

\author[Lee Rudolph]{Lee Rudolph}
\givenname{Lee}
\surname{Rudolph}
\address{44 Allandale Street, Apt.\ 302
\\Jamaica Plain MA 02130\\USA\\}
\email{lrudolph@meganet.net}

\keyword{quasipositivity}
\subject{primary}{msc2000}{57M25}
\subject{secondary}{msc2000}{57Q45}

\keyword{contact structures on $3$--manifolds}
\subject{primary}{msc2000}{57R17}

\keyword{topological aspects of Stein theory}
\subject{primary}{msc2000}{32Q28}

\keyword{graph manifolds}
\subject{primary}{msc2000}{57M27}
\subject{secondary}{msc2000}{14B05}

\arxivreference{}
\arxivpassword{}

%
%
\volumenumber{}
\issuenumber{}
\publicationyear{}
\papernumber{}
\startpage{}
\endpage{}
\doi{}
\MR{}
\Zbl{}
\received{}
\revised{}
\accepted{}
\published{}
\publishedonline{}
\proposed{}
\seconded{}
\corresponding{}
\editor{}
\version{}

\swapnumbers
\newtheorem{theorem}{Theorem}[section]
\newtheorem{proposition}[theorem]{Proposition}
\newtheorem{corollary}[theorem]{Corollary}
\newtheorem{lemma}[theorem]{Lemma}

\newtheorem*{theorem*}{Theorem}
\newtheorem*{lemma*}{Lemma}
\newtheorem*{corollary*}{Corollary}
\newtheorem*{proposition*}{Proposition}
\theoremstyle{definition}

\newtheorem{definition}[theorem]{Definition}
\newtheorem{definitions}[theorem]{Definitions}
\newtheorem*{definition*}{Definition}
\newtheorem*{definitions*}{Definitions}
\newtheorem{example}[theorem]{Example}

\newtheorem{question}[theorem]{Question}
\newtheorem*{question*}{Question}
\newtheorem{questions}[theorem]{Questions}
\newtheorem*{questions*}{Questions}
\newtheorem{remark}[theorem]{Remark}
\newtheorem{remarks}[theorem]{Remarks}
\newtheorem*{remark*}{Remark}
\newtheorem*{remarks*}{Remarks}
\newtheorem{notation}[theorem]{Notation}
\newtheorem*{notation*}{Notation}

\input{mdefs.mac}

\hyphenation{homo-lo-gy}
\begin{document}

\begin{abstract}   
By use of a variety of techniques (most based on 
constructions of quasipositive knots and links, some old 
and others new), many smooth $3$--manifolds are 
realized as transverse intersections of complex 
surfaces in $\C^3$ with strictly pseudoconvex $5$--spheres.  
These manifolds not only inherit interesting 
intrinsic structures (eg, they have canonical Stein-fillable 
contact structures), they also have extrinsic structures 
of a knot-theoretical nature (eq, $S^3$ arises in infinitely 
many distinct ways).  This survey is not 
comprehensive; a number of questions are left open for future work.
\end{abstract}
\begin{asciiabstract}
By use of a variety of techniques (most based on 
constructions of quasipositive knots and links, some old 
and others new), many smooth 3-manifolds are 
realized as transverse intersections of complex 
surfaces in complex 3-space with strictly pseudoconvex 
5-spheres.  These manifolds not only inherit interesting 
intrinsic structures (eg, they have canonical Stein-fillable 
contact structures), they also have extrinsic structures 
of a knot-theoretical nature (eq, the 3-sphere arises in 
infinitely many distinct ways).  This survey is not 
comprehensive; a number of questions are left open for 
future work.
\end{asciiabstract}
\maketitle


\section{Introduction\label{part:introduction}}

A \bydef{$k$--dimensional link} in a smooth, oriented 
$m$--manifold $M$ is a pair $\Lscr=(L,M)$ where 
$L\sub M$ is a compact, non-empty, purely $k$--dimensional
manifold (without boundary) called the \bydef{link-manifold} 
of $\Lscr$; $\Lscr$ is \bydef{classical} when $k=1$, $m=3$,
and $M$ is diffeomorphic to $S^3$.  
In case $L$ is endowed with an extra structure (such as being 
smooth), $\Lscr$ is also said to have that structure.  
A \bydef{knot} is a link with connected link-manifold.

For $n\ge 1$, let $\Sigma\sub\C^{n+1}$ 
be a strictly pseudoconvex $(2n+1)$--sphere, 
$\Delta\sub\C^{n+1}$ the closed Stein $(2n+2)$--disk 
it bounds, and $U$ an open Stein neighborhood of $\Delta$ 
in $\C^{n+1}$. If $f\in\holo{U}$ is a non-constant 
holomorphic function without repeated factors, 
then $V(f)\isdefinedas f^{-1}(0)$ is a complex-analytic 
hypersurface in $U$; up to multiplicities, every 
complex-analytic hypersurface in $U$ has the form $V(f)$.  
Let $\lman{f}{\Sigma}\isdefinedas V(f)\cap\Sigma$,
$\Cspan{f}{\Delta}\isdefinedas V(f)\cap\Delta$.

\begin{definitions}
\begin{inparaenum}[(1)]
\item
Suppose that the singular set $\sing{V(f)}$ of $V(f)$
has empty intersection with $\Sigma$, so that
$\lman{f}{\Sigma}$ is the intersection of $\Sigma$ with the complex $n$--manifold $\reg{V(f)}$ of regular points of $V(f)$.  
If this intersection is transverse, then $\lman{f}{\Sigma}$ 
is a smooth compact $(2n-1)$--manifold.  In case
either 
	\begin{inparaenum}[(a)]\
	\item
	$n>0$ and $\lman{f}{\Sigma}\ne\emptyset$, or
	\item
	$n=0$ (so that necessarily $\lman{f}{\Sigma}=\emptyset$)
	and $\Cspan{f}{\Delta}\ne\emptyset$,
	\end{inparaenum}
call the smooth link $\Clink{f}{\Sigma}%
\isdefinedas(\lman{f}{\Sigma},\Sigma)$ a 
$(2n-1)$--dimensional \bydef{transverse $\C$--link}.
\item
In case $\lman{f}{\Sigma}$ is a compact $(2n-1)$--manifold
that is \textit{not} smoothly embedded in $\Sigma$, call
$\Clink{f}{\Sigma}$ a $(2n-1)$--dimensional 
\bydef{wild $\C$--link}.
\end{inparaenum} \hfill\done
\end{definitions}

\begin{remarks}\label{rmks:C-link terminology}
\begin{inparaenum}[(1)]
\item\label{rmk:C-link history}
The term ``$\C$--link'' was introduced (Rudolph 
\cite{Rudolph2005}) as a way to include under one name
two types of classical links which share the defining
feature that, up to ambient isotopy, they arise as intersections
of a complex plane curve $V\sub\C^2$ with a $3$--sphere $\Sigma\sub\C^2$.  One of these types is the special case 
in that dimension of transverse $\C$--links.  The other type,
``totally tangential $\C$--links'', also can be generalized
to higher dimensions but will be left undefined here 
(and will be ignored except in a small neighborhood of
Question \ref{qn:generalized qp annuli}).  
There are no $1$--dimensional wild $\C$--links.
\item\label{rmk:empty C-links}
For $q\ge 1$, let $f_q\from\C\to\C\mapsuchthat z\mapsto z^q$.  
The empty $(-1)$--dimensional $\C$--link
$\Clink{f_q}{S^1}=(\emptyset,S^1)\defines [q]$ is endowed 
with an extra structure---namely, the degree--$q$ fibration 
$f_q\restr S^1\from S^1\to S^1$---that makes $[q]$ a
degenerate but very useful \bydef{fibered link} 
as defined and discussed in \ref{subsubsect:fibered links, 
open books, and contact structures}.  The links and
notation $[q]$ were introduced by Kauffman and 
Neumann \cite{KauffmanNeumann1977} for an application 
to be used in \ref{subsect:cyclic branched covers (general)}.
\end{inparaenum}
\done
\end{remarks}

In this paper I launch investigations into 
$3$--dimensional transverse $\C$--links 
(for some remarks on wild $\C$-links in
odd dimensions greater than or equal to $3$,
see Rudolph \cite{Rudolph2015}). 
I describe in more or less detail several  
constructions of such links and a few of their 
interesting properties; deeper investigations 
are deferred to a later date.  Most of the 
new $3$--dimensional constructions rely, in turn, on 
various constructions of $1$--dimensional transverse 
$\C$--links---some of them new, and presented
with proofs or proof sketches, and others simply 
restated (with references to published proofs) as 
needed.

One important special case of $3$--dimensional transverse 
$\C$--links is well known and well understood.  Let 
$U$ be a neighborhood of $\zz\in\C^3$, $f\in\holo{U}$.
If $\zz$ is an isolated singular point (or a regular point)
of $V(f)$, then for all sufficiently 
small $\epsilon>0$ the (round) $5$--sphere 
$\Sigma=\SPH{5}{\zz}{\epsilon}$ intersects
$\reg{V(f)}$ transversely.  The ambient isotopy type of 
the transverse $3$--dimensional $\C$--link 
$\Clink{f}{\Sigma}$ is independent of $\epsilon$; any 
representative of this ambient isotopy type is called 
the \bydef{link of the isolated singular point} of $f$ 
at $\zz$, and may be denoted 
$\Clinksing{\zz}{f}=(\lman{f}{\zz},S^5)$. 
Milnor \cite{Milnor1968} began the systematic 
knot-theoretical study of these links (and their analogues 
in higher dimensions; classical knot theory had been 
applied to singular points of complex plane curves since
Brauner \cite{Brauner1928}), and there is now a huge
body of research on the topology (and geometry) both of 
their link-manifolds and of the embeddings of those 
link-manifolds in their ambient $5$--spheres.

A second special case of $3$--dimensional transverse 
$\C$--links (and their analogues in other dimensions, 
including the classical) has also been studied, though 
less thoroughly. If $f\in\holo{\C^3}$ is a complex polynomial 
function and $V(f)$ is a finite set, then 
for all sufficiently small $\epsilon>0$ the (round) $5$--sphere $\Sigma=\SPH{5}{\zero}{1/\epsilon}$ intersects $\reg{V(f)}$ transversely. The ambient isotopy type of the transverse 
$3$--dimensional $\C$--link $\Clink{f}{\Sigma}$ is 
independent of $\epsilon$; any representative of this ambient 
isotopy type is called the \bydef{link at infinity} of $f$, 
and may be denoted $\Clinkinfty{f}=(\lman{f}{\infty},S^5)$.  
The link at infinity of a complex algebraic plane curve 
was introduced under that name by Rudolph \cite{Rudolph1982}, 
though implicit earlier in Chisini 
\cite{Chisini1933,Chisini1937}; links at infinity of complex 
algebraic hypersurfaces in all dimensions were introduced 
by Neumann and Rudolph 
\cite{NeumannRudolph1987,NeumannRudolph1988}. See Rudolph \cite{Rudolph2005} for further references.

Aside from those two special cases, very little is known
(or has been published) about $3$--dimensional transverse
$\C$--links.  This contrasts considerably with the situation
for $1$--dimensional transverse $\C$--links, where---by
taking Boileau and Orevkov \cite{BoileauOrevkov2001}
(applying Eliashberg \cite{Eliashberg1990})
and Rudolph \cite{Rudolph1983} together---such links 
are known to be (up to ambient isotopy) precisely the \bydef{quasipositive links}.  This characterization is 
not effective, in the sense that no algorithm is presently 
known to determine whether or not a given smooth, 
oriented classical link is quasipositive (precisely: the class 
of quasipositive links is recursive, but is not known 
to be recursively enumerable).  However, there is an 
abundance of ways to construct quasipositive links with
various prescribed properties, as can be seen in the next
part of this paper.

It is not clear whether or to what extent the notion 
of ``quasipositive link'' has useful generalizations 
in higher dimensions, much less whether for some such 
generalization(s) there exist analogues of 
\cite{Rudolph1983} and \cite{BoileauOrevkov2001} that
could lead to a topological characterization of higher-dimensional
transverse $\C$--links.  Even for specifically $3$--dimensional
transverse $\C$--links, this may be a daunting task. 
For the purposes of the following brief and speculative discussion,  
definitions of terminology not already introduced can be 
found in the preliminaries, section \ref{sect:preliminaries}.

First note that in the case of a $1$--dimensional transverse
$\C$--link $\Clink{f}{\Sigma}$ (assuming it to be generic, 
ie, such that $V(f)\cap\Delta$ is non-singular) there 
is nothing special about the 
intrinsic topology of $\lman{f}{\Sigma}$ or 
$\Cspan{f}{\Delta}$:
any non-empty compact oriented $1$--manifold without boundary 
occurs as $\lman{f}{S^3}$, and any compact oriented 
$2$--manifold without closed components appears as 
$\Cspan{f}{D^4}$, for some $\Clink{f}{D^4}$.  This contrasts
with the situation for $3$--dimensional transverse $\C$--links 
in a strictly pseudoconvex $6$--disk $\Delta$: if 
$\Clink{f}{\Delta}$ is generic, then $\Cspan{f}{\Delta}$ is 
a compact Stein manifold-with-boundary bounded by 
$\lman{f}{\Sigma}$, and both kinds of spaces are subject
to non-trivial topological restrictions.  By \cite{BoileauOrevkov2001}, an intrinsic 
topological restriction on $\Cspan{f}{\Delta}$ (due to 
Loi and Piergallini \cite{LoiPiergallini2001}, with 
later proofs by Akbulut and Ozbagci \cite{AkbulutOzbagci2001}
and Giroux \cite{Giroux2002}) can be restated thus. 

\begin{theorem}\label{thm:Loi-Piergallini theorem}
A compact, oriented, smooth $4$--manifold-with-boundary
$W$ is diffeomorphic to a compact Stein surface iff 
$W$ is a branched covering of $D^4$ over the $\C$--span
$\Cspan{f}{D^4}$ of a $1$--dimensional transverse $\C$--link
$\Clink{f}{S^3}$. \qed
\end{theorem}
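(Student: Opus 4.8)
The plan is to route both implications through the equivalence between compact Stein surfaces and positive allowable Lefschetz fibrations over $D^2$ (PALFs), together with the standard dictionary translating such fibrations into simple branched coverings of $D^4=D^2\times D^2$ over braided surfaces. The main external engine is the Loi--Piergallini \cite{LoiPiergallini2001}, Akbulut--Ozbagci \cite{AkbulutOzbagci2001}, Giroux \cite{Giroux2002} result that a compact, oriented, smooth $4$--manifold-with-boundary $W$ admits a Stein structure iff it admits a PALF $\pi\from W\to D^2$ whose regular fiber is a compact surface with non-empty boundary and whose vanishing cycles are realized by \emph{positive} Dehn twists. I also use the characterization recalled above (Boileau--Orevkov \cite{BoileauOrevkov2001} together with Rudolph \cite{Rudolph1983}), by which the $\C$--spans $\Cspan{f}{D^4}$ are, up to ambient isotopy, exactly the quasipositive braided surfaces in $D^4$; thus the whole theorem amounts to reformulating the phrase ``admits a PALF'' as ``is a branched cover over a quasipositive braided surface''.

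For the direction ($\Rightarrow$), suppose $W$ carries a Stein structure, hence a PALF $\pi\from W\to D^2$ with bounded fiber $F$. First I present $F$ as a $d$--fold simple branched covering $F\to D^2$ over finitely many interior points; every compact surface with non-empty boundary admits such a presentation. Writing $D^4=D^2\times D^2$ with the first factor as base, I assemble these fiberwise coverings into a degree--$d$ branched covering $W\to D^4$, letting the branch points in the fiber disks move over the base according to the Lefschetz monodromy. Their total trace is a surface $S\sub D^4$ on which the base projection restricts to a branched cover, i.e.\ a braided surface. The decisive point is that a positive Dehn twist about a vanishing cycle corresponds to a positively oriented half-twist band of $S$; since every vanishing cycle of a PALF is positive, $S$ is quasipositive and therefore ambient isotopic to some $\Cspan{f}{D^4}$, and by construction $W$ is the branched cover of $D^4$ over it.

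For ($\Leftarrow$), let $W\to D^4$ be a branched cover over $\Cspan{f}{D^4}$. Because the $\C$--span is braided, there is a projection $\rho\from D^4\to D^2$ whose restriction to $\Cspan{f}{D^4}$ is a branched cover; composing, I set $\pi\isdefinedas\rho\circ(\text{cover})\from W\to D^2$. Over a regular value $b$ the fiber of $\pi$ is the induced $d$--fold branched cover of the complementary disk over the finitely many points in which $\Cspan{f}{D^4}$ meets that disk, hence a compact surface with non-empty boundary; over the critical values of $\rho|_{\Cspan{f}{D^4}}$ two such points collide and $\pi$ acquires a Lefschetz singularity. Here quasipositivity pays off in the other direction: since $\Cspan{f}{D^4}$ inherits the complex orientation of $V(f)$, each such degeneration is the positive (complex-analytic) local model, so the vanishing cycles are positive Dehn twists. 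Thus $\pi$ is a PALF and $W$ is Stein.

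The main obstacle is making the dictionary between the two combinatorial encodings both well-defined and invertible. On the Lefschetz side one has a factorization of the total monodromy into positive Dehn twists about simple closed curves on $F$; on the branched-cover side one has a presentation of $S$ as parallel disks joined by positively oriented bands. The identification sends a vanishing cycle to (the preimage of) an embedded arc joining two branch points in the fiber disk, and a positive Dehn twist to a positive half-twist along that arc; one must check that this is independent of the auxiliary presentation of $F$ as a branched cover of $D^2$, that it respects allowability (bounded fibers, no boundary-parallel or separating vanishing cycles), and that it recovers every quasipositive $S$ and every positive factorization. This is precisely the technical content supplied by \cite{LoiPiergallini2001}, and it is what upgrades the two one-sided constructions above into a genuine ``iff''.
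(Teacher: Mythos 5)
Your proposal is correct and takes essentially the same approach as the paper: the paper offers no independent proof of this theorem, stating it as the Loi--Piergallini result (with the later proofs by Akbulut--Ozbagci and Giroux) restated through the Rudolph/Boileau--Orevkov equivalence between $\C$--spans of $1$--dimensional transverse $\C$--links and quasipositive braided surfaces, which is precisely the reduction you perform. Your sketch of the PALF/branched-covering dictionary simply unpacks the technical content that both you and the paper ultimately delegate to \cite{LoiPiergallini2001}, so nothing in your argument diverges from the paper's route.
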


As noted by Etnyre \cite{Etnyre2002}, it is a corollary
to Loi and Piergallini's \fullref{thm:Loi-Piergallini theorem}
that the intrinsic topology of $\lman{f}{\Sigma}$ is
restricted as follows.
\begin{corollary}\label{cor:Etnyre's observation}
A compact, oriented $3$--manifold $M$ is diffeomorphic
to the (strictly pseudoconvex) boundary of a compact
Stein surface iff there exists an 
open book $\book{b}\from M\to \C$ which is \emph{positive} 
in the sense that its geometric monodromy 
$F_0(\book{b})\to F_0(\book{b})$ 
(where $F_0(\book{b})\isdefinedas \invof{\book{b}}([0,\infty{[})$,
the ``first page'' of $\book b$, is a compact oriented 
$2$--manifold-with-boundary) can be written as a product
of positive Dehn twists on $F_0(\book{b})$.\qed
\end{corollary}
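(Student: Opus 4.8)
The plan is to read \fullref{thm:Loi-Piergallini theorem} as the assertion that a compact oriented smooth $4$--manifold-with-boundary $W$ is Stein exactly when it is a branched covering $p\from W\to D^4$ over the $\C$--span $\Cspan{f}{D^4}$ of a $1$--dimensional transverse $\C$--link $\Clink{f}{S^3}$, and to convert such a branched covering into a positive open book on $\Bd W$ (and back). The bridge in both directions is the correspondence between branched coverings over \emph{braided} surfaces and positive allowable Lefschetz fibrations (PALFs) over $D^2$, obtained by post-composing a branched covering of $D^4=D^2\times D^2$ with a coordinate projection.

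\emph{From Stein filling to positive open book.} Suppose $M$ is diffeomorphic to $\Bd W$ for a compact Stein surface $W$, and by \fullref{thm:Loi-Piergallini theorem} present $W$ as a branched covering $p\from W\to D^4$ over $\Cspan{f}{D^4}$. Because $\Clink{f}{S^3}$ is, by Boileau--Orevkov together with Rudolph, a quasipositive link, I would first isotope $\Cspan{f}{D^4}$ to a \emph{braided surface} in $D^4=D^2\times D^2$: one projection $\pr$ then restricts on $\Cspan{f}{D^4}$ to a branched covering of $D^2$ all of whose branch points arise from \emph{positive} bands of the quasipositive braided surface. The composite $\pr\circ p\from W\to D^2$ is then a Lefschetz fibration: its generic fibre $F$ is the branched covering of a fibre disk $D^2\times\{t\}$ obtained by restricting $p$, branched over the braid points, and each positive band contributes a single right-handed (positive) vanishing-cycle Dehn twist; hence $\pr\circ p$ is a PALF. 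The open book $\book{b}\from M\to\C$ induced on $\Bd W$ (binding $\Bd F$, page $F_0(\book{b})=F$) then has geometric monodromy equal to the product of these vanishing-cycle twists, so $\book{b}$ is positive.

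\emph{From positive open book to Stein filling.} Conversely, given a positive open book $\book{b}\from M\to\C$ with page $F_0(\book{b})$ and monodromy a product $\tau_{c_1}\cdots\tau_{c_k}$ of positive Dehn twists, I would assemble the PALF over $D^2$ with generic fibre $F_0(\book{b})$ and ordered vanishing cycles $c_1,\dots,c_k$, producing a compact $4$--manifold $W$ with $\Bd W=M$ carrying exactly the open book $\book{b}$. Realizing $F_0(\book{b})$ as a branched covering of a disk and each $\tau_{c_i}$ as the lift of a positive half-twist of the branch points identifies this PALF with a branched covering of $D^4$ over a quasipositive braided surface, that is, over $\Cspan{f}{D^4}$ for some transverse $\C$--link $\Clink{f}{S^3}$; \fullref{thm:Loi-Piergallini theorem} then endows $W$ with a Stein structure whose strictly pseudoconvex boundary is $M$.

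\emph{Main obstacle.} All Stein-theoretic input is absorbed into \fullref{thm:Loi-Piergallini theorem}, so the real work lies in the two-way dictionary between quasipositive braided surfaces (as branch loci) and PALFs, and above all in checking that \emph{positivity is preserved in both directions}: that a positive band of the branch surface yields a right-handed Dehn twist, and, conversely, that every positive Dehn twist in the prescribed monodromy can be realized as the lift of a positive half-twist under a suitable branched-covering presentation of the page $F_0(\book{b})$. Setting up this correspondence with the correct orientations---exactly the mechanism underlying Loi--Piergallini's theorem in the forms of Akbulut--Ozbagci and Giroux---is the delicate point.
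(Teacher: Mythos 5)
Your proposal reconstructs exactly the derivation this paper has in mind: the paper itself gives no argument (the corollary is quoted from Etnyre \cite{Etnyre2002} without proof, as an observation following from Theorem \ref{thm:Loi-Piergallini theorem}), and the intended mechanism is precisely the dictionary you describe between simple branched coverings of $D^4$ branched over quasipositive braided surfaces, positive allowable Lefschetz fibrations, and positive open books on the boundary, as established by Loi--Piergallini, Akbulut--Ozbagci, and Giroux. Your forward direction is sound as sketched, with the one remark that for $\pr\circ p$ to be a Lefschetz fibration you need the covering $p$ furnished by Theorem \ref{thm:Loi-Piergallini theorem} to be a \emph{simple} branched covering, which is what the cited theorem actually provides.

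There is, however, one concrete gap in your backward direction. The corollary assumes only that the monodromy factors as $\tau_{c_1}\cdots\tau_{c_k}$ with each $\tau_{c_i}$ a positive Dehn twist; it does \emph{not} assume the curves $c_i$ are homologically essential in the page. If some $c_i$ is null-homologous --- a separating curve bounding a positive-genus subsurface, or a boundary-parallel curve when the page has a single boundary component --- then the positive Lefschetz fibration you assemble is not \emph{allowable}, and the next step, realizing $\tau_{c_i}$ as the lift of a positive half-twist under a simple branched covering of the disk, is not merely delicate but impossible: a lifted curve $c$ always admits a properly embedded arc in the page meeting it transversely in a single point (lift an arc of the disk crossing the branch arc once), so lifted curves are never null-homologous. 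The standard repair is algebraic and stays on the same page: a positive twist along a curve bounding a disk is isotopic to the identity and may be discarded, while a positive twist along a curve bounding a genus-$h$ subsurface ($h\ge 1$) equals, by the chain relation, the product $(\tau_{a_1}\cdots\tau_{a_{2h}})^{4h+2}$ of positive twists along curves $a_i$ that are non-separating (indeed pairwise intersecting, hence homologically essential in the page). After this rewriting the factorization is allowable, and your identification with a branched covering over a quasipositive braided surface, followed by the application of Theorem \ref{thm:Loi-Piergallini theorem}, goes through.
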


Moreover, $W$ can be reconstructed from its boundary $M$
together with such a positive factorization of the monodromy
of an open book on $M$ (different open books, or even different
factorizations, may give different $4$--manifolds $W$).

However, neither of these necessary conditions on $W$ 
and $M=\Bd W$ is sufficient to ensure that they actually
occur as $\Cspan{f}{\Delta}$ or $\lman{f}{\Sigma}$.  In fact,
although every Stein surface embeds properly and holomorphically 
in $\C^4$ (Eliashberg and Gromov \cite{EliashbergGromov1992},
Sch\"urmann \cite{Schuermann1992,Schuermann1997}), there are 
compact Stein surfaces that do not embed holomorphically in 
$\C^3$ (indeed, whose underlying differentiable manifolds 
do not embed smoothly in $\R^6$; Forster \cite{Forster1970}).
Suppose, however, that $W$ is in fact a compact Stein surface
embedded holomorphically (with strictly pseudoconvex boundary)
in $\C^3$.  In this case, it is easy (possibly after slightly
perturbing the complex structure) actually to embed $W$ as a 
Stein domain on a (non-singular) complex algebraic surface 
in $V(f)\sub\C^3$; but it is not immediately obvious that this
can be done in such a way that $W=\Cspan{f}{\Delta}$ for
some strictly pseudoconvex $6$--disk $\Delta$ in $\C^3$. 

\begin{questions}\label{qn:which Stein surfaces are Cspans?}
\begin{inparaenum}[(1)]
\item\label{subqn:which Stein surfaces embed in C3?}
What are necessary and/or sufficient conditions on a
compact, oriented, smooth $4$--manifold with boundary 
that it be diffeomorphic to a compact Stein surface 
embedded holomorphically (with strictly pseudoconvex
boundary) in $\C^3$?
\item\label{subqn:which Stein surfaces in C3 are Cspans?} 
What are necessary and/or sufficient conditions on
a compact Stein surface embedded holomorphically 
(with strictly pseudoconvex boundary) in $\C^3$ that 
it be $\Cspan{f}{\Delta}$ for some strictly pseudoconvex 
$6$--disk $\Delta\sub\C^3$? 
\end{inparaenum}
\done
\end{questions}

The extreme (not to say pathological) behavior exhibited
by Stein domains in $\C^2$, as demonstrated by Gompf
\cite{Gompf2013}, suggests that any full answer to these
questions may be quite alarming.

Conceivably the following question can be more easily 
answered.
\begin{question}\label{qn:which 3mflds are Clink-manifolds?}
What are necessary and/or sufficient conditions on a
compact, oriented $3$--manifold $M$ that it support some 
positive open book $\book{b}\from M\to\C$ that is associated
to a realization of $M$ as a link-manifold $\lman{f}{\Sigma}$?
\done
\end{question}

Another question, presumably easier than characterizing 
all $3$--dimensional transverse $\C$--links (whether or 
not by generalizing quasipositive links), is the following.

\begin{question}\label{qn:generalize strong quasipositivity}
Can some non-trivial family of $3$--dimensional $\C$--links
be characterized by a reasonable generalization of the 
notion of a strongly quasipositive link (see 2.4)?\done
\end{question}

At a minimum, such a generalization would presumably involve
finding properties---including, but going further than, the
topological condition in Theorem \ref{thm:Loi-Piergallini theorem}%
---that a $4$--dimensional submanifold-with-boundary $W$ 
of a strictly pseudoconvex $5$--sphere $\Sigma=\Bd\Delta\sub\C^3$
must possess for there to be an ambient isotopy carrying 
some $3$--dimensional transverse $\C$--link $\Clink{f}{\Sigma}$ 
onto $(\Bd W,\Sigma)$ and $(\Cspan{f}{\Delta},\Delta)$ 
onto $(W',\Delta)$, where $W'$ is obtained from $W$ by 
leaving $\Bd W$ fixed and pushing $\Int W$ into $\Int\Delta$.  

Question \ref{qn:generalize strong quasipositivity}
can be made more particular yet.  The family of
strongly quasipositive $2$--component links 
$\Clink{f}{S^3}$ such that $\Cspan{f}{D^4}$ is an annulus
can be characterized as those that can be obtained 
(via a digression into $1$--dimensional totally tangential
$\C$--links, Rudolph \cite{Rudolph1992a,Rudolph1992,Rudolph1998}) using a real-analytic Legendrian simple closed curve in $S^3$ 
(with its standard contact structure) and its canonical framing; 
see Theorem \ref{thm:qp annuli}%
\eqref{subthm:qp knotted annuli and TB}.

\begin{question}\label{qn:generalized qp annuli}
Is there a reasonable generalization of strongly 
quasipositive annuli?\done
\end{question}
It is easy to construct totally tangential $2$--dimensional
$\C$--links in $S^5$---which are, in particular, Legendrian
manifolds---diffeomorphic to $S^2$ and $S^1\times S^1$;
and these do give $3$--dimensional transverse $\C$--links
(with link-manifolds $S^2\times S^1$ and $(S^1)^3$, respectively,
for the examples I have in mind).  It may well be possible,
if not easy, to do the same for $2$--manifolds $F_g$ of 
genus $g>1$; a careful reading of Haskins and Kapouleas
\cite{HaskinsKapouleas2007} might even provide an appropriate 
reader (which I am not) with the affirmative answer.

\begin{remark} A recent theorem of Kasuya \cite{Kasuya2014}
implies that if $\Clink{f}{\Sigma}$ is a $3$--dimensional 
$\C$--link then the first Chern class of its link-manifold $\lman{f}{\Sigma}$ vanishes.\done
\end{remark}

\section{Old and new constructions of quasipositive links%
\label{part:qp stuff old and new}}
This part of the paper assembles constructions of 
quasipositive links used in the next part to construct 
$3$--dimensional $\C$--links.  For further information, 
particularly about constructions not flagged as either new or
incorporating new details, see \cite{Rudolph2005} and sources 
cited there.

\subsection{Preliminaries on braids, 
plumbing,
trees, 
fibered links, 
etc\label{sect:preliminaries}}
For general material on braids and closed braids (as well as
plats, used in passing in \ref{subsubsect:qp rational links}),
see Birman \cite{Birman1974} or Birman and Brendle
\cite{BirmanBrendle2005}.  For details and further 
references on braided surfaces, quasipositive braids, 
etc, see \cite{Rudolph2005}.  

For an historical survey of open books, see 
Winkelnkemper \cite{Winkelnkemper1998}.  For
details and further references on contact structures,
fibered links, and open books in dimension $3$, 
see Etnyre \cite{Etnyre2006}
or Geiges \cite{Geiges2008}.

The first four sections of Ozbagci and Popescu-Pampu
\cite{OzbagciPopescu-Pampu2014} form an excellent historical
survey of plumbing and many of its generalizations.  
Starting from first principles, Bonahon and Siebenmann
\cite[Chapter 12]{BonahonSiebenmann1979/2010} give a 
careful treatment of---and calculus for---a particular 
case (called {strip-plumbing} below, 
\ref{rmks:2p-plumbing}\eqref{defs:iterated strip plumbing})
that explicitly allows non-orientable plumbands and is often 
suppressed in or excluded from such discussions.

\subsubsection{Braids and braided surfaces}\label{subsect:braids}
Let $n\ge 1$.  The \bydef{$n$--string braid group $B_n$} 
with identity $o^{(n)}$, \bydef{standard generators}
$\sigma_1,\dots,\sigma_{n-1}$, and 
\bydef{standard presentation}
\begin{equation*}
\left\langle \sigma_1,\dots,\sigma_{n-1}
	\left| 
	\begin{aligned}
	&\sigma_i\sigma_j\invof{\sigma_i}\invof{\sigma_j}
	&= o^{(n)},
	&\qua 1\le i<j-1\le n-1\\
	&\sigma_i\sigma_{i+1}\sigma_i
		\invof{\sigma_{i+1}}\invof{\sigma_{i}}\invof{\sigma_{i+1}}
	&=o^{(n)}, 
	&\qua 1\le i\le n-2
	\end{aligned}
	\right.
\right\rangle
\end{equation*}
is identified to the fundamental group of the
\bydef{configuration space}
\begin{align*}
E_n\isdefinedas 
	& \{\{w_1,\dots,w_n\}\sub\C\setsuchthat
	w_i\ne w_j, 1\le i<j\le n\}\\
\phantom{E_n}\sub & \{\{w_1,\dots,w_n\}\sub\C\} \cong 
\C^n/\symgroup{n}
\end{align*}
with respect to an arbitrary choice of base point 
$\boldsymbol{\omega}=\{w_1,\dots,w_n\}\in E_n$. 
A \bydef{positive band} in $B_n$ is any member of 
the conjugacy class of the standard generators; 
this conjugacy class is independent of
the choice of $\boldsymbol{\omega}$---in $E_n$ 
it is represented by any positively oriented meridian 
of the \bydef{discriminant locus} consisting of all
multisets $\{w_1,\dots,w_n\}\sub \C$ with 
$w_i=w_j$ for some $i\ne j$.  For $\beta, \gamma\in B_n$,
let $\SideSet{\gamma}{\beta}\isdefinedas 
\gamma\beta\invof{\gamma}$;
since any two standard generators $\sigma_i$, $\sigma_j$
are conjugate, every positive band has the form 
$\SideSet{\gamma}{\sigma_1}$ with $\gamma\in B_n$.
A braid $\beta\in B_n$ is \bydef{quasipositive} in 
case it belongs to the submonoid $Q_n\sub B_n$ generated 
by the positive bands (equivalently, normally generated by $\sigma_1$). 

The \bydef{closure} (or \bydef{closed braid}) of a braid 
$\beta$ is a smooth oriented link $(\close{\beta},S^3)$, 
unique up to ambient isotopy, defined as follows.  Let 
$\ell_\beta\from (S^1,1)\to (E_n,\boldsymbol{\omega})$ 
be a smooth based loop that represents $\beta\in B_n$. 
The \bydef{multigraph} 
$\gr{\ell_\beta}=\{(\mathrm{e}^{i\theta},w)
\in S^1\times\C\setsuchthat
w\in \ell_\beta(\mathrm{e}^{i\theta})\}$ is 
then a naturally oriented 
smooth compact $1$--submanifold of the open solid torus
$S^1\times\C$ such that $\pr_1\restr\gr{\ell_\beta}$ is a 
covering map.  Embed $S^1\times\C$ as the interior of one
solid torus of a genus--$1$ Heegaard splitting
of $S^3\sub \C^2$---say by the map
\begin{equation}
J\from S^1\times\C\to S^3\mapsuchthat 
(\mathrm{e}^{i\theta},w)\mapsto
\frac{(\mathrm{e}^{i\theta}\sqrt{1+|w|^2},w)}%
{\sqrt{1+2|w|^2}}\label{eqn:closed braid embedding}
\end{equation}
---and then define $\close\beta$ as the image 
$J(\gr{\ell_\beta})$.  
An oriented link $(L,S^3)$ is \bydef{quasipositive}
in case it is ambient isotopic to the closure $(\close\beta,S^3)$ 
of some quasipositive braid $\beta$.
A \bydef{quasipositive band representation} 
of a (necessarily quasipositive) braid 
$\beta\in B_n$ is a $k$--tuple $\brep{b}=(b(1),\dots,b(k))$ 
of positive bands in $B_n$ such that $\beta=\braidof{\brep{b}}
\isdefinedas b(1)\cdots b(k)$.  The calculus of 
band representations and \bydef{braided Seifert ribbons} 
in $D^4$ elaborated by Rudolph \cite{Rudolph1983b}, 
coupled with the equivalence
\cite{Rudolph1983,BoileauOrevkov2001} between 
$1$--dimensional transverse $\C$--links and 
quasipositive links (mentioned in Part \ref{part:introduction}), 
establishes a many-many correspondence between non-singular
$\C$--spans of $1$--dimensional transverse $\C$--links in $S^3$
and quasipositive band representations (on all numbers of
strings).  

\subsubsection{Annuli, strips, plumbing, and trees}%
\label{subsubsect:annuli, strips, and plumbing}
A \bydef{Seifert surface} is a compact oriented smooth
$2$--submanifold-with-boundary $S\sub S^3$ each component 
of which has non-empty boundary; $S$ is called a Seifert 
surface \bydef{for} (or \bydef{of}) the oriented link 
$(\Bd S,S^3)$, and $(\Bd S,S^3)$ is said to \bydef{have}
the Seifert surface $S$.

Let $\Lscr=(L,S^3)$ be a smooth classical link.
A \bydef{framing} of $\Lscr$ is a locally constant
function $f\from L\to \Z$; in case $\Lscr$ is a knot
(ie $L$ is connected), $f$ is identified with its only 
value.   An \bydef{annular surface $\AKn{\Lscr}{f}$} of
\bydef{type $(\Lscr,f)$} is a Seifert surface in 
$S^3$ consisting of pairwise disjoint annuli, each of
which contains exactly one component $K$ of $L$ as its
core $1$--sphere, and such that the linking number
in $S^3$ of the two boundary components of that
annulus is $-f(K)$ (in other words, the Seifert
matrix of that component is $[f(K)]$).  The ambient 
isotopy type of $\AKn{\Lscr}{f}$ is independent of 
the orientation of $L$.  The annular surface $\AKn{\Oscr}{-1}$ 
(where $\Oscr=(O,S^3)$ denotes a trivial knot) is often 
called a \bydef{positive Hopf band} (and 
its mirror image $\AKn{\Oscr}{+1}$ a \bydef{negative Hopf band}); 
to avoid possible (if unlikely) confusion with bands 
in braid groups, here I will call $\AKn{\Oscr}{\mp1}$ 
\bydef{Hopf annuli} instead (see \ref{subsect:qp Hopf links}
for some justification of the sobriquet ``Hopf''). 
 
Given a manifold $X$ (not necessarily oriented or orientable), 
let $\abs{X}$ denote the underlying unoriented manifold; 
given a link $\Lscr(L,M)$ with link-manifold $L$, let 
$\abs{\Lscr(L,M)}$ denote the unoriented link $(\abs{L},M)$
(so $M$ retains its orientation).  

\begin{definition}\label{def:strips}
Let $\Kscr=(K,S^3)$ be a classical knot, $t\in\Z$.  
A \bydef{strip of type $\Kscr$ with $t$ half-twists} 
is an unoriented $2$--submanifold-with-boundary
$\strip{\Kscr}{t}\sub S^3$ defined as follows:
\begin{inparaenum}[(1)]
\item\label{subdef:annular strips}
in case $t$ is even, $\strip{\Kscr}{t}=
\abs{\AKn{\Kscr}{-t/2}}$;
\item\label{subdef:Moebius strips}
in case $t$ is odd, $\strip{\Kscr}{t}$ is a smoothly
embedded M\"obius strip $S\sub S^3$ containing $K$ as its core
$1$--sphere, such if the $1$--sphere $\Bd S$ is oriented to 
be everywhere locally parallel (rather than anti-parallel)
to $K$, then the linking number in $S^3$ of $K$ and $\Bd S$
equals $t$.
\end{inparaenum}
Clearly up to ambient isotopies 
$\strip{\Kscr}{t}$ determines, and is determined by,
$\abs{\Kscr}$ and $t$.\done
\end{definition}

Recall that an arc $\alpha$ in a manifold with boundary $X$ is 
\bydef{proper} in case $\Bd\alpha=\alpha\cap\Bd X$.

\begin{definition}\label{def:M-sum}
Let $2p\ge 2$ be even.  Call    
a compact, smooth $2$--submanifold-with-boundary 
$F\sub S^3$, not necessarily oriented or orientable,
a \bydef{$2p$--gonal plumbing} of 
submanifolds-with-boundary $F_1, F_2\sub S^3$
\bydef{along $P$} 
in case there exists a smoothly embedded $2$--sphere 
$S^2\sub S^3$ bounding $3$--disks $D^3_1$, $D^3_2$ such that 
\begin{inparaenum}[(1)]
\item\label{subdef:plumbing sphere}
$F_i=F\cap D^3_i$ ($i=1,2$),
\item\label{subdef:plumbing patch}
$F\cap S^2=F_1\cap S^2=F_2\cap S^2$ is a $2$--disk $P$ 
such that $\Bd P$ consists of $2p$ arcs that are, alternately, 
proper arcs in $F_1$ and in $F_2$. 
$F_1$ and $F_2$ may be called the \bydef{plumbands}
of this plumbing; $P$ is its \bydef{plumbing patch}. 
\end{inparaenum}
\done
\end{definition}

\begin{figure}[ht!]
\centering
\includegraphics[width=\figwidth]{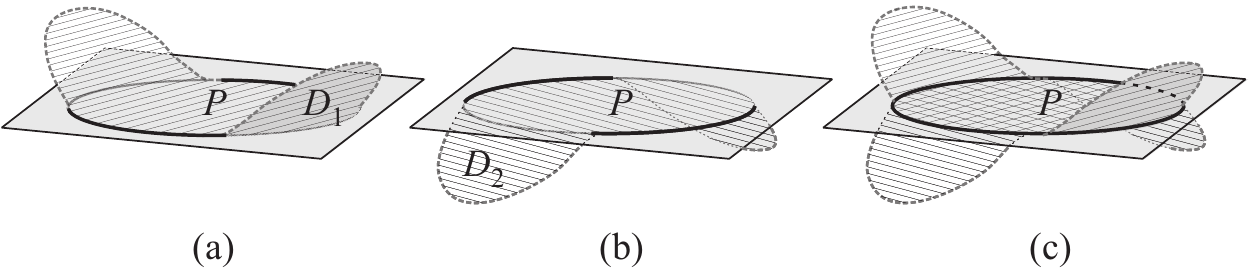} 
\vspace{-6pt}\caption{(a)~A $2$--disk $D_1\sub F_1\sub\R^2\times[0,\infty{[}$ intersects $\R^2\times\{0\}$ in the round $2$--disk~$P$.
(b)~A similar $2$--disk $D_2\sub F_2\sub\R^2\times{]}-\infty,0]$.
(c)~$D_1\cup D_2$ is a $2$--disk
on $F=F_1\plumb{P}F_2\sub\R^3$.
\label{fig:4-gonal M-sum}}
\vspace{-6pt}
\end{figure}

\begin{remarks}\label{rmks:2p-plumbing}
\begin{inparaenum}[(1)]
\item\label{rmk:connsum is M-sum}
Boundary-connected sum and $2$--gonal plumbing  
are equivalent.
\item\label{rmk:Stallings plumbing}
By \bydef{plumbing}, Stallings \cite{Stallings1978} refers
to a construction that, on its face, is a strict generalization
of $2p$--gonal plumbing; however, 
as observed in \cite[p.~260]{Rudolph1998}, ``it is easy to 
see that (up to ambient isotopy) every Stallings plumbing
is a'' $2p$--gonal plumbing ``of the same plumbands''.
\item\label{rmk:Gabai M-sum}
By his (now standard) coinage \bydef{Murasugi sum},
Gabai \cite{Gabai1983} refers exclusively to $2p$--gonal 
plumbing of Seifert surfaces.
\item\label{rmk:BS 2-plumbing}
For Bonahon and Siebenmann \cite{BonahonSiebenmann1979,
BonahonSiebenmann1979/2010}, the term ``plumbing'' refers
exclusively to $4$--gonal plumbing with unoriented 
(possibly non-orientable) plumbands $F_i$.
\item\label{rmk:no corners}
As pointed out in \cite[Remark 12.3]{BonahonSiebenmann1979/2010},
to stay in the differentiable category when plumbing, 
care ``can easily (and must)'' be taken to avoid creating 
corners (on $\Bd P$).  Such care is illustrated
in Figure \ref{fig:4-gonal M-sum}, where
$S^2=(\R^2\times\{0\})\cup\infty\sub\R^3\cup\infty=S^3$,
$P=\{(x_1,x_2,0)\in S^2\setsuchthat x_1^2+x_2^2\le 1\}$,
and liberal use of ``bump functions'' ensures 
that when the illustrated $2$--disks on $F_1$ 
and $F_2$ are identified along $P$, $\Bd F$
acquires no corners.
\item\label{rmk:stars and sides}
Although in the situation of \ref{def:M-sum} such notations 
as $F=F_1\plumb{P}F_2$ or---when $F$ is being constructed 
by plumbing, rather than displayed as already 
plumbed---$F=F_1\splumb{P_1}{P_2}F_2$---are often
useful, it is important to note that 
$F$ is typically not determined (even up to ambient 
isotopy) by the pairs $(F_i,P)$ or $(F_i,P_i)$
(or their ambient isotopy types): further (combinatorial) 
information
(such as \bydef{$n$-stars} and a distinction 
between the \bydef{sides} of $F_i$ near $P$), 
sufficient to specify an identification of 
$P_1$ and $P_2$ up to an appropriate equivalence, is 
required for disambiguation (with a few exceptions); 
see \cite[Remark 12.1]{BonahonSiebenmann1979/2010}
for unoriented plumbands and Rudolph \cite{Rudolph1998} 
for Murasugi sums.\done 
\end{inparaenum}
\end{remarks}

An especially useful case of plumbing in the 
sense of \cite{BonahonSiebenmann1979/2010} is 
\bydef{strip-plumbing}, where $F_2$ is an 
unknotted strip $\strip{\Oscr}{t}\sub S^3$ and the 
$4$--gonal plumbing patch $P_2\sub F_2$ is 
\bydef{core-transverse} in the sense that it is 
a relative regular neighborhood on $F_2$ of a 
\bydef{normal arc} (a proper arc that
intersects the core $1$--sphere $O\sub F_2$ in
a single point, transversely).  Iterating 
strip-plumbing produces several (overlapping) 
families of unoriented $2$--submanifolds-with-boundary
of $S^3$.

\begin{definitions}\label{defs:iterated strip plumbing}
Let $F_0\sub S^3$ be a $2$--disk. For $j=1,\dots,k$, 
let $F_{j-1}\splumb{Q_j}{P_j}\strip{\Oscr}{t_j}\defines
F_j$ be a strip-plumbing.  
\begin{inparaenum}[(1)]
\item\label{def:basket plumbing}  
If all $Q_i$ are contained in $F_0$ and all $t_i$ are
even, then $F_k$ is orientable, and with either orientation 
it is a \bydef{basket} as defined by Rudolph \cite{Rudolph2001a} 
and further studied by Furihata, Hirasawa and Kobayashi
\cite{FurihataHirasawaKobayashil2008},
Kim, Kwan, and Lee \cite{KimKwanLee2013}, etc. 
\item\label{def:Hopf plumbing} 
If all $\abs{t_i}$ equal $2$ (ie, all strips are Hopf bands), 
then again $F_k$ is orientable, and with either orientation it 
is a \bydef{Hopf-plumbed surface} as studied by Harer 
\cite{Harer1982}, Melvin and Morton \cite{MelvinMorton1986},
Rudolph \cite{Rudolph2001a}, Goodman \cite{Goodman2003}, 
etc.
\item\label{def:arborescent plumbing, traditional version}
Let $\Ts=(\Verts{\Ts},\Edges{\Ts})$ be a {planar tree},
$w\from\Verts{\Ts}\to\Z$ a {weighting} of $\Ts$. 
There seems to be no single standard notation or name for the 
unoriented, possibly non-orientable $2$-submanifold-with-boundary
of $S^3$ associated to $(\Ts,w)$ that has been described 
and constructed by various authors since (at least) 
Bonahon and Siebenmann \cite{BonahonSiebenmann1979};
here it will be denoted $\stripplumb{\Ts,w}$ and called the
\bydef{iterated strip-plumbed surface} of the weighted
graph (although the strip-plumbing in its construction 
seems usually to be conceptualized as simultaneous rather 
than iterated).  More precisely,
let $v_1,\dots,v_m$ be an enumeration of the vertices
of $\Ts$, and let $S_i\isdefinedas\strip{\Oscr}{w(v_i)}$.
For each edge $\{v_i,v_j\}$ of $\Ts$, with $i<j$, let
$P_{i,j}\sub S_i,P_{j,i}\sub S_j$ be core-transverse 
$4$--gonal plumbing patches, such that
\begin{inparaenum}[(a)]
\item\label{subdef:disjoint patches}
$P_{i,j}\cap P_{k,\ell}=\emptyset$ unless $(i,j)=(k,\ell)$
and
\item\label{subdef:cyclic order}
for each vertex $i$ the counter-clockwise cyclic order 
induced on $\{j\setsuchthat \{i,j\} \text{ is an edge of $\Ts$}\}$ 
by the hypothesized planar embedding of 
(the geometric realization of) the planar tree $\Ts$
is the same as the cyclic order in which the various 
plumbing patches $P_{i,j}$ and $P_{j,i}$ 
intersect the core $S^1$ of $S_i$.
\end{inparaenum}
Without loss of generality, the enumeration 
$v_1,\dots,v_m$ is such that $v_1,\dots,v_\ell$ are
vertices of a subtree of $\Ts$ for all $\ell=1,\dots,m$;
in this case, assumptions \eqref{subdef:disjoint patches}
and \eqref{subdef:cyclic order} suffice to construct
an iterated strip-plumbed $2$--manifold-with-boundary
\begin{equation}
\label{eqn:sp(T,w)}
(\cdots(S_1\splumb{P_{1,2}}%
{P_{2,1}}S_2)\cdots)\splumb{P_{q,m}}{P_{m,q}} S_m \sub S^3
\end{equation}
that typically is not unique up to ambient isotopy (see 
\ref{rmks:2p-plumbing}\eqref{rmk:stars and sides}).  However,
its boundary is, which excuses  the slight abuse of letting
$\stripplumb{\Ts,w}$ denote any of the surfaces \eqref{eqn:sp(T,w)}.
An \bydef{arborescent link} is an unoriented classical 
link of the form $(\Bd\stripplumb{\Ts,w},S^3)$.  For
$\Ts$ empty, let $\stripplumb{\Ts,w}=D^2$, so that
the unoriented trivial knot is arborescent.
\done
\end{inparaenum}
\end{definitions}

\subsubsection{Contact structures, fibered links, and open books}%
\label{subsubsect:fibered links, open books, and contact structures}
Let $M$ be a compact, oriented, smooth manifold of odd 
dimension $2n+1\ge 3$.  A \bydef{contact form} on $M$ 
is a $1$--form $\alpha$ on $M$ such that the $(2n+1)$--form 
$\alpha\wedge d\alpha\wedge\dotsm\wedge d\alpha$ 
(with $n$ factors $d\alpha$) is a volume form on $M$.  
A \bydef{contact structure} on $M$ is a field $\xi$ 
of $2n$--planes on $M$ of the form $\ker(\alpha)$ 
for some contact form $\alpha$.  Every strictly pseudoconvex 
$(2n+1)$--sphere $\Sigma\sub\C^{n+1}$ (in particular 
$S^{2n+1}=\SPH{2n+1}{\zero}{1}$) is equipped with a 
\bydef{canonical contact structure}, namely, the field 
of oriented $2n$--planes underlying the field of complex 
$n$--planes tangent to $\Sigma$.  The link-manifold of 
any transverse $\C$--link (of dimension greater than or
equal to $3$) has a similarly defined canonical contact
structure.

An $n$--dimensional
smooth link $\Lscr=(L,M)$ is \bydef{Legendrian} for 
$\xi$ in case the tangent $n$--plane to $L$ at each 
of its points lies in the contact $2n$--plane of $M$
at that point.  If $(L,M)$ is Legendrian and $M$ is 
Riemannian, then the field of $n$--planes on $L$ 
complementary in the field of contact $2n$--planes
to the tangent bundle of $L$ is a natural normal 
$n$--plane field on $L$, independent (up to isotopy)
of the metric on $M$.

Let $2n+1=3$. A contact structure $\xi$ on $M$ 
is called \bydef{overtwisted} in case there exists a
Legendrian knot $\Kscr=(K,M)$ such that $K=\Bd D$
where $D\sub M$ is a smoothly embedded $2$--disk 
such that the restrictions to $K$ of $\xi$ and the 
tangent bundle of $D$ are homotopic.  A contact structure
is called \bydef{tight} in case it is not overtwisted.  

\begin{theorem}[Bennequin \cite{Bennequin1983}]
\label{thm:Bennequin's theorem}
The canonical contact structure $\xi_0$ on $S^3$ 
is tight.\qed
\end{theorem}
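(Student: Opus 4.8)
The plan is to deduce tightness from the unknotted case of Bennequin's self-linking inequality, whose proof is where the real work lies. First I would recast overtwistedness in enumerative terms. Recall that a Legendrian knot $(K,S^3)$ carries a Thurston--Bennequin framing, measured against the Seifert framing by $\TB(K)$, together with a rotation number $r(K)$, and that its two transverse pushoffs $K^{\pm}$ have self-linking numbers $\operatorname{sl}(K^{\pm})=\TB(K)\mp r(K)$. If $\xi_0$ were overtwisted, then by definition there is a Legendrian unknot $K=\Bd D$ whose contact framing is homotopic, through plane fields containing $TK$, to the framing by the embedded disk $D$; since the latter is the Seifert framing, this forces $\TB(K)=0$. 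Consequently $\operatorname{sl}(K^{+})+\operatorname{sl}(K^{-})=2\TB(K)=0$, so at least one pushoff $K^{\pm}$ has $\operatorname{sl}\ge 0$. As $K^{\pm}$ is a transverse \emph{unknot}, it would suffice to know that every transverse unknot $T$ satisfies $\operatorname{sl}(T)\le -1$ in order to reach a contradiction and conclude that $\xi_0$ is tight.

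That bound is the unknotted case of Bennequin's inequality: for every transverse link $L$ in $(S^3,\xi_0)$ and every Seifert surface $\Sigma$ of $L$,
\[
\operatorname{sl}(L)\le -\chi(\Sigma),
\]
which for the unknot (taking $\Sigma$ a disk, $\chi(\Sigma)=1$) reads $\operatorname{sl}(L)\le -1$. To prove it I would pass to braids. Up to transverse isotopy, transverse links are exactly closed braids about a fixed axis $A$, whose complement $S^3\setminus A$ is the open book fibered by the half-plane pages $H_\theta$; for $\beta\in B_n$ one has $\operatorname{sl}(\close{\beta})=e(\beta)-n$, where $e(\beta)$ is the exponent sum and $n$ the number of strings. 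Given an \emph{arbitrary} Seifert surface $\Sigma$ for $\close{\beta}$, the goal becomes the purely three-dimensional inequality $e(\beta)-n\le -\chi(\Sigma)$. Note that the naive braided (Bennequin) surface only yields this in the trivial direction; controlling $\chi$ over all Seifert surfaces at once is the substance of the theorem.

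The heart of the argument is a singularity count for $\Sigma$ relative to the open book. After a general-position isotopy rel boundary, the slices $\Sigma\cap H_\theta$ foliate $\Sigma$ with finitely many singular points, occurring where $\Sigma$ is tangent to a page; these are elliptic or hyperbolic and each carries a sign. A Poincar\'e--Hopf computation expresses $\chi(\Sigma)$ as an alternating count of these singularities, while the braid structure along $\Bd\Sigma$ pins down $e(\beta)-n$ as a second such count. Bennequin's crucial local lemma forbids the tangency configurations that would violate the inequality---controlling, in effect, the negative elliptic points---and combining the two counts yields $e(\beta)-n\le -\chi(\Sigma)$. This local positivity lemma is the step I expect to be the main obstacle: the global Euler-characteristic bookkeeping is routine, but ruling out the bad tangency patterns, which is precisely where the rigidity of the standard contact structure enters, is the delicate combinatorial-geometric core. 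With the inequality established, the reduction of the first paragraph completes the proof.
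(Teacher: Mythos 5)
The paper does not prove this theorem at all: it is quoted from Bennequin \cite{Bennequin1983}, which is why the statement carries a terminal square with no proof environment following it. Your outline---reducing tightness to the unknotted case of the self-linking inequality $\operatorname{sl}(L)\le-\chi(\Sigma)$ via the pushoffs of a putative overtwisted disk's boundary, then proving that inequality by braiding the transverse link and counting elliptic/hyperbolic tangencies of a Seifert surface with the pages of the braid-axis open book---is exactly the strategy of Bennequin's original proof, and you have correctly identified the local lemma controlling the bad tangency configurations as the genuinely hard step.
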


\begin{remark}\label{rmk:Stein fillable}
It follows from a (much) more general theorem of
Eliashberg and Gromov \cite{EliashbergGromov1991}
that the canonical contact structure on the link-manifold
$\lman{f}{\Sigma}$ of a $3$--dimensional transverse 
$\C$--link $\Lscr(f,\Sigma)$ is tight (because, up to 
replacing $f$ by $f+\epsilon$, the $\C$--span
$\Cspan{f}{\Delta}$ is non-singular and thus a
\bydef{Stein filling} of $\lman{f}{\Sigma}$; see
Gompf \cite{Gompf1998}). \done
\end{remark}

\begin{theorem}[Eliashberg \cite{Eliashberg1992}]
\label{thm:Eliashberg's theorem}
Overtwisted contact structures are isotopic iff they are
homotopic as plane fields.  Every homotopy class of
plane fields on $S^3$ contains overtwisted contact 
structures; only the class of $\xi_0$ contains a tight
contact structure.\qed
\end{theorem}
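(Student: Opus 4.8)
The plan is to separate the statement into its three assertions---the classification up to isotopy, existence in every homotopy class on $S^3$, and uniqueness of the tight class---and to observe that the classification carries all of the analytic weight while the other two are, granted the classification and \fullref{thm:Bennequin's theorem}, essentially bookkeeping. For the classification one implication is free: isotopic contact structures are homotopic through their underlying plane fields. For the converse, on a fixed closed oriented $3$--manifold $M$, I would exploit the overtwisted disk as a reservoir of flexibility. Given overtwisted $\xi_0,\xi_1$ that are homotopic as plane fields, I would first arrange by a preliminary isotopy that they share one fixed overtwisted disk $D$ and in fact agree on a neighborhood $N$ of $D$; this uses that all overtwisted disks are isotopic, together with Gray stability to upgrade a contactomorphism near $D$ to an ambient isotopy. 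The heart of the argument is then a \emph{relative parametric $h$--principle}: the inclusion of the space of contact structures agreeing with the given germ on $N$ into the space of plane fields carrying the same germ is a weak homotopy equivalence. Applying this to $\pi_0$ of a path of plane fields rel $N$ from $\xi_0$ to $\xi_1$ produces a path of genuine contact structures, which Gray stability realizes by an ambient isotopy.

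The hard part---indeed the entire content of Eliashberg's theorem---is establishing that relative $h$--principle, so this is where I expect the real obstacle to lie. I would attack it by convex integration in the spirit of Gromov, adapted to the contact differential relation and supplemented by holonomic approximation. The difficulty is that the contact condition, while open, is \emph{not} ample, so the naive flexibility machinery does not apply; the role of the overtwisted disk is precisely to manufacture the missing ampleness. Concretely one would subdivide $M\setminus N$ into cells and extend the contact structure cell by cell, at each stage using that in a model neighborhood modeled on $D$ the characteristic foliation can be deformed essentially freely, so that any obstruction accumulated on a lower-dimensional skeleton can be pushed into and absorbed by the overtwisted region. Making this extension scheme precise, and checking that it is parametric (i.e.\ works in families, which is what the $\pi_0$ statement requires), is the genuinely delicate step.

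Granting the classification, existence in every homotopy class on $S^3$ becomes a construction. The homotopy classes of cooriented $2$--plane fields on $S^3$ form a $\Z$--torsor: trivializing $TS^3$ identifies such fields with maps $S^3\to S^2$, classified up to homotopy by $\pi_3(S^2)\cong\Z$ via the Hopf invariant. Starting from the standard tight $\xi_0$ and performing full Lutz twists along a transverse unknot yields overtwisted contact structures whose underlying plane field shifts the Hopf invariant by a definite nonzero amount; iterating the twist, and using smaller local modifications to cover the intermediate residues, realizes every value of the invariant, hence every homotopy class, by an overtwisted contact structure.

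Finally, uniqueness of the tight class on $S^3$ follows by combining two inputs. By \fullref{thm:Bennequin's theorem} the canonical structure $\xi_0$ is tight, so its homotopy class certainly contains a tight structure. Conversely, by Eliashberg's uniqueness theorem for tight contact structures on $S^3$, \emph{any} tight contact structure on $S^3$ is isotopic to $\xi_0$, and therefore homotopic to $\xi_0$ as a plane field; consequently a tight structure can occur only in the homotopy class of $\xi_0$, and every other class---populated by overtwisted structures via the previous step---contains no tight structure at all.
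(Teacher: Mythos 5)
The first thing to note is that the paper does not prove this statement at all: it is quoted, with attribution and a terminal \qed, from Eliashberg \cite{Eliashberg1992}, exactly as Bennequin's theorem (Theorem \ref{thm:Bennequin's theorem}) is quoted just before it. So there is no internal argument to compare against; what you have written is an outline of the literature proof. On that level your decomposition is the standard one, and your description of the classification as a relative parametric $h$--principle in which the overtwisted disk supplies the flexibility that the (open but non-ample) contact relation lacks is a fair account of the shape of Eliashberg's argument and of its modern descendants. As you say yourself, though, the entire analytic content lives in that step and you have not supplied it. Likewise, deducing the tight statement from Bennequin plus the uniqueness of the tight contact structure on $S^3$ is legitimate, but that uniqueness is itself part of the theorem being quoted from \cite{Eliashberg1992}, so at that point you are citing the result rather than proving it---which is, to be fair, precisely what the paper does.

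There is, however, one concrete error in your existence step: a \emph{full} ($2\pi$) Lutz twist along a transverse knot does \emph{not} change the homotopy class of the underlying plane field---that invariance is exactly its virtue, being how one shows every plane field class containing a contact structure also contains an overtwisted one. The operation that shifts the class is the simple ($\pi$) Lutz twist, whose effect on the Hopf invariant is computed from the self-linking number of the transverse knot along which one twists; realizing every integer then uses simple twists along transverse unknots of the various self-linking numbers $-1,-3,-5,\dots$ (and combinations along transverse links), not iterates of one full twist. As written, your construction produces infinitely many overtwisted structures all lying in the homotopy class of $\xi_0$, and the vague appeal to ``smaller local modifications'' does not repair this. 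The step is fixable by standard means: either replace full twists by simple twists with the self-linking bookkeeping, or note that existence already follows from your part-one $h$--principle (homotope an arbitrary plane field so that it agrees with the standard overtwisted germ near a disk, then apply $\pi_0$--surjectivity), which makes the Lutz construction redundant. But as stated that paragraph is wrong.
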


If a classical link $\Lscr=(L,S^3)$ is Legendrian for $\xi_0$,
then $\Lscr$ is naturally framed by assigning to each component
$K$ of $L$ the linking number in $S^3$ of $K$ with $K^{+}$ 
obtained by pushing $K$ a small distance along its natural
normal line field.  
Two standard facts are that every smooth classical link
$\Lscr$ is ambient isotopic to various Legendrian links for 
$\xi_0$, and that if $\Lscr=\Kscr$ is a knot then there is a 
finite upper bound---called the 
\bydef{maximal Thurston--Bennequin number} of $\Kscr$,
and denoted $\TB(\Kscr)$---for the self-linking numbers
of Legendrian knots smoothly isotopic to $\Kscr$.

Let $M$ be a smooth, oriented $m$--manifold of dimension
$m\ge 2$.  An $(m-2)$--dimensional link $\Lscr=(L,M)$ is
\bydef{fibered} in case there is a smooth fibration
$\phi\from M\setminus L\to S^1$ such that each fiber
$\phi^{-1}(\mathrm{e}^{i\theta})=\Int F_\theta$ for a smooth, 
compact $(m-1)$--dimensional submanifold-with-boundary
$F_\theta\sub M$ with $\Bd F_\theta=L$; such an 
$F_\theta$ (for any
$L$ and $\phi$) is a \bydef{fiber manifold in $M$}.
The mirror image $\Mir\Lscr\isdefinedas(L,-M)$ of a 
link $\Lscr=(L,M)$ is fibered iff $\Lscr$ is;
a connected sum $\Lscr_1\connsum\Lscr_2\isdefinedas
(L_1\connsum L_2,M_1\connsum M_2)$ of links
$\Lscr_i=(L_i,M_i)$ is fibered iff $\Lscr_1$ and
$\Lscr_2$ are, and similarly for boundary-connected
sums of fiber manifolds.

An \bydef{open book} on $M$ is a smooth map 
$\book{b}\from M \to\C$ such that $0\in\C$ is a regular 
value of $\book{b}$ and 
$(\book{b}/|\book{b}|)\restr \invof{\book{b}}\of{\C\setminus 0}
\from \invof{\book{b}}\of{\C\setminus 0}\to S^1$ is a fibration. 
A \bydef{page} of $\book{b}$ is any one of the smooth, oriented
$(m-1)$--manifolds-with-boundary
$F_\theta\of{\book{b}}\isdefinedas
\invof{\book{b}}\of{\{r\mathrm{e}^{i\theta}\setsuchthat r\ge0\}}$
for $\mathrm{e}^{i\theta}\in S^1$, with non-empty boundary 
$L\of{\book{b}}\isdefinedas \invof{\book{b}}\of0$.
The oriented link $\Lscr(\book{b})\isdefinedas (L(\book{b}),M)$
is the \bydef{binding} of $\book{b}$.
Open books on $M$ are handy rigidifications
of fibered links in $M$; indeed, $\Lscr(\book{b})$ is a 
fibered link in $M$, every non-empty fibered link in $M$ 
is $\Lscr(\book{b})$ for various open books on $M$ (all
equivalent in an appropriate sense), and every 
fiber manifold in $M$ with non-empty boundary 
is a page of some open book on 
$M$ (again, essentially unique).
For odd $m\ge 3$, an open book on (or fibered link in) $M$ 
is called \bydef{simple} in case its page (or fiber 
$(m-1)$--manifold) has the homotopy type of a bouquet 
of $(m-1)/2$--spheres; this is always so for $m=3$.  
For $m=3$ and $M=S^3$, 
Neuwirth \cite{Neuwirth1961,Neuwirth1963} and Stallings 
\cite{Stallings1962} showed independently that 
a Seifert surface $F$ is a \bydef{fiber surface}
(that is, a fiber manifold of dimension $2$) iff the 
normal push-off $F\to S^3\setminus F$ induces a homotopy 
equivalence.  It follows that $F$ is connected 
($H_0(F;\Z)$ is Alexander dual to $H_2(S^3\setminus F;\Z)$), 
and thus that $\AKn{\Lscr}{f}$ is fibered
iff $\Lscr=\Oscr$ and $f=\pm 1$.

\begin{remark}\label{rmk:analogy between qp and fibered}
As noted in the abstract of \cite{Rudolph1998}, there 
is an analogy between fiber surfaces in $S^3$ and
quasipositive Seifert surfaces in $S^3$:
``a Seifert surface $S\sub S^3 = \Bd{D^4}$ is a fiber 
surface if a push-off $S\to S^3\setminus S$ induces 
a homotopy equivalence; roughly, $S$ is quasipositive if
pushing $\Int S$ into $\Int D^4\sub \C^2$ produces a 
piece of complex plane curve.''  A glimpse
of this analogy led me to call the first of my  
series of papers \cite{Rudolph1983c,
Rudolph1984,
Rudolph1992b,
Rudolph1992a,
Rudolph1998} ``Constructions of quasipositive knots and 
links'' in homage to Stallings's paper ``Constructions
of fibred knots and links'' \cite{Stallings1978}.
Several of the following constructions can be taken 
as evidence that this analogy is not completely illusory.
\done
\end{remark}

\subsection{Construction: quasipositive satellites 
(new) \label{subsect:qp cable links}}
The following construction in classical knot theory is due to 
Schubert \cite{Schubert1953}.  

\begin{definition}\label{def:satellization}
Let $\Kscr=(K,S_0^3)$ be a knot, $\Lscr=(L,S_1^3)$ a link,
and $\Oscr=(O,S_1^3)$ a trivial knot such that
	\begin{inparaenum}[(a)]
	\item\label{subrmk:pattern siting a}
	$L$ is contained in the interior $\Int N(O)$ of a
	regular neighborhood $N(O)$ of $O$ in $S^3$, and
	\item\label{subrmk:pattern siting b}
	no $2$--sphere in $N(O)$ separates any connected
	component of $L$ from $\Bd N(O)$.
	\end{inparaenum}
Let $E(K)\isdefinedas S_0^3\setminus\Int N(K)$ be
the \bydef{exterior} of $K$, let
$h\from\Bd E(K)\to\Bd N(O)$ be a faithful
diffeomorphism (ie, it carries a standard 
meridian-longitude pair on $\Bd E(K)$ to a
standard meridian-longitude pair on $\Bd N(O)$),
and let $S^3$ be the (suitably smoothed) identification 
space $(E(K)\cup N(O))/{\sim}$, where the non-trivial
equivalence classes of the equivalence relation 
${\sim}$ are the pairs $\{x,h(x)\}$ with $x\in\Bd E(K)$.
Let $K\{L\}$ denote $L\sub N(O)\sub S^3$.  In this 
situation, the link $\Kscr\{\Lscr\}\isdefinedas(K\{L\},S^3)$
is the \bydef{satellite} of $\Kscr$ with \bydef{pattern}
$\Lscr$; $\Kscr$ is a \bydef{companion} of $\Kscr\{\Lscr\}$.
\done
\end{definition}

Stallings gave a natural condition under which a satellite 
with fibered companion and fibered pattern is itself fibered. 

\begin{theorem}[Stallings \cite{Stallings1978}]%
\label{thm:Stallings's satellization theorem}
If $\Kscr$ and $\Lscr$ are fibered links
in $S_0^3$ and $S_1^3$ respectively, and if, 
further, there exist an integer $d\ne 0$ and open books 
$\book{p}\from S_1^3\to D^2$ for $\Lscr$ and
$\book{o}\from S_1^3\to D^2$ for $O$ 
with $\book{p}\restr E(O)$ equal to
$(\book{o}\restr E(O))^d$ up to the identification above,
then $\Kscr\{\Lscr\}$ is a fibered link in $S^3$.
\qed 
\end{theorem}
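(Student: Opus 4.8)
The plan is to manufacture an explicit fibration (indeed an open book) of $S^3\setminus K\{L\}$ over $S^1$ by gluing the pattern's open book $\book{p}$ on the solid torus $N(O)$ to a $d$--fold reparametrized copy of the companion's fibration on the knot exterior $E(K)$. First I would fix an open book $\book{k}\from S_0^3\to\C$ representing the fibered companion $\Kscr$, normalized so that $\book{k}$ has constant modulus on $\Bd E(K)$, and introduce the $\abs{d}$--fold covering $p_d\from S^1\to S^1\mapsuchthat z\mapsto z^d$. On $E(K)$ I would use the map $\book{k}^d$ (pointwise $d$--th power), whose argument is $d\arg\book{k}$ and whose associated $S^1$--valued fibration is $p_d\circ(\book{k}/\abs{\book{k}})$; on $N(O)$ I would use $\book{p}$ itself, which is already an open book for $\Lscr$ with binding $L=K\{L\}$.

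The heart of the matter is to check that these two prescriptions agree on the gluing torus $T\isdefinedas\Bd E(K)=\Bd N(O)$, so that they patch (after a collar smoothing) into a single smooth $\C$--valued map $\book{b}\from S^3\to\C$. On $T$ the pages of the companion fibration meet $\Bd E(K)$ in preferred longitudes $\lambda_K$ (the boundary of the fiber Seifert surface carries the $0$--framing), transverse to the meridian $\mu_K$; since $h$ is \emph{faithful} it carries $(\mu_K,\lambda_K)$ to a standard meridian--longitude pair $(m,\ell)$ of $\Bd N(O)$, which is exactly the boundary behaviour of the unknot open book $\book{o}$, whose disk pages meet $\Bd N(O)$ in longitudes $\ell$ with $\arg\book{o}$ equal to the meridian angle. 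Hence $\book{k}$ and $\book{o}$ have the same argument on $T$ under $h$, and therefore so do $\book{k}^d$ and $\book{o}^d$. The hypothesis $\book{p}\restr E(O)=(\book{o}\restr E(O))^d$ now identifies $\book{o}^d$ with $\book{p}$ on $T=\Bd E(O)$, so $\book{k}^d$ and $\book{p}$ match there. This is precisely the role of the integer $d$: it records that the page of $\book{p}$ wraps $d$ times longitudinally around $\Bd N(O)$ and so must be capped off by $\abs{d}$ parallel translates of the companion fiber.

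It remains to confirm that $\phi\isdefinedas\book{b}/\abs{\book{b}}\from S^3\setminus L\to S^1$ is a fibration with the desired properties. On $N(O)$ it is the pattern fibration, with the open-book normal form of $\book{p}$ near $L$, so $L$ is the binding; and the ambient manifold really is $S^3$, since faithful regluing of $E(K)$ to a solid torus is meridional Dehn filling. On $E(K)$ it equals $p_d\circ(\book{k}/\abs{\book{k}})$, a genuine fibration because the post-composition of a fibration with the covering $p_d$ is again a fibration, with fiber over each point consisting of $\abs{d}$ disjoint parallel copies of the companion page $F_0(\book{k})\cap E(K)$; these copies are joined through $N(O)$ by the single connected page of $\book{p}$, so each global page is a bona fide Seifert surface for $L$, and $\phi$ is locally trivial across $T$ by a routine collar argument. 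Thus $\Kscr\{\Lscr\}$ is a fibered link in $S^3$. The main obstacle is the smooth matching on $T$ of the second paragraph---both recognizing that the companion fibration must be raised to the $d$--th power and verifying that the faithfulness of $h$ together with the power relation forces the arguments of the two maps to coincide there; the covering-composed-with-fibration point and the corner-free collar smoothing are then routine.
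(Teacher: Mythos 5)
The paper does not prove this theorem at all: it is stated with attribution to Stallings \cite{Stallings1978} and a qed symbol, so there is no internal proof to compare against. Your argument is, in outline, exactly the classical one from Stallings's paper---keep $\book{p}$ on $N(O)$, put the $d$--th power of a companion open book $\book{k}$ on $E(K)$, match along the gluing torus, and observe that a fibration post-composed with the covering $z\mapsto z^d$ is again a fibration whose fibers are $\abs{d}$ parallel copies of the original pages---and it is correct in all essentials. The one step you state too strongly is the claim that $\book{k}$ and $\book{o}$ ``have the same argument on $T$ under $h$'': faithfulness of $h$ only guarantees that $\arg\book{k}\circ\invof{h}$ and $\arg\book{o}\restr\Bd N(O)$ are two fibrations of the torus over $S^1$ with the same fiber class $\ell$ and the same degree along $m$, hence isotopic as fibrations, not literally equal as maps. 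One must first adjust $\book{k}$ (or the gluing) by such an isotopy, supported in a collar of $T$, before your identification $\book{k}^d=\book{o}^d=\book{p}$ on $T$ makes literal sense; after that routine adjustment, the patching, the identification of the binding with $K\{L\}$, and the local triviality across $T$ all go through as you describe.
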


The analogy mentioned in 
Remark~\ref{rmk:analogy between qp and fibered} suggests
that there should be a similarly broad result for a satellite
with quasipositive companion and quasipositive pattern, 
presumably subject to some further coherence condition 
like that in Theorem \ref{thm:Stallings's satellization theorem}. 
Lacking sufficient space, time, and insight either to 
find such a broad theorem or come up with convincing 
reasons none should exist, here I prove only a single
narrow result to be used later.

For $n\ge 1$, denote by $\Oscr^{(n)}\isdefinedas(O^{(n)},S^3)$ 
the closed braid of the identity $o^{(n)}\in B_n$, 
embedded---as in subsection \ref{subsect:braids}, equation
\eqref{eqn:closed braid embedding}---in 
$J(S^1\times\C)=\Int N(O^{1})\sub S^3=S_1^3$.
The \bydef{untwisted $n$--strand cable} of a knot
$\Kscr=(K,S_0^3)$ is the satellite 
$\Kscr\{n,0\}\isdefinedas\Kscr\{\Oscr^{(n)}\}$.

\begin{proposition}\label{prop:untwisted cables on qp}
If $\Kscr$ is quasipositive, then for all $n\ge 1$,
$\Kscr\{n,0\}$ is quasipositive.
\end{proposition}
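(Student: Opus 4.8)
The plan is to pass from the knot $\Kscr$ to an explicit quasipositive braid whose closure it is, and then to realize the untwisted $n$--strand cable $\Kscr\{n,0\}$ as the closure of an explicit quasipositive braid built from that data. Since $\Kscr$ is quasipositive, write $\Kscr=(\close\beta,S^3)$ for a quasipositive braid $\beta\in B_k$ equipped with a quasipositive band representation $\brep{b}=(b(1),\dots,b(m))$, so $\beta=b(1)\cdots b(m)$ with each $b(i)=\SideSet{\gamma_i}{\sigma_1}$ a positive band. The key geometric observation is that forming the untwisted $n$--strand cable corresponds, at the level of braids, to replacing each of the $k$ strands by $n$ parallel strands running alongside it in the solid torus $J(S^1\times\C)$, \emph{without} adding any extra twisting (this is what ``untwisted'' and the framing ``$0$'' encode). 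Concretely, there is a cabling homomorphism (or at least a cabling map on braids) $\Delta_n\from B_k\to B_{nk}$ that sends each generator and hence each band to a product; the closure of $\Delta_n(\beta)$, suitably normalized to kill the framing that cabling a single strand would otherwise introduce, is $\Kscr\{n,0\}$.

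The heart of the argument is then to check that $\Delta_n(\beta)$, after this normalization, lies in the quasipositive submonoid $Q_{nk}$. First I would verify that the image under cabling of a single positive band is quasipositive: a positive band $\SideSet{\gamma}{\sigma_1}$ is conjugate to $\sigma_1$, and $\sigma_1$ represents a positively oriented meridian of the discriminant locus (as in subsection~\ref{subsect:braids}); cabling such a meridian produces a braid on the $n$ ribbons replacing each of the two strands that come together, which I expect to be a product of positive bands in $B_{nk}$ coming from the full-twist-free band crossing of two groups of $n$ parallel strands. Because quasipositivity is preserved under products and under conjugation in the ambient braid group (conjugating a positive band gives a positive band, by definition of the conjugacy-class description), it suffices to handle $\sigma_1$ itself and then conjugate and multiply. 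Thus if $\Delta_n(\sigma_1)\in Q_{nk}$ and cabling is compatible with conjugation up to quasipositive correction, then $\Delta_n(b(i))\in Q_{nk}$ for each $i$, and the product $\Delta_n(\beta)=\prod_i\Delta_n(b(i))$ is quasipositive, whence so is its normalized closure.

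The main obstacle, and the step I would spend the most care on, is the bookkeeping of the framing. Cabling a single strand of a braid naively introduces a number of full twists on the $n$--strand cable determined by the writhe (or the self-linking) of that strand in the closed braid, and these twists are generally \emph{not} individually expressible as positive bands unless they are positive. For the \emph{untwisted} cable one must cancel exactly the framing coming from the closed-braid presentation, i.e., insert $-w$ full twists where $w$ is the relevant self-linking contribution, and negative full twists are emphatically not quasipositive. The resolution I anticipate is that the apparent negative twisting is an artifact of the band presentation rather than intrinsic: by choosing the quasipositive band representation of $\beta$ appropriately (or by using the Seifert-ribbon / braided-surface calculus of Rudolph~\cite{Rudolph1983b} recalled in subsection~\ref{subsect:braids}), one arranges that each cabled band already carries its own correct framing, so that cabling band-by-band never requires inserting a negative full twist. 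In other words, the correct way to view the construction is not ``cable the whole braid, then fix the framing'' but ``cable each positive band, each of which is individually a quasipositive piece with the right local framing,'' and the untwistedness then follows automatically from the fact that a single positive band contributes self-linking $+1$, which is exactly compensated within the cabled band. Making this compensation precise—showing that the $n$--strand cable of one positive band is a specific quasipositive braid with no leftover negative twisting—is the crux; once it is established, the rest is the routine product-and-conjugation argument sketched above.
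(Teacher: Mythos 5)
Your route is not the paper's: the paper proves \ref{prop:untwisted cables on qp} analytically, while what you propose is the braid-theoretic alternative that the paper only sketches in Remark \ref{rmks:qp braided companion of qp braid}\eqref{rmk:qp braided cables--algebraic} (``fairly obvious algebraic manipulations''). More importantly, your argument has a genuine gap, and it is exactly the step you yourself defer: you never exhibit the correctly framed $n$--cable of a single positive band as an explicit element of $Q_{nk}$, you only ``anticipate'' that one exists. The heuristic offered in its place---that a positive band ``contributes self-linking $+1$, which is exactly compensated within the cabled band''---is not an argument, and it points in the wrong direction: the block crossing $\Delta_n(\sigma_1)$ is a positive braid carrying precisely the unwanted $+1$ of framing, with nothing inside it to compensate, and no clever choice of band representation of $\beta$ can help, since every band representation of $\beta$ has the same number of bands (the exponent sum $m$ of $\beta$), which is exactly the total framing defect. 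The actual fix is to discard the homomorphism $\Delta_n$ altogether: replace each band $b(i)=\SideSet{\gamma_i}{\sigma_1}$ not by a block crossing but by $n$ \emph{parallel} bands, one for each of $n$ disjoint push-off copies of the braided Seifert ribbon of $\brep{b}$; the copy-$k$ band joins the $k$th strand of one cabled group to the $k$th strand of the other, is again a conjugate of a standard generator (hence a positive band in $B_{nk}$), and the $nm$ bands so obtained form a quasipositive band representation whose braided surface is $n$ disjoint parallel copies of the original ribbon. Disjointness of the copies is what forces all pairwise linking numbers of boundary components to vanish---that is, the untwistedness---so no full-twist correction is ever inserted. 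Writing this down, and checking that the closure really is $\Kscr\{n,0\}$, is the content your proposal is missing.

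It is worth comparing the paper's own proof, which makes the same geometric point with no combinatorics at all: realize $\Kscr=\Clink{f}{S^3}$ with non-singular $\C$--span $\Cspan{f}{D^4}$ and factor $f^n-\epsilon^n=\prod_{k=0}^{n-1}\bigl(f-\epsilon\mathrm{e}^{2\pi ik/n}\bigr)$; for small $\epsilon\ne0$ the hypersurfaces $V(f-\epsilon\mathrm{e}^{2\pi ik/n})$ are pairwise disjoint (they are level sets of $f$ at distinct values), so their $\C$--spans in $D^4$ are disjoint, so the components of $\Clink{f^n-\epsilon^n}{S^3}$ have pairwise linking number $0$, whence this transverse $\C$--link is $\Kscr\{n,0\}$ up to ambient isotopy, and quasipositivity follows from the characterization of $1$--dimensional transverse $\C$--links \cite{Rudolph1983,BoileauOrevkov2001}. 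The linking-number-zero statement that your framing bookkeeping struggles to encode combinatorially is automatic once the parallel copies are realized as distinct level sets of $f$.
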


\begin{proof}
Realize the quasipositive knot $\Kscr$ as a transverse 
$\C$--link---say $\Kscr=\Clink{f}{S^3}$---with 
non-singular $\C$--span $\Cspan{f}{D^4}$.
For all sufficiently small $\epsilon\ne 0$, 
$(V(f^n-\epsilon^n)\cap S^3,S^3)$ is then a transverse 
$\C$--link with $n$ components 
$(V(f-\epsilon\mathrm{e}^{2k\pi i/n})\cap S^3,S^3)$, 
each a transverse $\C$--link
in its own right, and such that its $\C$--span
$S_k\isdefinedas
(V(f-\epsilon\mathrm{e}^{2k\pi/n})\cap D^4,D^4)$
is non-singular.  For $1\le k<\ell\le n$, clearly
$S_k\cap S_\ell=\emptyset$, so the linking number of
$\Bd S_k$ and $\Bd S_\ell$ in $S^3$ is $0$; it follows
that $\Clink{f^n-\epsilon^n}{S^3}$ is, up to ambient
isotopy, $\Kscr\{n,0\}$. 
\end{proof}
 
\begin{remarks}\label{rmks:qp braided companion of qp braid}
\begin{inparaenum}[(1)]
\item\label{rmk:qp braided cables--algebraic}
For another proof of \ref{prop:untwisted cables on qp}, 
let $\Kscr$ be the closed braid  
$\close{\braidof{\brep{b}}}$ of a quasipositive  
band representation $\brep{b}$ in $B_p$; fairly obvious 
algebraic manipulations (motivated by geometry) 
generate a quasipositive band representation
$\brep{b}\{n,0\}$ in $B_{np}$ with closure $\Kscr\{n,0\}$.
\item\label{rmk:qp braided companion--algebraic}
The proof just sketched readily generalizes to show that 
$\Kscr\{\Lscr\}$ is quasipositive in case both
the companion $\Kscr$ and the pattern $\Lscr$ are 
quasipositive \emph{and in addition} $L$ sits inside
$N(O)$ as a quasipositive closed braid.  Certainly,
this last hypothesis \emph{is} a ``coherence condition like that 
in Theorem \ref{thm:Stallings's satellization theorem}'', but it
seems much too strong to be optimal (and is much stronger than
Stallings's condition).
\item\label{rmk:qp braided companion--analytic}
In the situation of \eqref{rmk:qp braided companion--algebraic},
if also the quasipositive companion $\Kscr$ is a slice 
knot, then an analytic proof that $\Kscr\{\Lscr\}$ is
quasipositive can be cobbled together along the lines of 
the (first) proof of \ref{prop:untwisted cables on qp}
by using techniques applied (in a much more delicate
context) by Baader, Kutzschebauch, and Wold \cite{Baaderetal2010}.  
\end{inparaenum}\done
\end{remarks}

\subsection{Construction: strongly quasipositive links%
\label{subsect:strongly qp links}}
The monoid $Q_n$ contains a distinguished
finite subset  
\begin{equation*}
\{\sigma_{i,j}\isdefinedas
\SideSet{\sigma_i\dotsm\sigma_{j-2}}{\sigma_{j-1}}
\setsuchthat 1\le i\le j\le n-1\}
\end{equation*} 
of positive bands called \bydef{embedded bands} 
(in $B_n$) by Rudolph \cite{Rudolph1983b} and later, 
a bit confusingly, simply ``band generators'' (of $B_n$) 
by Birman, Ko, and Lee \cite{BirmanKoLee1998}.
The calculus of 
band representations and Seifert ribbons in $D^4$ mentioned in
\ref{subsect:braids} has a variant (expounded, like it,
in \cite{Rudolph1983b}, and elaborated in various later
papers by Rudolph \cite{Rudolph1992b,Rudolph1992a,
Rudolph1998,Rudolph2001a}, Baader and Ishikawa
\cite{BaaderIshikawa2009,BaaderIshikawa2011}, etc)
by which \bydef{quasipositive embedded band representations}
$\brep{b}$
and algebraic/combinatorial operations thereon 
correspond to \bydef{quasipositive braided Seifert surfaces} 
$\brsurf{\brep{b}}$ in $S^3$ and geometric/topological 
operations thereon.
A Seifert surface is called \bydef{quasipositive}
in case it is ambient isotopic to a quasipositive braided
Seifert surface $\brsurf{\brep{b}}$. 

Given a compact orientable $2$--manifold-with-boundary
$M$, call a closed subset $N\sub\Int{M}$ \bydef{full}
on $M$ in case no component of $M\setminus N$ is contractible.

\begin{proposition}[Rudolph \cite{Rudolph1992b}]%
\label{prop:full on qp Ss} 
A Seifert surface is quasipositive iff it is a full 
subsurface of some quasipositive fiber surface.
In particular, a full subsurface of a quasipositive
Seifert surface is quasipositive.\qed
\end{proposition}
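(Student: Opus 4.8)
The plan is to argue entirely inside the calculus of quasipositive embedded band representations $\brep{b}$ and their braided Seifert surfaces $\brsurf{\brep{b}}$ recalled just above, translating the topological relation ``full subsurface'' into the algebraic relation ``delete (respectively adjoin) positive bands.'' I would prove the two implications separately and then deduce the final (``in particular'') sentence formally. Throughout I use the Neuwirth--Stallings criterion (a Seifert surface is a fiber surface iff the normal push-off induces a homotopy equivalence) to certify fiberedness, the fact that a Hopf-plumbed surface is fibered (Stallings \cite{Stallings1978}, Gabai \cite{Gabai1983}; see \ref{defs:iterated strip plumbing}\eqref{def:Hopf plumbing}), and the basic operations of the band calculus \cite{Rudolph1983b}, under which adjoining a positive band---in particular plumbing on a positive Hopf annulus $\AKn{\Oscr}{-1}$---preserves quasipositivity.

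For the forward implication, realize the quasipositive surface $S$ up to ambient isotopy as $\brsurf{\brep{b}}$, so that it is built from finitely many disks and one positive band per entry of $\brep{b}$. I would then construct a quasipositive fiber surface $F$ by plumbing positive Hopf annuli $\AKn{\Oscr}{-1}$ onto a disk: such an $F$ is manifestly fibered (Hopf-plumbed) and quasipositive, so the only issue is to choose the plumbing pattern so that $S=\brsurf{\brep{b}}$ sits inside $\Int{F}$ as a \emph{full} subsurface. Concretely I would ``saturate'' the braided surface of $\brep{b}$ by adjoining enough positive bands to reach a Hopf-plumbed surface, monitoring fiberedness by the Neuwirth--Stallings criterion, and arrange the adjoined bands so that every region of $F$ lying outside $S$ carries either an essential loop or a nontrivial arc to $\Bd F$---which is exactly the failure of contractibility demanded by fullness.

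For the reverse implication, let $S\sub\Int{F}$ be full on a quasipositive fiber surface $F$, and realize $F$ as a quasipositive braided fiber surface $\brsurf{\brep{B}}$. The crux is a normal-form lemma: because $F$ is fibered (so carries an open-book/monodromy structure rigidifying its braided decomposition) and $S$ is full (so the complementary regions admit no compressing or boundary-parallel disks), $S$ may be ambient-isotoped so as to become a \emph{sub}-braided-surface of $\brsurf{\brep{B}}$, i.e.\ the braided surface obtained by striking some entries from $\brep{B}$. Granting this, $S=\brsurf{\brep{b}}$ for the subtuple $\brep{b}$ of $\brep{B}$; since every subtuple of a quasipositive band representation is again quasipositive, $S$ is quasipositive.

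Finally, the ``in particular'' assertion follows by using the fiber surface as an intermediary. If $S$ is full on a quasipositive Seifert surface $S'$, the forward implication supplies a quasipositive fiber surface $F$ on which $S'$ is full; fullness is transitive for the nested interiors $S\sub\Int{S'}\sub\Int{F}$ (a disk component of $F\setminus\Int{S}$ would meet $\Bd S'$ only in circles, since $\Bd S'$ is disjoint from $\Bd F$ and from $\Bd S$, and an innermost such circle would cut off a disk component of either $S'\setminus\Int{S}$ or $F\setminus\Int{S'}$, contradicting one of the two fullness hypotheses), so $S$ is full on $F$ and the reverse implication makes $S$ quasipositive. I expect the genuine obstacle to be the forward construction together with its mirror in the reverse implication: producing a Hopf-plumbed (hence quasipositive and fibered) surface that contains $S$ as a \emph{full} subsurface, and conversely promoting the purely topological hypothesis ``$S$ is full on the fiber surface $F$'' to the combinatorial conclusion ``$S$ is a band-deletion of $\brsurf{\brep{B}}$.'' These are the points where fiberedness and fullness must both be spent to control an ambient isotopy, and they are least amenable to routine calculation.
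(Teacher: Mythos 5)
The paper offers no argument for \ref{prop:full on qp Ss}: the proposition is quoted, without proof, from \cite{Rudolph1992b}, where the form actually proved is sharper---a Seifert surface is quasipositive iff it is a full subsurface of the fiber surface of a \emph{torus link} $\tor{p}{q}$. Measured against that argument, your proposal fails at exactly the point you flag as the crux: the ``normal-form lemma'' underlying your reverse implication is false. Let $F=\brsurf{\sigma_1,\sigma_1,\sigma_1}$ be the fiber surface of the positive trefoil (two disks and three positive bands in $B_2$), a quasipositive fiber surface, and orient the cores $c_1,c_2$ of its two plumbed Hopf bands so that $\operatorname{lk}(c_1,\pushoff{c_1})=\operatorname{lk}(c_2,\pushoff{c_2})=-1$, $\operatorname{lk}(c_1,\pushoff{c_2})=1$, $\operatorname{lk}(c_2,\pushoff{c_1})=0$. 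The class $c_1-c_2$ is primitive, hence represented by an essential simple closed curve $c\sub\Int{F}$, and its surface framing is $\operatorname{lk}(c,\pushoff{c})=(-1)-1-0+(-1)=-3$. A regular neighborhood $N$ of $c$ on $F$ is therefore an annulus with Seifert matrix $[-3]$, and $N$ is full on $F$: as $c$ is non-separating, $F\setminus N$ is a single pair of pants, which is not contractible. But the braided surfaces of subtuples of $(\sigma_1,\sigma_1,\sigma_1)$---even allowing deletion of disks as well as bands---are exhausted, up to ambient isotopy, by one disk, two disjoint disks, the positive Hopf annulus $\AKn{\Oscr}{-1}$ (Seifert matrix $[-1]$), and $F$ itself; since the framing of the core of an embedded annulus, Euler characteristic, and connectedness are ambient-isotopy invariants, $N$ is isotopic to none of these. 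So a full subsurface of $\brsurf{\brep{B}}$ need not be a band-deletion of $\brsurf{\brep{B}}$ up to any ambient isotopy. ($N$ is of course still quasipositive, consistently with the proposition; it is just not exhibited by deleting entries of this $\brep{B}$.)

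The gap is fatal rather than cosmetic, because this reverse implication is the hard content of the theorem; weakening your lemma to assert only that \emph{some} quasipositive braided realization of $F$, chosen after seeing $S$, exhibits $S$ as a band-deletion begs the question, since producing such a realization amounts to proving $S$ quasipositive. In \cite{Rudolph1992b} this direction is established by essentially geometric, not combinatorial, means---via the correspondence of \cite{Rudolph1983} between quasipositive braided surfaces and pieces of complex algebraic curves, fullness being what allows the subsurface itself to be realized as such a piece---and no normal form of your kind appears there (nor could it, as the example shows). Your forward implication is closer in spirit to the cited proof, which completes a quasipositive braided surface to the fiber surface of a torus link by adjoining positive bands in an explicit pattern and then verifies fullness; but your ``saturation'' names neither the bands to adjoin nor a reason the complementary regions end up non-contractible, and this is not automatic: $\brsurf{\sigma_1}$ is \emph{not} full in $\brsurf{\sigma_1,\sigma_1}$ (its complement there is an open rectangle), so careless band-addition destroys fullness. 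The one component of your proposal that is complete and correct is the innermost-circle argument for transitivity of fullness, which validly reduces the ``in particular'' clause to the two main implications.
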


A link is \bydef{strongly quasipositive} in case it has
a quasipositive Seifert surface.   Many interesting 
quasipositive links are strongly quasipositive,
including the classes of examples described next.

\subsubsection{Strongly quasipositive annuli}
\label{subsubsect:qp annuli}

\begin{theorem}[Rudolph \cite{Rudolph1992a,Rudolph1992,Rudolph1995}]
\label{thm:qp annuli}
\begin{inparaenum}[\upshape(1)]
\item\label{subthm:qp knotted annuli and TB}
If the smooth classical knot $\Kscr$ is non-trivial, then
the following are equivalent:
	\begin{inparaenum}[\upshape(a)]
	\item
	the annular Seifert surface $\AKn{\Kscr}{n}$ is quasipositive;
	\item
	the oriented link $(\Bd\AKn{\Kscr}{n},S^3)$ is
	strongly quasipositive;
	\item
	$n\le\TB(\Kscr)$.
	\end{inparaenum}
\item\label{subthm:qp unknotted annuli and TB}
The oriented link $(\Bd\AKn{\Oscr}{n},S^3)$
is strongly quasi\-positive iff $n\le 0$; the annular 
surface $\AKn{\Oscr}{n}$ is quasipositive
iff $n\le -1=\TB(\Oscr)$.\qed
\end{inparaenum}
\end{theorem}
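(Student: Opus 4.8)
The plan is to route both parts of \fullref{thm:qp annuli} through a single correspondence between quasipositive annuli and Legendrian knots, and then to read off the Seifert-surface statements using \fullref{prop:full on qp Ss}. The organizing claim is this: for any classical knot $\Kscr$ and any $n\in\Z$, the annular surface $\AKn{\Kscr}{n}$ is quasipositive if and only if $\Kscr$ has a Legendrian representative (for the standard contact structure $\xi_0$ on $S^3$) whose canonical framing equals $n$. Granting this, (1a)$\Leftrightarrow$(1c) and the quasipositivity half of part (2) are immediate: Legendrian knots may be stabilized to decrease their Thurston--Bennequin number one step at a time, and $\TB(\Kscr)$ is \emph{defined} as the maximum such number, so a representative with canonical framing exactly $n$ exists precisely when $n\le\TB(\Kscr)$ (and $\TB(\Oscr)=-1$ supplies the cut-off in part (2)). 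The finiteness of $\TB$ that makes ``$n\le\TB(\Kscr)$'' a genuine restriction is \fullref{thm:Bennequin's theorem}.

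I would prove the organizing claim in two directions. For the construction ($n\le\TB(\Kscr)\Rightarrow\AKn{\Kscr}{n}$ quasipositive), I would take a Legendrian representative $K$ of $\Kscr$ with canonical framing $n$ and approximate it by a real-analytic Legendrian simple closed curve $\gamma\sub S^3\sub\C^2$. Since $\gamma$ is everywhere tangent to the field of complex tangent lines underlying $\xi_0$, the annulus swept out by $\gamma$ and its canonical push-off can be pushed into $\Int{D^4}$ as a piece of (non-singular) complex curve---that is, as the $\C$--span $\Cspan{f}{D^4}$ of a $1$--dimensional transverse $\C$--link; by the equivalence of transverse $\C$--links with quasipositive links this annulus is quasipositive, and by construction its framing is $n$. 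This is the ``totally tangential $\C$--link'' input advertised in the introduction. Conversely, given a quasipositive $\AKn{\Kscr}{n}$, the same dictionary isotopes its core onto a Legendrian knot whose contact framing is the annulus framing $n$, exhibiting $n$ as the Thurston--Bennequin number of an honest Legendrian representative of $\Kscr$, so that $n\le\TB(\Kscr)$.

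It remains to bring in strong quasipositivity. Implication (1a)$\Rightarrow$(1b) is immediate, since a quasipositive annulus is itself a quasipositive Seifert surface for its boundary. For (1b)$\Rightarrow$(1a) I would argue that, when $\Kscr$ is non-trivial, the cabling annulus $\AKn{\Kscr}{n}$ is---up to isotopy---the unique Seifert surface of maximal Euler characteristic for the $2$--component link $(\Bd\AKn{\Kscr}{n},S^3)$: each boundary component is a knotted copy of $\Kscr$, so no component bounds a disc, $\chi\le 0$ throughout, and the incompressible cabling annulus attains $\chi=0$. Since a quasipositive Seifert surface has maximal Euler characteristic among all Seifert surfaces of its boundary, any quasipositive Seifert surface witnessing (1b) must be isotopic to the annulus itself, whence (1a). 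The same bookkeeping proves the strong-quasipositivity half of part (2) for $n\le 0$ (for $n\le -1$ it follows from quasipositivity of the annulus, while for $n=0$ the boundary is the $2$--component unlink, strongly quasipositive via two discs) and rules it out for $n\ge 1$, where the boundary is a negative torus link and hence not quasipositive.

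The main obstacle is the organizing claim, and within it the analytic construction: promoting a smooth Legendrian representative to a real-analytic Legendrian curve and extending the swept annulus to a genuine complex-curve span (the content of the totally tangential $\C$--link theory of \cite{Rudolph1992a,Rudolph1992,Rudolph1998}), together with the reverse reading-off of a Legendrian core with contact framing exactly $n$ from an arbitrary quasipositive annulus. A secondary but essential point is the exceptional behavior at $\Kscr=\Oscr$: the failure of the cabling annulus to be Euler-characteristic-maximal precisely at $n=0$ is what decouples strong quasipositivity of the boundary (valid for $n\le 0$) from quasipositivity of the annulus (valid for $n\le -1=\TB(\Oscr)$), and is the reason part (1) must assume $\Kscr$ non-trivial.
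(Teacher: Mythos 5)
First, a point of comparison: the paper itself contains no proof of Theorem \ref{thm:qp annuli} --- it is imported, stated without proof, from \cite{Rudolph1992a,Rudolph1992,Rudolph1995}, and the present paper only gestures at the mechanism in the paragraph preceding Question \ref{qn:generalized qp annuli}. Measured against that cited architecture, your outline is the right one, and the steps you actually argue are sound: (1a)$\Rightarrow$(1b) is definitional; your (1b)$\Rightarrow$(1a), via Euler-characteristic maximality of quasipositive Seifert surfaces (a fact the paper itself wields in the proof of Proposition \ref{prop:overtwisted qp}) together with the rigidity of embedded annuli in $S^3$ (determined up to ambient isotopy by core and framing), is correct and correctly isolates why $\Kscr=\Oscr$ must be excluded (at $n=0$ two disks beat the annulus); part (2), including $\TB(\Oscr)=-1$ from Theorem \ref{thm:Bennequin's theorem}, is handled properly; and the construction direction (1c)$\Rightarrow$(1a) via real-analytic Legendrian approximation and totally tangential $\C$--links is exactly the cited construction.

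The genuine gap is the implication (1a)$\Rightarrow$(1c), the ``reverse'' half of your organizing claim. You assert that, given a quasipositive $\AKn{\Kscr}{n}$, ``the same dictionary isotopes its core onto a Legendrian knot whose contact framing is the annulus framing $n$.'' But the totally tangential correspondence runs only one way: a real-analytic Legendrian curve spawns a complex annulus. A quasipositive annulus is, up to isotopy, a piece of complex curve meeting $S^3$ \emph{transversely}, and a complex curve meets round spheres tangentially only at isolated points in general, never along a closed curve; so there is no tangential, Legendrian core to ``read off,'' and no isotopy is furnished by that theory. This direction is precisely why \cite{Rudolph1995} appears in the citation list: it is supplied by the gauge-theoretic (Kronheimer--Mrowka \cite{KronheimerMrowka1994}) circle of results, not by the tangential construction run backwards; a modern alternative inside this paper's toolkit is Proposition \ref{prop:full on qp Ss} (the annulus is a full subsurface of a quasipositive fiber surface) plus Theorem \ref{thm:qp fiber surfaces} (that fiber surface is a page of an open book supporting $\xi_0$) plus the Legendrian realization principle applied to the core on that page. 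Nor can the tools you do deploy close the gap: $\chi$--maximality and non-negativity of linking numbers of quasipositive links only yield $n\le 0$, which is far weaker than $n\le\TB(\Kscr)$ when $\TB(\Kscr)$ is very negative --- for the left-handed trefoil, $\TB(\Kscr)=-6$, yet for $-5\le n\le 0$ the annulus $\AKn{\Kscr}{n}$ is $\chi$--maximal for its boundary and that boundary has non-negative linking number, so nothing in your proposal obstructs its quasipositivity, while the theorem says it is not quasipositive.
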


\subsubsection{Strongly quasipositive Murasugi sums%
\label{subsubsect:qp Murasugi sums}}

\begin{theorem}[Rudolph \cite{Rudolph1998}%
\label{thm:qp Murasugi sum theorem}]
A Murasugi sum of Seifert surfaces $F_1$ and $F_2$ is 
quasipositive iff the summands $F_1$ and $F_2$ are 
quasipositive.\qed
\end{theorem}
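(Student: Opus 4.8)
The plan is to prove both implications by reducing, via Proposition~\ref{prop:full on qp Ss}, to the corresponding statement about \emph{fiber} surfaces, where plumbing interacts transparently with monodromy. Throughout, write $F=F_1\plumb{P}F_2$ for the Murasugi sum along the plumbing patch $P$, and recall that each plumband sits inside $F$ as a subsurface with $F_1\cap F_2=P$.

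For the implication that a Murasugi sum of quasipositive surfaces is quasipositive, I would first invoke Proposition~\ref{prop:full on qp Ss} to realize each $F_i$ as a full subsurface of a quasipositive fiber surface $\Phi_i$. The key step is then to assemble a single Murasugi sum $\Phi=\Phi_1\plumb{}\Phi_2$ of the two fiber surfaces, arranged (by placing the plumbing patch inside the subsurfaces $F_i\sub\Phi_i$) so that it extends the given sum, $F_1\plumb{P}F_2=F\sub\Phi$, and so that $F$ remains full in $\Phi$. I claim $\Phi$ is a quasipositive fiber surface. That $\Phi$ is fibered is Gabai's theorem~\cite{Gabai1983} that a Murasugi sum of fiber surfaces is a fiber surface; that it is quasipositive follows because, by Stallings~\cite{Stallings1978}, the geometric monodromy of $\Phi$ is the composition of those of $\Phi_1$ and $\Phi_2$, hence---each factor being a product of positive Dehn twists, as the $\Phi_i$ are quasipositive---is again a product of positive Dehn twists; the resulting positive open book exhibits $\Bd\Phi$ as a transverse $\C$--link with $\C$--span $\Phi$ (Corollary~\ref{cor:Etnyre's observation} together with the equivalence \cite{Rudolph1983,BoileauOrevkov2001}). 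Since $F$ is a full subsurface of the quasipositive fiber surface $\Phi$, Proposition~\ref{prop:full on qp Ss} gives that $F$ is quasipositive.

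For the converse I would argue that each plumband is a full subsurface of $F$ and apply the ``in particular'' clause of Proposition~\ref{prop:full on qp Ss}. Fullness of $F_i$ in $F$ holds whenever the complementary plumband $F_{3-i}$ is not an annulus plumbed core-transversely; the only obstruction is that such an annular summand, cut along $P$, becomes a disk, so that $F\setminus\Int{F_i}$ acquires a contractible component. That degenerate case is handled directly: the annular plumband $F_{3-i}$ is itself full in $F$ (its complement being $F_i$ with a boundary bite removed), hence quasipositive by Proposition~\ref{prop:full on qp Ss}, and by Theorem~\ref{thm:qp annuli} it is then a quasipositive annulus (a positive Hopf band $\AKn{\Oscr}{-1}$ when unknotted with a single twist); deplumbing such a quasipositive annulus from $F$ preserves quasipositivity by the band-representation calculus of~\cite{Rudolph1983b}, yielding that $F_i$ is quasipositive.

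The main obstacle is the quasipositivity half of the fiber-surface step, namely that a Murasugi sum of two quasipositive fiber surfaces is again quasipositive. The argument above routes this through Stallings' composition law for monodromies together with the positive-open-book characterization of quasipositive fibered links in $S^3$; the alternative, and closer to the original treatment, is to realize Murasugi sum as an explicit concatenation operation on quasipositive (embedded) band representations, producing $\brsurf{\brep{b}}$ for a manifestly quasipositive $\brep{b}$. A secondary, purely bookkeeping obstacle is the fullness accounting---transitivity of ``full subsurface'' along $F_i\sub F\sub\Phi$, and the degenerate annulus case just noted.
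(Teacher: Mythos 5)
The paper itself offers no proof of this theorem: it is quoted from \cite{Rudolph1998}, where the ``if'' direction is proved by an explicit construction turning quasipositive band representations of the plumbands into one for the sum (the route you flag in your last paragraph as an ``alternative'' but do not carry out), and the ``only if'' direction by exactly the fullness argument you sketch. Judged on its own terms, your proposal has a genuine gap in the ``if'' direction: it hinges on the implication \emph{quasipositive fiber surface $\Rightarrow$ geometric monodromy is a product of positive Dehn twists}, which you assert parenthetically (``each factor being a product of positive Dehn twists, as the $\Phi_i$ are quasipositive'') and which nothing in the paper supplies. Corollary \ref{cor:Etnyre's observation} is existential---it produces \emph{some} positive open book on a Stein-fillable $3$--manifold, not a positive factorization of the monodromy of a \emph{given} open book---and Theorem \ref{thm:qp fiber surfaces} gives only that the open book of a quasipositive fiber surface is compatible with $\xi_0$. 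Whether every open book supporting $(S^3,\xi_0)$ (equivalently, by \ref{thm:qp fiber surfaces}, every strongly quasipositive fibered link) admits a positive factorization is a well-known open question, and the analogous statement for Stein-fillable contact structures on general $3$--manifolds is false (Wand, Baker--Etnyre--Van Horn-Morris); only the reverse implication, positive factorization $\Rightarrow$ quasipositive, is available. There is also a construction gap: since fullness requires $F_i\sub\Int{\Phi_i}$, the patch $P$ lies in the interiors of both $\Phi_i$, so $\Phi_1\cup\Phi_2$ is not even an embedded surface along $\Bd P$ and cannot be a Murasugi sum along $P$; you would need a larger patch and a re-embedding of the extensions $\Phi_i\setminus F_i$ on controlled sides of the separating sphere, which Proposition \ref{prop:full on qp Ss}, being a statement up to ambient isotopy only, does not provide.

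In the ``only if'' direction your strategy is the correct one, but your ``degenerate case'' is spurious, and the patch you propose for it would itself be a gap. With the plumband pushed into $\Int F$, as the definition of fullness demands ($N\sub\Int M$), every complementary component abutting the patch also contains a boundary circle of $F_i$ attached along an arc, hence is homotopy equivalent to a space with a circle wedged on and is never contractible; the only genuine exceptions are trivial split disk components. Concretely, for two copies of $\AKn{\Oscr}{-1}$ plumbed core-transversely, $F$ is a once-punctured torus, the pushed-in plumband is an annulus about a nonseparating curve, and its complement is a pair of pants, not a disk. Finally, the repair you invoke for the (nonexistent) degenerate case---that ``deplumbing a quasipositive annulus preserves quasipositivity by the band-representation calculus of \cite{Rudolph1983b}''---is unsupported: no such deplumbing statement appears there, and it is an instance of the very implication being proved, so the argument would be circular if the case actually arose.
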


\begin{remarks}\label{rmks:qp Murasugi sum}
\begin{inparaenum}[(1)]
\item\label{rmk:qp Hopf-plumbed fiber surfaces}
Evidently $\strip{\Oscr}{2}=\abs{\AKn{\Oscr}{-1}}$, so 
Theorem \ref{thm:qp annuli}%
\eqref{subthm:qp unknotted annuli and TB} 
and Theorem \ref{thm:qp Murasugi sum theorem} imply 
that if each plumband of a Hopf-plumbed Seifert surface $F$ 
(as in \ref{defs:iterated strip plumbing}\eqref{def:Hopf plumbing})
is $\strip{\Oscr}{2}$, then $(\Bd F,S^3)$ is
strongly quasipositive.
\item\label{rmk:Gabai's credo for qp}
Theorem \ref{thm:qp Murasugi sum theorem} is analogous to
Gabai's theorem \cite{Gabai1985} that a Murasugi
sum of Seifert surfaces is a fiber surface iff the 
plumbands are fiber surfaces, and may be taken as 
further evidence (along different geometric lines from 
those followed in \cite{Gabai1983,Gabai1985} and later 
work by Gabai and others) for what Ozbagci and Popescu-Pampu 
\cite{OzbagciPopescu-Pampu2014} call \bydef{Gabai's credo}:
``the Murasugi sum is a natural geometric operation''.
\end{inparaenum}
\done
\end{remarks}

\subsubsection{Positive links\label{subsubsect:positive links}}
Given a classical oriented link diagram $\diagram$,
let $\SeifAlg{\diagram}$ denote the Seifert surface
(unique up to ambient isotopy) produced by  
\bydef{Seifert's algorithm} (Seifert, \cite{Seifert1935})
applied to $\diagram$, so $\Lscr(\diagram)\isdefinedas
(\Bd\SeifAlg{\diagram},S^3)$ 
is the oriented link (unique up to ambient isotopy) 
determined by $\diagram$.  A diagram is \bydef{positive}
in case every crossing is positive; a link is 
\bydef{positive} in case it has some positive $\diagram$.
Positivity (of links and diagrams) is preserved by simultaneous
reversal of all orientations---in particular, for \textit{knots}
it is independent of orientation.

\begin{theorem}[Nakamura \cite{Nakamura1998,Nakamura2000},
Rudolph \cite{Rudolph1999}]\label{thm:positive implies stqp}
If $\diagram$ is positive, then $\SeifAlg{\diagram}$
is a quasipositive Seifert surface.  In particular,
a positive link is strongly quasipositive. 
\qed
\end{theorem}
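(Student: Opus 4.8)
The plan is to exhibit $F\isdefinedas\SeifAlg{\diagram}$ as a \emph{full} subsurface of a quasipositive Seifert surface and then invoke the ``in particular'' half of Proposition~\ref{prop:full on qp Ss}; the ``in particular'' clause of the theorem is then immediate, since by definition a link possessing a quasipositive Seifert surface is strongly quasipositive. The first thing to record is the structure of $F$. Because every crossing of $\diagram$ is positive, Seifert's algorithm yields one Seifert disk~$D_v$ for each Seifert circle~$v$ of~$\diagram$ together with one half-twisted band~$b_e$ for each crossing~$e$, and every band carries a \emph{positive} half-twist; thus $F$ is, up to ambient isotopy, a surface built from disks by attaching positive bands, with attaching data recorded by the planar fat Seifert graph whose vertices are the Seifert circles and whose edges are the crossings.

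The heart of the matter is to enlarge $F$ to a surface $\widehat F\supseteq F$ that is an iterated plumbing of positive Hopf annuli $\AKn{\Oscr}{-1}$ and in which $F$ sits as a full subsurface. Granting such a $\widehat F$, the theorem follows at once: the positive Hopf annulus $\AKn{\Oscr}{-1}$ is quasipositive by Theorem~\ref{thm:qp annuli}\eqref{subthm:qp unknotted annuli and TB} (as $-1\le-1=\TB(\Oscr)$), and an iterated plumbing of such annuli is again quasipositive by repeated application of Theorem~\ref{thm:qp Murasugi sum theorem} (cf.\ Remark~\ref{rmks:qp Murasugi sum}\eqref{rmk:qp Hopf-plumbed fiber surfaces}); hence $\widehat F$ is a quasipositive Seifert surface, and Proposition~\ref{prop:full on qp Ss} then makes its full subsurface $F$ quasipositive.

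To build $\widehat F$ I would first stack the Seifert disks of~$\diagram$ into braided position and then \emph{saturate} the band pattern: realize $F$ inside the Bennequin surface of a \emph{positive} braid by adjoining further positive bands until the completed brick pattern is that of a positive braid word, a surface which by Stallings~\cite{Stallings1978} is precisely an iterated plumbing of positive Hopf annuli. Adjoining bands leaves $F$ embedded as a subsurface, and one arranges---padding with auxiliary Seifert disks where needed, and using the corner-free plumbing conventions of Remark~\ref{rmks:2p-plumbing}\eqref{rmk:no corners}---that no component of $\widehat F\setminus F$ is a disk, so that $F$ is full in $\widehat F$.

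The main obstacle is exactly this last construction: carrying $F$ into a positive-braid Bennequin surface \emph{without ever introducing a negative crossing}. A naive braiding of the Alexander--Vogel type inserts cancelling pairs of crossings and so destroys positivity, so the real content of the theorem is that for a positive diagram the surface can nonetheless be pushed into braided position by moves under which every band stays a positive band, preserving the sign-coherence of the oriented smoothing throughout. Equivalently, one must check that the braided form of $F$ is the quasipositive braided surface $\brsurf{\brep{b}}$ of an all-positive embedded band representation $\brep{b}$; verifying this band-sign bookkeeping is where the delicate combinatorial work lies and is the step I expect to dominate a complete proof.
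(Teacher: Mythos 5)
Your reduction is built from the right pieces, and they are the pieces the paper actually supplies (note that the paper itself gives no proof of Theorem~\ref{thm:positive implies stqp}: it is stated with attribution to Nakamura \cite{Nakamura1998,Nakamura2000} and Rudolph \cite{Rudolph1999}, so the only things to measure your attempt against are those cited arguments and the framework results quoted in the paper). Proposition~\ref{prop:full on qp Ss} does reduce the theorem to exhibiting $\SeifAlg{\diagram}$ as a full subsurface of a quasipositive Seifert surface, and the Bennequin surface of a positive braid word is indeed quasipositive --- your route via Stallings plus Theorem~\ref{thm:qp annuli}\eqref{subthm:qp unknotted annuli and TB} and Theorem~\ref{thm:qp Murasugi sum theorem} works, though it is roundabout, since a positive braid word is already a tuple of embedded positive bands $\sigma_i=\sigma_{i,i+1}$, so its Bennequin surface is a quasipositive braided Seifert surface by definition. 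Your worry about fullness, on the other hand, is the one place you can relax: if $\widehat F$ is obtained from $F$ by attaching bands along $\Bd F$ (together with any auxiliary disks joined on by bands), then every component of $\widehat F\setminus F$ contains an entire boundary circle of $F$ and deformation retracts to a graph containing that circle, hence is never contractible; fullness is automatic in this situation.

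The genuine gap is the step you yourself flag as the ``main obstacle'' and then leave undone: no construction is given that puts $\SeifAlg{\diagram}$, for an \emph{arbitrary} positive diagram, into braided position with all bands positive --- equivalently, that embeds it fully in the Bennequin surface of a positive braid. For a positive diagram whose Seifert circles are already coherently nested this is vacuous (the diagram is a positive braid diagram and its Bennequin surface is quasipositive by definition, with no saturation or plumbing needed at all), so the entire difficulty is the remaining case of incoherently nested Seifert circles, where, as you correctly observe, Vogel--Yamada braiding inserts negative crossings and destroys positivity. Calling this step ``delicate combinatorial work'' does not discharge it: it is not bookkeeping but the whole content of the theorem, and it is exactly what the cited proofs supply --- Nakamura by a diagrammatic induction that makes the Seifert circles coherent while preserving positivity, Rudolph by constructing, without ever braiding $\diagram$, an explicit full embedding of $\SeifAlg{\diagram}$ into a quasipositive fiber surface (that of a suitable torus link). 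As written, your proposal is a correct reduction of Theorem~\ref{thm:positive implies stqp} to an unproved claim that subsumes it.
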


\subsubsection{Strongly quasipositive satellites (new)}
\label{subsect:strongly qp satellites}

Two results stated for quasipositive knots and links 
in section \ref{subsect:qp cable links} remain true in the 
strongly quasipositive case.  In the first, a variation on \ref{prop:untwisted cables on qp}, the proofs differ a bit.

\begin{proposition}\label{prop:untwisted cables on strong qp}
If $\Kscr$ is strongly quasipositive, then for 
all $n\ge 1$, $\Kscr\{n,0\}$ is strongly quasipositive.
\end{proposition}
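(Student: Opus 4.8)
The plan is to reduce Proposition~\ref{prop:untwisted cables on strong qp} to the machinery of quasipositive Seifert surfaces rather than to the transverse $\C$--link picture used in the proof of Proposition~\ref{prop:untwisted cables on qp}, since strong quasipositivity is intrinsically a statement about Seifert surfaces (a link is strongly quasipositive iff it bounds a quasipositive Seifert surface). So I would start by fixing a quasipositive Seifert surface $S$ for $\Kscr$ with $\Bd S=K$. The goal is to build, from $S$, a quasipositive Seifert surface whose boundary is the untwisted $n$--strand cable $K\{n,0\}=\Kscr\{\Oscr^{(n)}\}$.

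First I would take the satellite construction of Definition~\ref{def:satellization} literally and track the Seifert surface through it. The pattern $\Oscr^{(n)}$ sits in $N(O)\sub S_1^3$ as $n$ parallel copies of the core, each an untwisted longitude; inside $N(O)$ it bounds an obvious Seifert surface consisting of $n$ parallel annuli (or, better, $n$ parallel copies of a meridian disk's complement), realizing $\Oscr^{(n)}$ as the boundary of a trivially fibered/quasipositive surface in the solid torus. Under the faithful gluing $h$, a standard longitude of $O$ is matched to a standard longitude of $K$, i.e.\ to a push-off $K^+$ lying on $S$. Thus I would form the satellite surface by replacing the core annulus structure of the pattern with $n$ parallel pushed-off copies of $S$ inside the exterior $E(K)$, glued to the pattern surface across $\Bd N(O)$. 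Concretely, the untwisted $n$--cable $K\{n,0\}$ bounds the surface obtained by taking $n$ parallel copies $S^{(1)},\dots,S^{(n)}$ of $S$ (using the $0$--framing, i.e.\ the Seifert framing coming from $S$ itself), which are pairwise disjoint because the $0$--framing is exactly the Seifert-surface framing; their union $S^{(1)}\sqcup\dots\sqcup S^{(n)}$ is a Seifert surface for $K\{n,0\}$.

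The heart of the matter is then to show this disjoint union of parallel copies of a quasipositive surface is again quasipositive. Here the proofs ``differ a bit'' from the transverse--$\C$--link argument precisely because I want to stay on the Seifert-surface side. I would invoke Proposition~\ref{prop:full on qp Ss}: a Seifert surface is quasipositive iff it is a full subsurface of some quasipositive fiber surface. The clean route is to realize $S$ as a quasipositive braided Seifert surface $\brsurf{\brep{b}}$ for a quasipositive embedded band representation $\brep{b}$ in some $B_p$ (as in \ref{subsect:strongly qp links}), and then, exactly as in Remark~\ref{rmks:qp braided companion of qp braid}\eqref{rmk:qp braided cables--algebraic}, cable the band representation: each embedded band $\sigma_{i,j}$ in $B_p$ is replaced by its $n$--fold braided analogue in $B_{np}$, producing a quasipositive embedded band representation $\brep{b}\{n,0\}$ whose braided Seifert surface $\brsurf{\brep{b}\{n,0\}}$ is exactly the $n$ parallel copies of $S$ assembled along the cabled braid axis. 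Because replacing each generator by $n$ parallel strands sends positive bands to products of positive bands, $\brep{b}\{n,0\}$ is manifestly quasipositive and its closure is $\Kscr\{n,0\}$, giving the result.

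The main obstacle I anticipate is bookkeeping rather than conceptual: verifying that the $0$--framing in Definition~\ref{def:satellization}---the Seifert-longitude framing transported by the faithful $h$---is genuinely the framing along which the $n$ parallel copies of $S$ are disjoint, so that the cabled band representation $\brep{b}\{n,0\}$ really closes up to the \emph{untwisted} cable and not to some $\pm$-framed variant. In other words, the delicate step is matching the combinatorial cabling operation on $\brep{b}$ (which automatically uses the braided, i.e.\ Seifert-surface, framing) with the topological untwisted framing in the definition of $\Kscr\{n,0\}$; once that identification is pinned down, quasipositivity of $\brep{b}\{n,0\}$ is immediate from the definition of the monoid $Q_{np}$.
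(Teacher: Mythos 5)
Your proposal is correct and takes essentially the same route as the paper: the paper's proof likewise fixes a quasipositive Seifert surface $S$ with $\Kscr=(\Bd S,S^3)$, takes a one-sided collar $c\from S\times[1,n]\to S^3$, and observes that the $n$ parallel copies $c(S\times\{1,\dots,n\})$ form a quasipositive Seifert surface whose boundary is $\Kscr\{n,0\}$. The only difference is one of detail: the paper asserts the quasipositivity of this parallel-copy surface outright, whereas you justify it (correctly) by cabling the embedded band representation as in Remark~\ref{rmks:qp braided companion of qp braid}\eqref{rmk:qp braided cables--algebraic}, and you likewise spell out the framing bookkeeping that the paper compresses into the word ``clearly''.
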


\begin{proof}
Let $S$ be a quasipositive Seifert surface with 
$\Kscr=(\Bd S,S^3)$.  Let $c\from S\times[1,n]\to S^3$ 
be an embedding onto a one-sided collar of $S=c(S\times\{1\})$; 
then $c(S\times\{1,\dots,n\})$ is a quasipositive Seifert surface, 
and its boundary is clearly $\Kscr\{n,0\}$.
\end{proof}

The second is a variation on 
\ref{rmks:qp braided companion of qp braid}%
\eqref{rmk:qp braided companion--algebraic}; in this case,
the sketched proof of the original applies equally well
to the variation.

\begin{proposition}\label{prop:strongly qp satellites}
If both the companion $\Kscr$ and the pattern $\Lscr$
are strongly quasipositive, and if in addition $L$ sits 
inside $N(O)$ as a strongly quasipositive closed braid, 
then the satellite $\Kscr\{\Lscr\}$ is strongly quasipositive.
\qed
\end{proposition}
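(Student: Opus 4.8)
The plan is to run the algebraic argument sketched in Remark~\ref{rmks:qp braided companion of qp braid}\eqref{rmk:qp braided companion--algebraic} entirely inside the calculus of \emph{quasipositive embedded band representations} recalled in~\ref{subsect:strongly qp links}, since a braided Seifert surface $\brsurf{\brep{b}}$ assembled from embedded bands is quasipositive \emph{as a Seifert surface} and so witnesses \emph{strong} quasipositivity of its boundary. First I would extract the combinatorial data promised by the hypotheses: as $\Kscr$ is strongly quasipositive it may be taken, up to ambient isotopy, to be the closure $\close{\braidof{\brep{b}_K}}$ of a quasipositive embedded band representation $\brep{b}_K$ in some $B_p$; and as $L$ sits inside $N(O)$ as a strongly quasipositive closed braid, I may choose $\brep{b}_L$ to be a quasipositive embedded band representation in $B_n$ whose closure, sited in $J(S^1\times\C)=\Int N(O)$, is $L$.

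Next I would cable the companion. Replacing each of the $p$ strands of $\brep{b}_K$ by $n$ parallel strands along the $0$--framing turns each embedded band $\sigma_{i,j}$ into $n$ parallel embedded bands of $B_{np}$, each joining two strands across a contiguous block and so again of the form $\sigma_{a,b}$; this is precisely the ``fairly obvious algebraic manipulation (motivated by geometry)'' of~\eqref{rmk:qp braided cables--algebraic}. The resulting word $\brep{b}_K\{n,0\}$ is therefore once more a quasipositive embedded band representation, its closure is the untwisted cable $\Kscr\{n,0\}=\Kscr\{\Oscr^{(n)}\}$, and the surface $\brsurf{\brep{b}_K\{n,0\}}$ is exactly the $n$--fold parallel collar of $\brsurf{\brep{b}_K}$ already used in the proof of Proposition~\ref{prop:untwisted cables on strong qp}.

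Finally I would splice in the pattern. The standard inclusion $\iota\from B_n\hookrightarrow B_{np}$ onto $n$ consecutive strands carries embedded bands to embedded bands, so the concatenation $\iota(\brep{b}_L)\,\brep{b}_K\{n,0\}$ is yet again a quasipositive embedded band representation, and its closure is consequently strongly quasipositive. What remains is to recognize that closure as $\Kscr\{\Lscr\}$: inserting the braid $\braidof{\brep{b}_L}$ into one block of $n$ cable strands replaces the trivial pattern $\Oscr^{(n)}\sub N(O)$ by the closed braid $L\sub N(O)$ at a single meridian disk of $N(K)\cong N(O)$, which is exactly the satellite operation of Definition~\ref{def:satellization}.

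The main obstacle is not a genuinely new difficulty---this is why the sketched proof ``applies equally well''---but two bookkeeping points that must be checked with some care: that cabling an embedded band yields a \emph{product of embedded bands} (so that strong, and not merely ordinary, quasipositivity survives the cabling step), and that the braid closure of the spliced word reproduces the gluing of Definition~\ref{def:satellization} rather than some other filling of $N(K)$. Should one wish to bypass the band combinatorics, the same conclusion ought to follow geometrically, by building a Seifert surface for $\Kscr\{\Lscr\}$ out of the quasipositive surfaces $\brsurf{\brep{b}_K\{n,0\}}$ and $\brsurf{\brep{b}_L}$ and appealing to Theorem~\ref{thm:qp Murasugi sum theorem} and Proposition~\ref{prop:full on qp Ss}.
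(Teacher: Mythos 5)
Your proposal is correct and takes essentially the same approach as the paper: the paper's entire proof is the remark that the algebraic cable-and-splice sketch of Remark~\ref{rmks:qp braided companion of qp braid} ``applies equally well'' when carried out with embedded bands, which is exactly the argument you flesh out. The two bookkeeping points you flag---that $0$--framed cabling turns each embedded band $\sigma_{i,j}$ into $n$ parallel embedded bands $\sigma_{(i-1)n+m,(j-1)n+m}$, and that closing the spliced word $\iota(\brep{b}_L)\,\brep{b}_K\{n,0\}$ reproduces the faithful (Seifert-framed) gluing of Definition~\ref{def:satellization}---are precisely the details the paper leaves implicit.
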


\subsection{Construction: partially reoriented Hopf links 
(new details)\label{subsect:qp Hopf links}}
The \bydef{partially reoriented positive Hopf links} 
$\Hopf{+}{p}{q}$ and their mirror images the
\bydef{partially reoriented negative Hopf links}
$\Hopf{-}{p}{q}\isdefinedas \Mir\Hopf{+}{p}{q}$ are 
defined using the \bydef{positive Hopf fibration} 
$h_{+}\from S^3\to\CP1\mapsuchthat(z_0,z_1)\mapsto(z_0:z_1)$
and its mirror image the \bydef{negative Hopf fibration}
$h_{-}\mapsuchthat(z_0,z_1)\mapsto(\conj{z_0}:\conj{z_1})$.
The usual orientations of $S^3\sub\C^2$ and $\CP1$ naturally
orient the fibers of $h_\pm$. 
For $0\ne p\ge q\ge 0$, denote by $H_{\pm}(p,q)$  
the union of (any) $p+q$ fibers of $h_{\pm}$, 
$p$ with the natural orientation and $q$ with its opposite;
let $\Hopf{\pm}{p}{q}\isdefinedas (H_{\pm}(p,q),S^3)$.  Note
that $\Hopf{+}{1}{0}$ and $\Hopf{-}{1}{0}$ are ambient isotopic
(they are trivial knots), as are $\Hopf{\pm}{2}{0}$ and
$\Hopf{\mp}{1}{1}$; with those exceptions, $\Hopf{\pm}{p}{q}$
is determined up to ambient isotopy by $(\pm,p,q)$.
Let $\nabla_n\isdefinedas ((\sigma_1\sigma_2\dotsm\sigma_{n-1})
(\sigma_1\sigma_2\dotsm\sigma_{n-2})\dotsm
(\sigma_1\sigma_2)\sigma_1)^2\defines\Delta\in B_n$.
It is standard that the closure of $\nabla_n$ is 
$\Hopf{\pm}{n}{0}$.  For $1\le i<j\le n$, inject
$B_{j-i+1}$ into $B_n$ by $\iota_{i,j}$ with
$\iota_{i,j}(\sigma_k)=\sigma_{k+i-1}$, $k=1,\dots,j-i$;
let $\nabla_{i,j}\isdefinedas \iota_{i,j}(\nabla_{j-i+1})$.
Figure \ref{fig:Hopf links}(a)--(b)
show that
\begin{align*}
\nabla_{p+q}
&=\nabla_{1,\;p}\nabla_{p+1,\;p+q}%
(\sigma_p\sigma_{p+1}\dotsm\sigma_{p+q-1})%
(\sigma_{p-1}\sigma_p\dotsm\sigma_{p+q-2})
\dotsm (\sigma_1\sigma_2\dotsm\sigma_q)\\
&\quad(\sigma_q\sigma_{q+1}\dotsm\sigma_{q+p-1})
(\sigma_{q-1}\sigma_q\dotsm\sigma_{q+p-2})
\dotsm(\sigma_1\sigma_2\dotsm\sigma_p)\\
&=\nabla_{1,\;p-q}\nabla_{p-q+1,\;p+q}
(\sigma_{p-q}\sigma_{p-q+1}\dotsm\sigma_{p-1})
(\sigma_{p-q-1}\sigma_{p-q}\dotsm\sigma_{p-2})
 \dotsm\\
&\quad (\sigma_1\sigma_2\dotsm\sigma_{p-q})
(\sigma_{p-q}\sigma_{p-q+1}\dotsm\sigma_{p+q-1})
(\sigma_{p-q-1}\sigma_q \sigma_{p+q-2})
\dotsm(\sigma_1\sigma_2\dotsm\sigma_{2q})
\end{align*}
and thus both have closure $\Hopf{+}{p+q}{0}$.
\begin{figure}[ht!]
\begin{center}
\includegraphics[width=\figwidth]{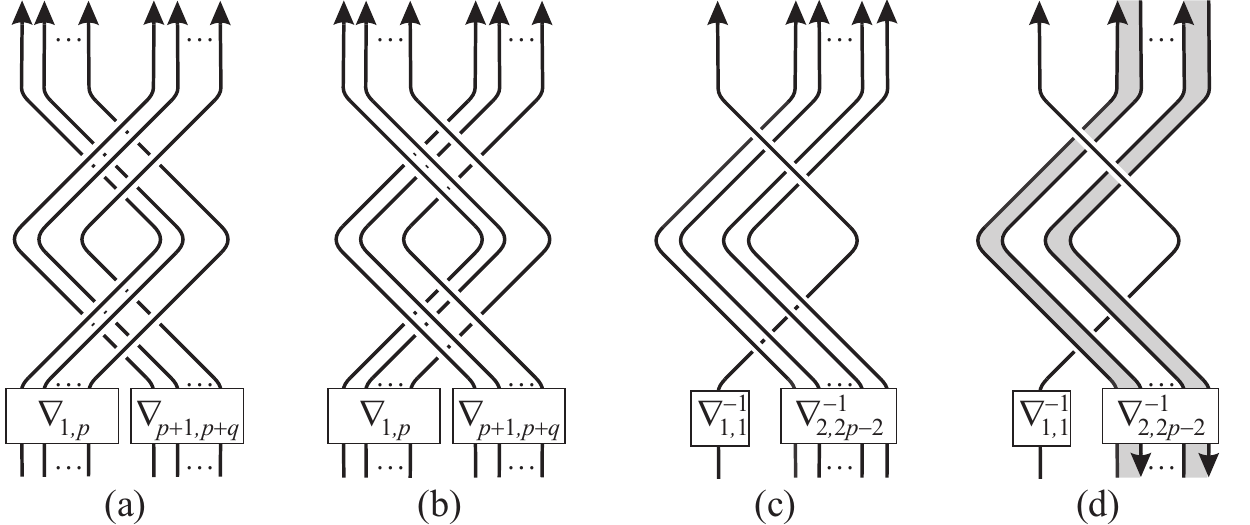} 
\caption{(a), (b), (c)~Braid diagrams with closure 
$\Hopf{+}{p+q}{0}$, $\Hopf{+}{p}{q}$, $\Hopf{-}{2p-1}{0}$
respectively.
(d)~A partially reoriented braid diagram with closure
$\Hopf{-}{p}{p-1}$; the shading indicates $p-1$ linked 
and twisted annuli that are components of a ribbon-immersed 
surface in $S^3$ bounded by $\Hopf{-}{p}{p-1}$ (the remaining
component is a disk, not indicated).
\label{fig:Hopf links}}
\end{center}
\end{figure}%
(Since $\iota_{1,\;r}(B_r)$ and $\iota_{r+1,\;p+q}(B_{p+q-r})$ 
commute with each other for any $r=1,\dots,p+q$, the 
detailed placement of crossings inside 
the boxes at the bottoms of the diagrams is irrelevant; 
a similar observation applies to the tops of the diagrams.) 
The braid diagram in Figure \ref{fig:Hopf links}(c) is 
derived from that in Figure \ref{fig:Hopf links}(a) by 
simultaneously reversing the orientations of the 
rightmost $q$ strings and turning those strings, 
so grouped, by (approximately) a half-turn 
around the horizontal axis; its closure is evidently 
$\Hopf{+}{p}{q}$.  Although Figure \ref{fig:Hopf links}(d)%
---derived from Figure \ref{fig:Hopf links}(b) by reversing
the orientation of alternate ones of the last $2q$ strings%
---is not a braid diagram for $q>0$, it has an obvious 
``closure'' that is, again evidently, $\Hopf{+}{p}{q}$.
(The shading in Figure \ref{fig:Hopf links}(d) is for future 
reference.)

\begin{lemma}\label{lemma:qp Hopf links}
Let $p\ge q\ge 0$.
\begin{inparaenum}[\upshape(1)]
\item
$\Hopf{+}{p}{q}$ is quasipositive iff $p\ge 1$ and $q=0$.
\item 
$\Hopf{-}{p}{q}$ is quasipositive iff either
$q=p>0$ or $q=p-1$.
\end{inparaenum}\qed
\end{lemma}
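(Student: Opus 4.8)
The plan is to test quasipositivity through the self-linking/slice-Bennequin inequality, which (unlike the pairwise-linking-number test, which only detects \emph{strong} quasipositivity) is sensitive to genuine quasipositivity. For a braid $\beta\in B_n$ write $\mathrm{sl}(\close\beta)=e(\beta)-n$, where $e$ is the exponent sum, let $\overline{\mathrm{sl}}(L)$ be the maximum of $\mathrm{sl}$ over all braids with closure $L$, and let $\chi_4(L)$ denote the greatest Euler characteristic of a smooth surface in $D^4$, with no closed components, bounding $L$. A quasipositive band representation on $n$ strings with $k$ bands has exponent sum $k$ and bounds a braided Seifert ribbon of Euler characteristic $n-k$ which, being a piece of algebraic curve, realizes $\chi_4$; combined with the equivalence of quasipositive links with $1$--dimensional transverse $\C$--links quoted in Part~\ref{part:introduction} (Rudolph \cite{Rudolph1983}, Boileau and Orevkov \cite{BoileauOrevkov2001}) this gives the one fact I will use for the obstruction: always $\overline{\mathrm{sl}}(L)\le-\chi_4(L)$, \emph{with equality whenever $L$ is quasipositive}.

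For the forward (quasipositive) implications I would argue constructively. In part~(1) the only case is $q=0$, and there $\Hopf{+}{p}{0}=\close{\nabla_p}$ with $\nabla_p=\Delta^2$ a positive braid word, so its closure is quasipositive by definition. In part~(2) the two cases $q=p$ and $q=p-1$ I would read off explicit quasipositive band representations from the reoriented diagrams of Figure~\ref{fig:Hopf links}: starting from the negative full twist (whose closure is $\Hopf{-}{p+q}{0}$) and reversing the orientation of $q$ suitably chosen strings turns every crossing between a reversed and an unreversed string from negative to positive, while the crossings internal to the two blocks are unaffected. When $p-q\le 1$ the blocks are so nearly balanced that the surviving negative crossings cancel after an isotopy and the resulting word is a product of positive bands; the shaded ribbon-immersed surface in Figure~\ref{fig:Hopf links}(d) exhibits such a quasipositive band representation $\brep b$ for $\Hopf{-}{p}{p-1}$ (a disk together with $p-1$ positive bands), and the case $q=p$ is obtained by adjoining one further band.

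For the converse (non-quasipositive) implications --- part~(1) with $q\ge1$, and part~(2) with $0\le q\le p-2$ --- I would show $\overline{\mathrm{sl}}(L)<-\chi_4(L)$ strictly, which by the boxed fact rules out quasipositivity. The reoriented full-twist diagram has $p(p-1)+q(q-1)$ crossings internal to the two blocks and $2pq$ block-to-block crossings, the reorientation reversing the sign of exactly the latter; its writhe is thus $\pm((p-q)^2-(p+q))$, and from this (or from the Morton--Franks--Williams bound on the HOMFLY polynomial) I would extract an upper bound for $\overline{\mathrm{sl}}$. A matching lower bound for $-\chi_4$ comes either from the signature or from an explicit genus-minimizing surface, and in the mirror-balanced cases $q\in\{p-1,p\}$ one even knows $\chi_4$ exactly, since $\chi_4$ is mirror-invariant and is realized by the quasipositive surface constructed in the previous paragraph for $\Hopf{\mp}{p}{q}$. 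Comparing the two bounds should yield the strict inequality precisely when the blocks are unbalanced.

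The main obstacle will be this converse direction: producing an upper bound for $\overline{\mathrm{sl}}$ sharp enough, and a lower bound for $-\chi_4$ strong enough, that the gap opens in \emph{exactly} the stated ranges and closes at the threshold. This is where the delicate boundary case $q=p-2$ versus $q=p-1$ of part~(2) must be decided, and it is also where I must be most careful in passing between writhe, self-linking, and Euler characteristic, since the reoriented diagrams are no longer braid diagrams once $q>0$.
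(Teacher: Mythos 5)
Your forward (``if'') directions and your choice of obstruction are sound, and they essentially track what the paper itself offers: the paper gives no proof of Lemma~\ref{lemma:qp Hopf links} at all (it is asserted with a qed), leaving the positive cases to the explicit diagrams of Figure~\ref{fig:Hopf links} and, implicitly, to the realization of $\Hopf{-}{p}{p-1}$ as the link at infinity $\Clinkinfty{z_0(z_0z_1-1)\cdots}$ used in the proof of Proposition~\ref{prop:overtwisted qp} (that realization, or the analogous one of $\Hopf{-}{p}{p}$ by $p$ disjoint hyperbolas $z_0z_1=c_k$, is in fact a cleaner route than your ``the surviving negative crossings cancel after an isotopy,'' which as written is not an argument; note also that the surface of Figure~\ref{fig:Hopf links}(d) is a disk plus $p-1$ annuli, i.e.\ a band representation with $2(p-1)$ bands on $2p-1$ strings, not ``a disk together with $p-1$ positive bands''). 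Two things you get right that are easy to get wrong: the boxed fact $\overline{\mathrm{sl}}(L)\le-\chi_4(L)$ with equality for quasipositive $L$ is correct (slice--Bennequin plus Rudolph's realization of $\chi_4$ by $\C$--spans), and the pairwise-linking-number test must indeed be avoided --- the quasipositive links of part~(2) themselves have pairs of components with linking number $-1$, so any ``proof'' of the converse by nonnegativity of linking numbers would contradict the lemma.

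The genuine gap is exactly where you flag it: the ``only if'' halves are a plan, not a proof, and they are the entire content of the lemma. To run your obstruction you need, for every $(p,q)$ with $q\ge1$ in part~(1) and every $q\le p-2$ in part~(2), an upper bound on $\overline{\mathrm{sl}}$ strictly below $-\chi_4$; you compute a writhe, but a writhe of one non-braid diagram bounds nothing, and the Morton--Franks--Williams bound you invoke requires the HOMFLY polynomials of partially reoriented torus links, which you do not compute. Worse, your only handle on $\chi_4$ is mirror-invariance plus quasipositivity of the mirror, which works \emph{only} at the threshold $q\in\{p-1,p\}$; off the threshold (e.g.\ $\Hopf{+}{p}{1}$ for large $p$, or $\Hopf{-}{p}{0}$) you have neither quantity. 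This is not a removable bookkeeping issue: the ``soft'' consequences of your boxed fact genuinely do not suffice. For instance, combining slice--Bennequin on each component of a putative $\C$--span with positivity of intersections of distinct curve components rules out $\Hopf{+}{p}{q}$ only when the resulting counting constraints are violated, and for $\Hopf{+}{p}{1}$ with $p$ large those constraints are satisfiable by a connected curve of moderate genus --- so some finer, genuinely computed invariant (MFW-type bounds, signatures, or an adjunction-type argument for mixed orientations) is indispensable. Until those computations are supplied, the non-quasipositivity assertions, and hence the lemma, remain unproven by your proposal.
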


In particular, a Seifert surface diffeomorphic to $D^2$ 
is a fiber surface bounded by the trivial fibered knot
$\Oscr=\Hopf{\pm}{1}{0}$; and a Seifert surface 
$\AKn{\Kscr}{n}$ diffeomorphic to an annulus (see
\fullref{subsubsect:qp annuli} for the notation) is 
a fiber surface iff it is a 
\bydef{${\pm}$-ive Hopf band}
$\AKn{\Oscr}{\mp 1}$ bounded by the fibered 
\bydef{${\pm}$-ive Hopf link} $\Hopf{\pm}{2}{0}=\Hopf{\mp}{1}{1}$.

\subsection{Construction: quasipositive fibered links 
(new details)\label{subsect:qp fibered links}}
\begin{lemma}\label{lemma:fibered Hopf links}
For $p>1$, $\Hopf{\pm}{p}{q}$ is a fibered link iff $p>q$.
\end{lemma}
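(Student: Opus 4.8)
The plan is to pass to the link exterior and exploit the Seifert structure coming from $h_\pm$, reducing fiberedness to a single linking computation. Set $n=p+q$ and let $E\isdefinedas S^3\setminus\Int{N(H_\pm(p,q))}$. Since every component of $\Hopf{\pm}{p}{q}$ is a fiber of $h_\pm$, the restriction of $h_\pm$ exhibits $E$ as the total space of a circle bundle over the $n$--holed sphere $P_n\isdefinedas S^2\setminus(n\text{ open disks})$; as $P_n$ has nonempty boundary this bundle is trivial, so $E\cong P_n\times S^1$ with the $S^1$--factor a Hopf fiber. By Alexander duality $H_1(E;\Z)\cong\Z^n$ is free on the meridians $\mu_1,\dots,\mu_n$, and since any two fibers of $h_\pm$ have linking number $\pm1$, the class of a regular fiber is $[t]=\pm\sum_i\mu_i$. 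Finally, the chosen orientation of $\Hopf{\pm}{p}{q}$ forces the fibration class: any Seifert surface meets each meridian algebraically once, so the only candidate $\phi\in H^1(E;\Z)$ is determined by $\phi(\mu_i)=\epsilon_i$, where $\epsilon_i=+1$ on the $p$ naturally oriented fibers and $\epsilon_i=-1$ on the $q$ reversed ones. Hence $\phi(t)=\pm\sum_i\epsilon_i=\pm(p-q)$, and the whole lemma turns on whether this integer vanishes.

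For the implication $p>q\Rightarrow$ fibered I would simply build the fibration by hand. Choosing $g\from P_n\to S^1$ representing $\phi|_{H^1(P_n)}$ and setting $\Phi\from P_n\times S^1\to S^1\mapsuchthat(x,\mathrm e^{i\theta})\mapsto g(x)\,\mathrm e^{i(p-q)\theta}$ gives a map with nowhere-vanishing $\theta$--derivative (here $p-q\ne0$), hence a submersion; by Ehresmann it is a fibration of the compact manifold-with-boundary $E$ over $S^1$. Because $\phi$ is primitive the fiber $F$ is connected, and because $\phi(\mu_i)=\pm1$ the fibration meets each boundary torus in a single longitude oriented by $\epsilon_i$; thus $\Bd F=\Hopf{\pm}{p}{q}$ with its given orientation and $F$ is a fiber surface.

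For the converse I would use the infinite cyclic cover. When $p=q$ one has $\phi(t)=0$, so $\phi$ annihilates the central $S^1$ and factors as $\bar\phi\circ\pr_*$ through $\pi_1(E)=\pi_1(P_n)\times\Z=F_{n-1}\times\Z$, with $\bar\phi\from F_{n-1}\to\Z$ nonzero. The corresponding infinite cyclic cover is therefore $\tilde E\cong\tilde P_n\times S^1$, whence $H_1(\tilde E;\Z)\cong H_1(\tilde P_n;\Z)\oplus\Z$. Now $p=q$ together with $p>1$ gives $n=2p\ge4$, so $F_{n-1}$ is free of rank $\ge2$; the kernel of a nonzero $\bar\phi\from F_{n-1}\to\Z$ is then free of infinite rank, and $H_1(\tilde P_n;\Z)$ is infinitely generated over $\Z$. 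If $\Hopf{\pm}{p}{p}$ were fibered, $H_1(\tilde E;\Z)$ would be isomorphic to the first homology of a compact fiber surface and hence finitely generated—a contradiction. So $\Hopf{\pm}{p}{p}$ is not fibered, completing the equivalence.

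The main obstacle is the homological bookkeeping of the first paragraph—identifying the correct trivialization and, above all, getting the relation $[t]=\pm\sum_i\mu_i$ and the freeness of the meridians right, since naive ``product coordinates'' on $P_n\times S^1$ tempt one into the false relation $\sum_i\mu_i=0$. It is exactly here that the hypothesis $p>1$ does its work: when $p=q$ it forces the free rank $n-1$ of $\pi_1(P_n)$ to be at least $2$, which is what makes the covering homology infinitely generated. The borderline case $p=q=1$ is excluded precisely because there $\Hopf{\pm}{1}{1}=\Hopf{\mp}{2}{0}$ is the fibered Hopf link, the one genuine exception to ``$p>q$''.
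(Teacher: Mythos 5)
Your proof is correct, but it takes a genuinely different route from the paper's. The paper handles the direction $p>q$ by exhibiting an explicit real-polynomial map $F_{p,q}\from\C^2\to\C$, $(z_0,z_1)\mapsto(z_0^{p}+z_1^{p})(\conj{z_0}^q+2\conj{z_1}^q)$, and invoking calculations in the style of Rudolph's enhanced-Milnor-number paper to show it has an isolated critical point at the origin and restricts to $S^3$ as an open book with binding $\Hopf{+}{p}{q}$ (mirror images give the $\Hopf{-}{p}{q}$ case); for $p=q$ it argues via the disconnected Seifert surface (the $p$ disjoint annuli), so that the relevant complement cannot have the homotopy type of a bouquet of $1$--spheres, and offers as an alternative the Eisenbud--Neumann splice-diagram characterization of fibered solvable links. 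You instead never leave the link exterior: you trivialize it as $P_n\times S^1$ with the circle factor a Hopf fiber, pin down the unique candidate fibration class $\phi$ by its values $\epsilon_i$ on meridians, evaluate it on the fiber class ($\phi(t)=\pm(p-q)$, using $[t]=\pm\sum_i\mu_i$, which you correctly ground in linking numbers rather than in the misleading product coordinates), then build the fibration by hand via $(x,\mathrm e^{i\theta})\mapsto g(x)\,\mathrm e^{i(p-q)\theta}$ when $p>q$, and when $p=q$ rule out fiberedness because the infinite cyclic cover determined by $\phi$ is $\tilde P_n\times S^1$, whose first homology is infinitely generated (kernel of a surjection from a free group of rank $\ge 2$ onto $\Z$), whereas a fiber would force it to be finitely generated. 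Each approach buys something: the paper's explicit model meshes with the machinery it needs elsewhere (the same style of calculation underlies its computation of $\lambda(\Hopf{-}{p}{q})$ and produces the open book directly, which is what the later contact-geometric applications consume), while your argument is elementary and self-contained, makes the uniqueness of the candidate fibration class explicit, shows transparently where the hypothesis $p>1$ enters (only in making the free group have rank $\ge 2$, with $\Hopf{\pm}{1}{1}=\Hopf{\mp}{2}{0}$ as the genuine boundary case), and is in effect a hands-on instance of the Eisenbud--Neumann criterion for Seifert links that the paper cites only as an alternative.
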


\begin{proof}
Calculations in the style of Rudolph \cite{Rudolph1987} 
show that if $p>q$ then the real-polynomial mapping
\begin{equation*}F_{p,q}\from\C^2\to\C
\mapsuchthat(z_0,z_1)\mapsto
(z_0^{p}+z_1^{p})(\conj{z_0}^q+2\conj{z_1}^q)
\end{equation*}
has an isolated critical point at $(0,0)$, and in fact
that $F_{p,q}\restr S^3$ is an open book with binding   
$\Hopf{+}{p}{q}$; the result for $\Hopf{-}{p}{q}$ follows
by taking mirror images. On the other hand, if $p>1$ 
then $\Hopf{\pm}{p}{p}$ is not fibered (it has 
a disconnected Seifert surface, so 
$S^3\setminus H_{\pm}(p,p)$ has non-trivial
second homology and cannot be homotopy equivalent 
to a bouquet of $1$--spheres).  
Alternatively, note that a partially reoriented Hopf link is \bydef{solvable} in the sense of Eisenbud and Neumann
\cite{EisenbudNeumann1985} and then apply the characterization 
of fibered solvable links derived in \cite{EisenbudNeumann1985}
using the calculus of splice diagrams.
\end{proof}

In combination with \ref{lemma:qp Hopf links},
\ref{lemma:fibered Hopf links} yields the following.
\begin{corollary}\label{cor:qp fibered Hpq}
$\Hopf{p}{q}$ is both quasipositive and fibered iff
it is the trivial knot, the positive Hopf link,
$\Hopf{+}{p}{0}$, or $\Hopf{-}{p}{p-1}$.
\qed
\end{corollary}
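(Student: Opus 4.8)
The plan is to read off both directions of the equivalence directly from Lemmas \ref{lemma:qp Hopf links} and \ref{lemma:fibered Hopf links} by a short case analysis on the sign and on the two numerical constraints, being careful that the fibering criterion \ref{lemma:fibered Hopf links} is stated only for $p>1$; the cases $p=1$ must therefore be treated separately, using the ambient isotopies $\Hopf{+}{1}{0}\simeq\Hopf{-}{1}{0}$ (trivial knot) and $\Hopf{\pm}{2}{0}\simeq\Hopf{\mp}{1}{1}$ recorded just before \ref{lemma:qp Hopf links}. Throughout I keep the normalization $p\ge q\ge 0$, $p>0$.

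For the positive links I would argue as follows. Lemma \ref{lemma:qp Hopf links}(1) says $\Hopf{+}{p}{q}$ is quasipositive exactly when $q=0$. If in addition $p>1$, then $p>0=q$, so Lemma \ref{lemma:fibered Hopf links} makes it fibered; and the remaining case $\Hopf{+}{1}{0}$ is the trivial knot, which is fibered. Hence the positive partially reoriented Hopf links that are both quasipositive and fibered are precisely the $\Hopf{+}{p}{0}$ with $p\ge 1$ (these include the trivial knot for $p=1$ and the positive Hopf link for $p=2$).

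For the negative links, Lemma \ref{lemma:qp Hopf links}(2) says $\Hopf{-}{p}{q}$ is quasipositive exactly when $q=p>0$ or $q=p-1$. When $p>1$, the subcase $q=p$ fails the fibering test (it requires $p>q$), while the subcase $q=p-1$ passes it ($p>p-1$); and when $p=1$ the two quasipositive negative links are $\Hopf{-}{1}{1}$, which by the recorded isotopy is the positive Hopf link $\Hopf{+}{2}{0}$ (fibered), and $\Hopf{-}{1}{0}$, the trivial knot (fibered). So the negative links that are both quasipositive and fibered are precisely the $\Hopf{-}{p}{p-1}$ with $p\ge 1$, together with the positive Hopf link $\Hopf{-}{1}{1}$.

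Assembling the two families and using the identifications trivial knot $=\Hopf{+}{1}{0}=\Hopf{-}{1}{0}$ and positive Hopf link $=\Hopf{+}{2}{0}=\Hopf{-}{1}{1}$ recovers exactly the stated list: trivial knot, positive Hopf link, $\Hopf{+}{p}{0}$, or $\Hopf{-}{p}{p-1}$. The only real subtlety — hence the part I would write out most carefully — is the bookkeeping at $p=1$, where Lemma \ref{lemma:fibered Hopf links} does not apply and one must invoke the small-case isotopies to see that $\Hopf{-}{1}{1}$ is the (fibered) positive Hopf link rather than a genuinely negative, non-fibered link.
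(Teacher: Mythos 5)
Your proposal is correct and takes essentially the same route as the paper, which simply declares the corollary to follow "in combination" from Lemmas \ref{lemma:qp Hopf links} and \ref{lemma:fibered Hopf links} without writing out the case analysis. Your explicit bookkeeping at $p=1$ (using the isotopies $\Hopf{+}{1}{0}\simeq\Hopf{-}{1}{0}$ and $\Hopf{\mp}{1}{1}\simeq\Hopf{\pm}{2}{0}$ to cover the cases where the fibering lemma's hypothesis $p>1$ fails) is exactly the detail the paper leaves implicit.
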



Let $\Lscr$ be a simple fibered link in $S^{2n+1}$, 
$\book{p}$ an open book with $\Lscr=\Lscr_\book{p}$.
The \bydef{Milnor number} $\mu(\Lscr)$ is now usually 
defined as the middle Betti number of the fiber 
$2n$--manifold of $\Lscr$, making the following 
properties evident.

\begin{proposition}\label{prop:Milnor number properties}
\begin{inparaenum}[\upshape(1)]
\item\label{subdef:mu non-neg}
$\mu(\Lscr)\ge 0$.
\item\label{subdef:mu achiral}
$\mu(\Mir\Lscr)=\mu(\Lscr)$.
\item\label{subdef:mu(O)=0}
$\mu(\Lscr)=0$ iff the fiber $2n$--manifold is 
contractible; in particular, for $n=1$, $\mu(\Lscr)=0$
iff $\Lscr=\Oscr$ is a trivial knot.
\item\label{subdef:mu is additive}
If $\Lscr_1$ and $\Lscr_2$ are fibered links,
then $\mu(\Lscr_1\connsum\Lscr_2)=
\mu(\Lscr_1)+\mu(\Lscr_2)$.
\end{inparaenum}\qed
\end{proposition}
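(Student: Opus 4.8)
The plan is to read each assertion directly off the definition together with the simplicity hypothesis. Write $F$ for the fiber $2n$--manifold of $\Lscr$, so that $\mu(\Lscr)=b_n(F)$ is its middle Betti number; since $\Lscr$ is simple, $F$ has the homotopy type of a bouquet $\bigvee_{i=1}^{\mu(\Lscr)}S^n$ of $n$--spheres. With this description in hand each part becomes a short verification.

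Part \eqref{subdef:mu non-neg} is immediate, a Betti number being the rank of a homology group. For part \eqref{subdef:mu achiral} I would observe that forming the mirror $\Mir\Lscr=(L,-M)$ amounts to post-composing the defining fibration with an orientation-reversal of $S^1$ (equivalently, replacing $\book{b}$ by $\conj{\book{b}}$ in an associated open book); this carries each page to the \emph{same} underlying $2n$--manifold $F$, only reoriented. As $b_n$ is a diffeomorphism invariant insensitive to orientation, $b_n(-F)=b_n(F)$, and the two Milnor numbers agree.

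For part \eqref{subdef:mu(O)=0} I would argue straight from the bouquet model: $\mu(\Lscr)=b_n(F)=0$ precisely when the bouquet is empty, i.e.\ when $F\simeq\ast$ is contractible (for $n\ge 1$ a contractible space has vanishing $H_n$, and conversely an empty wedge is a point). For the refinement when $n=1$ I would invoke the fact recorded in \ref{subsubsect:fibered links, open books, and contact structures} that a fiber surface in $S^3$ is connected: a compact, connected, orientable, contractible surface-with-boundary must be a disk $D^2$, whose boundary is the trivial knot $\Oscr$, and conversely the disk is the fiber surface of $\Oscr$.

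For part \eqref{subdef:mu is additive} I would use that the fiber manifold of a connected sum of fibered links is the boundary-connected sum $F_1\natural F_2$ of their fiber manifolds---precisely the compatibility of fiberedness with boundary-connected sum already noted in \ref{subsubsect:fibered links, open books, and contact structures}. Since $F_1\natural F_2$ deformation retracts onto the wedge $F_1\vee F_2$, additivity of reduced homology over wedges gives $b_n(F_1\natural F_2)=b_n(F_1)+b_n(F_2)$ for $n\ge 1$, which is exactly the claimed additivity of $\mu$. None of these steps is a genuine obstacle---this is why the proposition is flagged as ``evident''---but if one point deserves care it is the identification in \eqref{subdef:mu is additive} of the fiber of the connected sum with $F_1\natural F_2$, together with the homotopy equivalence $F_1\natural F_2\simeq F_1\vee F_2$; both are standard and are the only inputs beyond the definition.
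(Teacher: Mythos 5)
Your proof is correct and takes essentially the same approach as the paper: the paper offers no argument beyond declaring these properties ``evident'' from the definition of $\mu(\Lscr)$ as the middle Betti number of the fiber $2n$--manifold (which, by simplicity, is a bouquet of $n$--spheres), and your verifications---nonnegativity of Betti numbers, orientation-insensitivity under mirroring, the contractibility criterion with the disk/unknot refinement for $n=1$, and additivity via fiber-of-connected-sum equals boundary-connected sum, which is homotopy equivalent to a wedge---are exactly the routine details being elided. Nothing needs correcting.
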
  
Originally, however, $\mu$ was defined by Milnor 
\cite{Milnor1968} (in his context of links 
$\Clinksing{\zz}{f}$ of isolated singular points 
of complex hypersurfaces $V(f)\sub\C^{n+1}$, $n\ge 1$,
where $\book{p}=f\restr\SPH{2n+1}{\zz}{\epsilon}$)  
as the degree of a map $S^{2n+1}\to S^{2n+1}$ naturally
associated to $\book{p}$, while the Betti number 
characterization was a theorem to be proven
and \ref{prop:Milnor number properties}%
\eqref{subdef:mu non-neg}--\eqref{subdef:mu(O)=0}
were its corollaries (in Milnor's context, 
\eqref{subdef:mu is additive} arises only trivially). 

For $n=1$, Rudolph \cite{Rudolph1987} adapted Milnor's 
original approach to $\mu$ to define for every fibered 
classical link $\Lscr$---in terms of any open book 
$\book{p}$ with $\Lscr=\Lscr_\book{p}$---a pair 
$(L_\Lscr,R_\Lscr)$ of maps $S^3\to S^2$ naturally 
associated to $\book{p}$.  In \cite{Rudolph1987}, the 
pair $(\lambda(\Lscr),\rho(\Lscr))$ of Hopf invariants
of $(L_\Lscr,R_\Lscr)$ was called the \bydef{enhanced 
Milnor number} of $\Lscr$, and shown to have the 
following properties.  

\begin{proposition}\label{prop:lambda facts}
\begin{inparaenum}[\upshape(1)]
\item\label{subprop:lambda plus rho}
$\lambda(\Lscr)+\rho(\Lscr)=\mu(\Lscr)$.
\item\label{subprop:lambda mirrored}
$\rho(\Lscr)=\lambda(\Mir\Lscr)$.
\item\label{subprop:lambda 0 on links of singularities}
$\lambda(\Clinksing{\zz}{f})=0$ if $f\from\C^2\to\C$ is
a holomorphic function with an isolated critical point (or
regular point) at $\zz\in\C^2$.
\item\label{subprop:lambda additivity}
$\lambda$ is additive over connected sum: $\lambda(\Lscr_1\connsum\Lscr_2)=
\lambda(\Lscr_1)+\lambda(\Lscr_2)$.
\end{inparaenum}
\qed
\end{proposition}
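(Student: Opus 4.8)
The plan is to work from the definition of the pair $(\lambda,\rho)$ given in Rudolph \cite{Rudolph1987}, which I recall as follows. Realize $\Lscr$ as the fibered link cut out by a smooth map $f=(f_1,f_2)\from(\R^4,0)\to(\R^2,0)$ having an isolated critical point at $0$, so that on a small sphere $S^3$ the two gradients $\nabla f_1,\nabla f_2$ are everywhere linearly independent. Sending each point of $S^3$ to the oriented $2$--plane they span defines a Gauss map $S^3\to\widetilde{\mathrm{Gr}}_2(\R^4)$, and under the classical diffeomorphism $\widetilde{\mathrm{Gr}}_2(\R^4)\cong S^2\times S^2$ (which splits a unit decomposable $2$--vector into its self-dual and anti-self-dual parts in $\Lambda^{+}\oplus\Lambda^{-}$) the two coordinate projections are maps $L_\Lscr,R_\Lscr\from S^3\to S^2$ whose Hopf invariants are by definition $\lambda(\Lscr)$ and $\rho(\Lscr)$. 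That this is independent of the choice of $f$ up to homotopy is part of \cite{Rudolph1987} and will be taken as given; first I would record this setup carefully, checking in particular that the Gauss map extends over the binding, where $df$ still has rank $2$ because $0$ is a regular value.

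For \eqref{subprop:lambda plus rho}, the essential input is Milnor's description of $\mu(\Lscr)$ as the middle Betti number of the fiber, together with its interpretation through the gradient data \cite{Milnor1968}. I would compute $\lambda$ and $\rho$ as integrals over $S^3$ of primitives of the area forms pulled back by $L_\Lscr$ and $R_\Lscr$, and then show that their sum equals the index of the gradient frame---equivalently $\mu$---by tracking how the self-dual and anti-self-dual parts of the Gauss map account, respectively, for the two halves of that index. Reconciling this decomposition with the topological value $\mu=\dim H_1(F)$, with all orientations and normalizations pinned down, is the step I expect to be the main obstacle, since it is exactly where the splitting of the Milnor number is created.

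For \eqref{subprop:lambda 0 on links of singularities}, suppose $f$ is holomorphic with an isolated critical point. Then $\nabla f_2=J\nabla f_1$ for the ambient complex structure $J$ (the Cauchy--Riemann equations), so every oriented span is a complex line; the $2$--vectors of complex lines fill out exactly one factor of $S^2\times S^2$ while staying at a constant point of the other. Hence one of $L_\Lscr,R_\Lscr$ is constant, and the corresponding Hopf invariant vanishes; with conventions arranged so that this is $L_\Lscr$, one gets $\lambda(\Clinksing{\zz}{f})=0$ (and then $\rho=\mu$ by \eqref{subprop:lambda plus rho}, recovering Milnor's original computation).

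For \eqref{subprop:lambda mirrored} and \eqref{subprop:lambda additivity} I would argue by symmetry and locality. Passing to $\Mir\Lscr$ reverses the orientation of $\R^4$, which interchanges $\Lambda^{+}$ and $\Lambda^{-}$ and hence the two factors of $S^2\times S^2$, so $L_{\Mir\Lscr}=R_\Lscr$ up to a reflection of the target; the sign from reversing the orientation of the domain $S^3$ and the sign from that reflection cancel, giving $\lambda(\Mir\Lscr)=\rho(\Lscr)$. For additivity, realize $\Lscr_1\connsum\Lscr_2$ by a map $f$ agreeing with the local models for $\Lscr_1$ and $\Lscr_2$ inside two disjoint balls and a submersion onto a half-plane outside them; then $L_{\Lscr_1\connsum\Lscr_2}$ is homotopic to the composite of the pinch map $S^3\to S^3\vee S^3$ with $L_{\Lscr_1}\vee L_{\Lscr_2}$, and additivity of the Hopf invariant under this pinch yields $\lambda(\Lscr_1\connsum\Lscr_2)=\lambda(\Lscr_1)+\lambda(\Lscr_2)$. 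The only real content here is arranging the geometry so that both Gauss maps have disjoint supports, which follows from the collar description of the connected sum of open books.
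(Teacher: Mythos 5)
First, a point of comparison: the paper does not prove this proposition at all --- it is stated with a terminal box, the four properties being quoted from \cite{Rudolph1987}, where the pair $(\lambda,\rho)$ is defined and where these facts are the theorems being cited. So your proposal has to be measured against that source rather than against any argument in the paper. Your recollection of the definitional framework (realize $\Lscr$ as the link of an isolated critical point of a smooth map $\R^4\to\R^2$, take the Gauss map of the gradient span into $\widetilde{\mathrm{Gr}}_2(\R^4)\cong S^2\times S^2$ split into self-dual and anti-self-dual parts, and let $\lambda,\rho$ be the Hopf invariants of the two projections) is essentially right, and two of your four arguments are sound in outline: for \eqref{subprop:lambda 0 on links of singularities}, the Cauchy--Riemann equations do force the gradient span to be a complex line at every point, so one projection is constant and its Hopf invariant vanishes; and for \eqref{subprop:lambda additivity}, locality of the construction plus the pinch map $S^3\to S^3\vee S^3$ and the fact that the Hopf invariant is an isomorphism $\pi_3(S^2)\to\Z$ (hence additive on homotopy classes) gives additivity.

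The genuine gaps are at \eqref{subprop:lambda plus rho} and \eqref{subprop:lambda mirrored}. For \eqref{subprop:lambda plus rho} you give no argument: the phrase ``the index of the gradient frame---equivalently $\mu$'' assumes exactly what must be proved. The frame map has a class in $\pi_3(V_2(\R^4))\cong\Z\oplus\Z$, with no preferred single ``index,'' and identifying the particular combination $\lambda+\rho$ with the first Betti number of the fiber of an \emph{arbitrary} fibered link --- where Milnor's degree-theoretic argument for complex singularities is unavailable --- is the main theorem of \cite{Rudolph1987}, not a normalization to be checked; you correctly flag it as the main obstacle, but flagging is not proving. For \eqref{subprop:lambda mirrored}, your sign bookkeeping is wrong: under postcomposition with a degree-$d$ map of the target $S^2$ the Hopf invariant is multiplied by $d^2$, so an orientation-reversing reflection of the target contributes $+1$, not $-1$, and there is nothing to cancel the $-1$ coming from the orientation-reversing diffeomorphism of the domain $S^3$. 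Indeed, if one carries out the computation carefully with the symmetric conventions you set up (both $\lambda$ and $\rho$ being Hopf invariants, for one fixed orientation of $S^3$, of the two projections of one Gauss map), the mirror operation yields $\lambda(\Mir\Lscr)=-\rho(\Lscr)$; together with \eqref{subprop:lambda plus rho} and the mirror-invariance of $\mu\ge 0$ this is a contradiction whenever $\mu>0$. The moral is that \eqref{subprop:lambda mirrored} is true only because of orientation conventions built into the definitions of $L_\Lscr$ and $R_\Lscr$ (how the two factors of $S^2\times S^2$, the page orientations, and the mirror of an open book are normalized against one another), conventions your proposal never pins down; this is precisely the content that \cite{Rudolph1987} supplies, and ``by symmetry the signs cancel'' does not.
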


Neumann and Rudolph 
\cite{NeumannRudolph1987,
NeumannRudolph1988,
NeumannRudolph1990} named $\lambda(\Lscr)$ (and its analogue
for fibered links of higher odd dimension, an element of 
$\Z/2\Z$ rather than $\Z$) the \bydef{enhancement} of $\Lscr$.
They introduced a notion of an open book $\book{b}$
(or its fibered link $\Lscr_\book{b}$) \bydef{unfolding} 
into open books $\book{b_i}$ (or their fibered links
$\Lscr_\book{b_i}$), denoted by $\book{b} = 
\mathop\Upsilon_i \book{b_i}$ (or $\Lscr_\book{b} = 
\mathop\Upsilon_i \Lscr_\book{b_i}$);
with their definition, 
$\lambda\of{\Lscr_{\book{b_1}\mathop\Upsilon\book{b_2}}}
=\lambda\of{\Lscr_{\book{b_1}}}+\lambda\of{\Lscr_{\book{b_2}}}$
is tautologous.  They also show that unfolding includes 
Murasugi sum in the sense that for any open books 
$\book{b}_1, \book{b}_2$ on $S^3$, pages $F_i$ of 
$\book{b}_i$, and Murasugi sum $F=F_1\plumb{}F_2$, 
there exists an unfolding  $\book{b}=\book{b}_1\mathop\Upsilon\book{b}_2$
with $F$ as a page.  The generalization of
Proposition \ref{prop:lambda facts}%
\eqref{subprop:lambda additivity} from connected sum
to Murasugi sum follows immediately.

In \cite{NeumannRudolph1990} Neumann and 
Rudolph applied the calculus of splice diagrams 
\cite{EisenbudNeumann1985} to calculations of the enhancement
for various classes of fibered links.  In particular,
Proposition 9.3 of \cite{NeumannRudolph1990} (stated 
for a pair of coaxial torus \bydef{knots} but true 
for a pair of coaxial torus links in general) 
includes the following calculation, which can also 
be derived by a pleasant exercise using the techniques 
of \cite{Rudolph1987}.

\begin{proposition}\label{prop:lambda for Hpq}
For $p>q\ge0$, $\lambda(\Hopf{-}{p}{q})=2q-q^2$.\qed
\end{proposition}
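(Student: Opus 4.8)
The plan is to compute $\lambda(\Hopf{-}{p}{q})$ directly from the explicit open book produced in the proof of \ref{lemma:fibered Hopf links}, following the ``pleasant exercise'' alluded to above, and to keep the splice-diagram calculus of Eisenbud and Neumann \cite{EisenbudNeumann1985} used in \cite{NeumannRudolph1990} in reserve as an independent cross-check. Write $F_{p,q}=P\cdot\conj{Q}$ with $P(z_0,z_1)=z_0^p+z_1^p$ and $Q(z_0,z_1)=z_0^q+2z_1^q$, so that $F_{p,q}/\abs{F_{p,q}}$ (or, after conjugation, $\conj{F_{p,q}}/\abs{F_{p,q}}$) presents the relevant partially reoriented Hopf link $\Hopf{-}{p}{q}=\Mir\Hopf{+}{p}{q}$ as its binding. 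Recalling from \cite{Rudolph1987} that $\lambda(\Lscr)$ is the Hopf invariant of the map $L_\Lscr\from S^3\to S^2$ built from the holomorphic and antiholomorphic halves $\partial F_{p,q}$, $\bar\partial F_{p,q}$ of $dF_{p,q}$, the whole computation reduces to evaluating one Hopf invariant of a gradient-type map attached to a product of a holomorphic factor and an antiholomorphic factor.

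The key structural observation I would exploit is that this Hopf invariant is \emph{bilinear in the fibers}. The fibers carrying the unreoriented orientation, together with the ambient holomorphic bulk, reproduce (a small deformation of) the link of a holomorphic singularity and so contribute nothing, by \ref{prop:lambda facts}\eqref{subprop:lambda 0 on links of singularities}; all of $\lambda$ is carried by the $q$ reoriented fibers, each contributing a fixed self-term $a$ and each pair of them a fixed linking-term $b$. Granting this decomposition one gets $\lambda(\Hopf{-}{p}{q})=aq+b\binom{q}{2}$, manifestly independent of $p$, so it remains only to pin down the two constants. These I would read off from the smallest cases $q=0,1,2$, computed by hand from the open book: the case $q=0$ gives $\lambda=0$ (the purely holomorphic situation) and so fixes nothing, while $q=1$ and $q=2$ determine $a$ and $b$ and force $a=1$, $b=-2$, whence $aq+b\binom{q}{2}=2q-q^2$ as claimed.

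To make the bilinearity rigorous I would exhibit the Hopf invariant as a self-linking number of a generic fiber of $L_\Lscr$ and split it, by general position, into a diagonal part that is linear in the number $q$ of reoriented fibers (the source of the $2q$) and an off-diagonal pairwise-linking part that is quadratic (the source of the $-q^2$); the antiholomorphic factor of degree $q$ is precisely what produces both terms, the holomorphic factor contributing zero throughout. Alternatively, since \ref{lemma:fibered Hopf links} already records that $\Hopf{\pm}{p}{q}$ is solvable, one can write down its splice diagram---a single Seifert node carrying $p+q$ arrowheads, $q$ of them sign-reversed---and feed the node- and edge-determinant data into the enhancement formula of \cite{NeumannRudolph1990}; the same $aq+b\binom{q}{2}$ pattern emerges, the quadratic term arising from the linking numbers across the node.

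The main obstacle, and the only genuinely delicate point, is the orientation and sign bookkeeping in the Hopf-invariant computation: one must show that a single reoriented fiber contributes exactly $+1$ and a pair exactly $-2$, rather than $-1$ and $+2$ (which would amount to computing $\rho$ in place of $\lambda$). This is where the conventions fixing $L_\Lscr$ versus $R_\Lscr$, and the induced orientations of the $h_\pm$-fibers, must be tracked without error. I would cross-check the outcome in two ways: against the identity $\lambda(\Hopf{+}{p}{q})=\mu(\Hopf{+}{p}{q})-(2q-q^2)$ coming from \ref{prop:lambda facts}\eqref{subprop:lambda plus rho}--\eqref{subprop:lambda mirrored}, with $\mu(\Hopf{+}{p}{q})$ read off from the Euler characteristic of a page of the open book; and against the quasipositive, fibered case $q=p-1$ of \fullref{cor:qp fibered Hpq}, whose ribbon-immersed spanning surface is displayed in Figure \ref{fig:Hopf links}(d).
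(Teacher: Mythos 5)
Your proposal and the paper part ways at the very first step: the paper gives no computation at all, but obtains the statement by citation---Proposition 9.3 of \cite{NeumannRudolph1990}, proved there by the splice-diagram calculus of \cite{EisenbudNeumann1985} (together with the observation that the coaxial-torus-\emph{knot} case stated there extends to links)---whereas what you attempt is the ``pleasant exercise'' via \cite{Rudolph1987} that the paper mentions but does not carry out. That is a legitimate alternative route, but as written it has a gap exactly where the content lies. The ansatz $\lambda(\Hopf{-}{p}{q})=aq+b\binom{q}{2}$, with $a$ and $b$ independent of $p$ and of the choice of fibers, is not a formal consequence of anything you establish; it is essentially a restatement of the proposition. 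Your justification---that the unreoriented fibers ``together with the ambient holomorphic bulk'' contribute nothing, by \ref{prop:lambda facts}\eqref{subprop:lambda 0 on links of singularities}---misapplies that fact: it is a statement about $\lambda$ of the \emph{whole} link of an isolated singular point of a holomorphic function, and it neither defines nor controls a ``contribution'' of a subset of components to the Hopf invariant of the non-holomorphic map $L_{\Lscr}$ attached to $P\conj{Q}$. In particular, nothing you say excludes cross-terms between reoriented and unreoriented fibers, which would introduce a dependence on $p$ (for instance a $pq$-term); ruling those out is precisely what the additivity of $\lambda$ under unfolding/splicing in \cite{NeumannRudolph1987,NeumannRudolph1990} accomplishes, and without it the interpolation through $q=1,2$ proves nothing. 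Moreover, the two base cases that would fix $a$ and $b$ are only promised (``computed by hand''), never exhibited, and both proposed cross-checks are empty as stated: the identity $\lambda(\Hopf{+}{p}{q})=\mu(\Hopf{+}{p}{q})-(2q-q^2)$ is just \ref{prop:lambda facts}\eqref{subprop:lambda plus rho}--\eqref{subprop:lambda mirrored} rewritten, so it checks nothing absent an independent computation of $\lambda(\Hopf{+}{p}{q})$, and the splice-diagram computation---which is the paper's actual proof---is again only asserted (``the same pattern emerges'').

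The sign bookkeeping you defer to the end is, in this problem, not a formality, and your own gloss shows why. You write that ``the case $q=0$ gives $\lambda=0$ (the purely holomorphic situation)''; but by the paper's definitions $\Hopf{-}{p}{0}=\Mir\Hopf{+}{p}{0}$ is the \emph{mirror} of the link $\Clinksing{\zero}{z_0^p+z_1^p}$, and \ref{prop:lambda facts}\eqref{subprop:lambda plus rho}--\eqref{subprop:lambda 0 on links of singularities} then give $\lambda(\Hopf{-}{p}{0})=\mu(\Hopf{+}{p}{0})=(p-1)^2$, not $0$. So the $q=0$ endpoint of the formula and the paper's own $\lambda$-facts already pull in opposite directions, and deciding which of $\lambda$, $\rho$ vanishes on which side of the mirror---together with what orientation convention $\Hopf{-}{p}{q}$ actually carries---is exactly the question your plan postpones; it must be settled \emph{before} any fiber-by-fiber computation, since it changes what the statement asserts. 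If you want to salvage your approach, the honest version of your ``localization'' already exists in the literature: present $\Hopf{-}{p}{q}$ by a one-node splice diagram, invoke the additivity of the enhancement under unfolding/splicing from \cite{NeumannRudolph1987,NeumannRudolph1990}, and evaluate the node contribution; that does yield the linear-plus-quadratic dependence on $q$ you posited, but at that point you have reproduced the paper's citation rather than supplied an independent proof.
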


At an Oberwolfach Research-in-Pairs-Workshop on 
$3$--manifolds and singularities convened (for 
a large value of ``pair'') by N.~A'Campo in 2000, 
several participants noticed simultaneously 
that when the map $L_{\Lscr_\book{p}}\from S^3\to S^2$ 
is taken to be a field of oriented tangent $2$--planes 
on $S^3$ (as in \cite{Rudolph1987}), it is clearly
isotopic to (and arbitrarily close to) a 
contact structure $\xi_{\book{b}}$ on $S^3$ 
for which $\xi_{\book{b}}$ and $\book{b}$ are 
\bydef{compatible} in the sense of Thurston and 
Winkelnkemper \cite{ThurstonWinkelnkemper1975} 
(alternatively, $\xi_{\book{b}}$ is \bydef{supported}
by $\book{b}$ in the sense of Giroux \cite{Giroux2002}) and  
the Hopf invariant of $\xi_\book{b}$ (as a plane field)
equals $\lambda(\Lscr_\book{b})$.

\begin{theorem}[Giroux \cite{Giroux2002}; see 
also Giroux and Goodman \cite{GirouxGoodman2006}]
\label{thm:Giroux-Goodman theorem}
Every contact structure on $S^3$ is ambient isotopic 
to a contact structure $\xi_{\book{b}}$ compatible with 
some open book $\book{b}$ on $S^3$; $\xi_{\book{b}_0}$ and 
$\xi_{\book{b}_1}$ are homotopic iff the fiber
surfaces of $\Lscr_{\book{b}_0}$ and $\Lscr_{\book{b}_1}$ 
are stably equivalent under the operation 
$F\mapsto F\plumb{P}\AKn{\Oscr}{-1}$
of {positive Hopf plumbing}. 
\qed
\end{theorem}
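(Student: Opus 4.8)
The plan is to split the statement into an existence half and a classification half, routing the classification through the enhancement $\lambda$ of \S\ref{subsect:qp fibered links}, whose value on $\Lscr_\book{b}$ was identified just above with the Hopf invariant of $\xi_\book{b}$. For existence I would first recall the Thurston--Winkelnkemper construction \cite{ThurstonWinkelnkemper1975}: from any open book $\book{b}$ on $S^3$ one interpolates between a contact form near the binding and the pullback of the fibration's angular form on the pages, producing a compatible $\xi_\book{b}$ (unique up to isotopy). The converse---that every contact structure is isotopic to some $\xi_\book{b}$---is the existence half of the Giroux correspondence \cite{Giroux2002}, obtained by extracting an open book from a contact cell decomposition; on $S^3$ one may alternatively write down explicit open books realizing each isotopy class.

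For the classification I would treat the two implications separately, the forward one being routine. Since positive Hopf plumbing is a Murasugi sum with $\AKn{\Oscr}{-1}$, additivity of $\lambda$ over Murasugi sums (the extension of Proposition~\ref{prop:lambda facts}\eqref{subprop:lambda additivity} recorded after it) gives
\begin{equation*}
\lambda(\Bd(F\plumb{P}\AKn{\Oscr}{-1}))=\lambda(\Bd F)+\lambda(\Hopf{+}{2}{0})=\lambda(\Bd F),
\end{equation*}
the last step because $\Hopf{+}{2}{0}$ is the link of the node and so has $\lambda=0$ by Proposition~\ref{prop:lambda facts}\eqref{subprop:lambda 0 on links of singularities}. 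Thus $\lambda(\Lscr_\book{b})$ is unchanged under positive Hopf plumbing; as this enhancement equals the Hopf invariant of $\xi_\book{b}$, and as co-oriented $2$--plane fields on $S^3$ are classified up to homotopy by their Hopf invariant in $\pi_3(S^2)\cong\Z$, stably equivalent pages yield homotopic contact structures.

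The reverse implication is the crux and packages the two cited theorems. Given $\xi_{\book{b}_0}$ and $\xi_{\book{b}_1}$ with equal Hopf invariant, hence homotopic as plane fields, I would first invoke Eliashberg's classification (Theorem~\ref{thm:Eliashberg's theorem}): homotopic \emph{overtwisted} contact structures on $S^3$ are already isotopic, so the two structures are isotopic unless one lands in the distinguished tight class, where Bennequin's theorem (Theorem~\ref{thm:Bennequin's theorem}) singles out $\xi_0$ and one checks directly that the trivial open book (together with its positive Hopf plumbings) realizes it. Once the structures are known to be isotopic, the Giroux--Goodman stabilization theorem \cite{GirouxGoodman2006} supplies a common positive stabilization of $\book{b}_0$ and $\book{b}_1$---equivalently, a sequence of positive Hopf plumbings after which the two pages become ambient isotopic---which is precisely the asserted stable equivalence.

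The hard part is this last step. Proving that isotopic contact structures are carried by open books with a common positive stabilization needs the full Giroux correspondence and its convex-surface-theory underpinnings (contact cell decompositions, the passage from Legendrian ribbons to pages, and the control that shows every contact isotopy is realized by stabilizations); none of this is elementary, and it is the substance of \cite{GirouxGoodman2006}. A secondary but genuine subtlety is the bookkeeping in the trivial homotopy class, where the tight $\xi_0$ and the overtwisted structure sharing its Hopf invariant must be kept apart: here one leans on Bennequin and Eliashberg to guarantee that the tight representative forms its own stable-equivalence class under positive Hopf plumbing while all the overtwisted ones coalesce into a single class.
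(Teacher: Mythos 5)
Your existence half and your forward implication are fine: positive Hopf plumbing is a Murasugi sum with $\AKn{\Oscr}{-1}$, whose boundary $\Hopf{+}{2}{0}$ is the link of a node, so $\lambda$ is unchanged by \ref{prop:lambda facts}\eqref{subprop:lambda 0 on links of singularities} together with Murasugi-sum additivity of $\lambda$, and oriented plane fields on $S^3$ are classified up to homotopy by the Hopf invariant. (For calibration: the paper itself gives no argument here at all---the statement is a citation of \cite{Giroux2002} and \cite{GirouxGoodman2006} with its proof deferred to those works---so what is really being tested is whether your outline proves the statement as written.)

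The genuine gap is your reverse implication, and it cannot be closed, because the step ``homotopic $\Rightarrow$ isotopic'' via \ref{thm:Eliashberg's theorem} is available only when both structures are overtwisted, and in the homotopy class of $\xi_0$ the implication you want is actually \emph{false}. Let $\book{b}_0$ be the trivial open book (supporting the tight $\xi_0$) and let $\book{b}_1$ be an open book supporting the overtwisted structure with the same Hopf invariant as $\xi_0$ (such a $\book{b}_1$ exists by the existence half). These plane fields are homotopic; but if the pages of $\book{b}_0$ and $\book{b}_1$ had a common positive stabilization, then---since plumbing on $\AKn{\Oscr}{-1}$ preserves the supported contact structure up to isotopy, and an open book supports a unique contact structure up to isotopy---$\xi_0$ would be isotopic to an overtwisted structure, contradicting \ref{thm:Bennequin's theorem}. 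Your closing paragraph in fact concedes exactly this counterexample: saying that the tight representative ``forms its own stable-equivalence class under positive Hopf plumbing'' while the overtwisted ones form another is the \emph{negation} of the asserted ``iff'' within that homotopy class, not a bookkeeping device for proving it. The underlying problem is that the statement conflates the two cited results, and your outline inherits the conflation: Giroux's correspondence \cite{Giroux2002} says $\xi_{\book{b}_0}$ and $\xi_{\book{b}_1}$ are \emph{isotopic} iff the pages admit a common \emph{positive} stabilization, while Giroux--Goodman \cite{GirouxGoodman2006} says the pages admit a common stabilization by Hopf bands of \emph{both} signs iff the plane fields are \emph{homologous} (a condition that is automatic on $S^3$, which is how Harer's conjecture follows, and which is strictly weaker than homotopic since negative plumbing shifts $\lambda$). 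Your strategy becomes a proof only after amending the statement to one of these two correct pairings; no argument can establish it with ``homotopic'' paired with positive-only plumbing.
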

 
\begin{theorem}[\cite{Rudolph2005}; see also
Hedden \cite{Hedden2008}]%
\label{thm:qp fiber surfaces} 
A fibered link $\Lscr=\Lscr(\book{b})$ in $S^3$ is 
strongly quasipositive iff up to ambient isotopy 
$\book{b}$ is compatible with the standard, contact 
structure $\xi_0$ on $S^3$. \qed
\end{theorem}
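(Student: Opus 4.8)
The plan is to prove both implications by routing everything through the contact structure $\xi_{\book{b}}$ supported by $\book{b}$, and then invoking the dichotomy of \ref{thm:Eliashberg's theorem}: on $S^3$, the structure $\xi_0$ is the only tight one, so ``$\book{b}$ is compatible with $\xi_0$'' is the same as ``$\xi_{\book{b}}$ is tight.'' The forward (easy) implication I would obtain from Giroux's stable-equivalence criterion (\ref{thm:Giroux-Goodman theorem}) together with the plumbing calculus of \fullref{subsubsect:qp Murasugi sums}; the reverse (hard) implication I would obtain from the Stein-fillability of quasipositive $\C$--spans (\ref{rmk:Stein fillable}).

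For the implication ``compatible with $\xi_0$ $\Rightarrow$ strongly quasipositive,'' I would argue as follows. The trivial open book with page $D^2$ and unknotted binding also supports $\xi_0$, so \ref{thm:Giroux-Goodman theorem} identifies the fiber surface $F$ of $\Lscr$ with one stably equivalent to $D^2$ under positive Hopf plumbing; that is, after plumbing finitely many positive Hopf bands $\AKn{\Oscr}{-1}$ to each, $F$ and $D^2$ become ambient isotopic. The surface built from $D^2$ by positive Hopf plumbings is a Hopf-plumbed surface each of whose plumbands is $\abs{\AKn{\Oscr}{-1}}=\strip{\Oscr}{2}$, hence is quasipositive by \ref{thm:qp Murasugi sum theorem} and \ref{rmks:qp Murasugi sum}\eqref{rmk:qp Hopf-plumbed fiber surfaces}. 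Therefore the isotopic surface $F\plumb{}\AKn{\Oscr}{-1}\plumb{}\cdots\plumb{}\AKn{\Oscr}{-1}$ is a quasipositive fiber surface in which $F$ sits as a full subsurface (each plumbing leaves the complement of $F$ with no contractible component), so \ref{prop:full on qp Ss} gives that $F$ is quasipositive, and hence $\Lscr=(\Bd F,S^3)$ is strongly quasipositive.

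For the converse, I would first show that the fiber surface $F$ of a strongly quasipositive fibered link is itself quasipositive: a quasipositive Seifert surface for $\Lscr$ is, by \ref{prop:full on qp Ss}, full in a quasipositive fiber surface, and for a fibered link this should force the unique fiber surface to be quasipositive. Pushing $\Int{F}$ into $\Int{D^4}\subset\C^2$ then realizes $F$ as the non-singular $\C$--span $\Cspan{f}{D^4}$ of a transverse $\C$--link $\Clink{f}{S^3}$ with $\lman{f}{S^3}=\Lscr$; the Milnor fibration of this $\C$--link is an open book for $\Lscr$ with page $F$, so by uniqueness of the fibration it agrees with $\book{b}$, and it is by construction compatible with the contact structure $\xi_0$ induced on $S^3=\Bd D^4$ by the complex structure. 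Since $\Cspan{f}{D^4}$ is a Stein filling of $(S^3,\xi_{\book{b}})$, \ref{rmk:Stein fillable} shows $\xi_{\book{b}}$ is tight, whence $\xi_{\book{b}}\cong\xi_0$ by \ref{thm:Bennequin's theorem} and \ref{thm:Eliashberg's theorem}; thus $\book{b}$ is compatible with $\xi_0$.

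The hard part, I expect, is precisely the geometric claim invoked in the converse: that the fiber surface of a strongly quasipositive fibered link is quasipositive, together with the identification of $\book{b}$ with the Milnor fibration of the resulting $\C$--span. Strong quasipositivity furnishes only \emph{some} quasipositive Seifert surface, which a priori may differ from the fiber surface, so the reduction to $F$ must exploit minimality of the fiber surface and the characterization in \ref{prop:full on qp Ss}; and one must verify that pushing $F$ into $\C^2$ yields a complex curve whose ambient fibration is genuinely the given open book rather than merely an open book with the same binding. These two points are where the substance lies, the contact-topological dichotomy and the plumbing calculus being by comparison formal.
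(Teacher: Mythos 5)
The paper offers no proof of this statement to compare against: Theorem \ref{thm:qp fiber surfaces} is quoted from \cite{Rudolph2005} and Hedden \cite{Hedden2008}, with the \textit{qed} marking it as imported, so your proposal can only be judged on its own merits. Your first implication (compatibility with $\xi_0$ implies strong quasipositivity) is essentially the standard argument and is sound: compatibility makes $\xi_{\book{b}}$ isotopic, hence homotopic, to the structure supported by the trivial disk open book, so Theorem \ref{thm:Giroux-Goodman theorem} gives a positive-Hopf-plumbing stabilization of the page $F$ that is ambient isotopic to a positive-Hopf-plumbing stabilization of $D^2$; the latter is quasipositive by Theorem \ref{thm:qp annuli} and Theorem \ref{thm:qp Murasugi sum theorem}, and then $F$ is quasipositive. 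For that last ``deplumbing'' step, though, you should invoke the \emph{iff} in Theorem \ref{thm:qp Murasugi sum theorem} rather than \ref{prop:full on qp Ss}: your parenthetical claim that $F$ sits as a full subsurface of its iterated stabilization is true but not free (one must shrink $F$ into the interior and check that no complementary component is a disk, and fullness does not formally compose under iteration), whereas the Murasugi sum theorem is stated exactly as the needed equivalence.

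The converse is where the proposal genuinely fails, in three places. First, your reduction ``the fiber surface of a strongly quasipositive fibered link is quasipositive'' does not follow from \ref{prop:full on qp Ss}: that proposition embeds the given quasipositive Seifert surface $S$ fully in a quasipositive fiber surface of some \emph{other} link $(\Bd F',S^3)$, and this says nothing about the fiber surface $F$ of $\Lscr$ itself. What the step actually requires is that quasipositive Seifert surfaces realize the minimal genus (the Kronheimer--Mrowka input the paper uses in proving \ref{prop:overtwisted qp}) together with the uniqueness, up to isotopy, of minimal-genus Seifert surfaces for fibered links; you gesture at ``minimality'' but supply neither ingredient. Second, there is no ``Milnor fibration'' of a general transverse $\C$--link: $f/\abs{f}$ restricted to $S^3\setminus\lman{f}{S^3}$ is a fibration only in special cases (links of singular points, links at infinity), so realizing $F$ as $\Cspan{f}{D^4}$ hands you no open book at all, let alone one identified with $\book{b}$ and compatible with $\xi_0$. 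Third, the assertion that $\Cspan{f}{D^4}$ is a Stein filling of $(S^3,\xi_{\book{b}})$ is a dimension error: the $\C$--span of a $1$--dimensional $\C$--link is a surface, and \ref{rmk:Stein fillable} concerns $3$--dimensional $\C$--links, whose $\C$--spans are $4$--manifolds. Even corrected (the Stein filling of $(S^3,\xi_0)$ is $D^4$), this only recovers Bennequin's tightness of $\xi_0$ (\ref{thm:Bennequin's theorem}); what must be proved is tightness of $\xi_{\book{b}}$, the structure supported by the given open book, and your argument never touches it. This direction is the genuinely hard one---it is why the paper cites Hedden, whose proof runs through Heegaard Floer theory ($\tau(K)=g(K)$ and the contact invariant), machinery entirely outside what this paper's quoted results provide.
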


In light of \ref{thm:qp fiber surfaces}, the following 
may be somewhat surprising.

\begin{proposition}\label{prop:overtwisted qp}
Every contact structure on $S^3$ is homotopic to
a contact structure $\xi_\book{b}$ compatible with 
an open book $\book{b}$ with non-strongly quasipositive
binding.
\end{proposition}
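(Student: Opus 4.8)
The plan is to exploit the gap between \emph{isotopy} and \emph{homotopy} of contact structures on $S^3$. By Theorem~\ref{thm:qp fiber surfaces}, a fibered binding is strongly quasipositive precisely when its open book is compatible---up to ambient isotopy---with the tight structure $\xi_0$; but by Eliashberg's Theorem~\ref{thm:Eliashberg's theorem} every homotopy class of plane fields on $S^3$, the class of $\xi_0$ included, also contains overtwisted contact structures. I would show that the open books supporting those overtwisted structures necessarily have bindings that fail to be strongly quasipositive.

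So fix an arbitrary contact structure $\eta$ on $S^3$. First I would use the existence half of Theorem~\ref{thm:Eliashberg's theorem} to pick an overtwisted contact structure $\eta'$ in the same homotopy class of plane fields as $\eta$, so that $\eta'$ is homotopic to $\eta$. Next, the Giroux--Goodman Theorem~\ref{thm:Giroux-Goodman theorem} produces an open book $\book{b}$ on $S^3$ with $\xi_{\book{b}}$ ambient isotopic to $\eta'$. Because an ambient isotopy induces a homotopy of the underlying plane fields, $\xi_{\book{b}}$ is homotopic to $\eta$, which already settles the compatibility clause of the statement.

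It remains to check that the binding $\Lscr(\book{b})$ is not strongly quasipositive. Since $\xi_{\book{b}}$ is isotopic to the overtwisted $\eta'$, it is itself overtwisted, and overtwistedness is an isotopy invariant; as $\xi_0$ is tight by Bennequin's Theorem~\ref{thm:Bennequin's theorem}, $\xi_{\book{b}}$ cannot be isotopic to $\xi_0$. Were $\Lscr(\book{b})$ strongly quasipositive, Theorem~\ref{thm:qp fiber surfaces} would furnish an ambient isotopy of $S^3$ taking $\book{b}$ to an open book compatible with $\xi_0$; pushing forward by that isotopy, and using that the contact structure supported by an open book is unique up to isotopy, we would find $\xi_{\book{b}}$ isotopic to $\xi_0$, a contradiction. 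Hence $\Lscr(\book{b})$ is not strongly quasipositive, and the proof is complete.

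The one genuinely delicate step is this last translation: Theorem~\ref{thm:qp fiber surfaces} measures strong quasipositivity against the \emph{isotopy} class of $\xi_0$, whereas the proposition constrains only the \emph{homotopy} class, and the entire content of the result is that these diverge, precisely because Eliashberg's theorem populates every homotopy class with overtwisted representatives. For the class of $\xi_0$ itself one can even name explicit witnesses: the Hopf invariant of the structure supported by an open book equals the enhancement $\lambda$ of its binding, and Proposition~\ref{prop:lambda for Hpq} gives $\lambda(\Hopf{-}{p}{0})=0$, the Hopf invariant of $\xi_0$; yet for $p\ge 2$ the link $\Hopf{-}{p}{0}$ is fibered by Lemma~\ref{lemma:fibered Hopf links} and fails to be quasipositive by Lemma~\ref{lemma:qp Hopf links}, hence is a fortiori not strongly quasipositive.
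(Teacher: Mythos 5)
Your proof is correct, but it takes a genuinely different route from the paper's. You argue softly and uniformly: Eliashberg's Theorem \ref{thm:Eliashberg's theorem} places an overtwisted structure $\eta'$ in every homotopy class, Theorem \ref{thm:Giroux-Goodman theorem} provides an open book $\book{b}$ supporting $\eta'$, and strong quasipositivity of the binding is then excluded by playing Theorem \ref{thm:qp fiber surfaces} against Bennequin's Theorem \ref{thm:Bennequin's theorem}: a strongly quasipositive binding would force $\xi_{\book{b}}$ to be isotopic to the tight $\xi_0$, yet $\xi_{\book{b}}$ is overtwisted. The paper instead builds explicit bindings: by Corollary \ref{cor:qp fibered Hpq}, Proposition \ref{prop:lambda for Hpq} and the additivity in Proposition \ref{prop:lambda facts}, the connected sums $\Hopf{-}{q+1}{q}\connsum\Hopf{-}{2}{1}\connsum\cdots\connsum\Hopf{-}{2}{1}$ are \emph{quasipositive} fibered links whose enhancements $\lambda$ realize every integer, hence whose supported contact structures realize every homotopy class; for classes other than that of $\xi_0$ the Hopf-invariant mismatch already rules out strong quasipositivity, while for the class of $\xi_0$ itself the paper shows that $\Hopf{-}{3}{2}$ (which has $\lambda=0$) is quasipositive but not strongly so, using the Kronheimer--Mrowka theorem to compare the Euler characteristics of its fiber surface and its $\C$--span. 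The trade-offs: your argument is shorter, needs no enhancement computations, no gauge theory, and no separate treatment of the class of $\xi_0$ (which is exactly where the paper must work hardest, since there the Hopf invariant cannot obstruct); but it is non-constructive and gives no control over the bindings, whereas the paper's proof delivers the sharper fact that the bindings can be taken quasipositive---precisely the point that makes the proposition ``somewhat surprising'' after Theorem \ref{thm:qp fiber surfaces}. (Your explicit witnesses $\Hopf{-}{p}{0}$ for the class of $\xi_0$ are correct but, being non-quasipositive, are weaker than the paper's $\Hopf{-}{3}{2}$.) One caveat, which you rightly flag: your last step invokes the uniqueness, up to isotopy, of the contact structure supported by a given open book (Giroux); this is not among the paper's stated theorems, but the paper's notation $\xi_{\book{b}}$ and its own proof tacitly presuppose the same fact, so this is an appeal to standard literature rather than a gap.
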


\begin{proof}
Let $q\ge 0$.  By \ref{cor:qp fibered Hpq} and 
\ref{prop:lambda for Hpq}, $\Hopf{-}{q+1}{q}$ is
a quasipositive fibered link with enhancement
$\lambda(\Hopf{-}{q+1}{q})=2q-q^2$.  In particular,
$\lambda(\Hopf{-}{2}{1})=1$ and 
$\lambda(\Hopf{-}{q+1}{q})\searrow-\infty$ as
$q\nearrow\infty$.  It follows from 
\ref{prop:lambda facts}\eqref{subprop:lambda additivity}
that $\lambda$ achieves every integer value on an
appropriate connected sum
\begin{equation*}
\Lscr_{q,m}=\Hopf{-}{q+1}{q}\connsum\overbrace{\Hopf{-}{2}{1}\connsum
	\cdots\connsum\Hopf{-}{2}{1}}^\text{$m$ times}.
\end{equation*}
Connected sum preserves both quasipositivity and fiberedness, 
so by Theorem \ref{thm:Giroux-Goodman theorem} and the 
paragraph that precedes it the proof is complete except for 
the homotopy class of $\xi_0$.  That case is covered by  
observing that $\Hopf{-}{3}{2}$, though quasipositive,
is not strongly quasipositive---for 
instance (as illustrated in Figure~\ref{fig:qpfibered}) 
because it is realized as a $1$--dimensional 
transverse $\C$--link by the link at infinity of  
$z_0(z_0z_1-1)$: since the Euler characteristic $-1$ of 
its fiber surface (a pair of pants) is strictly smaller
than that of its $\C$--span (the disjoint union of an
annulus and a disk), the truth of the Thom Conjecture
(Kronheimer and Mrowka \cite{KronheimerMrowka1994})
implies that the fiber surface is not quasipositive. 
\end{proof}

\begin{figure}[ht!]
\begin{center}
\includegraphics[width=\figwidth]{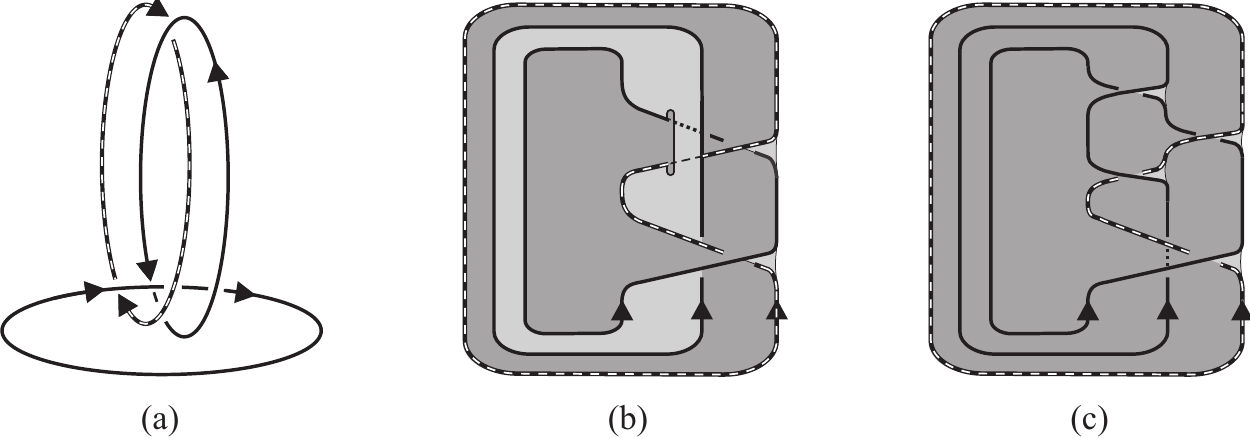} 
\caption{(a)~$\Hopf{-}{3}{2}=\Clinkinfty{z_0(z_0z_1-1)}$.
(b)~The $\C$--span of $\Clinkinfty{z_0(z_0z_1-1)}$ 
in $\DSK{4}{\zero}{1/\epsilon}$, represented 
as a ribbon-immersed surface in $S^3$. 
(Figure \ref{fig:Hopf links}(d) suggests the analogous
surface for any $\Hopf{-}{p}{p-1}$ with $p\ge 3$.)  
(c)~The fiber surface of $\Clinkinfty{z_0(z_0z_1-1)}$.
\label{fig:qpfibered}}
\end{center}
\end{figure}

\subsection{Construction: quasipositive links
with distinct $\C$--spans (new)}
Consider the quasipositive band
representations 
\begin{align*}
\brep\rho_0\isdefinedas &
(\SideSet{\sigma_3^{-2}\sigma_2}{\sigma_1},
\sigma_2,
\SideSet{\sigma_1 \sigma_3^3}{\sigma_2},
\SideSet{\sigma_1 \sigma_3^5 \sigma_2^{-1}}{\sigma_1},
\sigma_3,
\sigma_3),
\\
\brep\rho_1\isdefinedas &
(\sigma_2,
\SideSet{\sigma_1\sigma_3}{\sigma_2},
\SideSet{\sigma_1\sigma_3}{\sigma_2},
\SideSet{(\sigma_1\sigma_3)^2}{\sigma_2},
\SideSet{(\sigma_1\sigma_3)^2}{\sigma_2},
\SideSet{(\sigma_1\sigma_3)^3}{\sigma_2})
\end{align*}
in $B_4$ and their associated quasipositive braided 
surfaces realized (as in \ref{subsect:braids})
by $\C$--spans of $1$--dimensional transverse $\C$--links
$\Clink{g_i}{S^3}$ in $S^3$.  Auroux, Kulikov, and Shevchishin 
\cite{Aurouxetal2004} show that, although the braids
$\braidof{\brep\rho_0}$ and $\braidof{\brep\rho_1}$
are equal, and $\Cspan{g_0}{D^4}$ is diffeomorphic to
$\Cspan{g_1}{D^4}$ (both are twice-punctured tori),
$D^4\setminus \Cspan{g_0}{D^4}$ is not homeomorphic 
to $D^4\setminus \Cspan{g_1}{D^4}$ (their fundamental
groups are different).  In particular, although 
$\Clink{g_0}{S^3}$ and $\Clink{g_1}{S^3}$ are
ambient isotopic as smooth links in $S^3$, 
their $\C$--spans are not ambient isotopic as
smooth $2$--submanifolds-with-boundary in $D^4$.
By appending $\SideSet{\sigma_1^3}{\sigma_2}$
to $\rho_0$ and $\rho_1$, Geng \cite{Geng2011} showed 
that even smoothly isotopic quasipositive \textit{knots} 
can have (non-singular, diffeomorphic) 
$\C$--spans that are not ambient isotopic in $D^4$
(again, the fundamental groups of their complements
are not isomorphic).

\begin{remark}\label{rmk:cutting out and pushing off annuli}
Another, easier construction produces arbitrarily large 
finite sets of mutually ambient isotopic $1$--dimensional 
transverse $\C$--links in $S^3$ (in fact, strongly 
quasipositive links) with pairwise non-diffeomorphic 
$\C$--spans (all of the same Euler characteristic); 
the simplest example, shown in Figure \ref{fig:subsurf}, 
suffices to illustrate the general method, which 
necessarily produces link-manifolds of at least $3$
components.\done
\end{remark}

\begin{figure}[ht!]
\begin{center}
\includegraphics[width=\figwidth]{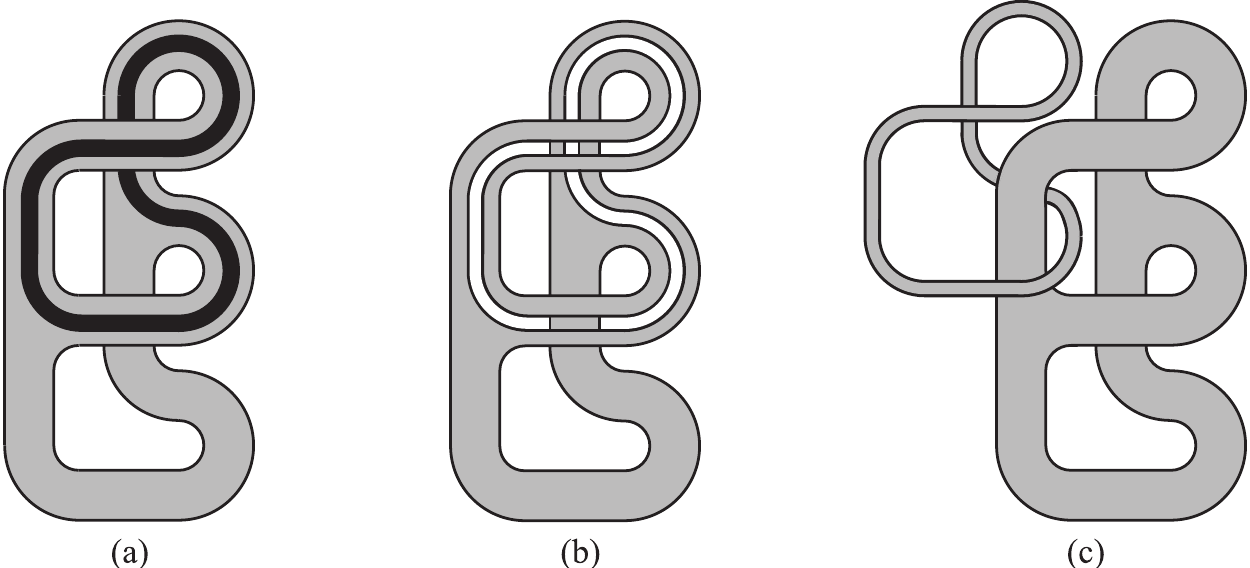} 
\caption{(a)~A connected quasipositive Seifert surface $S$ 
(gray) and an annular subsurface $F\sub S$ (black). 
(b)~The cut-open Seifert surface $S\setminus\Int{F}$ 
is again quasipositive, by \ref{prop:full on qp Ss}.  
(c)~The Seifert surface $S\cup\pushoff{F}$,
comprising $S$ and the push-off of $F$, 
is quasipositive (again, by \ref{prop:full on qp Ss}) 
and $(\Bd(S\cup\pushoff{F}),S^3)$ is ambient isotopic
to $(\Bd(S\setminus\Int F),S^3)$ (by an isotopy that
begins by rotating each component of $\pushoff{F}$ 
around a core $S^1$ so as to interchange its two
boundary components).
\label{fig:subsurf}}
\end{center}
\end{figure}

\subsection{Construction: quasipositive orientations of unoriented links (new) \label{subsect:qp orientations of unoriented links}}
Given an oriented manifold $L$ with $n\ge 1$ components, 
the unoriented manifold $\abs{L}$ supports $2^n$ orientations 
and thus $2^{n-1}$ \bydef{projective orientations}, each determined 
by an orientation and its componentwise opposite (Sakuma
\cite{Sakuma1994} uses the term \bydef{semi-orientation} for
this concept).  Write $\orientation$ for a projective orientation.  

Let $\Lscr=(L,S^3)$ be an oriented classical link (with,
as is usual, the orientation of $L$ not included explicitly 
in the notation).  As noted in \ref{subsubsect:positive links},
$\Lscr$ is positive iff its \bydef{opposite}
$-\Lscr\isdefinedas(-L,S^3)$ is positive,
the proof being consideration of any link diagram of $\Lscr$.
Similarly, $\Lscr$ is quasipositive iff $-\Lscr$ is;
here the proof is to note that reversing the 
orientation of a braid diagram with closure $\Lscr$,
then rotating it by $\pi$ in its plane, makes it into
a braid diagram with closure $-\Lscr$, and that this
operation preserves diagrammatic quasipositivity.
One might expect that at most one projective orientation 
of an unoriented classical link makes it quasipositive,
and that is the case with a few exceptions 
(eg, a trivial knot;
split links of two or more positive knots; 
$\abs{\Hopf{\pm}{2}{0}}=\abs{\Hopf{\pm}{1}{1}}$).

This section collects 
several useful examples of families of unoriented 
classical links in which each member supports a 
projective orientation (typically but not invariably 
unique) that makes it quasipositive---briefly, a 
\bydef{quasipositive orientation}.

\subsubsection{Quasipositive orientations of unknotted strip
boundaries\label{subsubsect:qp strip boundaries}}

Let $\Kscr$ be a classical knot, $t$ an integer,
and $\strip{\Kscr}{t}$ the strip of type $\Kscr$ with
$t$ half-twists as defined in \ref{def:strips}.
The unoriented link $(\Bd\strip{\Kscr}{t},S^3)$ 
has $1$ or $2$ components according as $t$ is odd or even.
In both cases, let $\orientation$ be the ``braidlike''
projective orientation, so that 
$(\Bd\strip{\Kscr}{t}^\orientation,S^3)\defines\Kscr\{2,t\}$ 
is the 
\bydef{$2$--strand cable on $\Kscr$ with $t$ half-twists}.  
(Both the notation $\Kscr\{m,n\}$, already introduced for $n=0$, 
and its iterated extension 
$\Kscr\{m_1,n_1;m_2,n_2;\dots;m_q,n_q\}\isdefinedas
\Kscr\{m_1,n_1\}\{m_2,n_2\}\cdots\{m_q,n_q\}$,
are adapted from Litherland \cite{Litherland1979};
this is reasonably consistent with the notation
for satellites.) In case $t$ is even, let 
$\orientation^\prime$ be the ``non-braidlike'' projective 
orientation.  (See Figure \ref{fig:strip-boundaries}(a) and (c).)

\begin{figure}[ht!]\begin{center}
\includegraphics[width=\figwidth]{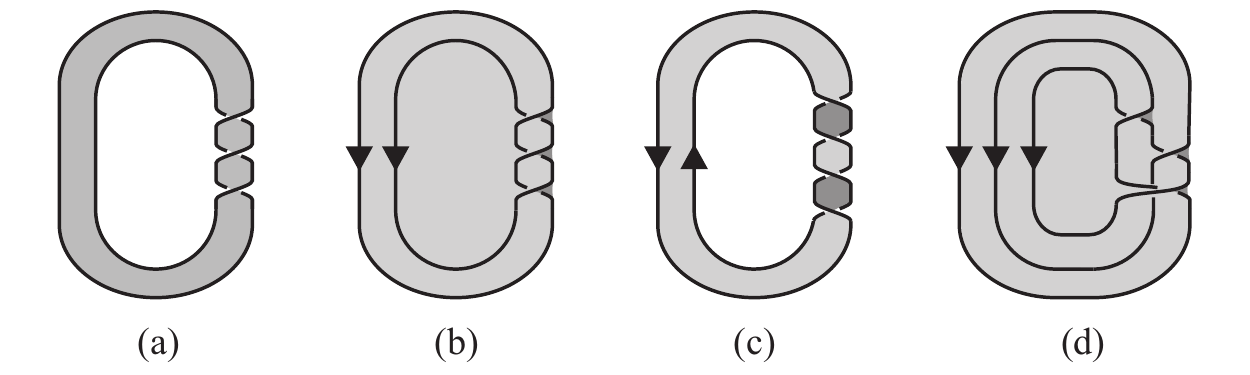} 
\caption{(a)~The unoriented unknotted strip 
$\strip{\Oscr}{3}$ with $3$ half-twists, and its unoriented 
boundary.
(b)~For $t>0$, $\Bd\strip{\Oscr}{t}^\orientation$ bounds
the quasipositive braided Seifert surface 
$S(\sigma_1,\dots,\sigma_t)$ with all $b_i=\sigma_1\in B_2$.
(c)~For $s<0$, $\Bd\strip{\Oscr}{2s}^{\orientation^\prime}$
bounds the quasipositive annular Seifert surface 
$\AKn{\Oscr}{s}$.
(d)~$\AKn{\Oscr}{-2}$ as the quasipositive braided Seifert 
surface $S(\SideSet{\sigma_1}\sigma_{2},\sigma_{2},\sigma_1)$.
\label{fig:strip-boundaries}}
\end{center}
\end{figure}

\begin{proposition}\label{prop:qp unknotted strip boundaries}
\begin{inparaenum}[\upshape(1)]
\item\label{subprop:qp unknotted braidlike strip boundaries}
$(\Bd\strip{\Oscr}{t}^\orientation,S^3)=\tor{2}{t}$ is 
(strongly) quasipositive iff $t\ge 0$.
\item\label{subprop:qp unknotted nonbraidlike strip boundaries}
$(\Bd\strip{\Oscr}{t}^{\orientation^\prime}%
\kern-4pt,S^3)$ is 
(strongly) quasipositive iff $t=2s\le 0$;
then it is $(\Bd\AKn{\Oscr}{s},S^3)$.\qed
\end{inparaenum}
\end{proposition}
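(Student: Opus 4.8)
The plan is to prove each equivalence by pairing a constructive \emph{if}-direction (exhibiting an explicit quasipositive braided Seifert surface, which yields the stronger conclusion of strong quasipositivity) with an obstructive \emph{only if}-direction (ruling out even plain quasipositivity on the excluded range). Throughout I use that quasipositivity is invariant under reversal of all orientations (recorded in \ref{subsect:qp orientations of unoriented links}), and I exploit the fact that for the families at hand the underlying oriented links are $(2,t)$--torus links and boundaries of twisted unknotted annuli, so that plain and strong quasipositivity in fact coincide and share the stated threshold.

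For part \eqref{subprop:qp unknotted braidlike strip boundaries}, the \emph{if}-direction is immediate: for $t>0$ the braidlike boundary $\tor{2}{t}$ is the closure $\close{\sigma_1^{\,t}}$ of a positive braid in $B_2$, bounding the quasipositive braided Seifert surface $\brsurf{\sigma_1,\dots,\sigma_1}$ of Figure \ref{fig:strip-boundaries}(b); for $t=0$ the link is the $2$--component unlink, bounded by two disjoint disks. In either case the surface is quasipositive, so $\tor{2}{t}$ is strongly quasipositive. For the \emph{only if}-direction, $t<0$, the link $\tor{2}{t}$ is a negative torus link (the mirror of $\tor{2}{-t}$). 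To see it is not quasipositive I would invoke the slice--Bennequin inequality in its sharp form for quasipositive links (Rudolph, via the Thom conjecture of Kronheimer and Mrowka \cite{KronheimerMrowka1994}, as already used in the proof of \ref{prop:overtwisted qp}): a quasipositive link $\close{\beta}$, with $\beta$ a product of $k$ positive bands on $n$ strings, bounds its quasipositive braided surface of Euler characteristic $n-k$, and this \emph{equals} the maximal Euler characteristic of any smooth oriented surface in $D^4$ it bounds. Comparing this value with the Euler characteristic of the genus-minimizing surface of the negative torus link $\tor{2}{t}$ shows the latter cannot be realized by a quasipositive braided surface, so $\tor{2}{t}$ is not quasipositive; the degenerate small cases, where the boundary reduces to an unknot, are checked directly.

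For part \eqref{subprop:qp unknotted nonbraidlike strip boundaries}, the first step is the geometric identification of the link. By Definition \ref{def:strips}, $\strip{\Oscr}{2s}=\abs{\AKn{\Oscr}{-s}}$; passing from the braidlike orientation $\orientation$ to the non-braidlike $\orientation^\prime$ reverses one boundary component, and a direct check of orientations (Figure \ref{fig:strip-boundaries}(c)) identifies $(\Bd\strip{\Oscr}{2s}^{\orientation^\prime},S^3)$ with $(\Bd\AKn{\Oscr}{s},S^3)$; I would record the sign bookkeeping here as a routine verification. With the link so identified, the \emph{if}-direction and the strong half of the equivalence follow at once from Theorem \ref{thm:qp annuli}\eqref{subthm:qp unknotted annuli and TB}: $(\Bd\AKn{\Oscr}{s},S^3)$ is strongly quasipositive iff $s\le 0$ (for $s\le -1$ the annulus $\AKn{\Oscr}{s}$ is itself a quasipositive Seifert surface, and for $s=0$ the link is the unlink). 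For the excluded range $s\ge 1$ the link $\Bd\AKn{\Oscr}{s}$ has negative linking number and is again a negative torus link, so its non-quasipositivity reduces to the obstruction already invoked in part \eqref{subprop:qp unknotted braidlike strip boundaries}.

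The main obstacle is precisely this \emph{only if}-direction. Theorem \ref{thm:qp annuli} supplies the failure of \emph{strong} quasipositivity outside the stated range for free, but it does not by itself exclude plain quasipositivity; closing that gap is what forces the appeal to the sharp slice--Bennequin inequality, i.e.\ to the Thom conjecture, in order to show that negative torus links are not quasipositive at all. A secondary, purely bookkeeping, nuisance is fixing the orientation and framing conventions so that the identification $(\Bd\strip{\Oscr}{2s}^{\orientation^\prime},S^3)=(\Bd\AKn{\Oscr}{s},S^3)$ emerges with the correct sign of $s$.
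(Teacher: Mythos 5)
Your two constructive halves are fine, and they agree with what the paper intends (the paper offers no written argument for this proposition at all: it is stated as immediate from the captions of Figure~\ref{fig:strip-boundaries} and Theorem~\ref{thm:qp annuli}). You also correctly isolate where the real difficulty lies, namely excluding \emph{plain} quasipositivity on the forbidden range. But the tool you propose there does not work, and this is a genuine gap. The maximal Euler characteristic $\chi_4$ of a smooth surface in $D^4$ bounded by a link is mirror-invariant, so for $t\le -2$ the negative torus link $\tor{2}{t}$ has $\chi_4=2-\abs{t}$, exactly as its positive mirror does; if $\tor{2}{t}$ were the closure of a quasipositive braid on $n$ strings with $k$ bands, the sharp slice--Bennequin equality would give $n-k=2-\abs{t}$, and nothing in that arithmetic is contradictory (take $k=n+\abs{t}-2$). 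So ``comparing Euler characteristics'' yields no obstruction. The comparison you are imitating, from the paper's proof of Proposition~\ref{prop:overtwisted qp}, rules out quasipositivity of a \emph{prescribed Seifert surface} whose Euler characteristic is strictly below that of the $\C$--span; it does not transfer to a \emph{link}, where the hypothetical quasipositive braided surface is not prescribed. What actually excludes quasipositivity is the writhe, not the genus: a quasipositive representation supplies a transverse representative with self-linking number $k-n=-\chi_4=\abs{t}-2\ge 1$ (odd $t\le -3$), whereas the maximal self-linking number of a negative $(2,\abs{t})$--torus knot is strictly negative. That upper bound follows neither from Kronheimer--Mrowka nor from Bennequin's inequality (both give exactly the borderline value $\abs{t}-2$); it comes from the Kauffman/HOMFLY Bennequin-type inequalities, or from Etnyre and Honda's classification of transverse torus knots---this is the outside input that the papers cited for Theorem~\ref{thm:qp annuli} actually use. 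The identical gap occurs in part~(2) for $s\ge 1$: there $\chi_4=0$ and $k=n$ is numerically consistent, so again only a self-linking bound (already nontrivial for $s=1$, the negative Hopf link) finishes the argument. Note also that your parenthetical fallback to negative linking numbers cannot substitute: by the paper's own Lemma~\ref{lemma:qp Hopf links}, $\Hopf{-}{2}{2}$ is quasipositive although some of its pairs of components have linking number $-1$.

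Two further points. First, the ``degenerate small cases'' cannot be ``checked directly'' in favor of the statement: at $t=-1$ the link $\tor{2}{-1}$ \emph{is} the unknot, hence strongly quasipositive, so the only-if clause of part~(1) actually fails there; the statement needs this unknotted exception made explicit (compare the exceptions the paper records for $\Hopf{\pm}{1}{0}$ in \ref{subsect:qp Hopf links}), and no proof can verify a false case. Second, the sign bookkeeping you defer as routine is in fact inconsistent under the paper's stated conventions: Definition~\ref{def:strips}\eqref{subdef:annular strips} gives $\strip{\Oscr}{2s}=\abs{\AKn{\Oscr}{-s}}$, whose Seifert-oriented boundary is $\Bd\AKn{\Oscr}{-s}$, of linking number $s$, while the proposition asserts it is $\Bd\AKn{\Oscr}{s}$, of linking number $-s$; for $s\ne 0$ these are distinct oriented links, and no choice of orientation converts one into the other. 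Part~(1) and the captions of Figure~\ref{fig:strip-boundaries} force the convention $\strip{\Oscr}{2s}=\abs{\AKn{\Oscr}{s}}$ (positive half-twists producing positive crossings); as written, your identification silently uses both conventions at once, and you should fix one and carry it through.
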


\begin{remark}\label{rmk:qp knotted strip boundaries}
Similar results for $\Kscr\ne\Oscr$ are true but more 
complicated to state.\done
\end{remark}

\subsubsection{Quasipositive rational links 
\label{subsubsect:qp rational links}}
The torus link $\tor{2}{k}$ in 
\ref{subsubsect:qp strip boundaries} is well
known to be a fibered link for $k\ne 0$; its 
fiber surface is the braided Seifert surface 
$S=S(\sigma_1^{\sgn(k)},\dots,\sigma_t^{\sgn(k)})$
(with $\abs{k}$ bands), illustrated for $k=3$ in 
Figure \ref{fig:strip-boundaries}(b).  In fact, 
$\tor{2}{k}$ is both a Hopf-plumbed link
as defined in 
\ref{defs:iterated strip plumbing}\eqref{def:Hopf plumbing}
and---with its orientation forgotten---an arborescent link 
as defined in \ref{defs:iterated strip plumbing}
\eqref{def:arborescent plumbing, traditional version}.  
More precisely, in this last guise $\abs{\tor{2}{k}}$ is
an unoriented \bydef{rational link} 
\begin{equation*} 
\rational{\overbrace{-2,-2,\dots,-2}
^\text{$k-1$}}\isdefinedas
(\rationallink{\overbrace{-2,-2,\dots,-2}%
^\text{$k-1$}},S^3)
\end{equation*}
where, for $r_1,\dots,r_n\in \Z\setminus\{0\}$,
$\rationallink{r_1,r_2,\dots,r_n}$ denotes the boundary 
of a $2$--manifold-with-boundary 
$\rationalsurface{r_1,r_2,\dots,r_n}$ 
strip-plumbed as in Figure \ref{fig:rational links}(a)
according to a \bydef{stick}---that is, a 
\bydef{tree} (a finite connected 
acyclic $1$--dimensional simplicial complex)
with no \bydef{nodes} (vertices of valence $3$ or 
greater) equipped with a weighting of its vertices
by integers;  
Figure \ref{fig:rational links}(b) is a standard 
depiction of a stick.
\begin{figure}[ht!]
\begin{center}
\includegraphics[width=\figwidth]{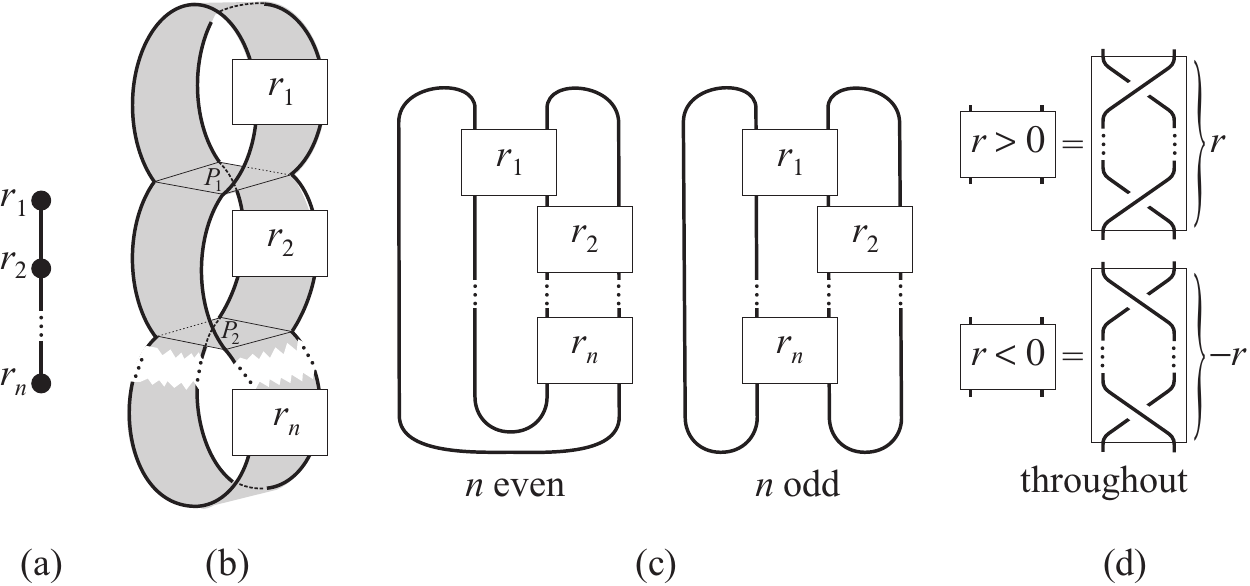} 
\caption{(a)~A stick $\stickgraph{r_1,r_2,\dots,r_n}$.
(b)~The strip-plumbed surface 
$\rationalsurface{r_1,r_2,\dots,r_n}$ $\sub S^3$.
(c)~The rational link-manifold $\rationallink{r_1,r_2,\dots,r_n}$
presented as the $4$--plat of  
$\sigma_2^{r_1}\sigma_3^{r_2}\cdots\sigma_3^{r_n}$ ($n$ even)
or $\sigma_2^{r_1}\sigma_3^{r_2}\cdots\sigma_2^{r_n}$ ($n$ odd),
read top to bottom, with plat closure as indicated.
(d)~Sign conventions for the $2$--string tangles in
(b), (c), and elsewhere.
\label{fig:rational links}}
\end{center}
\end{figure}

Clearly $\rationalsurface{r_1,r_2,\dots,r_n}$ 
is orientable iff all $r_i$ are even, and then 
$\rational{r_1,\dots,r_n}$ has a preferred 
projective orientation $\orientation$.
If also $r_i<0$ for all $i$, 
then (by \ref{thm:qp annuli} and \ref{thm:qp Murasugi sum theorem}) 
$\rationalsurface{r_1,\dots,r_n}$
is a quasipositive Seifert surface, and
$\rational{r_1,\dots,r_n}^\orientation$ 
is strongly quasipositive. 
But these sufficient conditions for 
$\rational{r_1,\dots,r_n}$ to have
a (strongly) quasipositive orientation are far 
from necessary.  
The following is true by inspection.

\begin{proposition}\label{prop:2-bridge machine}
The rational link-manifold $\rationallink{r_1,\dots,r_n}$ 
has a projective orientation $\orientation$ which, applied to
the $4$--plat diagram in Figure 
\textup{\eqref{fig:rational links}(c)},
makes it a positive diagram iff the 
braid $\sigma_2^{r_1}\sigma_3^{r_2}\sigma_3^{r_3}\cdots%
\sigma_\ell^{r_n}\in B_4$ (with $\ell$ equal to $3$ or $2$ 
according as $n$ is even or odd) is generated by the 
labeled digraph in Figure \textup{\ref{fig:2-bridge machine}}.
\qed \end{proposition}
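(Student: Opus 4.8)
The plan is to reduce the global question of positivity of the oriented $4$--plat diagram to a purely local, crossing-by-crossing condition, and then to organize that local condition as a finite-state computation whose accepting runs are exactly the paths through the digraph of Figure \ref{fig:2-bridge machine}. First I would fix the standard sign conventions recorded in Figure \ref{fig:rational links}(d) and note the elementary observation that, in a twist region realizing $\sigma_i^{r_i}$ with its two participating strands oriented, all $\abs{r_i}$ crossings carry the same sign, determined by $\sgn(r_i)$ together with whether the two strands run \emph{parallel} (coherently, braidlike) or \emph{antiparallel}. Explicitly, such a twist region is all-positive iff either the strands are parallel and $r_i>0$, or they are antiparallel and $r_i<0$. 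Thus positivity of the entire diagram for a given projective orientation $\orientation$ is equivalent to the conjunction over $i=1,\dots,n$ of these parallel/antiparallel-versus-$\sgn(r_i)$ conditions.

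Next I would observe that the only orientation data relevant at the $i$th twist region is the pair of up/down orientations of the two strands meeting there (positions $\{2,3\}$ or $\{3,4\}$ according as $i$ is odd or even), and that passing through $\sigma_i^{r_i}$ alters the ambient four-strand orientation pattern in a completely determined way: the two strand-arcs swap position when $r_i$ is odd and keep it when $r_i$ is even, each carrying its own orientation through unchanged. Since there are only finitely many orientation patterns on four strands---and the plat closure at top and bottom forces the two strands meeting at each turnback to carry opposite up/down orientations, pruning the list further---the propagation of orientations down the plat is exactly a walk on a finite directed graph. I would take the vertices of this graph to be the admissible orientation patterns between consecutive twist regions, and place a directed edge, labeled by the required sign of the exponent, from one pattern to its image under a twist region precisely when that region can be made all-positive; the turnback constraints single out the admissible initial and terminal vertices.

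With this dictionary in hand the proposition is immediate: a projective orientation $\orientation$ rendering the diagram of Figure \ref{fig:rational links}(c) positive is the same as a choice of orientation pattern at each level that is consistent across every twist region, which is exactly a directed path whose edge labels agree with $\sgn(r_1),\dots,\sgn(r_n)$---that is, a braid $\sigma_2^{r_1}\sigma_3^{r_2}\cdots\sigma_\ell^{r_n}$ \emph{generated by} the labeled digraph of Figure \ref{fig:2-bridge machine}. The only genuine work, and the reason the statement can be asserted ``by inspection,'' is the finite but slightly fussy verification that the states and sign-labeled transitions drawn in Figure \ref{fig:2-bridge machine} coincide with those produced by the parallel/antiparallel analysis above. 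I expect the main care to be needed in keeping the two twist positions ($\sigma_2$ versus $\sigma_3$) and the parity-induced strand swaps straight, and in correctly accounting for the identification of an orientation with its global reverse that makes $\orientation$ projective rather than honest.
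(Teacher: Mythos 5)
Your proposal is correct and coincides with the paper's approach: the paper gives no written argument at all (the proposition is introduced by the sentence ``The following is true by inspection''), and the inspection it intends is precisely your crossing-by-crossing parallel/antiparallel sign analysis, propagated down the $4$--plat as a finite-state walk with turnback constraints at the plat closure, then matched against the labeled digraph. Your writeup simply supplies the details of the finite check that the paper leaves entirely to the reader.
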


Here, $\beta\in B_4$ is \bydef{generated} by the labeled 
digraph in case there is a directed path from one of the 
(source) boxes at the top of the digraph to one of the 
(sink) boxes at the bottom of the digraph such that $\beta$ 
is produced by first concatenating the labels on the labeled 
edges of the path and then replacing each instance of the
letter ``$\mathrm{a}$'' (respectively 
``$\mathrm{e}$'' or ``$\mathrm{o}$'') by an
arbitrary (respectively even or odd) strictly 
positive integer.

\begin{figure}[ht!]
\begin{center}
\includegraphics[width=\figwidth]{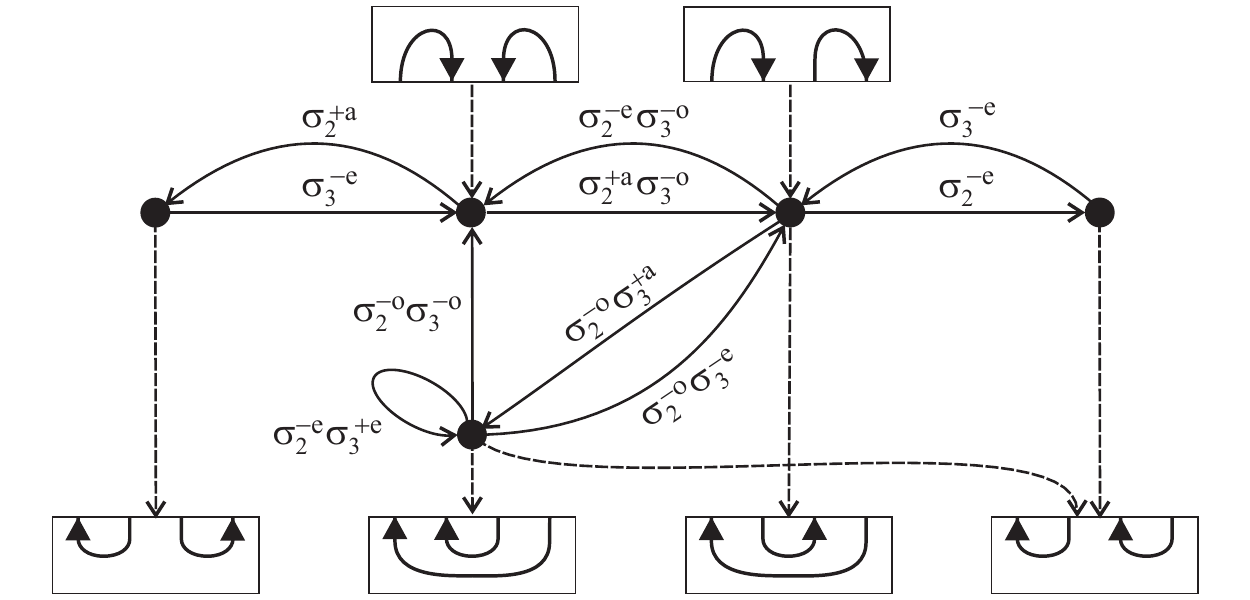} 
\caption{This machine generates all positive oriented 
rational links as $4$--plats.
\label{fig:2-bridge machine}}
\end{center}
\end{figure}

\begin{questions}\label{qn:rational machine questions} 
Proposition \ref{prop:2-bridge machine} gives expansive, but 
imperspicuous, sufficient conditions 
for $\rational{r_1,r_2,\dots,r_n}$ to have
a (strongly) quasipositive orientation. 
\begin{inparaenum}[(1)]
\item\label{subqn:language of 2-bridge machine}
What is a closed-form description (presumably one exists) 
of the set of rational numbers 
\begin{equation*}
r_1+
\cfrac{1}{-r_2+
\cfrac{1}{\dotsb+
\cfrac{1}{(-1)^{n-1}r_n
}}}
\end{equation*}
such that $\sigma_1^{r_1}\sigma_2^{r_2}\cdots%
\sigma_{(3\pm1)/2}^{r_n}$ is generated as in 
\ref{prop:2-bridge machine}? (See \ref{subsubsect:lens spaces}.)
\item\label{subqn:NASC for (s)qp rational}
Are the necessary and sufficient conditions for positivity
given in \ref{prop:2-bridge machine}
also necessary for strong quasipositivity?  For 
quasipositivity?  My tentative answers are ``probably
yes'' and ``almost certainly no''.\;\done
\end{inparaenum}
\end{questions}

\begin{remark}\label{rmk:stick origin}
The term ``stick'' is due to Bonahon and Siebenmann
\cite{BonahonSiebenmann1979/2010} (but there a stick 
may lack one or both terminal vertices yet retain 
its terminal edge or edges).
\done
\end{remark}

\subsubsection{Quasipositive pretzel links}%
\label{subsubsect:quasipositive pretzel links}
Another guise in which $\abs{\tor{2}{k}}$ ($k>0$) appears 
is as the \bydef{unoriented pretzel link}
\begin{equation*} 
\pretzel{\overbrace{-1,\dots,-1}^\text{$k$ times}}
\isdefinedas
(\pretzellink{\overbrace{-1,\dots,-1}^\text{$k$ times}},
S^3)
\end{equation*}
where, for $t_1,t_2,\dots,t_p\in\Z$, 
$\pretzellink{t_1,t_2,\dots,t_p}$ is the unoriented 
boundary of two unoriented $2$--submanifolds-with-boundary 
of $S^3$, depicted in
Figure \ref{fig:general pretzel surface and pretzel plumbing}%
(b) and (c).  

\begin{definitions}\label{defs:spans of pretzel links}
\begin{inparaenum}[(1)]
\item\label{def:star surfaces}
The \bydef{star surface} 
$\starplumb{0}{t_1,\dots,t_{p}}$ is strip-plumbed 
according to the \bydef{star} $\stargraph{0}{t_1,\dots,t_{p}}$,
where in general $\stargraph{c}{t_1,\dots,t_{p}}$
has a central node of weight $c$ and $p\ge 3$ 
\bydef{twigs} (terminal vertices) weighted $t_1,\dots,t_p$ 
in the cyclic order determined by some planar embedding 
of a geometric realization, as depicted in 
Figure \ref{fig:general pretzel surface and pretzel plumbing}(b)).
\item\label{def:pretzel surface}
The \bydef{pretzel surface} $\pretzelsurface{t_1,\dots,t_p}$
is defined by its ordered handle decomposition into 
two $0$--handles lying on $S^2\sub S^3$ and 
$p\ge 3$ $1$--handles with core arcs lying
on $S^2$, each of them joining the two $0$--handles, and
such that the $i$th $1$--handle has \bydef{twisting number}
$t_k\in\Z$ (normalized so that, eg, 
$\pretzelsurface{-1,-1,-1}=\abs{\tor{2}{3}}$).
\done
\end{inparaenum}
\end{definitions}
\begin{figure}[ht!]
\centering
\includegraphics[width=\figwidth]%
{general_pretzel_surface_and_pretzel_plumbing} 
\caption{(a)~The star $\stargraph{0}{t_1,t_2,\dots,t_p}$.
(b)~The star surface 
$\stripplumb{\stargraph{0}{t_1,t_2,\dots,t_p}}$.
(c)~The pretzel surface $\pretzelsurface{t_1,\dots,t_{p}}$.
\label{fig:general pretzel surface and pretzel plumbing}}
\end{figure}
\begin{remarks}\label{rmks:star surfaces}
\begin{inparaenum}[(1)]
\item\label{rmk:non-qp star surfaces}
By \ref{prop:full on qp Ss}, \ref{thm:qp annuli},
and the non-orientability of $\strip{\Oscr}{t}$ for
odd $t$, a star surface is orientable and has a quasipositive
orientation iff all weights are even and 
strictly negative.  In particular, 
the central plumband $\strip{\Oscr}{0}$ of 
$\starplumb{0}{t_1,\dots,t_p}$ keeps it from having
a quasipositive orientation.
\item\label{rmk:trading off ones}
For $c\ne 0$, $\starplumb{c}{t_1,\dots,t_{p}}$ and 
$\starplumb{c}{t_1,\dots,t_p,t_{p+1},\dots,t_{p+\abs{c}}}$,
where $t_{p+j}=-\sgn{c}$ for $j=1,\dots,\abs{c}$,
have ambient isotopic boundaries.
\end{inparaenum}
\done
\end{remarks}

The first claim in the following proposition is obvious;
the necessity of the second claim follows from 
\ref{prop:full on qp Ss}, and its sufficiency was 
proved by Rudolph \cite{Rudolph2001}.

\begin{proposition}\label{prop:qp pretzel surface}
\begin{inparaenum}[\upshape(1)]
\item\label{subprop:pretzel surface parity condition}
$\pretzelsurface{t_1,\dots,t_{p}}$ is orientable 
(with unique projective orientation $\orientation$) 
iff all $t_i$ have the same parity.
\item\label{subprop:pretzel surface negative sum condition} 
If $\pretzelsurface{t_1,\dots,t_{p}}$ is orientable,
then $\pretzelsurface{t_1,\dots,t_{p}}^\orientation$
is quasipositive iff $t_i+t_j<0$ for $1\le i<j\le p$.  
\end{inparaenum}\qed
\end{proposition}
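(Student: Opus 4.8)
The plan is to handle the two claims separately, proving orientability and the necessity half of (2) directly and then organizing the sufficiency half around the annular subsurfaces that the necessity argument already produces. For claim (1), I would retract $\pretzelsurface{t_1,\dots,t_p}$ onto the graph with two vertices (the cores of the two $0$--handles) and $p$ edges (the cores of the $1$--handles), so that $H_1$ is free of rank $p-1$ with cycles $\gamma_{i,j}$ running out across the $i$th band and back across the $j$th. A local orientation of the surface is reversed exactly when it is carried across a band of odd twisting number, so a coherent global orientation exists iff every cycle crosses an even total number of twists; since $\gamma_{i,j}$ crosses $t_i+t_j$ of them, this holds iff $t_i+t_j$ is even for all $i<j$, i.e.\ iff all $t_i$ share a common parity. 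As the two $0$--handles are joined by $p\ge 3$ bands the surface is connected, so when orientable it carries exactly two orientations and hence a single projective orientation $\orientation$.

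For the necessity direction of (2), fix $i<j$ and let $A_{i,j}\sub\Int\pretzelsurface{t_1,\dots,t_p}$ be a thin annular neighborhood of $\gamma_{i,j}$. Because $p\ge 3$, the two $0$--handles remain joined by one of the remaining $p-2\ge 1$ bands after $A_{i,j}$ is deleted, so the complement is connected with Euler characteristic $2-p\le -1$ and therefore contains no disk component; thus $A_{i,j}$ is full on the pretzel surface. If $\pretzelsurface{t_1,\dots,t_p}^\orientation$ is quasipositive then so is $A_{i,j}$, by \ref{prop:full on qp Ss}. Since the bands and $0$--handle cores lie on the separating sphere $S^2\sub S^3$, the curve $\gamma_{i,j}$ is unknotted, so $A_{i,j}=\AKn{\Oscr}{n_{i,j}}$; computing the Seifert framing of $\gamma_{i,j}$, with the sign pinned down by the normalization $\pretzelsurface{-1,-1,-1}=\abs{\tor{2}{3}}$ (for which each such annulus is a positive Hopf band $\AKn{\Oscr}{-1}$), gives $n_{i,j}=(t_i+t_j)/2$. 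By \ref{thm:qp annuli}\eqref{subthm:qp unknotted annuli and TB} this annulus is quasipositive iff $n_{i,j}\le -1$, i.e.\ iff $t_i+t_j<0$, which is exactly the asserted necessary condition.

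The sufficiency direction is the hard part, and I would organize it around the same annuli. A first reduction: the hypothesis forces at most one $t_i$ to be non-negative, since two non-negative twisting numbers would sum to a non-negative value; so either every $t_i<0$, or exactly one is $\ge 0$ and is dominated in absolute value by every other. Even when all $t_i<0$ one cannot simply quote \ref{thm:qp Murasugi sum theorem}, because the pretzel surface carries an essential cycle and hence is not a tree-plumbing of its bands (the naive route through the star surface $\stripplumb{\stargraph{0}{t_1,\dots,t_p}}$ is blocked by its weight-$0$ central plumband, as in \ref{rmks:star surfaces}). Instead I would aim to present $\pretzelsurface{t_1,\dots,t_p}^\orientation$ as an explicit quasipositive braided Seifert surface, braiding the two $0$--handles into two groups of strands and each band into an embedded band, so that the negative twisting numbers contribute positive (indeed embedded) bands.

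The main obstacle I expect is precisely the interaction of this cyclic (non-tree) structure with a single non-negatively twisted band: the pairwise condition $t_i+t_j<0$ is strictly weaker than ``all $t_i<0$'', so such a band cannot be made quasipositive in isolation and must instead be absorbed using the surplus negative twisting of the bands paired with it. Making this absorption explicit---equivalently, realizing the surface as a full subsurface of a quasipositive fiber surface in the sense of \ref{prop:full on qp Ss} despite the essential cycle---is the genuinely delicate step, and is what is carried out in Rudolph \cite{Rudolph2001}.
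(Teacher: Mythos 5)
Your division of labor is exactly the paper's: claim (1) is dismissed as elementary, the necessity half of (2) is derived from \ref{prop:full on qp Ss} applied to annular subsurfaces of the pretzel surface (together with \ref{thm:qp annuli} and the framing computation $n_{i,j}=(t_i+t_j)/2$, which is correct under the paper's normalization), and sufficiency is delegated to Rudolph \cite{Rudolph2001}---which is precisely all that the paper itself does. So your proposal is the intended argument, fleshed out, and your closing discussion of why sufficiency cannot be obtained by naively quoting \ref{thm:qp Murasugi sum theorem} is consistent with \ref{rmks:star surfaces}.

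One step in your fullness verification is, however, wrong as stated, although its conclusion is true. Deleting a thin annular neighborhood $A_{i,j}$ of $\gamma_{i,j}$ does not merely remove two bands: it cuts each $0$--handle into two disk pieces and splits each of the bands $i$ and $j$ into two side strips, and the resulting complement need \emph{not} be connected. For instance, if all $t_i$ are even and $j=i+1$, then the two disk pieces lying between bands $i$ and $j$, joined by the two side strips of those bands facing them, form an annulus component of the complement that is disjoint from the component carrying the remaining $p-2$ bands. Consequently the inference ``connected, with $\chi=2-p\le -1$, hence no disk component'' does not stand: once the complement may be disconnected, a total Euler characteristic of $2-p$ would a priori still permit a contractible component. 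The repair is short. The complement deformation retracts onto a graph whose vertices are the four disk pieces of the $0$--handles and whose edges are the $p-2$ surviving bands together with the four side strips of bands $i$ and $j$; every vertex of this graph has valence at least $2$ (each disk piece meets one side strip of band $i$ and one of band $j$), so every component of the graph contains a cycle, and hence no component of the complement is contractible. Thus $A_{i,j}$ is indeed full, \ref{prop:full on qp Ss} applies, and your necessity argument goes through.
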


Proposition \ref{prop:qp pretzel surface} 
is not the whole story on pretzel links with (strongly)
quasipositive orientations.  What was overlooked in
\cite{Rudolph2001} was that there are many $(t_1,\dots,t_p)$
failing the parity condition 
\ref{prop:qp pretzel surface}%
\eqref{subprop:pretzel surface parity condition},
the negative-sum condition
\ref{prop:qp pretzel surface}%
\eqref{subprop:pretzel surface negative sum condition},
or both, for which there nonetheless exists $\orientation$ 
making $\pretzel{t_1,t_2,\dots,t_p}^\orientation$ 
quasipositive.  This can happen in (at least) two ways.
\begin{inparaenum}[(a)]
\item\label{subex:ribbon pretzel knots}
$\pretzellink{t_1,\dots,t_p}^\orientation$ may bound
a quasipositive non-embedded ribbon-immersed surface
in $S^3$, but not bound any quasipositive Seifert surface,
making $\pretzel{t_1,\dots,t_p}^\orientation$ 
quasipositive but not strongly quasipositive;
the simplest knot of this type is 
$\pretzel{-3,3,-2}^\orientation$,
depicted in Figure \ref{fig:ribbon and positive pretzels}(a).
\item\label{subex:positive pretzel knots}
$\pretzellink{t_1,\dots,t_p}^\orientation$ may be a 
positive link, and thus strongly quasipositive 
by \ref{thm:positive implies stqp}; this case 
is addressed by the following proposition. 
\end{inparaenum}
\begin{figure}[ht!]
\centering
\includegraphics[width=\figwidth]{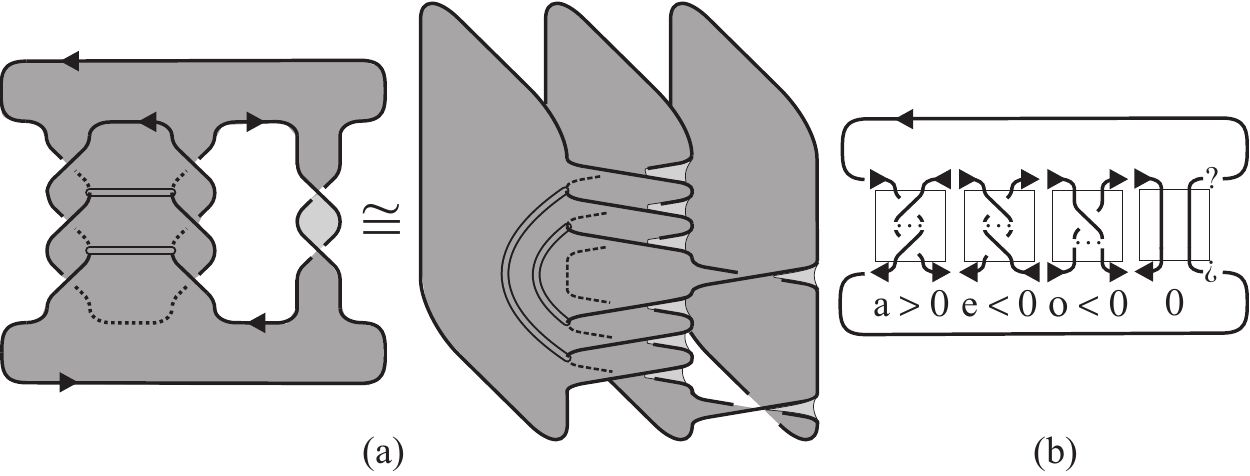} 
\caption{(a)~Isotopic ribbon-immersed surfaces bounded by 
$\pretzellink{3,-3,-2}^\orientation$ and the closure
of the quasipositive braid 
$\sigma_2\sigma_1^3\sigma_2\sigma_1^{-3}\in B_3$.
(b)~Templates for positive pretzels.
\label{fig:ribbon and positive pretzels}}
\end{figure}

\begin{proposition}\label{prop:positive pretzel links}
The following are equivalent.
\begin{inparaenum}[\upshape(A)]
\item\label{subprop:positive diagram}
$\pretzellink{t_1,\dots,t_p}$ has a projective orientation $\orientation$ such that the oriented link diagram of 
$\pretzel{t_1,\dots,t_p}^\orientation$ implicit in Figure~%
\textup{\ref{fig:general pretzel surface and pretzel plumbing}(c)} 
is positive.
\item\label{subprop:pretzel tangles}
The positive projective orientations of the 
$p$ $2$--string tangles indicated by the boxes 
labeled $t_1,\dots,t_p$ in Figure~%
\textup{\ref{fig:general pretzel surface and pretzel plumbing}(c)} 
are consistent.
\item\label{subprop:signs and parities}
Either 
	\begin{inparaenum}[\upshape(a)]
	\item\label{subsubprop:all-odd positive pretzel links}
	all $t_i$ are odd and negative, or
	\item\label{subsubprop:odd and even positive pretzel links}
	no odd $t_i$ is negative, and an even number
	of $t_i$ are strictly positive.
	\end{inparaenum}
\end{inparaenum}
\end{proposition}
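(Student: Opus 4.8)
The plan is to prove the equivalence by treating (A)$\Leftrightarrow$(B) as essentially a matter of unwinding definitions and reserving the genuine work for (B)$\Leftrightarrow$(C). First I would fix, once and for all, the tangle sign conventions of Figure~\ref{fig:rational links}(d) and record the elementary observation underlying (A)$\Leftrightarrow$(B): the only crossings in the pretzel diagram of $\pretzellink{t_1,\dots,t_p}$ implicit in Figure~\ref{fig:general pretzel surface and pretzel plumbing}(c) occur inside the $p$ twist regions, and within a region carrying $t_i\ne0$ half-twists every crossing has the same sign, a sign that is reversed by reversing either one of the two strands. Hence each twist region is positively oriented for exactly one of the two relative orientations of its two strands---its \emph{positive projective orientation}---the parallel one for one sign of $t_i$ and the antiparallel one for the other (the correspondence to be read off from Figure~\ref{fig:rational links}(d)). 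A global projective orientation $\orientation$ of the diagram restricts to a relative orientation on every region at once, so the oriented diagram of $\pretzel{t_1,\dots,t_p}^\orientation$ is positive precisely when $\orientation$ induces the positive projective orientation on all $p$ tangles simultaneously; this simultaneous realizability is exactly the consistency asserted in (B). Thus (A)$\Leftrightarrow$(B) is immediate.

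For (B)$\Leftrightarrow$(C) I would encode the relative orientations combinatorially. To each region assign a bit $s_i\in\{0,1\}$, equal to $0$ when its two strands are parallel and $1$ when they are antiparallel; positivity of that tangle then pins $s_i$ down from $\sgn(t_i)$. The arcs joining consecutive twist regions along the top and bottom of Figure~\ref{fig:general pretzel surface and pretzel plumbing}(c) impose $\mathbb{F}_{2}$--linear relations among the $s_i$ and the individual strand directions, and chasing these relations once around the cyclic ``necklace'' of tangles produces two constraints: a local one forcing the quantity $[\,t_i\text{ odd}\,]\,s_i$ to be independent of $i$, and a global cycle constraint equating $\bigoplus_i s_i$ to a parity determined by $p$ together with the signs of the $t_i$. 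The hard part will be carrying out this bookkeeping correctly---in particular keeping track of the fact that an odd $t_i$ transposes the two strands while an even $t_i$ does not, since it is exactly this interaction between the parities of the $t_i$ and the closing-up of the orientation around the cycle that separates the two alternatives in (C).

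Decoding the two constraints then yields (C). The local constraint says that the regions with odd $t_i$ must all carry the same relative orientation, hence the same sign of $t_i$, and that as soon as any $t_i$ is even this common sign is forced to be the one compatible with positivity of even tangles; this is precisely the dichotomy ``either all $t_i$ are odd of one sign, or no odd $t_i$ is negative,'' i.e.\ the split between (a) and (b). The global cycle constraint turns out to be vacuous in case (a): when all $t_i$ are odd and negative, the strand--transpositions of the odd tangles let the fully antiparallel orientation propagate consistently for \emph{every} $p$ (matching the fact that $\tor{2}{k}=\pretzel{-1,\dots,-1}$ is positive for all $k$). In case (b), by contrast, the even tangles do not transpose the strands, and the cycle constraint reduces to the requirement that the number of strictly positive $t_i$ be even.

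I would organize the two implications asymmetrically. For (C)$\Rightarrow$(A) an explicit construction is cleanest: in each of cases (a) and (b) exhibit a concrete orientation realizing a positive diagram, following the templates in Figure~\ref{fig:ribbon and positive pretzels}(b), and check positivity region by region. The reverse implication (A)$\Rightarrow$(C), equivalently the contrapositive $\neg$(C)$\Rightarrow\neg$(A), is then exactly the parity obstruction extracted from the cycle constraint above. I expect the sign-- and parity--bookkeeping around the necklace to be the only genuinely delicate point---especially pinning the conventions of Figure~\ref{fig:rational links}(d) so that the count in (b) comes out as ``an even number of \emph{strictly positive} $t_i$'' rather than some cognate condition on negative or odd entries; everything else is linear algebra over $\mathbb{F}_{2}$. (That a positive diagram then makes $\pretzel{t_1,\dots,t_p}^\orientation$ strongly quasipositive is not part of this statement but is the reason for it, via Theorem~\ref{thm:positive implies stqp}.)
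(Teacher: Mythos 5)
Your proposal is correct and is essentially the paper's own argument: the paper likewise treats (A)$\Leftrightarrow$(B) as clear, and proves (B)$\Leftrightarrow$(C) by examining how the positively oriented $2$--string tangle templates of Figure~\ref{fig:ribbon and positive pretzels}(b) can fit together around the cyclic pretzel diagram, and with the trivial tangle formed by the top and bottom arcs, while keeping projectively consistent orientations. Your $\mathbb{F}_2$--bookkeeping---the local constraint that the odd-twist regions all carry the same relative orientation (forced parallel as soon as any even region is present, because even tangles do not transpose their strands) together with the global cycle parity $\bigoplus_i s_i\equiv p\pmod{2}$---is an explicit algebraic rendering of exactly that consistency check, which the paper leaves to the figures.
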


\begin{proof}\label{pf:positive pretzel links}
It is clear that \eqref{subprop:positive diagram} 
and \eqref{subprop:pretzel tangles} are equivalent.
The equivalence of \eqref{subprop:pretzel tangles}
and \eqref{subprop:signs and parities} follows by
considering how the schematic templates for positive
oriented $2$--tangles shown in
Figure \ref{fig:ribbon and positive pretzels}(b) 
can fit together maintaining projectively consistent  
orientations~with each other and with the trivial 
$2$--tangle comprising the top and bottom of the
diagram.
\end{proof}

\begin{questions}\label{qn:qp pretzel questions} 
\begin{inparaenum}[(1)]
\item\label{subqn:non-strongly qp pretzel links}
The example depicted in 
Figure \ref{fig:ribbon and positive pretzels}(a)
can be generalized somewhat (eg, to $P(2n+1,-(2n+1),-2m)$ 
for all $m,n>0$), but it is not immediately
clear just how far.  Are there useful criteria for 
a pretzel link to have a quasipositive orientation that 
is not strongly quasipositive?  What about 
pretzel knots?  
\item\label{subqn:other stqp pretzel links?}
Excepting the case (covered by 
Proposition \ref{prop:qp pretzel surface})
in which $t_i+t_j$ is even and strictly negative for 
$1\le i<j\le p$, and one $t_i$ is non-negative, 
can $\pretzel{t_1,\dots,t_p}$ have a strongly quasipositive
orientation $\orientation$ that does not make 
$\pretzel{t_1,\dots,t_p}^\orientation$ actually a 
positive link as in Proposition 
\ref{prop:positive pretzel links}?
\end{inparaenum}\done
\end{questions}

\subsubsection{Strongly quasipositive arborescent links 
\label{subsubsect:sqp arborescent links}}

Three general methods produce large families 
of arborescent links supporting strongly quasipositive 
orientations; I do not know whether
all such links are produced in one of these ways.  To 
that extent (if not further: cf \ref{qn:qp pretzel questions}%
\eqref{subqn:non-strongly qp pretzel links}) this 
subsection is work in progress.

\begin{definitions}
Let $(\Ts,w)$ be a weighted tree.
\begin{inparaenum}[(1)]
\item\label{def:stqp (T,w)}
Call $(\Ts,w)$ \bydef{strongly quasipositive} 
in case there exists a projective orientation 
$\orientation$ of $\Bd\stripplumb{\Ts,w}$ such that
$(\Bd\stripplumb{\Ts,w}^\orientation,S^3)$
is a strongly quasipositive link.
\item\label{def:positive (T,w)}
Call $(\Ts,w)$ \bydef{positive} 
in case there exists a projective orientation 
$\orientation$ of $\Bd\stripplumb{\Ts,w}$ such that
the canonical unoriented link diagram of
$(\Bd\stripplumb{\Ts,w}^\orientation,S^3)$ 
(Gabai's \bydef{$\mathsf{T}$--projection}
\cite[Figure 1.4]{Gabai1986}; see also 
Bonahon and Siebenmann 
\cite[Figure 12.12]{BonahonSiebenmann1979/2010}), when endowed
with $\orientation$, becomes a positive oriented link diagram.
\item\label{def:vstqp (T,w)}
Call $(\Ts,w)$ \bydef{very strongly quasipositive} 
in case there exists a projective orientation 
$\orientation$ of $\stripplumb{\Ts,w}$ such that
$\stripplumb{\Ts,w}^\orientation$
is a quasipositive Seifert surface.
\end{inparaenum}\done
\end{definitions}

\eqref{def:positive (T,w)} and \eqref{def:vstqp (T,w)} 
each imply \eqref{def:stqp (T,w)}; 
\eqref{def:stqp (T,w)}, \eqref{def:positive (T,w)},
and \eqref{def:vstqp (T,w)} have no other non-trivial
implications.

To explore these properties, a few more definitions are useful. 
Let $\Ts$ be a planar tree with vertex set $\Verts{\Ts}$,
$w\from\Ts\to\Z$ a weighting. 
Writing $d(v,v')$ for the number of edges in the simple 
edge-path in $\Ts$ joining $v,v'\in\Verts{\Ts}$, call 
$v$ and $v'$ \bydef{adjacent} in case $d(v,v')=1$ 
and \bydef{distant} in case $d(v,w)\ge 3$.  A vertex 
adjacent to at least three is a node of $\Ts$,
and a vertex adjacent to at most one vertex is a twig 
of $\Ts$, as previously defined. 

\begin{definitions}\label{defs:decomposing a weighted tree}
Let $u$ be a non-node and $v$ a node of $\Ts$.
\begin{inparaenum}[(1)]
\item\label{def:stick of a non-node}
Denote by $\stickof{u}{\Ts}$ the subtree of $\Ts$ such 
that $u'\in\Verts{\stickof{u}{\Ts}}$ iff no vertex of the 
simple edge-path in $\Ts$ joining $u$ to $u'$ is a node.
The weighted tree $(\stickof{u}{\Ts},w\restr\stickof{u}{\Ts})$ is 
\bydef{the stick of $u$ in $(\Ts,w)$}; 
it is isomorphic to $\stickgraph{w(u'_1),\dots,w(u'_n)}$, where
$u'_1$ and $u'_n$ are the twigs of $\stickof{u}{\Ts}$
(so $v'_1=v'_n$ iff $\stickof{u}{\Ts}$ has $0$ edges)
and $d(v'_1,v'_q)=q-1$ for $q=1,\dots,n$.
\item\label{def:star of a node}
Denote by $\starof{v}{\Ts}$ the subtree of $\Ts$ such 
that $v'\in\Verts{\starof{v}{\Ts}}$ iff $d(v,v')\le 1$.
The weighted tree $(\starof{v}{\Ts},w\restr\starof{v}{\Ts})$ 
is \bydef{the star of $v$ in $(\Ts,w)$}; 
it is isomorphic to $\stargraph{w(v)}{w(v'_1),\dots,w(v'_p)}$, 
where $v'_1,\dots,v'_p$ are the twigs of $\starof{v}{\Ts}$
enumerated consistently with the planar embedding of $\Ts$.
\end{inparaenum}\done
\end{definitions}

\begin{lemma}\label{lemma:positive sticks and stars}
\begin{inparaenum}[\upshape(1)]
\item\label{sublemma:merely positive sticks}
$\stickgraph{r_1,\dots,r_n}$ is positive iff the braid 
$\sigma_2^{r_1}\sigma_3^{r_2}\sigma_3^{r_3}%
\cdots\sigma_\ell^{r_n}$ is generated by the machine 
in Figure~\upref{fig:2-bridge machine}.
\item\label{sublemma:vstqp sticks}
$\stickgraph{r_1,\dots,r_n}$ is very strongly quasipositive iff 
$r_i$ is even and strictly negative for $i=1,\dots,n$ iff
$\sigma_2^{r_1}\sigma_3^{r_2}\sigma_3^{r_3}\cdots\sigma_\ell^{r_n}$
is generated by the submachine obtained by deleting all but the
five rightmost arrows in Figure~\upref{fig:2-bridge machine}.
\item\label{sublemma:positive pretzel surfaces}
If $t_i+t_j$ is even and strictly negative
for $1<i\le j<p$, then $\stargraph{0}{t_1,\dots,t_p}$ 
is strongly quasipositive; if also $t_i<0$ for all $i$,
then $\stargraph{0}{t_1,\dots,t_p}$ 
is positive.
\item\label{sublemma:positive pretzel links}
If no odd $t_i$ is negative, and an even number
of $t_i$ are strictly positive, then 
$\stargraph{0}{t_1,\dots,t_p}$ is positive.
\item\label{sublemma:positive positive non-pretzel stars}
If $c>0$, all $t_i$ are odd and negative, and $c+p$ is even,
then $\stargraph{c}{t_1,\dots,t_p}$ is positive.
\item\label{sublemma:positive negative non-pretzel stars}
If $c$ and all $t_i$ are even and strictly negative, 
then $\stargraph{c}{t_1,\dots,t_p}$ is very strongly quasipositive.	\end{inparaenum}\qed
\end{lemma}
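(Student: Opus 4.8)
The plan is to obtain five of the six assertions by assembling results already in hand, and to reserve a direct diagrammatic argument for the remaining one, \eqref{sublemma:positive positive non-pretzel stars}, which is the only case with nonzero central weight. I will use freely that $\stripplumb{\stickgraph{r_1,\dots,r_n}}=\rationalsurface{r_1,\dots,r_n}$ and $\stripplumb{\stargraph{c}{t_1,\dots,t_p}}=\starplumb{c}{t_1,\dots,t_p}$ are iterated strip-plumbings of the strips $\strip{\Oscr}{r_i}$ (respectively of $\strip{\Oscr}{c}$ and the $\strip{\Oscr}{t_i}$), and that for $c=0$ the boundary $\Bd\starplumb{0}{t_1,\dots,t_p}$ is the pretzel link $\pretzellink{t_1,\dots,t_p}$, which also bounds $\pretzelsurface{t_1,\dots,t_p}$.

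The two stick cases are immediate. For \eqref{sublemma:merely positive sticks} one identifies Gabai's $\mathsf{T}$--projection of $\Bd\rationalsurface{r_1,\dots,r_n}^\orientation$ with the $4$--plat of $\sigma_2^{r_1}\sigma_3^{r_2}\cdots\sigma_\ell^{r_n}$ drawn in Figure~\ref{fig:rational links}(c); the assertion is then verbatim Proposition~\ref{prop:2-bridge machine}. For \eqref{sublemma:vstqp sticks}, the implication ``all $r_i$ even and negative $\Rightarrow$ very strongly quasipositive'' is the remark preceding Proposition~\ref{prop:2-bridge machine}: such a surface is an orientable Murasugi sum of quasipositive unknotted annuli, hence quasipositive by Theorems~\ref{thm:qp annuli} and~\ref{thm:qp Murasugi sum theorem}. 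Conversely, a quasipositive Seifert surface is orientable, forcing every $r_i$ even; and by Proposition~\ref{prop:full on qp Ss} each plumband $\strip{\Oscr}{r_i}$ is a full subsurface and therefore a quasipositive unknotted annulus, which by Theorem~\ref{thm:qp annuli}\eqref{subthm:qp unknotted annuli and TB} is possible only for the weights that are even and negative. The equivalence with the five-arrow submachine is then a direct reading of Figure~\ref{fig:2-bridge machine}, since its five rightmost arrows produce precisely the words $\sigma_2^{r_1}\cdots\sigma_\ell^{r_n}$ with every exponent even and negative.

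The $c=0$ star cases and \eqref{sublemma:positive negative non-pretzel stars} are corollaries of the pretzel and star-surface analysis. In \eqref{sublemma:positive pretzel surfaces} the hypothesis is exactly the combination---equal parities (orientability) together with strictly negative pairwise sums---under which Proposition~\ref{prop:qp pretzel surface} makes $\pretzelsurface{t_1,\dots,t_p}^\orientation$ a quasipositive Seifert surface; as this surface shares its unoriented boundary with $\starplumb{0}{t_1,\dots,t_p}$, the weighted star is strongly quasipositive, and when in addition every $t_i<0$ the resulting pretzel diagram---which is the $\mathsf{T}$--projection of the star surface---is positive by Proposition~\ref{prop:positive pretzel links}\eqref{subprop:signs and parities} (the case in which no odd $t_i$ is negative and an even number, namely none, are positive). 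Assertion \eqref{sublemma:positive pretzel links} is then nothing but the implication \eqref{subprop:signs and parities}$\Rightarrow$\eqref{subprop:positive diagram} of Proposition~\ref{prop:positive pretzel links}, read through that same identification of the $\mathsf{T}$--projection with the pretzel diagram. Finally \eqref{sublemma:positive negative non-pretzel stars} is the sufficiency half of Remark~\ref{rmks:star surfaces}\eqref{rmk:non-qp star surfaces}: with $c$ and every $t_i$ even and negative, $\starplumb{c}{t_1,\dots,t_p}$ is an orientable iterated Murasugi sum of quasipositive unknotted annuli, hence quasipositive, i.e.\ very strongly quasipositive.

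The one case not handed to us by a cited proposition is \eqref{sublemma:positive positive non-pretzel stars}, and I expect it to be the crux. Because $c>0$, the $\mathsf{T}$--projection of $\stargraph{c}{t_1,\dots,t_p}$ is the pretzel diagram of Figure~\ref{fig:general pretzel surface and pretzel plumbing}(c) with an extra block of $c$ crossings inserted at the node joining the two $0$--handles, so Proposition~\ref{prop:positive pretzel links} does not apply verbatim. My plan is to rerun its template argument: since each $t_i$ is odd and negative, the positive template of Figure~\ref{fig:ribbon and positive pretzels}(b) forces a definite (anti-parallel) projective orientation of the two strands in each twig, and the point is to check that the parity hypothesis ``$c+p$ even'' is exactly what allows these $p$ local orientations, together with a positive orientation of the central $c$--crossing block, to close up into one globally consistent projective orientation under which every crossing---twig and central alike---is positive. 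Remark~\ref{rmks:star surfaces}\eqref{rmk:trading off ones} furnishes a cross-check: trading the central twist for $c$ extra twigs of weight $-1$ (each odd and negative) leaves the boundary unchanged and recasts the diagram as an ordinary pretzel whose weights again meet the hypotheses of Proposition~\ref{prop:positive pretzel links}. The delicate step---and the only part of the whole lemma that is not immediate from earlier results---is verifying that no crossing is forced negative at the interface between the central block and the twigs; this is precisely where the sign of $c$ and the parity of $c+p$ enter.
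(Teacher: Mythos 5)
Your assembly is, item for item, the justification the paper intends: Lemma \ref{lemma:positive sticks and stars} is stated without proof precisely because it is meant to be read off from Propositions \ref{prop:2-bridge machine}, \ref{prop:qp pretzel surface}, \ref{prop:positive pretzel links}, Theorems \ref{thm:qp annuli} and \ref{thm:qp Murasugi sum theorem}, and the fullness argument (Proposition \ref{prop:full on qp Ss}) that the paper itself already deploys in Remark \ref{rmks:star surfaces}\eqref{rmk:non-qp star surfaces}; your reductions of \eqref{sublemma:merely positive sticks}, \eqref{sublemma:vstqp sticks}, \eqref{sublemma:positive pretzel surfaces}, \eqref{sublemma:positive pretzel links} and \eqref{sublemma:positive negative non-pretzel stars} coincide with that reading, including the converse half of \eqref{sublemma:vstqp sticks} via fullness of the plumbands. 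One small slip: in the second half of \eqref{sublemma:positive pretzel surfaces} your parenthetical invokes only alternative (b) of Proposition \ref{prop:positive pretzel links}\eqref{subprop:signs and parities}, which fails when the common parity is odd (all $t_i$ odd and negative); that case needs alternative (a) of the same proposition, so cite both.

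The substantive issue is \eqref{sublemma:positive positive non-pretzel stars}, which you correctly isolate as the only item not delegated to an earlier result but then leave as a plan; since this is the crux, the check should be executed, and it does close exactly along the line you indicate. In the $\mathsf{T}$--projection of $\stargraph{c}{t_1,\dots,t_p}$ the two long boundary arcs of the central strip (the ``rails'') are joined by the $p$ twig twist regions and cross each other in a block of $c$ crossings. A twig of odd negative weight has all its crossings positive exactly when its two strands are parallel, which forces the rails to be anti-parallel on either side of it and, because an odd region swaps its strands, interchanges which rail runs which way; the central block, being of the opposite handedness, has its crossings positive exactly when the rails are anti-parallel through it, and it interchanges them iff $c$ is odd. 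Thus every region demands the same (anti-parallel) rail state, and one full traversal of the diagram produces $p+(c\bmod 2)$ interchanges, which must be even for the orientation to close up --- precisely the hypothesis that $c+p$ is even. Finally, treat your cross-check with more suspicion than you do: Remark \ref{rmks:star surfaces}\eqref{rmk:trading off ones} only shows that the \emph{boundary link} is a positive link (indeed it is so for every $c>0$, whatever the parity of $c+p$, since the traded weights are all odd and negative), whereas positivity of a weighted tree is by definition a property of its own $\mathsf{T}$--projection, with the $c$ central crossings present; so trading can corroborate strong quasipositivity of the boundary but cannot substitute for the parity verification above.
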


\begin{theorem}\label{thm:positive weighted trees}
If $(\Ts,w)$ is such that 
\begin{inparaenum}[\upshape(1)]
\item\label{subthm:no 0 non-nodes}
if $u$ is a non-node then $w(u)\ne 0$, and 
\item\label{subthm:0 nodes are distant}
if $v, v'$ are adjacent nodes, then $w(v)$ or
$w(v')$ is non-zero,
\end{inparaenum} 
then $(\Ts,w)$ is positive iff the stick of every non-node in 
$(\Ts,w)$ and the star of every node in $(\Ts,w)$ is positive.
\end{theorem}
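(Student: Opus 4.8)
The plan is to regard positivity as a sign-condition, crossing by crossing, on the strands of Gabai's $\mathsf{T}$--projection of $\stripplumb{\Ts,w}$, and then to prove the asserted equivalence as a local-to-global (gluing) principle for orientations. The crossings of the $\mathsf{T}$--projection fall into two families: the \emph{twist crossings} at a vertex $v$, of which there are $\abs{w(v)}$ coming from the half-twists of the strip $\strip{\Oscr}{w(v)}$, and the \emph{plumbing crossings} on an edge $\{v,v'\}$, two per edge coming from the core-transverse $4$--gonal patch. A projective orientation $\orientation$ of $\Bd\stripplumb{\Ts,w}$ directs every strand, and the sign of each crossing depends only on $\orientation$ along the two strands meeting there; positivity of $(\Ts,w)$ means some $\orientation$ makes every crossing positive. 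Concretely I would record, at each vertex $v$, an \emph{orientation state} measuring the relative direction of the two boundary strands of $\strip{\Oscr}{w(v)}$ (co-oriented ``braidlike'' versus anti-oriented) together with the direction in which each strand enters an incident plumbing patch; positivity of the twist crossings at $v$ is then a condition on $w(v)$ and this state, while positivity of the two plumbing crossings on $\{v,v'\}$ couples the states at $v$ and $v'$.

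The forward implication is the easy direction and needs nothing beyond this bookkeeping. If $\orientation$ makes the whole $\mathsf{T}$--projection positive, then for any non-node $u$ the twist crossings of $\strip{\Oscr}{w(u)}$ and the plumbing crossings of the internal edges of $\stickof{u}{\Ts}$ are a sub-collection of the crossings of the full diagram carrying the same weights, and likewise for any node $v$ the twist crossings of $v$ and of its neighbors together with the plumbing crossings of the edges incident to $v$ reproduce the $\mathsf{T}$--projection of $\starof{v}{\Ts}$; restricting $\orientation$ to these strands exhibits each stick and each star as positive. So I would then concentrate on the converse, where hypotheses (1) and (2) do their work.

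For the converse I assume every stick and every star is positive and assemble the local positive orientations supplied by Lemma~\ref{lemma:positive sticks and stars} into a single global $\orientation$. Sticks and stars overlap precisely at the vertices adjacent to nodes---each such vertex is a non-node lying on a stick and simultaneously a twig of the adjacent node's star---and, when two nodes are adjacent, along the connecting edge, which lies in both of their stars. The key rigidity is that condition (1), $w(u)\ne 0$ for every non-node $u$, forces the twist crossings of $\strip{\Oscr}{w(u)}$ to pin the orientation state at $u$ uniquely up to the global projective reversal; hence at a node-neighbor the state demanded by its stick and the state demanded by the node's star are pinned by the \emph{same} nonzero weight and automatically agree, so these overlaps glue. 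At an edge between two adjacent nodes $v,v'$ the plumbing crossings are constrained by both stars at once; condition (2), that $w(v)$ or $w(v')$ is non-zero, guarantees that at least one endpoint has its state pinned by its own twist, and the plumbing-crossing positivity then determines the required relation across the edge unambiguously, so the demands of the two stars coincide. Because $\Ts$ is a tree, once a root is fixed these pinned states propagate outward along sticks and across stars without ever closing a cycle, and the resulting $\orientation$ makes every twist and every plumbing crossing positive.

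The hard part will be the precise sign-bookkeeping underlying the two rigidity claims: that under $w(u)\ne 0$ the twist crossings at a non-node really do determine its orientation state up to global reversal, and that under condition (2) the plumbing-crossing positivity between two nodes is satisfiable consistently whenever each star is separately positive. This reduces to tracking how a braidlike versus anti-oriented state at a strip, combined with the sign of $w(v)$, flips the signs of the twist and plumbing crossings, and then checking that the sign data already encoded in Lemma~\ref{lemma:positive sticks and stars}---equivalently in the generating machine of Figure~\ref{fig:2-bridge machine} for sticks and in the star cases---match along overlaps. I expect this explicit sign-chase, rather than any conceptual difficulty, to be the real content; the acyclicity of $\Ts$ ensures that once local rigidity and pairwise edge-compatibility are verified no further global obstruction can arise, and conditions (1) and (2) are exactly what exclude the degenerate zero-weight configurations in which a strip's state would fail to be pinned by its containing piece.
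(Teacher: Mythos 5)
Your proposal is correct and takes essentially the same route as the paper: the paper's entire proof consists of the observation that for $n\ne 0$ exactly one projective orientation of the canonical $2$--string tangle of Figure \ref{fig:rational links}(d) makes all $\abs{n}$ crossings positive (your ``pinning'' of the orientation state at a nonzero-weight strip), together with the remark that hypotheses \upeqref{subthm:no 0 non-nodes} and \upeqref{subthm:0 nodes are distant} make the positive sticks and stars fit together consistently at their shared twigs (your gluing along the tree). Your write-up merely makes explicit the sign-bookkeeping and the role of acyclicity that the paper leaves implicit.
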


\begin{proof}
Unless $n=0$, exactly one projective orientation
of the canonical $2$--string tangle depicted in Figure 
\ref{fig:rational links}(d) makes all $\abs{n}$ crossings
positive. \eqref{subthm:no 0 non-nodes} and
\eqref{subthm:0 nodes are distant} ensure that the
positive sticks and stars fit together consistently at
the appropriate twigs of each. 
\end{proof}

\begin{remark}
\eqref{subthm:no 0 non-nodes} can be weakened but
not dispensed with entirely; see Figure~\ref{fig:adjacent 0-nodes}.\done
\end{remark}

\begin{figure}[ht!]
\begin{center}
\includegraphics[width=\figwidth]{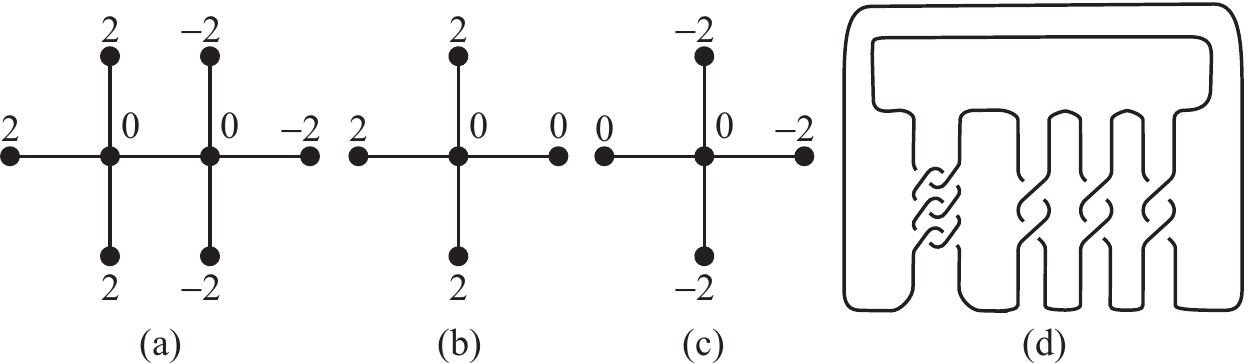} 
\caption{The weighted tree $(\Ts,w)$ in (a) has no sticks 
and two stars, shown in (b) and (c).  Both stars are positive;
but the canonical arborescent link diagram of 
$(\Bd\stripplumb{\Ts,w},S^3)$, shown in (d), has no 
projective orientation making it a positive diagram.
\label{fig:adjacent 0-nodes}}
\end{center}
\end{figure}

\begin{theorem}\label{thm:very stqp weighted trees}
The following are equivalent.  
\begin{inparaenum}[\upshape(A)]
\item
$(\Ts,w)$ is very strongly quasipositive.
\item
The stick of every non-node in $(\Ts,w)$ and the star of 
every node in $(\Ts,w)$ is very strongly positive.
\item
For every $v\in\Verts{\Ts}$, $w(v)$ is even and strictly negative.
\end{inparaenum}
\end{theorem}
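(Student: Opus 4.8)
The plan is to make condition (C) the hub, proving (A)$\Leftrightarrow$(C) and (B)$\Leftrightarrow$(C) separately; the single engine behind both is the reduction of iterated strip-plumbing to iterated Murasugi sum, combined with Theorem~\ref{thm:qp Murasugi sum theorem}. Two structural facts organize everything. Since $\strip{\Oscr}{t}$ is a M\"obius band for $t$ odd and an annulus for $t$ even, $\stripplumb{\Ts,w}$ is orientable---hence a Seifert surface carrying a projective orientation---iff every $w(v)$ is even; so if some weight is odd then neither (A) nor (C) can hold, and I may assume throughout that all weights are even. When that holds, the display \eqref{eqn:sp(T,w)} presents $\stripplumb{\Ts,w}$ as an iterated Murasugi sum of the annuli $S_v=\strip{\Oscr}{w(v)}$, one binary core-transverse ($4$--gonal) plumbing per edge of $\Ts$, performed in an order for which every initial segment of vertices spans a subtree, so that each partial sum is a connected Seifert surface plumbed to the next annulus along a single patch.

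For (A)$\Leftrightarrow$(C) I would induct on $\abs{\Verts{\Ts}}$, peeling off the final annulus $S_{v_m}$ (whose vertex $v_m$ is a leaf, being last in an enumeration whose initial segments are subtrees): by Theorem~\ref{thm:qp Murasugi sum theorem} the full sum is quasipositive iff both the partial sum $\stripplumb{\Ts',w'}$ (with $\Ts'$ the subtree on $v_1,\dots,v_{m-1}$) and $S_{v_m}$ are, so inductively $\stripplumb{\Ts,w}$ is quasipositive iff every $S_v$ is. By Lemma~\ref{lemma:positive sticks and stars}\eqref{sublemma:vstqp sticks} applied to the one-vertex stick $\stickgraph{w(v)}$, the annulus $S_v$ is quasipositive iff $w(v)$ is even and strictly negative. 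Hence $\stripplumb{\Ts,w}$ is very strongly quasipositive iff every $w(v)$ is even and strictly negative, which is exactly (A)$\Leftrightarrow$(C).

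For (C)$\Rightarrow$(B): the weights occurring in any $\stickof{u}{\Ts}$ form a sub-collection of the $w(v)$, hence are all even and strictly negative, so the stick is very strongly quasipositive by Lemma~\ref{lemma:positive sticks and stars}\eqref{sublemma:vstqp sticks}; and for a node $v$ the centre weight $w(v)$ together with all twig weights of $\starof{v}{\Ts}$ are even and strictly negative, so the star is very strongly quasipositive by Lemma~\ref{lemma:positive sticks and stars}\eqref{sublemma:positive negative non-pretzel stars}. For (B)$\Rightarrow$(C), Lemma~\ref{lemma:positive sticks and stars}\eqref{sublemma:vstqp sticks} converts ``every stick very strongly quasipositive'' into ``every non-node weight even and strictly negative,'' settling all non-nodes (in particular all twigs). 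A node $v$ lies in no stick, so I would recover $w(v)$ from its star by the same peeling argument: since $\starof{v}{\Ts}$ is very strongly quasipositive, iterating Theorem~\ref{thm:qp Murasugi sum theorem} forces each of its plumbands---in particular the central annulus $\strip{\Oscr}{w(v)}$---to be quasipositive, whence $w(v)$ is even and strictly negative. As every weight is a non-node weight or the centre weight of some star, (C) follows.

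The step I expect to require the most care is the reduction underlying the second structural fact: verifying that \eqref{eqn:sp(T,w)} is a bona fide iterated Murasugi sum to which Theorem~\ref{thm:qp Murasugi sum theorem} applies at each stage---that the chosen enumeration makes every partial plumbing a connected Seifert surface and every edge a single core-transverse patch, so that $4$--gonal plumbing is genuinely Murasugi sum of Seifert surfaces. Once this is secured the remainder is bookkeeping, the only non-formal inputs being Theorem~\ref{thm:qp Murasugi sum theorem} and the one-vertex case of Lemma~\ref{lemma:positive sticks and stars}\eqref{sublemma:vstqp sticks}. Finally, because the Murasugi-sum criterion depends only on the plumbands and not on the (non-unique) identification data of \ref{rmks:2p-plumbing}\eqref{rmk:stars and sides}, the property of being very strongly quasipositive is insensitive to the ambiguity in \eqref{eqn:sp(T,w)}, so the argument is well posed.
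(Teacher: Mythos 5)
Your proposal is correct and follows essentially the same route as the paper: the paper's own proof is simply ``Immediate from \ref{lemma:positive sticks and stars}\eqref{sublemma:vstqp sticks}, \ref{lemma:positive sticks and stars}\eqref{sublemma:positive negative non-pretzel stars}, and Theorem~\ref{thm:qp Murasugi sum theorem}'', which are exactly the three ingredients you use. Your write-up merely makes the word ``immediate'' precise---the orientability reduction, the leaf-peeling induction that turns the iterated strip-plumbing \eqref{eqn:sp(T,w)} into iterated Murasugi sums, and the one-vertex-stick reading of \eqref{sublemma:vstqp sticks}---which is a faithful expansion of the intended argument rather than a different one.
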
 

\begin{proof}
Immediate from \ref{lemma:positive sticks and stars}%
\eqref{sublemma:vstqp sticks},
\ref{lemma:positive sticks and stars}%
\eqref{sublemma:positive negative non-pretzel stars},
and Theorem~\ref{thm:qp Murasugi sum theorem}.
\end{proof}

Theorems~\ref{thm:positive weighted trees} and, 
especially, \ref{thm:very stqp weighted trees},
give an adequate account of strongly quasipositive
arborescent links that are constructed from either
positive weighted trees or very strongly quasipositive
weighted trees.  The situation is less satisfactory 
for strongly quasipositive arborescent links that are
of neither of those types: the sufficient conditions
to be described shortly are by no means clearly necessary.
 
Let $S_0$, $S'_0$, and $S_1$ be compact 
$2$--submanifolds-with-boundary of $S^3$ with 
$S_0$ unoriented and $\Bd S_0=\Bd S'_0$ as 
unoriented $1$--manifolds; either or both of
$S'_0$ and $S_1$ may be oriented (if orientable). 
Let $P_0\sub S_0$ and $P_1\sub S_1$ be 
$2p$--gonal plumbing patches.

\begin{definition}\label{def:transplant}
$P_0$ can be \bydef{transplanted} to a $2p$--gonal
plumbing patch $P'_0\sub S'_0$ in case there is an 
ambient isotopy between the $2$--complexes 
$\Bd S_0\cup P_0$ 
and $\Bd S'_0\cup P'_0$ 
respecting the components 
of $\Bd S_0$ and $\Bd S'_0$
and some (equivalently, every) projective orientation 
of $\Bd S=\Bd S'$.  \done
\end{definition}

\begin{lemma}\label{lemma:transplantation}
If $P_0\sub S_0$ can be transplanted to $P'_0\sub S'_0$,
then to any plumbing $S_0\splumb{P_0}{P_1}S_1$ 
corresponds a plumbing $S'_0\splumb{P_0}{P_1}S_1$ 
such that $\Bd(S_0\splumb{P_0}{P_1}S_1)$ and 
$\Bd(S'_0\splumb{P_0}{P_1}S_1)$ are ambient isotopic 
by an isotopy respecting any pre-assigned orientations
of $\Bd(S_0)=\Bd(S'_0)$ and $\Bd(S_1)$ consistent with (say)
the plumbing $\Bd(S_0\splumb{P_0}{P_1}S_1)$.\qed
\end{lemma}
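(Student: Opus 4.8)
The plan is to reduce the statement to the purely local nature of the boundary of a $2p$--gonal plumbing. First I would recall from Definition~\ref{def:M-sum} that, with the plumbing $2$--sphere $S^2$ bounding balls $D^3_1\supset S_0,S'_0$ and $D^3_2\supset S_1$, the plumbed surface is simply the disjoint union of its two plumbands away from $S^2$; the plumbands meet only in the patch disk $P$ on $S^2$ (along which $P_0\sub S_0$ and $P_1\sub S_1$ are identified). Consequently, if $N$ is a small ball neighborhood of $P$, then $\Bd(S_0\splumb{P_0}{P_1}S_1)$ coincides with $\Bd S_0\sqcup\Bd S_1$ outside $N$, while inside $N$ it is given by the standard local reconnection of the arcs of $\Bd P$ that are alternately proper in the two plumbands---exactly the model pictured in Figure~\ref{fig:4-gonal M-sum}. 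The upshot I would record as a preliminary observation is that the ambient isotopy type of $\Bd(S_0\splumb{P_0}{P_1}S_1)$, with its induced projective orientation, depends on the $S_0$--side only through the ambient isotopy type of the $2$--complex $\Bd S_0\cup P_0$, on the $S_1$--side only through $\Bd S_1\cup P_1$, and on the identification of the two patches along $S^2$.

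Next I would feed in the transplantation hypothesis. By Definition~\ref{def:transplant} there is an ambient isotopy $\Phi$ of $S^3$ carrying $\Bd S_0\cup P_0$ onto $\Bd S'_0\cup P'_0$, respecting the components of the coinciding boundaries $\Bd S_0=\Bd S'_0$ and a chosen projective orientation. Because the boundaries agree as unoriented $1$--manifolds, $\Phi$ merely slides the patch $P_0$ across to $P'_0$ while fixing $\Bd S_0$ componentwise and orientationwise, and in particular it preserves the alternating $2p$--gonal structure of the patch boundary. Applying the preliminary observation with $(\Bd S_0\cup P_0)$ replaced by the ambient isotopic $(\Bd S'_0\cup P'_0)$, and leaving $(\Bd S_1\cup P_1)$ and the gluing along $S^2$ untouched, would yield that $\Bd(S_0\splumb{P_0}{P_1}S_1)$ and the corresponding $\Bd(S'_0\splumb{P'_0}{P_1}S_1)$ are ambient isotopic by an isotopy respecting the prescribed orientations.

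The hard part will be the bookkeeping exactly at the patch, where the $S_1$--side is glued on: the transplant isotopy $\Phi$ a priori acts on all of $S^3$ and hence also disturbs $S_1$, which is glued to the $S_0$--side precisely along $P$. To handle this I would take $\Phi$ supported in an arbitrarily thin regular neighborhood of the $2$--complex $\Bd S_0\cup P_0\cup P'_0$ (legitimate since this complex has empty interior), so that near $S^2$ it is supported in a collar of the patches; there the surfaces sit in the standard plumbing position of Figure~\ref{fig:4-gonal M-sum}, and $\Phi$ moves patches that are glued to the fixed $P_1$ in the unchanged alternating pattern. One then checks that $\Phi$, so localized, extends across the plumbing sphere to an ambient isotopy carrying $\Bd(S_0\splumb{P_0}{P_1}S_1)$ to $\Bd(S'_0\splumb{P'_0}{P_1}S_1)$, the induced orientation being controlled by the orientation-respecting clause of Definition~\ref{def:transplant} together with the fact that $\Bd S_1$ is never moved. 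I expect the care required here to be of the same flavour as the ``no corners'' care of Remark~\ref{rmks:2p-plumbing}\eqref{rmk:no corners}: routine, but needing bump functions to keep everything in the smooth category.
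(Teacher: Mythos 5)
The paper supplies no argument for this lemma at all---the statement carries a terminal end-of-proof mark, i.e.\ it is treated as immediate from Definitions \ref{def:M-sum} and \ref{def:transplant}---so your proposal has to stand on its own. Your opening observation is indeed the mechanism that makes the lemma true, and is surely what the paper has in mind: writing the arcs of $\Bd P$ as $a_1,b_1,\dots,a_p,b_p$, with the $a_i$ proper in $S_0$ (hence lying on $\Bd S_1$) and the $b_i$ proper in $S_1$ (hence lying on $\Bd S_0$), one has $\Bd(S_0\splumb{P_0}{P_1}S_1)=(\Bd S_0\setminus\bigcup_i \Int{b_i})\cup(\Bd S_1\setminus\bigcup_i\Int{a_i})$, so the boundary is assembled from the two $2$--complexes $\Bd S_0\cup P_0$, $\Bd S_1\cup P_1$ and the patch identification alone. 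But note that this formula only computes the boundary of a \emph{given} plumbing presentation; the assertion that the \emph{isotopy class} of the boundary depends only on the isotopy classes of those complexes is essentially the lemma itself, and proving it requires converting an isotopy of complexes into compatible plumbing presentations. That conversion is your ``hard part,'' and there the argument has a genuine gap.

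The gap is twofold. First, the localization claim---that the transplant isotopy $\Phi$ may be taken supported in an arbitrarily thin regular neighborhood of $\Bd S_0\cup P_0\cup P'_0$---is unjustified and false in general. ``Empty interior'' is irrelevant: an ambient isotopy can be supported only in a neighborhood of its \emph{track} $\bigcup_t\Phi_t(\Bd S_0\cup P_0)$, not of the union of source and target, and the patch, in traveling from $P_0$ to $P'_0$, may be forced to sweep across regions (across $\Int{S_0}$, or around a knotted portion of the configuration) that no thin neighborhood of $\Bd S_0\cup P_0\cup P'_0$ contains. (Already two disjoint isotopic circles admit no isotopy supported near their union; sharing $\Bd S_0=\Bd S'_0$ makes the neighborhood connected but does not restore the claim. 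Note also that Definition \ref{def:transplant} does not say $\Phi$ fixes $\Bd S_0$ at intermediate times.) Second, and more fundamentally, even granted such a $\Phi$ you have not produced a plumbing $S'_0\splumb{P'_0}{P_1}S_1$: transplantation controls only the complex $\Bd S'_0\cup P'_0$, and nothing prevents $\Int{S'_0}$ from crossing the sphere $\Phi_1(S^2)$, entering the ball that holds the $S_1$--side, or meeting $\Phi_1(S_1)$ itself; so the transported configuration need not satisfy Definition \ref{def:M-sum} with $S'_0$ as a plumband. The missing step that repairs both defects is a repositioning of the complementary half-ball rel the complex: the closed ball $B$ on the $S_1$--side of $\Phi_1(S^2)$ meets $\Bd S'_0\cup P'_0$ only in $P'_0$, hence (since a disk in the boundary of a ball is a standard pair) $B$ can be shrunk, by an ambient isotopy of $S^3$ fixing $\Bd S'_0\cup P'_0$ pointwise, into a thin half-collar of $P'_0$ on the appropriate side; by uniqueness of such half-collars, the $S_1$--side can then be carried, still rel the complex, into the complementary ball of any sphere that genuinely exhibits $P'_0$ as a plumbing patch of $S'_0$. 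This yields the desired plumbing with boundary ambient isotopic, orientations respected, to $\Phi_1(\Bd(S_0\splumb{P_0}{P_1}S_1))$ and hence to the original boundary. It is this half-ball argument---not any localization of $\Phi$---that makes the boundary depend only on the complexes, and your proposal does not supply it.
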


\begin{figure}[ht!]
\begin{center}
\includegraphics[width=\figwidth]{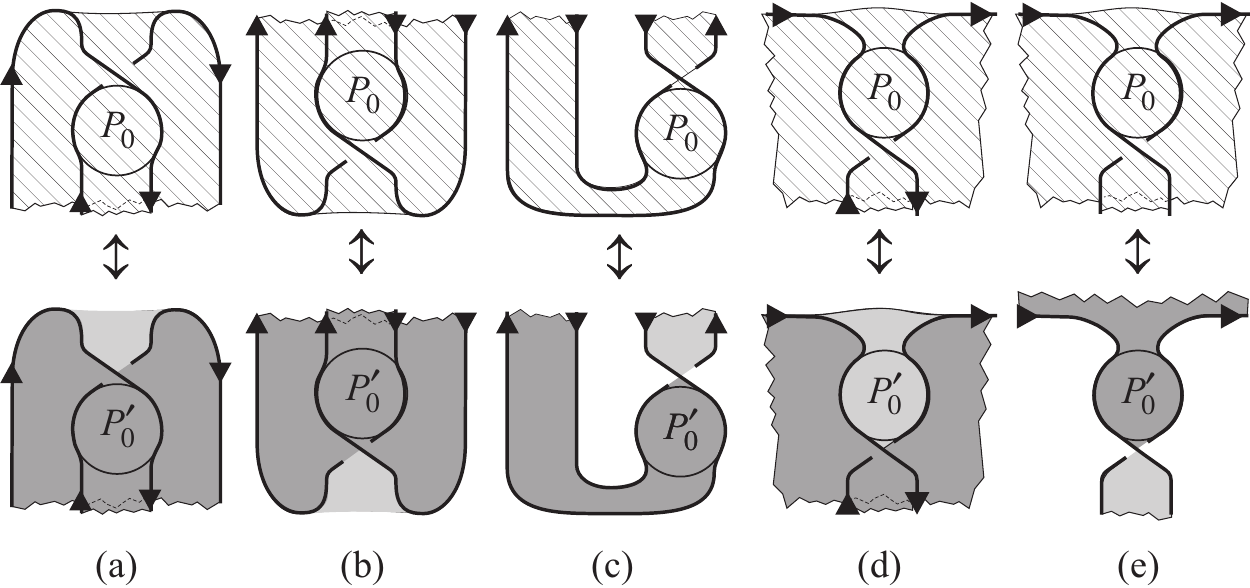} 
\caption{Transplanting a core-transverse plumbing patch 
from an unoriented strip-plumbed surface: the donor, which
is $\stripplumb{\stickgraph{r_1,\dots,r_n}}$ in (a)--(c)
and $\stripplumb{\stargraph{0}{t_1,\dots,t_p}}$ in (d) and (e),
may not be globally orientable (although the pictured part
of it is);
the recipient is a Seifert surface for the donor's boundary 
with the indicated orientation in (a)--(d), and the 
pretzel surface $\pretzelsurface{t_1,\dots,t_p}$ in (e), 
where nothing is oriented.
\label{fig:transplanting plumbing patches}}
\end{center}
\end{figure}

In all instances of transplanting used here, $p=2$; Figure \ref{fig:transplanting plumbing patches} shows them as follows.  
\begin{compactenum}[(1)]
\item
The upper portion of each sub-figure depicts part of one 
strip of an unoriented iterated strip-plumbed 
$2$--manifold-with-boundary $S_0$. 
	\begin{enumerate}[(i)]
	\item
	In (a), $S_0=\stripplumb{\stickgraph{r_1,\dots,r_n}}$
	and the strip is $\strip{\Oscr}{r_1}\sub S_0$.
	\item
	In (b) and (c), $S_0=\stripplumb{\stickgraph{r_1,\dots,r_n}}$
	and the strip is $\strip{\Oscr}{r_n}\sub S_0$.
	\item
	In (d) and (e), $S_0=\stripplumb{\stargraph{c}{t_1,\dots,t_p}}$
	and the strip is any $\strip{\Oscr}{t_i}\sub S_0$.
	\end{enumerate}
\item
In each sub-figure $P_0$ is a core-transverse plumbing patch 
on that strip, and $\Bd S_0$ is equipped with a projective
orientation $\orientation$ making the visible 
crossing positive.
\item
The lower portion of each sub-figure depicts part of 
the Seifert surface $S'_0$ with $\Bd S'_0=\Bd S_0$
that is produced by applying Seifert's algorithm to a 
diagram of $\Bd S_0^\orientation$ extending the partial 
diagram in the sub-figure.
\item 
In each sub-figure, $P'_0$ is $P_0$ transplanted from 
$S_0$ to $S'_0$ (the required isotopy 
can be taken to be constant).  
\end{compactenum}

Figure \ref{fig:plumbing along transplanted plumbing patches}
illustrates \ref{lemma:transplantation} using the surfaces
in Figure \ref{fig:transplanting plumbing patches}(c) and (d).

\begin{figure}[ht!]
\begin{center}
\includegraphics[width=\figwidth]%
{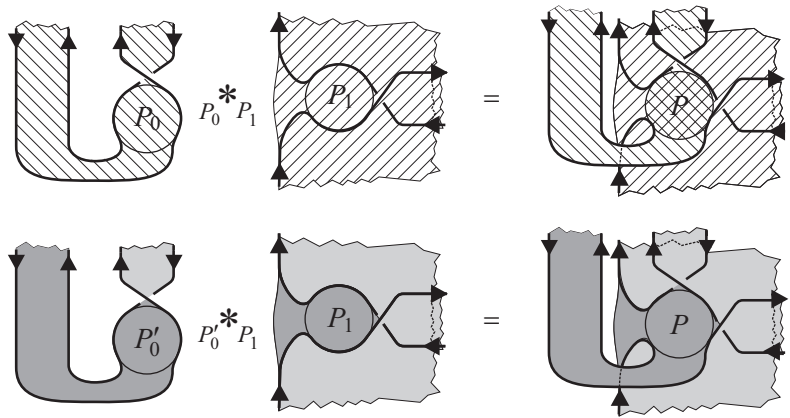} 
\caption{Plumbing along transplanted plumbing patches.
\label{fig:plumbing along transplanted plumbing patches}}
\end{center}
\end{figure}

Note that in the cases illustrated in 
Figure \ref{fig:transplanting plumbing patches}(a)--(d), 
the unique projective orientation of $\Bd P_0$ is consistent
with the given projective orientation of $\Bd S_0$.
Contrariwise, in the remaining cases of first and last 
strips on $\stripplumb{\stickgraph{r_1,\dots,r_n}}$
and any strip $\strip{\Oscr}{t_i}$ 
on $\stripplumb{\stargraph{0}{t_1,\dots,t_p}}$, with
boundaries oriented to make the visible crossing positive,
these projective orientations are inconsistent, and
thus a core-transverse plumbing patch $P_0$ on that 
strip \emph{cannot} be transplanted to a plumbing patch 
on $S'_0$ (by any isotopy whatever); see
Figure~\ref{fig:non-transplantable plumbing patches}.

\begin{figure}[ht!]
\begin{center}
\includegraphics[width=\figwidth]{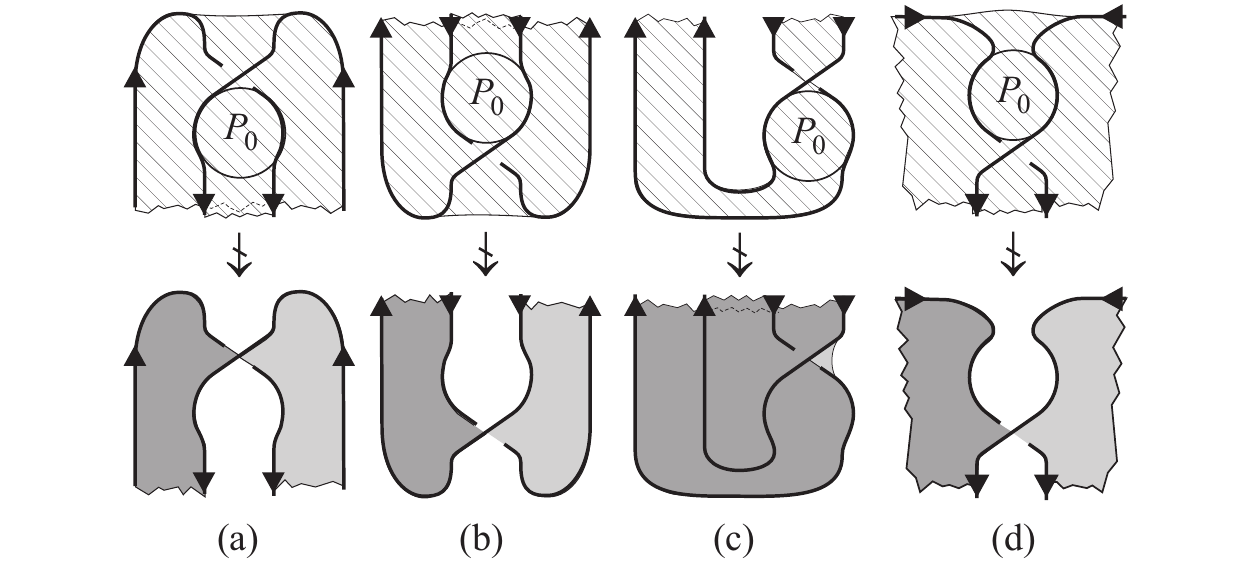} 
\caption{The indicated core-transverse plumbing patches on 
strip-plumbed surfaces cannot be transplanted to 
the indicated Seifert surfaces bounded by the same links. 
\label{fig:non-transplantable plumbing patches}}
\end{center}
\end{figure}

\begin{proposition}%
\label{prop:plumbing to stqp rational/pretzel links}
Let $r_1,\dots,r_n$, $t_1,\dots,t_p$, and $c$ be integers.
\begin{asparaenum}[\upshape(A)]
\item\label{subprop:transplantable ends of positive sticks}
Let $S_0=\stripplumb{\stickgraph{r_1,\dots,r_n}}$.
If $\Bd S_0$ has a projective orientation $\orientation$ for
which $((\Bd S_0)^\orientation,S^3)$ is a positive link, 
$S'_0$ is the quasipositive Seifert surface with 
$\Bd S'_0=(\Bd S_0)^\orientation$ produced by Seifert's 
algorithm (see \textup{\ref{thm:positive implies stqp}}),
and $i=1$ or $i=n$,
then a core-transverse $4$--patch 
$P_0\sub \strip{\Oscr}{r_i}\sub S_0$ can be transplanted to
$S'_0$ iff $r_i<0$.
\item\label{subprop:transplantable strips of positive stars}
Let $S_0\isdefinedas\stripplumb{\stargraph{c}{t_1,\dots,t_p}}$.
If $\Bd S_0$ has a projective orientation $\orientation$ for which
$((\Bd S_0)^\orientation,S^3)$ is a positive link, 
$S'_0$ is the quasipositive Seifert surface with 
$\Bd S'_0=(\Bd S_0)^\orientation$ produced by Seifert's 
algorithm, 
and $1\le i\le p$,
then a core-transverse $4$--patch 
$P_0\sub\strip{\Oscr}{t_i}\sub S_0$ can be transplanted to
$S'_0$ iff $t_i<0$.
\item\label{subprop:transplantable strips of non-qp, sqp stars}
If $c=0$, all $t_i$ have the same parity,
$t_i+t_j<0$ for $1\le i<j\le p$, 
and $1\le i\le p$,
then a core-transverse $4$--patch 
$P_0\sub\strip{\Oscr}{t_i}\sub S_0$ can be transplanted to $S'_0$.
\end{asparaenum}
\end{proposition}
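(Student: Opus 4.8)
The plan is to reduce all three assertions to the single local criterion announced just before the statement: transplantability of a core-transverse patch is equivalent to the \emph{consistency}, read off at the visible crossing, of the projective orientation of $\Bd P_0$ with the projective orientation $\orientation$ of $\Bd S_0$. The reduction rests on one orientation invariant, which I would prove first. Suppose $P_0$ transplants to a patch $P'_0$ lying on an oriented Seifert surface $S'_0$ with $\Bd S'_0=(\Bd S_0)^\orientation$. Orient $P'_0$ by restricting the orientation of $S'_0$; along each of the two arcs of $\Bd P'_0$ lying in $\Bd S'_0$ the boundary orientation of $P'_0$ agrees with that of $S'_0$, since the disk and the surface occupy the same side of such an arc. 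Hence the unique projective orientation of the circle $\Bd P'_0$ is consistent with $\orientation$ at the patch. As a transplanting isotopy (Definition \ref{def:transplant}) carries $\orientation$ to $\orientation$ and $\Bd P_0$ to $\Bd P'_0$, consistency of $\Bd P_0$ with $\orientation$ is a \emph{necessary} condition for transplanting onto any oriented Seifert surface.

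For the ``if'' halves of \eqref{subprop:transplantable ends of positive sticks} and \eqref{subprop:transplantable strips of positive stars} I would exhibit $P'_0$ directly. In the consistent case the local picture produced by Seifert's algorithm on the positive diagram coincides, near the patch, with the corresponding strip of $S_0$; one may therefore take $P'_0=P_0$ with the transplanting isotopy constant, exactly as in Figure \ref{fig:transplanting plumbing patches}(a)--(d). It then remains to decide, for an end strip $\strip{\Oscr}{r_i}$ of a positive stick and for an arbitrary twig strip $\strip{\Oscr}{t_i}$ of a positive star, when $\orientation$ is consistent with $\Bd P_0$. The boundary of the square $\Bd P_0$ traverses its two sides lying in $\Bd S_0$ in opposite senses along the core of the strip, presenting the two edges as anti-parallel strands at the visible crossing; making that crossing positive therefore forces a definite handedness of the half-twist, which by the convention of Figure \ref{fig:rational links}(d) is the handedness recorded by a negative weight. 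Thus consistency holds exactly when $r_i<0$ (respectively $t_i<0$), and the invariant of the first paragraph then supplies the ``only if'' halves. (Positivity forces each relevant weight to be non-zero, so the dichotomy is exhaustive.) The restriction to end strips in \eqref{subprop:transplantable ends of positive sticks} is essential: an interior strip is plumbed to neighbours on both sides, so its two edges need not meet at a single visible crossing; every twig of a star, being a leaf, behaves like an end strip, which is why \eqref{subprop:transplantable strips of positive stars} covers all of its strips, and the failures are exactly those of Figure \ref{fig:non-transplantable plumbing patches}.

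For \eqref{subprop:transplantable strips of non-qp, sqp stars} the recipient is not a Seifert surface built from a positive diagram but the pretzel surface $\pretzelsurface{t_1,\dots,t_p}$ of Figure \ref{fig:transplanting plumbing patches}(e), and the argument is different and easier. The star surface $\stripplumb{\stargraph{0}{t_1,\dots,t_p}}$ and $\pretzelsurface{t_1,\dots,t_p}$ are assembled from literally the \emph{same} bands $\strip{\Oscr}{t_i}$, differing only in that the central plumband $\strip{\Oscr}{0}$ of the star is replaced by the two $0$--handles of the pretzel. A core-transverse patch on any band $\strip{\Oscr}{t_i}$ therefore sits identically on both surfaces, and one transplants by taking $P'_0=P_0$ with constant isotopy, for every $i$ and with no constraint on $\sgn t_i$. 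The hypotheses enter only to make the shared boundary orientations compatible: ``all $t_i$ of the same parity'' renders $\pretzelsurface{t_1,\dots,t_p}$ orientable in the even case (Proposition \ref{prop:qp pretzel surface}\eqref{subprop:pretzel surface parity condition}) with the same preferred projective orientation on the shared bands, while $t_i+t_j<0$ places it in the quasipositive range (Proposition \ref{prop:qp pretzel surface}\eqref{subprop:pretzel surface negative sum condition}), so that the transplanted patch is useful for the intended plumbing.

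The main obstacle I anticipate is pinning down the sign correspondence in \eqref{subprop:transplantable ends of positive sticks} and \eqref{subprop:transplantable strips of positive stars}---that consistency at the visible crossing is equivalent to $r_i<0$ (respectively $t_i<0$), rather than to the opposite inequality. This is where the interplay between the twisting of the strip, the orientation forced by positivity of the diagram, and the fixed anti-parallel orientation of $\Bd P_0$ must be tracked with care against the conventions of Figure \ref{fig:rational links}(d); the explicit local pictures in Figures \ref{fig:transplanting plumbing patches} and \ref{fig:non-transplantable plumbing patches} are precisely what make this bookkeeping reliable.
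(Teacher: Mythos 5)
Your proposal is correct and takes essentially the same route as the paper, whose entire proof consists of citing Propositions \ref{prop:2-bridge machine}, \ref{prop:positive pretzel links}, and \ref{prop:qp pretzel surface}\eqref{subprop:pretzel surface negative sum condition} as applied to Figure~\ref{fig:transplanting plumbing patches}, together with the consistency-of-projective-orientations criterion and the constant-isotopy transplants set out in the text surrounding Definition~\ref{def:transplant}; your first paragraph simply makes explicit the necessity half (inconsistency obstructs transplanting by any isotopy) that the paper asserts without proof, and your treatment of part \eqref{subprop:transplantable strips of non-qp, sqp stars} matches the paper's use of the pretzel surface as recipient. The one blemish is your parenthetical claim that positivity forces the relevant weights to be nonzero: this is false for stars (e.g.\ $\pretzel{0,2,2}$ admits a positive orientation, and its strands at the weight-$0$ band are then anti-parallel, hence consistent with the patch), so the degenerate zero-weight case of \eqref{subprop:transplantable strips of positive stars} escapes your dichotomy---though it is excluded by definition for rational surfaces and is equally unaddressed by the paper's own one-line proof.
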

\begin{proof}
\eqref{subprop:transplantable ends of positive sticks} 
follows from \ref{prop:2-bridge machine} applied to
Figure~\ref{fig:transplanting plumbing patches}(a)--(c),
\eqref{subprop:transplantable strips of positive stars}
from \ref{prop:positive pretzel links} applied to
Figure~\ref{fig:transplanting plumbing patches}(d),
and \eqref{subprop:transplantable strips of non-qp, sqp stars}
from \ref{prop:qp pretzel surface}%
\eqref{subprop:pretzel surface negative sum condition}
applied to 
Figure~\ref{fig:transplanting plumbing patches}(e).
\end{proof}

\begin{remark}%
\label{thm:stqp arborescent links by transplanted plumbings}
As suggested by 
Figure~\ref{fig:non-transplanted diagrammatic plumbing example},
\begin{figure}[ht!]
\centering
\includegraphics[width=\figwidth]%
{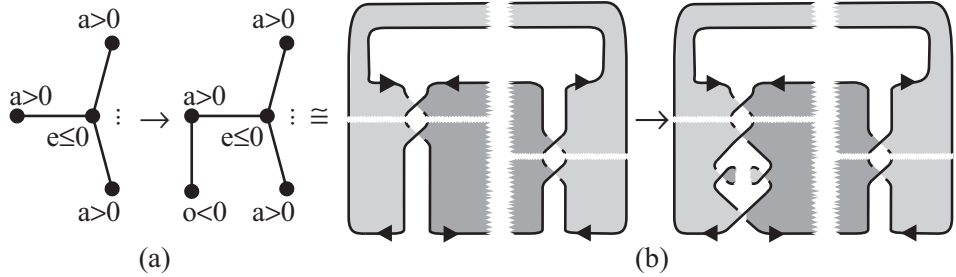} 
\caption{The illustrated strip-plumbing preserves diagrammatic
positivity and therefore strong quasipositivity, but 
cannot be performed by plumbing quasipositive Seifert surfaces.
\label{fig:non-transplanted diagrammatic plumbing example}}
\end{figure}
many arborescent links have positive (and thus  
strongly quasipositive) orientations but are not plumbed 
from strongly quasipositive sticks and stars as in
\ref{thm:positive weighted trees}.  It would
be interesting to have necessary and sufficient
conditions for an arborescent link to have a (strongly)
quaspositive orientation.
\done
\end{remark}

\section{Constructions of $\mathbf{3}$--dimensional 
transverse $\C$--links\label{part:3d constructions}}
This part collects constructions of $3$--dimensional
transverse $\C$--links; many of the 
constructions use quasipositive links constructed 
in Part \ref{part:qp stuff old and new}.

\begin{notation}\label{notations:3-dim stuff}
Let $\Delta\sub U\sub\C^{2}$, $\Sigma=\Bd{\Delta}$, 
and $f\in\holo{U}$ be as in Part \ref{part:introduction}. 
\begin{inparaenum}[(1)]
\item\label{notation:product with smoothed corners}
For $r>\max\{|f(\zz)|\setsuchthat \zz\in\Delta\}$,
the product $\Delta\times\DSK{2}{0}{r}\sub 
U\times\C\sub\C^{3}$ is a closed Stein $6$--disk.  
Its boundary $\Bd(\Delta\times\DSK{2}{0}{r})=%
\Sigma\times \DSK{2}{0}{r}\cup\Delta\times\SPH{1}{0}{r}$,
a piecewise real-analytic $5$--sphere, is  
pseudoconvex but not strictly so; however, 
$\Delta\times\DSK{2}{0}{r}$ can be arbitrarily well 
approximated by closed Stein $6$--disks in 
$U\times\C$ with strictly pseudoconvex real-analytic
boundaries.  Write $\pad{\Delta}$ for such an
approximation that is sufficiently close for whatever 
purpose is required, and $\pad{\Sigma}$ for 
$\Bd\pad{\Delta}$. 
\item\label{notation:[q]}
For an integer $q>0$, define $\cycsusp{f}{q}$ by
$(\cycsusp{f}{q})(z_0,z_1,z_2)=f(z_0,z_1)+z_2^q$; 
define $\cycsusp{f}{0}$ by
$(\cycsusp{f}{0})(z_0,z_1,z_2)=f(z_0,z_1)$.
For a $1$--dimensional transverse $\C$--link
$\Clink{f}{\Sigma}$, write 
$\cycsusp{\Clink{f}{\Sigma}}{q}\isdefinedas
(V(\cycsusp{f}{q}),\pad{\Sigma})$; for
$q>0$, this is an instance of what Kauffman and Neumann
\cite{KauffmanNeumann1977} call the \bydef{$q$-fold cyclic 
suspension} $\cycsusp{\Lscr}{q}$ of a 
smooth, oriented link $\Lscr$.\done 
\end{inparaenum}
\end{notation}

\subsection{$\mathbf{3}$--dimensional links of isolated 
singular points}\label{subsect:links of singularities}
This heading is included for completeness only, since the
theory of these $3$--dimensional transverse
$\C$--links has been thoroughly developed (\cite{Milnor1968},
\cite{Neumann1981}, 
\cite{EisenbudNeumann1985}, etc).

\subsection{Adding a dummy variable}\label{subsect:dummy variable}
\begin{proposition}\label{prop:dummy of transverse}
If $\Clink{f}{\Sigma}$ is a $1$--dimensional transverse
$\C$--link with non-singular $\C$--span $\Cspan{f}{\Sigma}$, then:
\begin{inparaenum}[\upshape(1)]
\item\label{subprop:dummy is a C-link} 
$\cycsusp{\Clink{f}{\Sigma}}{0}=
\Clink{\cycsusp{f}{0}}{\pad{\Sigma}}$
is a transverse $3$--dimensional $\C$--link; 
\item\label{subprop:Cspan of dummy C-link} 
$\Cspan{\cycsusp{f}{0}}{\pad{\Delta}}$ is non-singular,
and diffeomorphic to a disjoint union of boundary-connected 
sums of copies of $S^1\times D^3$;
\item\label{subprop:linkman of dummy C-link}
$\lman{\cycsusp{f}{0}}{\pad{\Sigma}}$ is diffeomorphic 
to a disjoint union of connected sums of copies of 
$S^1\times S^2$.
\end{inparaenum}
\end{proposition}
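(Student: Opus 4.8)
The plan is to exploit that adjoining the dummy variable replaces $V(f)$ by the product $V(\cycsusp{f}{0})=V(f)\times\C$, so that over the genuine product domain $\Delta\times\DSK{2}{0}{r}$ the $\C$--span of $\cycsusp{f}{0}$ is literally $\Cspan{f}{\Delta}\times\DSK{2}{0}{r}$. All three assertions then reduce to recording what multiplication by $D^2$ (followed by a corner-smoothing) does to a compact oriented surface-with-boundary.

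First I would establish \eqref{subprop:dummy is a C-link}. Since $\cycsusp{f}{0}$ ignores $z_2$, one has $V(\cycsusp{f}{0})=V(f)\times\C$ and $\sing{V(\cycsusp{f}{0})}=\sing{V(f)}\times\C$; as $\Cspan{f}{\Delta}$ is non-singular, $\sing{V(f)}$ is disjoint from a neighborhood $N$ of the compact set $\Delta$, so $\sing{V(\cycsusp{f}{0})}$ misses the neighborhood $N\times\C$ of the product domain and hence misses $\pad\Delta$ once the approximation is close enough. Over that product domain the intersection with $V(\cycsusp{f}{0})$ is the smooth manifold-with-corners $\Cspan{f}{\Delta}\times\DSK{2}{0}{r}$. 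Because $z_2$ is free, $V(f)\times\C$ is transverse to both smooth faces of $\Bd(\Delta\times\DSK{2}{0}{r})$: it is transverse to $\Sigma\times\DSK{2}{0}{r}$ because $V(f)$ meets $\Sigma$ transversally (the transversality defining $\Clink{f}{\Sigma}$), and transverse to $\Delta\times\SPH{1}{0}{r}$ automatically, since the free $z_2$--direction already supplies the normal to $\SPH{1}{0}{r}$. I would then invoke the freedom in the definition of $\pad\Delta$ (see \ref{notations:3-dim stuff}) to choose the smoothing close enough that $V(\cycsusp{f}{0})$ meets $\pad\Sigma$ transversally and that $\Cspan{\cycsusp{f}{0}}{\pad\Delta}$ is the induced corner-smoothing of $\Cspan{f}{\Delta}\times\DSK{2}{0}{r}$, hence diffeomorphic to $\Cspan{f}{\Delta}\times D^2$. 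This yields \eqref{subprop:dummy is a C-link} together with the non-singularity clause of \eqref{subprop:Cspan of dummy C-link}.

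Next I would identify the diffeomorphism type in \eqref{subprop:Cspan of dummy C-link}. The non-singular $\C$--span $\Cspan{f}{\Delta}$ is a compact oriented surface-with-boundary with no closed component (a closed component would be a compact complex curve sitting in the Stein domain $\Delta$, which is impossible). Each connected component $F$ therefore admits a handle decomposition with a single $0$--handle and $b_1(F)$ orientable $1$--handles, where $b_1(F)=\operatorname{rank} H_1(F)$. Multiplication by $D^2$ carries a $k$--handle $D^k\times D^{2-k}$ of $F$ to the $4$--dimensional $k$--handle $D^k\times D^{4-k}$ of $F\times D^2$, so $F\times D^2$ is a $4$--dimensional handlebody built from one $0$--handle and $b_1(F)$ orientable $1$--handles. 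Since such a $1$--handlebody is determined up to diffeomorphism by its number of orientable $1$--handles, $F\times D^2$ is a boundary-connected sum of $b_1(F)$ copies of $S^1\times D^3$; taking the disjoint union over the components of $\Cspan{f}{\Delta}$ gives \eqref{subprop:Cspan of dummy C-link}. (The genus of $F$ is washed out by the thickening, there being no four-dimensional analogue of the surface distinction between adding genus and adding a boundary component.)

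Finally, \eqref{subprop:linkman of dummy C-link} follows by passing to boundaries: $\lman{\cycsusp{f}{0}}{\pad\Sigma}=\Bd\Cspan{\cycsusp{f}{0}}{\pad\Delta}=\Bd(\Cspan{f}{\Delta}\times D^2)$, and the boundary of a boundary-connected sum of $b_1(F)$ copies of $S^1\times D^3$ is a connected sum of $b_1(F)$ copies of $S^1\times S^2$; the disjoint union over components then gives \eqref{subprop:linkman of dummy C-link}. The one step needing genuine care—and hence the main obstacle—is the transversality and corner-smoothing bookkeeping described above: confirming that the transversality of $V(f)$ along $\Sigma$ propagates to transversality of $V(\cycsusp{f}{0})$ with the smoothed sphere $\pad\Sigma$, and that the smoothing leaves the diffeomorphism type $\Cspan{f}{\Delta}\times D^2$ unchanged. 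Everything downstream of that is routine handle theory.
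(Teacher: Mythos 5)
Your proof is correct and takes essentially the same route as the paper's: both identify $\Cspan{\cycsusp{f}{0}}{\pad{\Delta}}$ with a corner-smoothed $\Cspan{f}{\Delta}\times D^2$ and then read off (2) from a handle decomposition of the surface $\Cspan{f}{\Delta}$ into $0$--handles and orientably attached $1$--handles, with (1) and (3) following. The only difference is one of detail: the paper simply asserts that (1) and (3) follow from (2) and leaves the transversality and corner-smoothing bookkeeping implicit in the ``sufficiently close for whatever purpose is required'' clause in the definition of $\pad{\Delta}$, whereas you write that step out explicitly.
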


\begin{proof}
Both \eqref{subprop:dummy is a C-link} and 
\eqref{subprop:linkman of dummy C-link} follow from 
\eqref{subprop:Cspan of dummy C-link}.
To see \eqref{subprop:Cspan of dummy C-link}, 
note that the $2$--manifold-with-boundary 
$\Cspan{f}{\Delta}$ has a handle decomposition 
into $2$--dimensional $0$--handles and $1$--handles
attached orientably to the $0$--handles; therefore
the product $\Cspan{f}{\Delta}\times D^2$ (with 
corners smoothed) has a handle decomposition into 
$4$--dimensional $0$--handles and $1$--handles 
attached orientably to the $0$--handles,
and so must be as described.
\end{proof}

\ref{prop:dummy of transverse} yields particularly
interesting examples in case $\Cspan{f}{\Delta}$ is a 
$2$--disk, so that $\Clink{f}{\Sigma}$ is a slice 
knot (in fact a ribbon knot; cf \cite{Rudolph1983}).  

\begin{proposition}
\label{prop:slice S3s}
If $\Clink{f}{S^3}$ is a quasipositive slice knot, then
the $3$--dimensional transverse $\C$--link 
$\Clink{\cycsusp{f}{0}}{\pad{S^3}}$ is a slice
knot in the $5$--sphere $\pad{S^3}$.\qed
\end{proposition}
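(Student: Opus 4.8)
The plan is to exhibit the $\C$-span of the suspension as an explicit, properly embedded slicing disk, after first showing that the hypothesis forces the $\C$-span downstairs to be a $2$-disk. Realize the knot as $\Clink{f}{S^3}$ with non-singular $\C$-span $S\isdefinedas\Cspan{f}{D^4}$. No component of $S$ can be closed, since a compact complex curve in $\C^2$ is a finite set of points (maximum principle); thus $S$ is a genuine Seifert surface, and as its boundary $\lman{f}{S^3}$ is a single circle, $S$ is connected with one boundary component. Being a non-singular piece of a complex plane curve, $S$ is genus-minimizing in $D^4$ by the Thom conjecture (Kronheimer and Mrowka \cite{KronheimerMrowka1994}), so its genus equals the slice genus of the knot. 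Sliceness makes that slice genus $0$, whence $S$ is a $2$-disk.

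Next I would feed $\Cspan{f}{D^4}\cong D^2$ into \ref{prop:dummy of transverse}. The computation in its proof identifies $\Cspan{\cycsusp{f}{0}}{\pad{\Delta}}$, up to smoothing corners, with the product $\Cspan{f}{\Delta}\times D^2$; here (with $\Delta=D^4$) this is $D^2\times D^2\cong D^4$. Because $\Clink{f}{S^3}$ has non-singular $\C$-span, the hypersurface $V(\cycsusp{f}{0})=V(f)\times\C$ is non-singular near $\pad{\Delta}$ and meets $\pad{\Sigma}$ transversely, so this $D^4$ is smoothly, indeed holomorphically, and properly embedded in the $6$-disk $\pad{\Delta}$. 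Its boundary is $\lman{\cycsusp{f}{0}}{\pad{\Sigma}}=\Bd\Cspan{\cycsusp{f}{0}}{\pad{\Delta}}\cong\Bd D^4=S^3$, a knot in $\pad{\Sigma}=\pad{S^3}$.

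Finally, a properly embedded $D^4$ in the $6$-disk $\pad{\Delta}$ bounded by the knot $\lman{\cycsusp{f}{0}}{\pad{S^3}}\cong S^3$ is by definition a slicing disk, so that knot is slice, as claimed.

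I expect the only non-formal step to be the reduction to a disk $\C$-span, which rests entirely on the genus-minimizing property of complex curves; everything afterward is bookkeeping with \ref{prop:dummy of transverse} together with the transversality already built into the notion of a transverse $\C$-link. The one minor subtlety worth checking is that a non-singular $\C$-span of a \emph{knot} cannot acquire closed or extra components, which is handled by the maximum-principle and connected-boundary remarks above and guarantees that a disk slice genus genuinely yields $S\cong D^2$ rather than some higher-Euler-characteristic disconnected surface.
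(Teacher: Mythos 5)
Your proof is correct and follows the route the paper intends: the paper states this proposition with no written proof, as an immediate consequence of \ref{prop:dummy of transverse} in the case where $\Cspan{f}{D^4}$ is a $2$--disk, so that $\Cspan{\cycsusp{f}{0}}{\pad{\Delta}}\cong D^2\times D^2\cong D^4$ is a properly embedded slicing disk for the link-manifold $S^3\sub\pad{S^3}$. Your one genuine addition is the Kronheimer--Mrowka step: the paper's preceding remark records only the implication (disk $\C$--span $\Rightarrow$ slice), whereas the proposition's hypothesis is sliceness, and your appeal to the local Thom conjecture---forcing the non-singular $\C$--span of a quasipositive slice knot to have genus equal to the (zero) slice genus, hence to be a disk---is exactly the justification needed to close that gap, and is in the spirit of how the paper itself uses \cite{KronheimerMrowka1994} elsewhere.
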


\begin{questions}
There are infinitely many pairwise non-isotopic quasipositive
slice knots in $S^3$ (indeed, there are infinitely many of
braid index $3$).  
\begin{inparaenum}[(1)]
\item
Are there infinitely many pairwise
non-isotopic slice $3$-dimensional transverse $\C$--links 
in $S^5$?  
\item
Specifically, if $\Clink{f_0}{S^3}$ and $\Clink{f_1}{S^3}$ are 
non-isotopic quasipositive slice knots in $S^3$,
are the $3$--dimensional transverse $\C$--links
$\Clink{\cycsusp{f}{0}}{\pad{S^3}}$ non-isotopic? 
\end{inparaenum}
\end{questions}

\subsection{General cyclic branched covers of $S^3$
over quasipositive links%
\label{subsect:cyclic branched covers (general)}}
The $q$--fold cyclic suspension $\cycsusp{\Lscr}{q}$ of a link
$\Lscr=(L,S^m)$, introduced by Neumann \cite{Neumann1974}
and Kauffman and Neumann \cite{KauffmanNeumann1977}, was
defined in \ref{notations:3-dim stuff}\eqref{notation:[q]}
in the special case that $\Lscr=\Clink{f}{S^3}$ is a 
$1$--dimensional transverse $\C$--link.  Cyclic 
suspensions of arbitrary links are themselves special 
cases of what Kauffman \cite{Kauffman1974}
and Kauffman and Neumann \cite{KauffmanNeumann1977} call
the \bydef{knot product} $\Kscr \otimes \Lscr$ of links
$\Kscr$ and $\Lscr$.  In a general knot product
$\Kscr \otimes \Lscr$,  
$\Kscr=(K,S^k)$ is any smooth, oriented $(k-2)$--dimensional
link, $\Lscr=(L,S^\ell)$ is a fibered smooth, oriented
$(\ell-2)$--dimensional link, and $\Kscr \otimes \Lscr$ 
is a smooth, oriented $(k+\ell-1)$--dimensional link 
$(K \otimes L,S^{k+\ell+1})$.

\begin{theorem}[Kauffman and Neumann \cite{KauffmanNeumann1977}]
\label{thm:KN generalities}
\begin{inparaenum}[\upshape(1)]
\item\label{subthm:link-manifold of cyclic suspension}
The link-manifold of the $q$--fold cyclic suspension
of $\cycsusp{\Lscr}{q}$ of a link $\Lscr=(L,S^m)$ is
the $q$--fold cyclic branched cover of $S^m$ branched along $L$.
\item\label{subthm:knot product of fibered links}
If both $\Kscr$ and $\Lscr$ are fibered, then so is 
$\Kscr \otimes \Lscr$; in particular, the $q$--fold 
cyclic suspension of a fibered link is fibered.\qed
\end{inparaenum}
\end{theorem}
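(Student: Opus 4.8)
The plan is to handle the two assertions separately, in each case first pinning the statement down in the algebraic model $\Lscr=\Clink{f}{\Sigma}$, where everything is transparent, and then promoting it to arbitrary links by a covering-space argument (for \eqref{subthm:link-manifold of cyclic suspension}) and a mapping-torus argument (for \eqref{subthm:knot product of fibered links}).

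For \eqref{subthm:link-manifold of cyclic suspension} I would begin from the algebraic case recorded in Notation \ref{notations:3-dim stuff}\eqref{notation:[q]}: when $\Lscr=\Clink{f}{\Sigma}$ the link-manifold of $\cycsusp{\Lscr}{q}$ is $V(\cycsusp{f}{q})\cap\pad\Sigma$ with $\cycsusp{f}{q}(z_0,z_1,z_2)=f(z_0,z_1)+z_2^q$. The projection $\pi\from(z_0,z_1,z_2)\mapsto(z_0,z_1)$, restricted to $V(\cycsusp{f}{q})$, is a $q$--fold map, since $z_2^q=-f(z_0,z_1)$ forces $z_2$ to be a $q$-th root of $-f$; it is unbranched off $V(f)$ and totally branched over it, with deck group $\Z/q$ acting by $z_2\mapsto e^{2\pi i/q}z_2$. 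A standard Milnor-fibration (gradient-flow) comparison between $\pad\Sigma\sub\C^3$ and $\Sigma\sub\C^2$ then identifies $V(\cycsusp{f}{q})\cap\pad\Sigma$ with the $q$--fold cyclic branched cover of $\Sigma$ over $L$, which is the whole content in the algebraic case.

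For general $\Lscr=(L,S^m)$ I would recall that the $q$--fold cyclic branched cover is assembled from the $q$--fold cyclic cover of $S^m\setminus\Int N(L)$ classified by the total-linking homomorphism $\pi_1(S^m\setminus L)\to\Z/q$ (meridian $\mapsto 1$, well defined since $[L]=0$ in $H_{m-2}(S^m)$ for $m\ge 3$), refilled by the branched cover $L\times D^2\to L\times D^2$, $(x,w)\mapsto(x,w^q)$. The fibered factor $[q]$ of $\cycsusp{\Lscr}{q}=\Lscr\otimes[q]$ is exactly the map $w\mapsto w^q$ of Remark \ref{rmks:C-link terminology}\eqref{rmk:empty C-links}, so Kauffman and Neumann's product construction grafts this branched $q$--fold cover of a $2$--disk onto the normal $D^2$--bundle of $L$; the remaining task is to check that the monodromy it installs around $L$ is precisely the generator of $\Z/q$, so that the resulting link-manifold coincides with the branched cover just described. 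Matching their (join/open-book) definition of $\otimes$ with the standard branched-cover recipe is the only delicate point, and is where I expect the bookkeeping to concentrate.

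For \eqref{subthm:knot product of fibered links} the model to keep in mind is Sebastiani--Thom: for isolated singularities $f,g$ the sum $f+g$ again has an isolated singularity, so by Milnor \cite{Milnor1968} its link is fibered by $\arg(f+g)$, with fiber the join of the two Milnor fibers and monodromy the join of the two monodromies. I would reproduce this topologically. Writing open books $\book{b}_K\from S^k\to\C$ and $\book{b}_L\from S^\ell\to\C$ for the two fibered factors, one presents the complement of $K\otimes L$ in $S^{k+\ell+1}=S^k*S^\ell$ as a mapping torus whose monodromy is the join $h_K*h_L$ acting on $F_K*F_L$, the fibration map being built from $\arg\book{b}_K$ and $\arg\book{b}_L$ through the join parameter. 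The main step is to verify local triviality of this combined argument map, with the usual care near the two ``axes'' (where $\book{b}_K$ or $\book{b}_L$ vanishes) and across the join coordinate; this is the genuine analogue of the hard part of Milnor's fibration theorem and is where I expect the real work to lie. The ``in particular'' is then immediate: $[q]$ is fibered (its fibration is the degree-$q$ covering $f_q\restr S^1$, with fiber $q$ points) and $\Lscr$ is fibered by hypothesis, so $\cycsusp{\Lscr}{q}=\Lscr\otimes[q]$ is fibered by the general statement.
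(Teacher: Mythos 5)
This theorem is not proved in the paper at all: it is quoted, citation and terminal QED included, as a result of Kauffman and Neumann \cite{KauffmanNeumann1977}, so there is no internal argument to compare yours against and your proposal must stand on its own. It does not quite stand. For part~(1), your treatment of the algebraic model $\Lscr=\Clink{f}{\Sigma}$ --- the projection $(z_0,z_1,z_2)\mapsto(z_0,z_1)$ exhibiting $V(f+z_2^q)$ as a $q$--fold cyclic cover branched along $V(f)$, with deck group generated by $z_2\mapsto \mathrm{e}^{2\pi i/q}z_2$ --- is correct, and is essentially how the paper's own Proposition \ref{prop:KN for qp links} would be verified directly. But the theorem concerns an \emph{arbitrary} link $\Lscr=(L,S^m)$, for which $\cycsusp{\Lscr}{q}$ means the Kauffman--Neumann product $\Lscr\otimes[q]$, and your argument never engages the actual definition of $\otimes$: the step you label ``the only delicate point'' --- matching the product construction against the meridian-to-generator branched-cover recipe --- is not bookkeeping but the entire content of the assertion, and deferring it leaves the general case unproved.

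Part~(2) contains a genuine error beyond deferral. You propose to exhibit the complement of $K\otimes L$ in $S^{k+\ell+1}$ as the mapping torus of the join monodromy $h_K*h_L$ acting on $F_K*F_L$. Count dimensions: the pages $F_K$ and $F_L$ have dimensions $k-1$ and $\ell-1$, so $F_K*F_L$ has dimension $(k-1)+(\ell-1)+1=k+\ell-1$ and its mapping torus has dimension $k+\ell$, whereas $S^{k+\ell+1}\setminus(K\otimes L)$ has dimension $k+\ell+1$ and must fiber with $(k+\ell)$--dimensional fibers. The Sebastiani--Thom theorem you are modelling this on asserts only that the Milnor fiber of $f\oplus g$ is \emph{homotopy equivalent} to the join of the two Milnor fibers, with monodromy the join map up to that equivalence; the genuine fiber is a smooth manifold of one dimension higher and is never literally the join (which is generally not even a manifold). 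So the fibration you describe cannot exist as stated; the true page has to be produced by Kauffman and Neumann's decomposition of the sphere (or, in the algebraic case, by applying Milnor's fibration theorem to $f+g$ itself), and the local-triviality verification you postpone is, once again, the substance of the theorem rather than a check at the end.
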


Whatever is not obvious in the next proposition
follows directly from Theorem \ref{thm:KN generalities}. 

\begin{proposition}\label{prop:KN for qp links}
Let $q\ge 1$.  
\begin{inparaenum}[\upshape(A)]
\item\label{subprop:cycsusp of qp link} 
If $\Clink{f}{S^3}$ is a $1$--dimensional 
transverse $\C$--link, then:
	\begin{inparaenum}[\upshape(1)]
	\item\label{subsubprop:cycsusp of qp link is transverse link}
	its $q$--fold cyclic suspension 
	$\cycsusp{\Clink{f}{S^3}}{q}=\Clink{\cycsusp{f}{q}}{\pad{S^3}}$
	is a $3$--dimensional transverse $\C$--link;
	\item\label{subsubprop:link-manifold of cycsusp of qp link}
	the link-manifold $\lman{\cycsusp{f}{q}}{\pad{S^3}}$ is 
	the $q$--fold cyclic branched cover of $S^3$ branched along
	$\lman{f}{S^3}$;
	\item\label{subsubprop:Cspan of cycsusp of qp link}
	the $\C$--span $\Cspan{\cycsusp{f}{q}}{\pad{D^4}}$ is the
	$q$--fold cyclic branched cover of $D^4$ branched along 
	$\Cspan{f}{D^4}$.
	\end{inparaenum}
\item\label{subprop:cycsusp of variously special qp links}
If in addition $\Clink{f}{S^3}$ is 
	\begin{inparaenum}[\upshape(1)]
	\item\label{subsubprop:cycsusp of fibered qp link}
	 fibered,
	\item\label{subsubprop:cycsusp of isolated singular point}
	the link of an isolated singular point, or
	\item\label{subsubprop:cycsusp of link at infinity}
	the link at infinity of a polynomial,
	\end{inparaenum}
then 
$\Clink{\cycsusp{f}{q}}{\pad{S^3}}$ is of the same type.\qed
\end{inparaenum}
\end{proposition}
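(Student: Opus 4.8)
The plan is to read the three parts of \eqref{subprop:cycsusp of qp link} off the explicit realization $\cycsusp{f}{q}(z_0,z_1,z_2)=f(z_0,z_1)+z_2^q$ of \ref{notations:3-dim stuff}\eqref{notation:[q]}, invoking Theorem~\ref{thm:KN generalities} for the branched-cover identifications, and then to settle the three inheritance claims of \eqref{subprop:cycsusp of variously special qp links} in turn.

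First I would record that, after replacing $f$ by $f+\epsilon$ if necessary (Remark~\ref{rmk:Stein fillable}), I may assume $\Cspan{f}{\Delta}$ is non-singular. The singular locus of $V(\cycsusp{f}{q})$ (where $\partial_0 f=\partial_1 f=0$ and, for $q\ge 2$, $z_2=0$) then projects under $(z_0,z_1,z_2)\mapsto(z_0,z_1)$ into $\sing{V(f)}$, hence lies outside $\pad{\Delta}$, so $\Cspan{\cycsusp{f}{q}}{\pad{\Delta}}\subset\reg{V(\cycsusp{f}{q})}$. For \eqref{subsubprop:cycsusp of qp link is transverse link}, the bound $r>\max\{\abs{f(\zz)}\mapsuchthat\zz\in\Delta\}$ keeps $V(\cycsusp{f}{q})$ off the corner and off the top face of $\Delta\times\DSK{2}{0}{r}$, so that it meets the model boundary only along the face $\Sigma\times\DSK{2}{0}{r}$; there transversality follows from that of $V(f)$ with $\Sigma$ together with the free $z_2$--direction, and it survives passage to a sufficiently $C^1$--close strictly pseudoconvex $\pad{\Sigma}$ (transversality being an open condition). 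Thus $\Clink{\cycsusp{f}{q}}{\pad{\Sigma}}$ is a transverse $\C$--link.

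For \eqref{subsubprop:link-manifold of cycsusp of qp link} and \eqref{subsubprop:Cspan of cycsusp of qp link} I would use the projection $\phi\from V(\cycsusp{f}{q})\to\C^2\mapsuchthat(z_0,z_1,z_2)\mapsto(z_0,z_1)$: on $V(\cycsusp{f}{q})$ one has $z_2^q=-f(z_0,z_1)$, so the fibre of $\phi$ over $(z_0,z_1)$ is the set of $q$th roots of $-f(z_0,z_1)$, exhibiting $\phi$ as the $q$--fold cyclic cover branched over $V(f)$. The same bound on $r$ places all $q$ sheets inside $\Delta\times\DSK{2}{0}{r}$, so $\phi$ restricts to a $q$--fold branched covering $\Cspan{\cycsusp{f}{q}}{\pad{D^4}}\to\Delta\cong D^4$ branched over $\Cspan{f}{D^4}$, which is \eqref{subsubprop:Cspan of cycsusp of qp link}; restricting $\phi$ to boundaries yields \eqref{subsubprop:link-manifold of cycsusp of qp link}, in agreement with Theorem~\ref{thm:KN generalities}\eqref{subthm:link-manifold of cyclic suspension}.

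Finally, \eqref{subprop:cycsusp of variously special qp links} I would handle case by case. Assertion \eqref{subsubprop:cycsusp of fibered qp link} is immediate from Theorem~\ref{thm:KN generalities}\eqref{subthm:knot product of fibered links}. For \eqref{subsubprop:cycsusp of isolated singular point}, if $f$ has an isolated critical point at $\zz$ then the Thom--Sebastiani suspension $\cycsusp{f}{q}=f+z_2^q$ has one at $(\zz,0)$; for \eqref{subsubprop:cycsusp of link at infinity}, $\cycsusp{f}{q}$ is again a polynomial, with behaviour at infinity governed by that of $f$. In each case the remaining point is that the product-type pseudoconvex sphere $\pad{\Sigma}$ may be traded for the round sphere used to define $\Clinksing{(\zz,0)}{\cycsusp{f}{q}}$, respectively $\Clinkinfty{\cycsusp{f}{q}}$, without altering the transverse $\C$--link up to ambient isotopy. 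I expect this comparison of spheres to be the only genuine obstacle; I would dispatch it by the standard independence of the link of a singularity (resp.\ the link at infinity) on the choice of strictly pseudoconvex sphere bounding a Stein domain that captures the relevant germ (resp.\ end) of $V(\cycsusp{f}{q})$, so that the round sphere and $\pad{\Sigma}$ bound Stein domains meeting $V(\cycsusp{f}{q})$ in ambient-isotopic pieces.
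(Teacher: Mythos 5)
Correct, and in substance this is the paper's own proof: the paper disposes of the proposition with the single remark that whatever is not obvious follows directly from Theorem~\ref{thm:KN generalities}, so it rests on exactly what you spell out --- the realization $(\cycsusp{f}{q})(z_0,z_1,z_2)=f(z_0,z_1)+z_2^{q}$ of \ref{notations:3-dim stuff}, transversality along $\pad{\Sigma}$, the projection $(z_0,z_1,z_2)\mapsto(z_0,z_1)$ as the $q$--fold cyclic branched covering, and the Kauffman--Neumann identifications for the fibered, singular-point, and at-infinity cases. One repair: drop your opening reduction to non-singular $\Cspan{f}{\Delta}$ --- it is unnecessary, since for $q\ge 2$ one has $\sing{V(\cycsusp{f}{q})}=\sing{V(f)}\times\{0\}$, which meets $\pad{\Delta}$ only over $\Int{\Delta}$ (the transverse $\C$--link hypothesis keeps $\sing{V(f)}$ off $\Sigma$) and hence never touches $\pad{\Sigma}$, which is all that the definition of transverse $\C$--link and your boundary transversality argument require; moreover that reduction is inconsistent with case (B)(2), where the isolated critical point of $f$ must be retained rather than perturbed away.
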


\begin{remark}\label{rmk:Harvey et al and Baldwin}
Let $M$ be the $q$--fold cyclic branched cover of $S^3$ 
branched along a quasipositive link.  
Harvey, Kawamuro, and Plamenevskaya \cite{Harveyetal2009} 
have used contact topology to find a Stein-fillable contact 
structure $\xi$ on $M$.  Proposition \ref{prop:KN for qp links}%
\eqref{subprop:cycsusp of qp link}%
\eqref{subsubprop:link-manifold of cycsusp of qp link},
together with the fact about Stein fillings noted just 
after Theorem \ref{thm:Bennequin's theorem}, gives the 
(apparently) stronger conclusion that $\xi$ can be
required to have a Stein filling by a Stein 
domain on a complex algebraic surface in $\C^3$.
\done
\end{remark}

\subsection{Double branched covers of $S^3$ over 
quasipositive links}\label{subsect:double branched covers}
It is traditional to call $2$--fold branched covers
\bydef{double} branched covers.  Double branched covers 
have two useful properties that distinguish them among 
all cyclic branched covers of classical links.

\begin{theorem}\label{thm:double cover properties}
If $\Lscr=(L,S^3)$ is an oriented classical link, 
then the double cover of $S^3$ branched over $L$ is
invariant under both changes of orientation of $\Lscr$
and mutation of $\Lscr$.
\end{theorem}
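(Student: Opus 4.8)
The plan is to treat the two invariances separately, as they rest on quite different facts. Throughout, write $\Sigma_2(\Lscr)$ for the double cover of $S^3$ branched over $L$. The orientation statement I expect to be essentially free, while the mutation statement is the substance of the theorem.

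For orientation invariance, I would recall that $\Sigma_2(\Lscr)$ is determined by the unoriented link $\abs{\Lscr}$ alone. Indeed, $\Sigma_2(\Lscr)$ is the Fox completion of the connected double cover of the exterior $S^3\setminus\Int{N(L)}$ classified by the homomorphism $\pi_1(S^3\setminus L)\to H_1(S^3\setminus L;\Z/2\Z)\to\Z/2\Z$ that sends every meridian to the nonzero element. Reversing the orientation of any subset of the components of $L$ replaces each affected meridian by its inverse; but in $\Z/2\Z$ a meridian and its inverse have the same image, so the classifying homomorphism is unchanged. Since the branch locus $\abs{L}$ is likewise unaffected, neither the cover nor its Fox completion changes, which disposes of the first assertion with no real work.

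For mutation invariance I would follow the classical lifting argument. A mutation is performed along a Conway sphere: an embedded $S^2\subset S^3$ meeting $L$ transversally in four points, cutting $(S^3,L)$ into two $2$--string tangles $(B_1,t_1)$ and $(B_2,t_2)$ and regluing $(B_2,t_2)$ by one of the three involutions $\rho$ of $S^2$ that are $\pi$--rotations preserving the four-point set. Passing to the double branched cover, the preimage of the Conway sphere is the double cover of $S^2$ branched over four points, namely a torus $T$; thus $\Sigma_2(\Lscr)=\tilde B_1\cup_T\tilde B_2$, where $\tilde B_i$ is the double cover of $B_i$ branched over $t_i$. Since $\rho$ preserves the four branch points it lifts to a homeomorphism $\tilde\rho$ of $T$, and the double branched cover of the mutant is exactly $\tilde B_1\cup_{\tilde\rho}\tilde B_2$; the whole problem is to compare this with $\tilde B_1\cup_{\mathrm{id}}\tilde B_2$.

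The crux is to identify $\tilde\rho$. I would model $T$ as $\R^2/\Z^2$ with covering (elliptic) involution $v\mapsto -v$, whose four fixed points cover the four branch points, and check directly that the three mutation rotations realize precisely the three double transpositions of the branch points --- the non-identity elements of the Klein four-group in $\symgroup{4}$ --- and that each lifts to translation of $T$ by a half-period, for instance $v\mapsto v+(1/2,0)$ (which commutes with $v\mapsto-v$ exactly because $(1/2,0)$ is a half-period, and descends to a $\pi$--rotation of $S^2$ with two non-branch fixed points). Such a translation is a free involution of $T$ that is isotopic to the identity. The two lifts of $\rho$ differ by the covering involution, so I would select the translation lift, observing that the covering involution $\tau_2$ of $\tilde B_2$ restricts to $v\mapsto-v$ on $T$ and hence regluing by either lift yields homeomorphic manifolds. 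Since gluing maps that are isotopic on $T$ produce homeomorphic unions --- extend the isotopy across a collar of $T$ in $\tilde B_2$ --- I conclude $\tilde B_1\cup_{\tilde\rho}\tilde B_2\cong\tilde B_1\cup_{\mathrm{id}}\tilde B_2=\Sigma_2(\Lscr)$. I expect the torus computation to be the main obstacle: verifying that the mutation rotations lift to half-period translations (equivalently, act trivially on $H_1(T;\Z)$) rather than to the elliptic involution, and keeping careful track of which of the two lifts is used. Once that is pinned down, isotopy-invariance of the gluing finishes the argument.
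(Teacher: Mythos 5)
Your proposal is correct, but it does substantially more work than the paper does: the paper dismisses orientation invariance as trivial (vacuously so for knots) and then simply cites Montesinos and Viro for mutation invariance, giving no argument of its own. What you have written out is, in essence, the classical proof underlying those citations. Your orientation argument (the classifying homomorphism $\pi_1(S^3\setminus L)\to\Z/2\Z$ kills meridian orientations, so the cover and its Fox completion depend only on $\abs{\Lscr}$) matches the paper's "trivial" in substance. Your mutation argument is the standard lifting proof: the Conway sphere lifts to a torus $T$, each of the three mutation rotations acts on the four branch points as a double transposition and admits a lift to $T$ that is a half-period translation, hence isotopic to the identity, and regluing by a map isotopic to the identity does not change the union. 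You also handle the two genuinely delicate points correctly: the choice between the two lifts (they differ by the elliptic involution, which extends over $\tilde B_2$ as the covering involution, so either lift yields the same manifold), and the verification that the rotation lifts to a translation rather than to the involution acting as $-I$ on $H_1(T;\Z)$ (the lift with four fixed points). So the comparison comes down to this: the paper buys brevity by outsourcing the theorem to the literature, while your argument makes the result self-contained at the cost of the torus computation, and would stand on its own without the references.
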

\begin{figure}[ht!]
\begin{center}
\includegraphics[width=\figwidth]{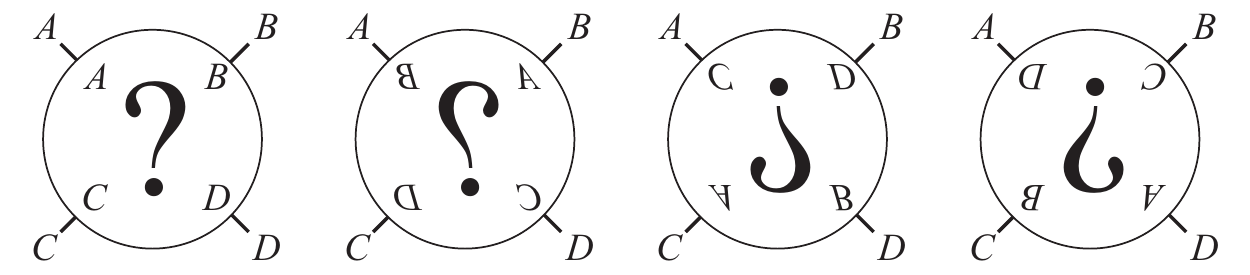} 
\caption{Part of a diagram for an unoriented classical 
link-manifold $L$ is shown schematically at the left; the 
question mark stands for an arbitrary tangle with four 
endpoints $A,\dots,D$ (its two strings may be knotted, 
and it may have simple closed curve components).  By leaving 
alone what is not shown while replacing the shown piece 
with one of its three transforms (at the right), 
the original diagram is transformed into a diagram of 
an \bydef{elementary mutation} of $L$.
\label{fig:mutations}}
\end{center}
\end{figure}
Here a \bydef{mutation} is the composition of finitely 
many \bydef{elementary mutations} as depicted and described
(for unoriented links, their appropriate setting in
this context) in Figure~\ref{fig:mutations}.
\begin{proof}
Invariance under changes of orientation is trivial
(and vacuously so in case $\Lscr$ is a knot);  
invariance under mutation was proved by Montesinos
\cite{Montesinos1973} and Viro \cite{Viro1976}.
\end{proof}

\begin{remarks}\label{rmk:mutation history}
The operation of mutation was introduced explicitly 
for link diagrams by Conway \cite{Conway1970} and 
explicitly for links themselves by Montesinos and Viro, 
but none of \cite{Conway1970}, \cite{Viro1976}, or
\cite{Montesinos1973} contains the term ``mutation''; 
I do not know when (and by whom) that word was first 
used, and would welcome information on the topic.
\done
\end{remarks}

\subsubsection{Doubles of knot exteriors}%
\label{subsubsect:doubles of knot exteriors}
For any manifold-with-boundary $M$, the \bydef{double} of
$M$ is the (suitably smoothed) identification space
$M\times\{0,1\}/{\sim}$, where the non-trivial equivalence
classes of the equivalence relation ${\sim}$ are precisely the 
pairs $\{(x,0),(x,1)\}$ with $x\in\Bd M$.

\begin{lemma} If $\Kscr$ is a classical knot, then 
the double cover of $S^3$ branched over 
$|\Kscr\{2,0\}|=|\AKn{\Kscr}{0}|$ is diffeomorphic to the 
double of the exterior $E(\Kscr)$ of $\Kscr$.\qed
\end{lemma}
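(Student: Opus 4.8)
The plan is to split $S^3$ along the boundary torus of a neighborhood of the spanning annulus and to concentrate all of the nontriviality of the cover inside that neighborhood. Write $A\isdefinedas\abs{\AKn{\Kscr}{0}}$, an unoriented annulus whose core is $K$ and whose boundary is $\abs{\Kscr\{2,0\}}=K_1\cup K_2$, a pair of $0$--framed parallel copies of $K$. A regular neighborhood $N(A)$ is a solid torus whose core is $K$, hence a tubular neighborhood of $K$, so that $S^3\setminus\Int N(A)=E(K)$; the $0$--framing is recorded by the curves $K_1,K_2$ sitting on $T\isdefinedas\Bd E(K)=\Bd N(A)$ as $0$--framed longitudes. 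Pushing $K_1\cup K_2$ slightly into $\Int N(A)$ yields a decomposition $S^3=E(K)\cup_T N(A)$ with branch locus $\abs{\Kscr\{2,0\}}\sub\Int N(A)$. By Theorem~\ref{thm:double cover properties} the double branched cover is insensitive to orientations, so it is legitimate to work with the unoriented annulus $A$ throughout.

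The key observation is that the double branched cover $\Sigma_2\to S^3$ is trivial over $E(K)$. Away from the branch locus the cover is classified by the homomorphism $\pi_1(S^3\setminus(K_1\cup K_2))\to\Z/2\Z$ carrying each meridian to the generator, that is, $\gamma\mapsto\operatorname{lk}(\gamma,K_1\cup K_2)\bmod 2$. Because $K_1$ and $K_2$ are each parallel to $K$, any loop $\gamma$ in $E(K)$ satisfies $\operatorname{lk}(\gamma,K_1\cup K_2)=2\operatorname{lk}(\gamma,K)\equiv 0$, so this homomorphism kills $\pi_1(E(K))$. Hence the restriction of $\Sigma_2$ over $E(K)$ is the trivial double cover, two disjoint copies $E(K)_0$ and $E(K)_1$.

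It remains to compute the cover over $N(A)$ and to see how it reattaches the two copies. Writing $N(A)=S^1\times R$ with $R$ a disk cross-section and $\abs{\Kscr\{2,0\}}=S^1\times\{p_1,p_2\}$ for two interior points $p_1,p_2\in R$, the double branched cover of $N(A)$ is $S^1\times\widetilde R$, where $\widetilde R\to R$ is the double branched cover of a disk over two interior points. A Riemann--Hurwitz count gives $\chi(\widetilde R)=0$; the induced cover of $\Bd R$ is trivial, since $\Bd R$ encircles both branch points (total winding $2\equiv 0$); and $\widetilde R$ is orientable. Thus $\widetilde R$ is an annulus, its two boundary circles each mapping homeomorphically to $\Bd R$ and cobounding $\widetilde R$. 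Consequently the cover over $N(A)$ is $S^1\times(\text{annulus})\cong T^2\times[0,1]$, a product collar whose two boundary tori each map homeomorphically to $T$ by maps that are isotopic in $T$ (through the annulus $\widetilde R$), and which attach to $E(K)_0$ and $E(K)_1$ respectively. Therefore
\[
\Sigma_2=E(K)_0\cup\bigl(T^2\times[0,1]\bigr)\cup E(K)_1,
\]
and collapsing the collar exhibits $\Sigma_2$ as two copies of $E(K)$ glued along $T$ by a map isotopic to the identity, i.e.\ as the double of $E(K)$.

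The main point requiring care is this last bookkeeping step: one must check that the two ends of the thickened-torus piece $T^2\times[0,1]$ attach to the two \emph{different} copies of $E(K)$, and that the resulting boundary identification is isotopic to the identity rather than to some nontrivial self-homeomorphism of $T$ (which would instead produce a torus bundle or a twisted double). This is exactly what the annulus $\widetilde R$ guarantees, and it is what the parity computation of the second paragraph makes transparent, by forcing all of the monodromy of the cover to live in $N(A)$. As a sanity check, for $K$ a trivial knot $\abs{\Kscr\{2,0\}}$ is a $2$--component unlink, whose double branched cover is $S^1\times S^2$, matching the double of the solid torus $E(K)$.
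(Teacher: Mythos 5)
The paper states this lemma without proof (it carries a \qed as a standard fact), so there is no argument of the author's to compare against; judged on its own terms, your proof has a genuine gap, and it sits exactly at the step you flag as ``the main point requiring care.'' The decomposition $S^3=E(K)\cup_T N(A)$, the triviality of the cover over $E(K)$, and the identification of the cover over $N(A)$ with $S^1\times\widetilde R$ are all correct. What fails is the claim that the two boundary tori of $S^1\times\widetilde R$ map to $T$ ``by maps that are isotopic in $T$ (through the annulus $\widetilde R$).'' Orient $R$ and pull the orientation back to $\widetilde R$; then each boundary circle of $\widetilde R$ maps to $\Bd R$ \emph{preserving} boundary orientations (the projection is an orientation-preserving local homeomorphism near the boundary), whereas any product identification of the two ends of the oriented annulus $\widetilde R$ \emph{reverses} boundary orientations, since the two boundary circles of an oriented annulus are anti-parallel. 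Hence the holonomy of $\Bd R$ obtained by passing through $\widetilde R$ is an orientation-reversing map of the circle---a reflection---not the identity. Collapsing the collar therefore exhibits your $\Sigma_2$ as a \emph{twisted} double $E(K)\cup_\Phi E(K)$, where $\Phi$ acts on $H_1(T)$ by $\lambda\mapsto\lambda$, $\mu\mapsto-\mu$ ($\lambda$ the $0$--framed longitude, $\mu$ the meridian). This sign is forced: the deck involution of a branched double cover is orientation-preserving with $1$--dimensional fixed set, so the two copies of $E(K)$ inherit the \emph{same} orientation, whereas in the double $E(K)\cup_{\mathrm{id}}E(K)$ the swap involution is orientation-reversing with fixed set the entire torus.

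The discrepancy is not cosmetic, so it cannot be repaired by a more careful isotopy. One has $E(K)\cup_\Phi E(K)\cong E(K)\cup_{\mathrm{id}}E(K)$ when $\Phi$ can be absorbed into self-homeomorphisms of the two sides, i.e.\ when $E(K)$ admits a self-homeomorphism inducing $\mu\mapsto-\mu$, $\lambda\mapsto\lambda$ on $T$; by Gordon--Luecke such a map extends to an orientation-reversing symmetry of $(S^3,K)$ preserving the orientation of $K$, so this requires $K$ to be (positively) amphichiral. Your unknot sanity check is blind to the issue precisely because the solid torus $E(\Oscr)$ has such a self-homeomorphism. For $\Kscr$ the trefoil the two manifolds are genuinely different: the trefoil exterior is Seifert fibered with boundary fiber $6\mu+\lambda$, and the identity gluing matches fibers, so the double is Seifert fibered; but $\Phi(6\mu+\lambda)=-6\mu+\lambda$, so in the twisted double the fibrations disagree along $T$, which becomes an essential JSJ torus (equivalently, $\pi_1$ of the double has nontrivial center while $\pi_1$ of the twisted double does not). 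So your decomposition is surely the intended argument, but carried out correctly it identifies the branched double cover of $S^3$ over $\abs{\Kscr\{2,0\}}$ as the twisted double $E(K)\cup_\Phi E(K)$; for knots without the symmetry above this is not diffeomorphic to the double of $E(\Kscr)$, so the gap in your write-up is real and, as far as I can see, it in fact afflicts the statement of the lemma itself, not merely your proof of it.
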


\begin{proposition}\label{prop:double double}
If either 
\begin{inparaenum}[\upshape(a)]
\item\label{subprop:untwisted 2-cable of qp is qp}
$\Kscr$ is quasipositive or
\item\label{subprop:AKn(K,2) is qp if TB(K) is negative}
the maximal Thurston--Bennequin invariant $\TB(\Kscr)$ of
$\Kscr$ is non-negative,
\end{inparaenum}
then the double of $E(\Kscr)$ occurs as the link-manifold 
of a $3$--dimensional transverse $\C$--link $\Clink{f}{\Sigma^5}$.
\end{proposition}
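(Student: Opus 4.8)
The plan is to realize the double of $E(\Kscr)$ as a double branched cover of $S^3$ and then pass to a cyclic suspension. By the preceding Lemma, the double of $E(\Kscr)$ is diffeomorphic to the double cover of $S^3$ branched over the unoriented link $\abs{\Kscr\{2,0\}}=\abs{\AKn{\Kscr}{0}}$. The crucial structural point is that, by \ref{thm:double cover properties}, this branched cover depends only on the \emph{unoriented} link $\abs{\Kscr\{2,0\}}$ and is unchanged under any reorientation of the branch set. It therefore suffices to equip $\abs{\Kscr\{2,0\}}$ with \emph{some} orientation $\orientation$ for which $(\Kscr\{2,0\}^\orientation,S^3)$ is quasipositive. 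Granting this, the oriented link is---by the characterization of $1$--dimensional transverse $\C$--links as quasipositive links recalled in Part~\ref{part:introduction}---a transverse $\C$--link $\Clink{g}{S^3}$ with $\lman{g}{S^3}=\Kscr\{2,0\}^\orientation$. By \ref{prop:KN for qp links}\eqref{subprop:cycsusp of qp link} (with $q=2$), its $2$--fold cyclic suspension $\Clink{\cycsusp{g}{2}}{\pad{S^3}}$ is then a $3$--dimensional transverse $\C$--link whose link-manifold $\lman{\cycsusp{g}{2}}{\pad{S^3}}$ is the double cover of $S^3$ branched along $\lman{g}{S^3}$, ie the double of $E(\Kscr)$. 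Taking $f\isdefinedas\cycsusp{g}{2}$ and $\Sigma^5\isdefinedas\pad{S^3}$ completes the construction.

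It remains to produce the required quasipositive orientation in each hypothesized case, and the two cases call for \emph{different} orientations---which is exactly why the orientation-insensitivity supplied by \ref{thm:double cover properties} is indispensable. In case \eqref{subprop:untwisted 2-cable of qp is qp}, with $\Kscr$ quasipositive, I would apply \ref{prop:untwisted cables on qp} with $n=2$: the untwisted $2$--cable $\Kscr\{2,0\}$, carrying its braidlike orientation, is itself quasipositive, so $\orientation$ may be taken to be that braidlike orientation. In case \eqref{subprop:AKn(K,2) is qp if TB(K) is negative}, with $\TB(\Kscr)\ge 0$, first note that $\Kscr$ is necessarily non-trivial, since $\TB(\Oscr)=-1<0$ by \ref{thm:qp annuli}\eqref{subthm:qp unknotted annuli and TB}. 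As $0\le\TB(\Kscr)$, \ref{thm:qp annuli}\eqref{subthm:qp knotted annuli and TB} (with $n=0$) shows that the annular Seifert surface $\AKn{\Kscr}{0}$ is quasipositive; hence $(\Bd\AKn{\Kscr}{0},S^3)$---that is, $\abs{\Kscr\{2,0\}}$ with the anti-braidlike orientation it inherits as the oriented boundary of the annulus---is strongly quasipositive, in particular quasipositive, and $\orientation$ is taken to be that Seifert-boundary orientation.

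The main obstacle here is conceptual rather than computational. Each hypothesis yields a quasipositive structure on a \emph{different} orientation of $\abs{\Kscr\{2,0\}}$ (braidlike in the first case, the anti-braidlike Seifert-boundary orientation in the second), so no single orientation serves both cases; the step I would state with the most care is precisely the appeal to \ref{thm:double cover properties}, which makes the branched cover---and hence the link-manifold $\lman{\cycsusp{g}{2}}{\pad{S^3}}$ eventually obtained---independent of the orientation of the branch link, thereby dissolving the difficulty. Beyond that, the argument is a matter of assembling the cited results; I would additionally confirm that the cyclic-suspension construction of \ref{notations:3-dim stuff}\eqref{notation:[q]} is being applied to a genuine transverse $\C$--link, which is guaranteed as soon as an orientation making $\abs{\Kscr\{2,0\}}$ quasipositive has been found.
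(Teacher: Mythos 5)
Your proposal is correct and follows essentially the same route as the paper's own proof: case (a) via Proposition \ref{prop:untwisted cables on qp} with $n=2$, case (b) via Theorem \ref{thm:qp annuli} (noting $\TB(\Oscr)=-1$ forces $\Kscr$ non-trivial), then the $q=2$ cyclic suspension of Proposition \ref{prop:KN for qp links} combined with the Lemma identifying the branched double cover with the double of $E(\Kscr)$. Your explicit appeal to Theorem \ref{thm:double cover properties} to reconcile the two different quasipositive orientations (braidlike versus Seifert-boundary) is a detail the paper leaves implicit in its use of the unoriented link $\abs{\Kscr\{2,0\}}=\abs{\AKn{\Kscr}{0}}$, but it is the same argument.
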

\begin{proof} 
In case \eqref{subprop:untwisted 2-cable of qp is qp}
the conclusion follows from \ref{prop:untwisted cables on qp} 
(with $n=2$) and \ref{prop:KN for qp links} (with $q=2$);
in case \eqref{subprop:AKn(K,2) is qp if TB(K) is negative}
the conclusion follows from \ref{thm:qp annuli} and 
\ref{prop:KN for qp links} (with $q=2$).
\end{proof}

\begin{remarks}\label{rmks:Cspans of doubled knot exteriors}
\begin{inparaenum}[(1)]
\item\label{rmk:Cspan of K(2,0)}
If the quasipositive knot $\Kscr$ is a transverse
$\C$--link with non-singular $\C$--span $S$, 
then by the proof of \ref{prop:untwisted cables on qp}
the $\C$--span of $\Kscr\{2,0\}$ can be taken to be
two parallel copies of $S$.  In this case, 
\ref{prop:KN for qp links}%
\eqref{subsubprop:Cspan of cycsusp of qp link}
implies that the $\C$--span of the $3$--dimensional 
transverse $\C$--link $\Clink{f}{\Sigma^5}$
in \ref{prop:double double} has Euler characteristic 
$2-\chi(S)$.
\item\label{rmk:Cspan of Bd AK0}
If the annulus $\AKn{\Kscr}{0}$ is a quasipositive Seifert 
surface, and the $\C$--span of the strongly quasipositive
link $(\Bd\AKn{\Kscr}{0},S^3)$ is non-singular (which may
be assumed), then that $\C$--span is also an annulus; in
this case, the $\C$--span of the $3$--dimensional 
transverse $\C$--link $\Clink{f}{\Sigma^5}$
in \ref{prop:double double} has Euler characteristic 
$2$.
\item\label{rmk:different Cspans}
Consequently, if $\Kscr\ne\Oscr$ is strongly quasipositive, 
then the double of $E(\Kscr)$ occurs as the link-manifold of
two transverse $\C$--links in $S^5$ with non-singular
$\C$--spans that are not diffeomorphic to each other.
In general the two links should not be expected
to be ambient isotopic to each other.\done
\end{inparaenum}
\end{remarks}

\subsubsection{Seifert manifolds with base $S^2$}%
\label{subsubsect:Seifert spaces}
It is standard (see, eg, Bonahon and Siebenmann 
\cite{BonahonSiebenmann1979/2010}) that 
a $3$--manifold $M$ is Seifert-fibered over $S^2$ with
at least $3$ exceptional fibers iff $M$ is the double 
branched cover of $S^3$ branched over a pretzel link-manifold 
$\pretzellink{t_1,\dots,t_p}$ (the restriction on the number 
of exceptional fibers is an artifact of the definitional 
restriction on pretzel links that $p$ be at least $3$), 
and then (in the language of \cite[p.~323]%
{BonahonSiebenmann1979/2010}) $(0;1/t_1,\dots,1/t_p)$ is 
the \bydef{raw data vector} of the \bydef{Seifert manifold}
$M$.

\begin{lemma}\label{lemma:mutant pretzels}
For any permutation $\pi$ of $\{1,\dots,p\}$, the
pretzel link $\pretzel{t_{\pi(1)},\dots,t_{\pi(p)}}$
is a mutation of $\pretzel{t_1,\dots,t_p}$ \qed
\end{lemma}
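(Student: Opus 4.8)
The plan is to reduce an arbitrary permutation to a composition of adjacent transpositions, and then to realize each adjacent transposition by a single elementary mutation. Since the symmetric group $\symgroup{p}$ is generated by the transpositions $(i\;i{+}1)$ with $1\le i<p$, and since by definition a mutation is any composition of finitely many elementary mutations (Figure~\ref{fig:mutations}), it suffices to prove that $\pretzel{t_1,\dots,t_{i-1},t_{i+1},t_i,t_{i+2},\dots,t_p}$ is obtained from $\pretzel{t_1,\dots,t_p}$ by one elementary mutation.

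To that end, I would recall (from \ref{defs:spans of pretzel links} and Figure~\ref{fig:general pretzel surface and pretzel plumbing}(c)) that the link-manifold $\pretzellink{t_1,\dots,t_p}$ is presented as $p$ vertical twist regions arranged in a row, the $k$th region carrying $t_k$ half-twists, with consecutive regions joined by one arc along the top and one along the bottom. First I would enclose the two consecutive twist regions labelled $t_i$ and $t_{i+1}$, together with the single top arc and single bottom arc joining them, in a smoothly embedded $2$--sphere $S^2\sub S^3$ meeting the link-manifold transversely in exactly four points (the two outer strands of $t_i$ and the two outer strands of $t_{i+1}$). This is a Conway sphere whose interior tangle $T$ is a copy of the $t_i$ region placed to the left of the $t_{i+1}$ region.

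Next I would apply the elementary mutation associated to the $\pi$--rotation of $T$ about the vertical axis lying in the plane of the diagram (one of the three transforms of Figure~\ref{fig:mutations}). In coordinates $(x,y,z)$ with the diagram in the plane $z=0$, this rotation is $(x,y,z)\mapsto(-x,y,-z)$; it carries the four boundary points of $T$ to themselves setwise (top-left $\leftrightarrow$ top-right, bottom-left $\leftrightarrow$ bottom-right), so it does define an elementary mutation and leaves the exterior tangle untouched. Because $y$ is preserved, vertical twist regions are carried to vertical twist regions and the top and bottom joining arcs are preserved; because the rotation lies in $\mathrm{SO}(3)$ it is realized by an ambient isotopy of $S^3$, so the number and handedness of the crossings in each region are unchanged. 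The only net effect is to interchange the left region (with $t_i$ half-twists) and the right region (with $t_{i+1}$ half-twists), so the mutated link is exactly $\pretzel{t_1,\dots,t_{i-1},t_{i+1},t_i,t_{i+2},\dots,t_p}$.

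The one point requiring genuine care---and the step I expect to be the main obstacle---is the claim that the $\pi$--rotation preserves the \emph{sense} of each twist region rather than reversing it. I would settle this by a direct local check at a single crossing: under $(x,y,z)\mapsto(-x,y,-z)$ the two strands are exchanged at the same moment as their over/under roles are exchanged, so the resulting crossing has the same sign, whence a region of $t$ half-twists of a given handedness is carried to a region of $t$ half-twists of the same handedness. Granting this, composing the resulting elementary mutations for a factorization of $\pi$ into adjacent transpositions exhibits $\pretzel{t_{\pi(1)},\dots,t_{\pi(p)}}$ as a mutation of $\pretzel{t_1,\dots,t_p}$. (The cyclic permutations are in fact realized by the ambient isotopy rotating the whole diagram, but these are not needed, since the adjacent transpositions already generate $\symgroup{p}$.)
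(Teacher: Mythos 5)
Your proof is correct: the paper states this lemma without proof (it is asserted as evident from Figure~\ref{fig:mutations} and the pretzel diagram), and your argument---factoring $\pi$ into adjacent transpositions and realizing each one by an elementary mutation along a Conway sphere enclosing two adjacent twist regions, with the orientation-preserving rotation $(x,y,z)\mapsto(-x,y,-z)$ preserving the handedness and count of the half-twists---is precisely the standard argument the paper leaves implicit. In particular your local check of crossing signs correctly settles the only point where such a proof could go wrong.
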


By \ref{thm:double cover properties} and
\ref{lemma:mutant pretzels}, in the description
of $M$ as the double branched cover of $S^3$ over
$\pretzellink{t_1,\dots,t_p}$ no generality is 
lost by requiring
\begin{equation}\label{eqn:pretzel in good order}
p=q+r+s, \qua t_1,\dots,t_q>1,\qua t_{q+1},\dots,t_{q+r}<-1, 
\qua \abs{t_{q+r+1}},\dots,\abs{t_p}=1.
\end{equation}
On assumption \eqref{eqn:pretzel in good order}, 
the \bydef{Seifert data vector} of $M$ (again following 
\cite{BonahonSiebenmann1979/2010}) is
\begin{equation*} 
(0;-r;{1}/{t_1},\dots,{1}/{t_q},
1+{1}/{t_{q+1}},\dots,1+{1}/{t_{q+r}})
\end{equation*}
and $M$ has the representation
\begin{equation}\label{eqn:Seifert notation} 
M(\mathrm{O},\mathrm{o};0;-r;(t_1,1),\dots,(t_q,1),
(-t_{q+1},-1-t_{q+1}),\dots,(-t_{q+r},-1-t_{q+r}))
\end{equation}
in (essentially) the original notation of Seifert 
\cite{Seifert1933}.  

Note that $M$ does not depend on $s$, so that the double 
cover of $S^3$ branched over 
\begin{equation*}
\pretzellink{t_1,\dots,t_{q+r},
\overbrace{1,\dots,1}^{\text{$s_{+}$ times}},
\overbrace{-1,\dots,-1}^{\text{$s_{-}$ times}}}
\end{equation*}
is independent of $s_{+}$ and $s_{-}$, 
although (with trivial exceptions) the links 
corresponding to given values of $s_{+}$ and $s_{-}$
are ambient isotopic (and mutations of each other)
iff they have the same value $s_{+}-s_{-}$.
  
\begin{proposition}\label{prop:C-transverse Seifert manifolds}
If $(t_1,\dots,t_p)$ is a $p$--tuple of integers
satisfying \textup{\eqref{eqn:pretzel in good order}}, 
then the Seifert manifold \textup{\eqref{eqn:Seifert notation}}
is the link-manifold of a $3$--dimensional transverse 
$\C$--link in each of the following cases.
\begin{inparaenum}[\upshape(A)]
\item\label{subprop:Seifert manifolds, all odd neg}
All $t_i$ are odd and negative.
\item\label{subprop:Seifert manifolds, no odd neg}
No $t_i$ is odd and negative, and an even number of
$t_i$ are positive.
\item\label{subprop:Seifert manifolds, all even, neg sums}
All $t_i$ are even, and $t_i+t_j<0$ for $1\le i<j\le p$.
\item\label{subprop:Seifert manifolds, special slice pretzels}
$(t_1,\dots,t_p)= (2n+1,-(2n+1),-2m)$ for $m, n>0$.
\end{inparaenum}
\end{proposition}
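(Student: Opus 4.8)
The plan is to reduce the statement to the two-variable theory, by realising each $M$ as a double branched cover of a \emph{quasipositive} link and then applying the cyclic-suspension machinery. Recall from \ref{subsubsect:Seifert spaces} that the Seifert manifold \eqref{eqn:Seifert notation} is exactly the double cover of $S^3$ branched over the unoriented pretzel link-manifold $\pretzellink{t_1,\dots,t_p}$; and by Theorem~\ref{thm:double cover properties} this branched cover is unaffected both by the choice of an orientation on $\pretzellink{t_1,\dots,t_p}$ and by permutation of the entries $t_i$ (which, by Lemma~\ref{lemma:mutant pretzels}, merely replaces the pretzel by a mutation). So it suffices, in each of the four cases, to produce some projective orientation $\orientation$ for which $\pretzel{t_1,\dots,t_p}^\orientation$ is quasipositive; the ordering imposed by \eqref{eqn:pretzel in good order} may be ignored, since the quasipositivity criteria invoked below are symmetric in the $t_i$. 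Granting such an $\orientation$, the oriented link $\pretzel{t_1,\dots,t_p}^\orientation$ is a $1$--dimensional transverse $\C$--link $\Clink{f}{S^3}$ with $\lman{f}{S^3}=\pretzellink{t_1,\dots,t_p}$ (Boileau--Orevkov \cite{BoileauOrevkov2001} and Rudolph \cite{Rudolph1983}), and Proposition~\ref{prop:KN for qp links}\eqref{subprop:cycsusp of qp link} with $q=2$ shows that $\cycsusp{\Clink{f}{S^3}}{2}=\Clink{\cycsusp{f}{2}}{\pad{S^3}}$ is a $3$--dimensional transverse $\C$--link whose link-manifold $\lman{\cycsusp{f}{2}}{\pad{S^3}}$ is the double branched cover $M$.

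Three of the four cases are then immediate citations. In case~\eqref{subprop:Seifert manifolds, all odd neg}, all $t_i$ are odd and negative, so Proposition~\ref{prop:positive pretzel links}\eqref{subprop:signs and parities}\eqref{subsubprop:all-odd positive pretzel links} supplies an $\orientation$ making $\pretzel{t_1,\dots,t_p}^\orientation$ a positive link, hence strongly quasipositive by Theorem~\ref{thm:positive implies stqp}. Case~\eqref{subprop:Seifert manifolds, no odd neg} is identical, using Proposition~\ref{prop:positive pretzel links}\eqref{subprop:signs and parities}\eqref{subsubprop:odd and even positive pretzel links} instead. In case~\eqref{subprop:Seifert manifolds, all even, neg sums}, all $t_i$ are even, so the pretzel surface is orientable by Proposition~\ref{prop:qp pretzel surface}\eqref{subprop:pretzel surface parity condition}; since $t_i+t_j<0$ throughout, Proposition~\ref{prop:qp pretzel surface}\eqref{subprop:pretzel surface negative sum condition} makes $\pretzelsurface{t_1,\dots,t_p}^\orientation$ a quasipositive Seifert surface, so its boundary is strongly quasipositive.

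The main obstacle is case~\eqref{subprop:Seifert manifolds, special slice pretzels}, precisely because the pretzels $\pretzel{2n+1,-(2n+1),-2m}$ are quasipositive but \emph{not} strongly quasipositive: they satisfy neither hypothesis of Proposition~\ref{prop:qp pretzel surface} nor the positivity hypotheses of Proposition~\ref{prop:positive pretzel links}, so no surface-based criterion applies. Here I would instead exhibit an explicit quasipositive band representation whose closure, suitably oriented, is the desired pretzel---generalising the band representation behind Figure~\ref{fig:ribbon and positive pretzels}(a), which realises the base case $\pretzel{3,-3,-2}^\orientation$ ($n=m=1$) as the closure of $\sigma_2\sigma_1^3\sigma_2\sigma_1^{-3}\in B_3$, manifestly quasipositive as the product of the positive band $\sigma_2$ with the positive band $\SideSet{\sigma_1^3}{\sigma_2}$. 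Following the route flagged in Question~\ref{qn:qp pretzel questions}\eqref{subqn:non-strongly qp pretzel links}, the odd parameter is absorbed by promoting $\sigma_1^{\pm 3}$ to $\sigma_1^{\pm(2n+1)}$, and the even parameter $m$ by iterating the linked, twisted annuli of the ribbon-immersed surface pictured in the figure (cf also Figure~\ref{fig:Hopf links}(d)).

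The one genuinely non-formal step, and the place where I expect the real work to lie, is to verify that for \emph{every} $m,n>0$ this quasipositive braid closes to $\pretzel{2n+1,-(2n+1),-2m}$ carrying the projective orientation $\orientation$; I would carry this out by isotoping the associated ribbon-immersed surface into the pretzel template of Figure~\ref{fig:general pretzel surface and pretzel plumbing}(c) and reading off the projective orientation directly. Once that identification is in place, the reduction of the first paragraph finishes all four cases uniformly.
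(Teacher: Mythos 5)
Your proposal is correct and takes essentially the same route as the paper's (very terse) proof: pass to double branched covers via Proposition~\ref{prop:KN for qp links} with $q=2$, then obtain quasipositive orientations of the pretzel links from Proposition~\ref{prop:positive pretzel links} in cases \eqref{subprop:Seifert manifolds, all odd neg} and \eqref{subprop:Seifert manifolds, no odd neg}, from Proposition~\ref{prop:qp pretzel surface} in case \eqref{subprop:Seifert manifolds, all even, neg sums}, and from the generalization of the $\pretzel{3,-3,-2}$ example in case \eqref{subprop:Seifert manifolds, special slice pretzels}. For that last case the paper itself only cites the unproved assertion in Question~\ref{qn:qp pretzel questions}\eqref{subqn:non-strongly qp pretzel links} that the example extends to $P(2n+1,-(2n+1),-2m)$, so your sketch of an explicit quasipositive band representation (and your identification of the closure verification as the remaining work) is, if anything, more detailed than the paper's own argument.
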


\begin{proof}
\eqref{subprop:Seifert manifolds, all odd neg}
and \eqref{subprop:Seifert manifolds, no odd neg}
follow from \ref{prop:positive pretzel links},
\eqref{subprop:Seifert manifolds, all even, neg sums}
from \ref{prop:qp pretzel surface}%
\eqref{subprop:pretzel surface parity condition},
and \eqref{subprop:Seifert manifolds, special slice pretzels}
from \ref{qn:qp pretzel questions}%
\eqref{subqn:non-strongly qp pretzel links}, all upon
passing to double covers of $S^3$ branched over the 
relevant quasipositive links.
\end{proof}

Gompf \cite{Gompf1998} shows that if $M$ is a Seifert 
manifold then $M$, at least one of $M$, $\Mir M$ has a Stein filling.   
\ref{prop:C-transverse Seifert manifolds} allows one to 
find such fillings that lie on algebraic surfaces in $\C^3$, 
and to calculate knot-theoretical properties of transverse 
$\C$--links with $M$ and/or $\Mir M$ as link-manifold.

\begin{example}[Boileau and Rudolph \cite{BoileauRudolph1995}]
For positive $\ell,m,n>0$, let $\Sigma(\ell,m,n)$
denote the $3$--dimensional \bydef{Brieskorn manifold} 
$\lmansing{(0,0,0)}{z_0^\ell+z_1^m +z_2^n}$. 
A calculation following Neumann \cite{Neumann1970}
shows that $\Sigma(\ell, m, n)$ is the Seifert manifold 
fibered over $S^2$ with three exceptional fibers with
$M(\mathrm{O},\mathrm{o};0;-r;(\ell,1),(m,1),(n,1))$
as its Seifert notation.  Suppose that
$\ell m+\ell n-mn=\epsilon\in\{-1,1\}$ and 
$\ell\equiv 1+\epsilon\pmod 2$;
for instance, $(\ell,m,n)$ could be 
$(2t-1,2t+1,2t^2-1)$ or $(2t,2t+1,2t(2t+1)+1)$.
In this situation, \ref{prop:C-transverse Seifert manifolds}  
implies that both $\Sigma(\ell,m,n)$ and 
$\Mir\Sigma(\ell,m,n)$
are link-manifolds of $3$--dimensional transverse $\C$--links.
(This example is due to Michel Boileau.)\done
\end{example}

\subsubsection{Lens spaces}\label{subsubsect:lens spaces}
It is standard (Schubert \cite{Schubert1956}; see
Burde and Zieschang \cite{BurdeZieschang1985}) that a 
$3$--manifold is a lens space (including $S^1\times S^2$) 
iff $M$ is the double branched cover of $S^3$ branched over 
a rational link-manifold $\rationallink{r_1,r_2,\dots,r_n}$,
and then $M=L(P,Q)$ where 
\begin{equation*}
\frac{P}{Q}
\isdefinedas
r_1+
\cfrac{1}{-r_2+
\cfrac{1}{\dotsb+
\cfrac{1}{(-1)^{n-1}r_n
}}}
\end{equation*}
and $P>0$ is relatively prime to $Q$.  By 
\ref{prop:2-bridge machine} and 
\ref{prop:KN for qp links},
we have the following.

\begin{proposition}\label{prop:C-transverse lens spaces}
With $P$, $Q$ as above, 
if $\sigma_2^{r_1}\sigma_3^{r_2}\sigma_3^{r_3}\cdots%
\sigma_\ell^{r_n}\in B_4$ (with $\ell$ equal to $3$ or $2$ 
according as $n$ is even or odd) is generated by the 
labeled digraph in Figure \textup{\ref{fig:2-bridge machine}},
then $L(P,Q)$ is the link-manifold of a $3$--dimensional 
transverse $\C$--link.\qed
\end{proposition}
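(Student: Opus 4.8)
The plan is to compose the positivity criterion of Proposition~\ref{prop:2-bridge machine} with the cyclic-suspension machinery of Proposition~\ref{prop:KN for qp links}, taking the suspension order $q=2$ precisely so that the link-manifold produced is a \emph{double} branched cover and hence a lens space.

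First I would unpack the hypothesis. Since $\sigma_2^{r_1}\sigma_3^{r_2}\sigma_3^{r_3}\cdots\sigma_\ell^{r_n}$ is generated by the labeled digraph of Figure~\ref{fig:2-bridge machine}, Proposition~\ref{prop:2-bridge machine} furnishes a projective orientation $\orientation$ of the rational link-manifold $\rationallink{r_1,\dots,r_n}$ for which the $4$--plat diagram of Figure~\ref{fig:rational links}(c) is a \emph{positive} diagram. By Theorem~\ref{thm:positive implies stqp} a positive oriented link is strongly quasipositive, in particular quasipositive; and by the equivalence between $1$--dimensional transverse $\C$--links and quasipositive links recalled in Part~\ref{part:introduction}, there is a holomorphic $f$ with non-singular $\C$--span such that $\Clink{f}{S^3}$ is ambient isotopic to $(\rationallink{r_1,\dots,r_n}^\orientation,S^3)$.

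Next I would feed this $1$--dimensional transverse $\C$--link into Proposition~\ref{prop:KN for qp links}\eqref{subprop:cycsusp of qp link} with $q=2$. By item \eqref{subsubprop:cycsusp of qp link is transverse link} the double cyclic suspension $\cycsusp{\Clink{f}{S^3}}{2}=\Clink{\cycsusp{f}{2}}{\pad{S^3}}$ is a $3$--dimensional transverse $\C$--link, and by item \eqref{subsubprop:link-manifold of cycsusp of qp link} its link-manifold $\lman{\cycsusp{f}{2}}{\pad{S^3}}$ is the double branched cover of $S^3$ branched along $\rationallink{r_1,\dots,r_n}^\orientation$.

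The one step needing genuine care is reconciling the \emph{oriented} link seen by the suspension with the \emph{unoriented} rational link-manifold that determines the lens space in the classical identification recalled at the head of \ref{subsubsect:lens spaces}. Here I would appeal to Theorem~\ref{thm:double cover properties}: the double branched cover is insensitive to the choice of orientation of the branch link, so branching along $\rationallink{r_1,\dots,r_n}^\orientation$ yields the same $3$--manifold as branching along the unoriented $\rationallink{r_1,\dots,r_n}$, namely $\lman{P}{Q}$. This identifies $\lman{P}{Q}$ with the link-manifold of the transverse $\C$--link just constructed, completing the argument. I expect no further obstacle: the proof is a bookkeeping composition of results already in hand, with the orientation-invariance of the double cover (together with the passage from quasipositivity to a genuine transverse $\C$--link) being the only non-formal ingredients.
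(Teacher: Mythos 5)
Your proposal is correct and follows essentially the same route as the paper, whose entire proof is the sentence ``By \ref{prop:2-bridge machine} and \ref{prop:KN for qp links}, we have the following'': positivity from the machine, hence (strong) quasipositivity, hence a $1$--dimensional transverse $\C$--link, then the $2$--fold cyclic suspension to realize the double branched cover $L(P,Q)$. The extra details you supply---Theorem \ref{thm:positive implies stqp} to pass from a positive diagram to quasipositivity, and Theorem \ref{thm:double cover properties} to dispose of the orientation of the branch locus---are exactly the steps the paper leaves implicit.
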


\begin{example}[\cite{BoileauRudolph1995}]
If $p,q>1$ are odd integers, then the lens spaces 
$L(pq+1,p)$ and $\Mir L(pq+1,p)$ both appear as 
link-manifolds of $3$--dimensional transverse
$\C$--links.  (This example is due to Michel Boileau.)\done
\end{example}

\subsubsection{Tree-manifolds}
\label{subsubsect:tree-manifolds}
Let $(\Ts,w)$ be a weighted planar tree. The double cover 
$M^3(\Ts,w)$ of $S^3$ branched over the arborescent link
$(\Bd\sp{\Ts,w})$ is called a \bydef{tree-manifold}; it 
is independent of the planar embedding of $\Ts$.   

\begin{remark}\label{rmk:graph manifold history}
Tree-manifolds are a special case of \bydef{graph-manifolds}.
Graph-manifolds can be defined in various 
(not obviously equivalent) ways; they were named
and first investigated in full generality by 
Waldhausen \cite{Waldhausen1967a,Waldhausen1967b}.
Waldhausen's work built on studies of tree-manifolds
by Hirzebruch \cite{Hirzebruch1953} and 
von Randow \cite{Randow1962}.  For them, 
the tree-manifold $M^3(\Ts,w)$ arises as the
boundary of a $4$--manifold $W^4(\Ts,w)$ constructed
by \bydef{$4$--dimensional plumbing of disk bundles}. 
Hirzebruch, Neumann, and Koh 
\cite{HirzebruchNeumannKoh1971} give a further
exposition of tree-manifolds from this viewpoint.
Neumann \cite{Neumann1981} gives a calculus for 
plumbing trees that is simultaneously
applicable to strip-plumbings $\stripplumb{\Ts,w}$
of unoriented $2$--submanifolds-with-boundary of $S^3$,
disk-bundle plumbings $W^4(\Ts,w)$, and 
tree-manifolds $M^3(\Ts,w)\isdefinedas\Bd W^4(\Ts,w)$.
\done
\end{remark} 

\begin{proposition}\label{prop:C-transverse tree-manifolds}
If $(\Ts,w)$ is strongly quasipositive, then
$M^3(\Ts,w)$ is the link-manifold of a $3$--dimensional
transverse $\C$--link; if $(\Ts,w)$ is very strongly
quasipositive, then $W^4(\Ts,w)$ is the $\C$--span
of a $3$--dimensional transverse $\C$--link.\qed
\end{proposition}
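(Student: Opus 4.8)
The plan is to realize the given quasipositive data in $S^3$ as a $1$--dimensional transverse $\C$--link and then pass to its $2$--fold cyclic suspension, reading off the $3$--dimensional link-manifold and $\C$--span from Proposition~\ref{prop:KN for qp links} with $q=2$. For the first assertion, the hypothesis that $(\Ts,w)$ is strongly quasipositive furnishes a projective orientation $\orientation$ of $\Bd\stripplumb{\Ts,w}$ for which $(\Bd\stripplumb{\Ts,w}^\orientation,S^3)$ is strongly quasipositive, hence quasipositive; by the equivalence \cite{Rudolph1983,BoileauOrevkov2001} I would realize it, up to ambient isotopy, as a transverse $\C$--link $\Clink{f}{S^3}$ with $\lman{f}{S^3}=\Bd\stripplumb{\Ts,w}^\orientation$. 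Proposition~\ref{prop:KN for qp links}\eqref{subprop:cycsusp of qp link} (with $q=2$) then shows that $\cycsusp{\Clink{f}{S^3}}{2}=\Clink{\cycsusp{f}{2}}{\pad{S^3}}$ is a $3$--dimensional transverse $\C$--link whose link-manifold is the double cover of $S^3$ branched over $\Bd\stripplumb{\Ts,w}^\orientation$. Finally, Theorem~\ref{thm:double cover properties} makes this branched cover insensitive to the choice of $\orientation$, so it equals the double cover of $S^3$ branched over the unoriented arborescent link $\Bd\stripplumb{\Ts,w}$, namely $M^3(\Ts,w)$ by definition.

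For the second assertion I would run the same argument one dimension higher. Very strong quasipositivity supplies a projective orientation $\orientation$ for which the oriented surface $\stripplumb{\Ts,w}^\orientation$ is a quasipositive Seifert surface; by the correspondence between quasipositive braided surfaces and non-singular $\C$--spans recalled in~\ref{subsect:braids}, pushing its interior into $D^4$ realizes $\stripplumb{\Ts,w}^\orientation$, up to ambient isotopy, as the $\C$--span $\Cspan{f}{D^4}$ of a transverse $\C$--link $\Clink{f}{S^3}$. Proposition~\ref{prop:KN for qp links}\eqref{subprop:cycsusp of qp link} (again with $q=2$) then identifies the $\C$--span of $\Clink{\cycsusp{f}{2}}{\pad{S^3}}$ with the double cover of $D^4$ branched over $\Cspan{f}{D^4}$, that is, over the pushed-in surface $\stripplumb{\Ts,w}^\orientation$.

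The one step that is not routine bookkeeping, and which I expect to be the crux, is the purely $4$--dimensional identification of this last branched cover with the disk-bundle plumbing $W^4(\Ts,w)$. This is exactly what Neumann's unified plumbing calculus \cite{Neumann1981} is designed to deliver: as recalled in Remark~\ref{rmk:graph manifold history}, that calculus governs simultaneously the strip-plumbing $\stripplumb{\Ts,w}$, the disk-bundle plumbing $W^4(\Ts,w)$, and the branched-covering relation between them, each weight $w(v)$ passing under the double branched cover to the Euler number of the corresponding plumbed disk bundle. Granting this identification, the $\C$--span of $\Clink{\cycsusp{f}{2}}{\pad{S^3}}$ is $W^4(\Ts,w)$, as desired. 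The care needed lies in matching framings to Euler numbers across the possibly non-orientable strips $\strip{\Oscr}{w(v)}$ of odd weight, whose cores become embedded spheres only after passing to the branched cover.
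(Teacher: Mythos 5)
Your proof is correct and is essentially the paper's own (implicit) argument: the proposition is stated there without proof precisely because it follows at once from Proposition \ref{prop:KN for qp links} with $q=2$, the definitions in \ref{subsubsect:sqp arborescent links} and \ref{subsubsect:tree-manifolds}, and the standard identification, recalled in Remark \ref{rmk:graph manifold history}, of the double cover of $D^4$ branched over the pushed-in surface $\stripplumb{\Ts,w}$ with the disk-bundle plumbing $W^4(\Ts,w)$ --- exactly the three steps you carry out. One small simplification: in the very strongly quasipositive case all weights $w(v)$ are even and negative (Theorem \ref{thm:very stqp weighted trees}), so the non-orientable odd-weight strips you flag as needing care in the last step never actually occur there.
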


This is a considerable strengthening of the following
result, stated without proof (and using slightly different
language) by Boileau and Rudolph \cite{BoileauRudolph1995} 
in 1995.

\begin{corollary}
\label{cor:original B-R theorem}
Let the weighted tree $(\Ts,w)$ satisfy the following 
conditions.
\begin{inparaenum}[\upshape(1)]
\item\label{oldthm:hyp1}
If $v\in\Verts{\Ts}$ is neither a node nor adjacent to a 
node, then $w(v)$ is even and less than $0$.
\item\label{oldthm:hyp2}
If $v\in\Verts{\Ts}$ is a node, then $w(v)$ is 
even and not greater than $0$.
\item\label{oldthm:hyp3} 
Let $v\in\Verts{\Ts}$ be a node with adjacent vertices $v'_i$, 
$1\le i \le r$. 
	\begin{inparaenum}[\upshape(a)]
	\item 
	If $w(v)<0$, then $w(v_i)$ is even and less than
	$0$, $1\le i \le r$. 
	\item 
	If $w(v)=0$, then $v$ is distant from every 
	other node, and $w(v_i)+w(v_j)$ is even and less than
	$0$, $1\le i<j \le r$.
	\end{inparaenum}
\end{inparaenum}
Then there is a projective orientation of the arborescent 
link $(\Bd\stripplumb{\Ts,w},S^3)$ that makes it a 
quasipositive link.\qed
\end{corollary}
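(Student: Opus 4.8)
The plan is to deduce the corollary from Proposition~\ref{prop:C-transverse tree-manifolds} by showing that the hypotheses \eqref{oldthm:hyp1}--\eqref{oldthm:hyp3} force $(\Ts,w)$ to be strongly quasipositive as a weighted tree; once that is known, the requisite projective orientation makes $(\Bd\stripplumb{\Ts,w}^\orientation,S^3)$ strongly quasipositive, hence quasipositive, which is exactly the asserted conclusion. Throughout I would work with the decomposition of the iterated strip-plumbed surface $\stripplumb{\Ts,w}$ into the sticks $\stickof{u}{\Ts}$ and stars $\starof{v}{\Ts}$ of Definitions~\ref{defs:decomposing a weighted tree}, glued along core-transverse plumbing patches, and exploit additivity of quasipositivity under Murasugi sum (Theorem~\ref{thm:qp Murasugi sum theorem}): if each piece can be realized by a quasipositive Seifert (or pretzel) surface whose boundary orientation is consistent with the plumbing, then the assembled surface is a quasipositive Seifert surface for the oriented arborescent link.

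First I would dispose of the case in which no node of $\Ts$ carries weight $0$. Then \eqref{oldthm:hyp2} makes every node weight even and strictly negative, \eqref{oldthm:hyp3}(a) makes every vertex adjacent to a node even and strictly negative, and \eqref{oldthm:hyp1} makes every remaining non-node even and strictly negative; thus \emph{every} $w(v)$ is even and strictly negative. Theorem~\ref{thm:very stqp weighted trees} then yields at a stroke that $(\Ts,w)$ is very strongly quasipositive, hence strongly quasipositive, and the argument is complete in this case with no further work.

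The substantive case is when some node $v$ has $w(v)=0$. Hypothesis \eqref{oldthm:hyp3}(b) does two things: it makes each such $v$ distant from every other node, so the zero-weight stars are pairwise far apart, and it forces the pairwise sums of the twig weights of $\starof{v}{\Ts}$ to be even and strictly negative---precisely the condition under which the pretzel surface $\pretzelsurface{\dots}$ bounding the star's boundary is quasipositive (Proposition~\ref{prop:qp pretzel surface}\eqref{subprop:pretzel surface negative sum condition}). The corresponding strip-plumbed star surface $\stripplumb{\stargraph{0}{\dots}}$ is \emph{not} quasipositive (its central $\strip{\Oscr}{0}$ obstructs this, cf.\ Remark~\ref{rmks:star surfaces}), so here I would replace it by the quasipositive pretzel surface with the same oriented boundary and transplant onto that pretzel surface the core-transverse patches at which the adjacent sticks are attached, invoking part~(C) of Proposition~\ref{prop:plumbing to stqp rational/pretzel links} together with Lemma~\ref{lemma:transplantation}. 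Every stick and every negative-weight star disjoint from these zero-weight neighborhoods carries a genuine quasipositive Seifert surface by Lemma~\ref{lemma:positive sticks and stars}\eqref{sublemma:vstqp sticks} and \eqref{sublemma:positive negative non-pretzel stars}, and Murasugi-summing all the pieces (Theorem~\ref{thm:qp Murasugi sum theorem}) produces a quasipositive Seifert surface for the arborescent link with the chosen projective orientation.

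The main obstacle I anticipate is the orientation bookkeeping at the plumbing patches, together with the interaction between a zero-weight node's star and the sticks attached to it. One must verify that the quasipositive orientation chosen on each piece agrees, along every shared core-transverse patch, with that on the neighboring piece, so that they assemble into a single projective orientation of $\Bd\stripplumb{\Ts,w}$; hypotheses \eqref{oldthm:hyp1} and \eqref{oldthm:hyp3} are precisely what confine zero weights to the immediate neighborhoods of zero-weight nodes and keep those neighborhoods mutually disjoint, thereby avoiding the obstruction to consistent orientation exhibited in the example following Theorem~\ref{thm:positive weighted trees}. Checking that these conditions really do guarantee global consistency, and that each transplanted pretzel surface splices correctly onto the surrounding very strongly quasipositive sticks and stars, is the crux; the remainder reduces to the already-established stick/star calculus (Lemma~\ref{lemma:positive sticks and stars}), the pretzel computation (Proposition~\ref{prop:qp pretzel surface}), transplantation (Lemma~\ref{lemma:transplantation}, Proposition~\ref{prop:plumbing to stqp rational/pretzel links}), and Murasugi-sum additivity (Theorem~\ref{thm:qp Murasugi sum theorem}).
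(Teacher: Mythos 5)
Your proposal is correct and is essentially the paper's own (implicit) argument: the paper states this corollary with no separate proof, treating it as an immediate application of the stick/star machinery of the section on strongly quasipositive arborescent links---Theorem \ref{thm:very stqp weighted trees} for the part of the tree with all weights even and negative, Proposition \ref{prop:qp pretzel surface} to replace each zero-weight star surface by a quasipositive pretzel surface, Lemma \ref{lemma:transplantation} together with Proposition \ref{prop:plumbing to stqp rational/pretzel links}(C) to transplant the attaching patches (here hypothesis (3)(b), ``distant from every other node,'' keeps the zero-weight neighborhoods disjoint and avoids the obstruction of Figure \ref{fig:adjacent 0-nodes}), and Theorem \ref{thm:qp Murasugi sum theorem} to assemble the pieces---which is exactly what you spell out. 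The one blemish is your opening claim to deduce the corollary from Proposition \ref{prop:C-transverse tree-manifolds}: that proposition concerns the double branched cover $M^3(\Ts,w)$ and takes strong quasipositivity of $(\Ts,w)$ as its \emph{hypothesis}, so it cannot yield the corollary; but your argument never actually uses it, since once $(\Ts,w)$ is shown to be strongly quasipositive the asserted quasipositive projective orientation exists by the very definition of a strongly quasipositive weighted tree.
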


\subsection{$\mathbf{3}$--dimensional links at infinity of complex
surfaces\label{subsect:links at infinity}}

Using algebraic topology, Sullivan \cite{Sullivan1975} 
proved that the link-manifold $M$ of an isolated singular 
point of a complex algebraic surface in $\C^3$ (actually, 
and more generally, the link---in the older sense of 
combinatorial topology, not that of knot-theory---of 
an isolated singular point of a complex algebraic 
surface in any $\C^n$) cannot be diffeomorphic to 
the $3$--torus $(S^1)^3$.  On the other hand, if 
$f(z_0,z_1,z_2)=z_0 z_1 z_2 - 1$, then 
$\Clinkinfty{f}$ has link-manifold diffeomorphic to
$(S^1)^3$ and $\C$--span diffeomorphic
to $(S^1)^2\times D^2$; the proof consists
in observing that, for sufficiently
small $r>0$, $(S^1)^2$ acts freely on
\begin{equation*}
\{(z_0,z_1,z_2)\in\C^3\setsuchthat f(z_0,z_1,z_2)=0,\qua
z_0^2+z_1^2+z_2^2 \le 1/r^2\}
\end{equation*}
by $(\mathrm{e}^{i\theta},\mathrm{e}^{i\phi})
\cdot(z_0,z_1,z_2)=
(\mathrm{e}^{i\theta}z_0,
\mathrm{e}^{i\phi}z_1,\mathrm{e}^{-i(\theta+\phi)z_2})$, 
and the slice 
\begin{equation*}
\{(x_0,x_1,x_2)\in\R_{+}^3\setsuchthat 
f(x_0,x_1,x_2)=0,\qua
x_0^2+x_1^2+x_2^2 \le 1/r^2\}
\end{equation*}
of this action is diffeomorphic to $D^2$. 

In fact, all the products $S^1\times F_g$ (where 
$F_g$ is the closed orientable $2$--manifold of genus $g$) 
arise as link-manifolds of links at infinity:
for sufficiently small $\epsilon>0$, 
$\{(z_0,z_1,z_2)\in\C^3: z_0 z_1(z_2^g-1)=1, 
\abs{z_0}^2+\abs{z_1}^2+\abs{z_2xi}^2=1/\epsilon \}$
is diffeomorphic to $S^1\times F_g$ (Boileau and Rudolph \cite{BoileauRudolph1995}).

\section*{Acknowledgments}

Many thanks to R.\ Inanc Baykur, John Etnyre, and Ursula 
Hamenst\"adt, the organizers of the conference on ``Interactions 
between low dimensional topology and mapping class groups'' 
(MPI--Bonn, July 2013), for their invitation to speak there
and to contribute to that volume.  I also thank an
anonymous referee for a close reading and numerous 
helpful suggestions, including references that I
had overlooked.   Preliminary versions 
of some results in part \ref{part:3d constructions},
particularly portions of   
\ref{subsubsect:Seifert spaces}--\ref{subsubsect:tree-manifolds}
and \ref{subsect:links at infinity}, were 
discovered in collaboration with Michel Boileau; I am very 
grateful for that collaboration, and assume all responsibility 
for our failure (so far) to publish a final version of 
\cite{BoileauRudolph1995}.

%
%
%
\bibliographystyle{gtart}
\bibliography{C3mflds}

%

\end{document}